\numberwithin{equation}{section}
\newcommand{\Hom}       {\operatorname{Hom}}
\newcommand{\uHom}      {\underline{\operatorname{Hom}}}
\newcommand{\spec}      {\operatorname{Spec}}
\newcommand{\Spc}       {\operatorname{Spc}}
\newcommand{\supp}      {\mathrm{supp}}
\newcommand{\Fun}       {\operatorname{Fun}}
\newcommand{\Alg}       {\operatorname{Alg}}
\newcommand{\CAlg}      {\operatorname{CAlg}}
\newcommand{\Tot}       {\operatorname{Tot}}
\newcommand{\Ind}       {\operatorname{Ind}}
\newcommand{\op}        {\operatorname{op}}
\newcommand{\Cpl}       {\operatorname{Cpl}}
\newcommand{\QCoh}      {\operatorname{QCoh}}
\newcommand{\GalCat}    {\mathsf{GalCat}}
\newcommand{\perf}      {\mathrm{perf}}
\newcommand{\dual}      {\mathrm{dual}}
\newcommand{\sep}       {\mathrm{sep}}
\newcommand{\fin}       {\mathrm{fin}}
\newcommand{\cov}       {\mathrm{cov}}
\newcommand{\wcov}      {\mathrm{w.cov}}
\newcommand{\br}        {\mathrm{br}}
\newcommand{\lcf}       {\mathrm{lcf}}
\newcommand{\rk}        {\mathrm{rk}}
\newcommand{\cf}         {\mathrm{cf}}
\newcommand{\cts}       {\mathrm{cts}}
\newcommand{\weak}      {\mathrm{weak}}
\newcommand{\Sp}        {{\mathrm{Sp}}}
\renewcommand{\Pr}      {{\mathsf{Pr}^{L}_{\mathrm{st}}}}
\newcommand{\Cat}       {{\mathsf{Cat}}}
\newcommand{\Set}       {{\mathsf{Set}}}
\newcommand{\FinSet}    {{\mathsf{FinSet}}}
\newcommand{\twoRing}   {2\text{-}\mathsf{Ring}}
\newcommand{\Gpd}       {\mathsf{Gpd}}
\newcommand{\Cov}       {\mathrm{Cov}}
\newcommand{\1}{\mathbbm{1}}
\newcommand{\CC}        {{\mathcal{C}}}
\newcommand{\CV}        {{\mathcal{V}}}
\newcommand{\Q}         {{\mathbb{Q}}}
\newcommand{\Z}         {{\mathbb{Z}}}
\newcommand{\N}         {{\mathbb{N}}}
\newcommand{\E}         {\mathbb{E}}
\newcommand{\C}         {{\mathscr{C}}}
\newcommand{\D}         {{\mathscr{D}}}
\newcommand{\G}         {{\mathscr{G}}}
\newcommand{\p}				{\mathfrak{p}}
\renewcommand{\q}			{\mathfrak{q}}
\newcommand{\colim}  {\operatornamewithlimits{\underset{\longrightarrow}{lim}}}
\renewcommand{\mod}[1]{\mathrm{Mod}_{#1}}
\newcommand{\Perf}[1]{\mathrm{Perf}_{#1}}
\newcommand{\StMod}{\mathrm{StMod}}
\newcommand{\stmod}{\mathrm{stmod}}
\newtheorem{theorem}{Theorem}[section]
\newtheorem{lemma}[theorem]{Lemma}
\newtheorem{proposition}[theorem]{Proposition}
\newtheorem{corollary}[theorem]{Corollary}
\theoremstyle{definition}
\newtheorem{remark}[theorem]{Remark}
\newtheorem{definition}[theorem]{Definition}
\newtheorem{example}[theorem]{Example}
\newtheorem{construction}[theorem]{Construction}
\newtheorem{question}[theorem]{Question}
\theoremstyle{plain}
\newtheorem*{question*}{Question}
\newtheorem*{theorem*}{Theorem}
\newtheorem{notation}[theorem]{Notation}
\newtheorem{thmx}{Theorem}
\newcommand{\ltodo}[1]{\todo[color=blue!20]{#1}}
\title[Separable commutative algebras and Galois theory]{Separable commutative algebras and Galois theory in stable homotopy theories}
\author{Niko Naumann and Luca Pol}
\begin{document}

\maketitle
\vspace{-5mm}
\begin{abstract}
We relate two different proposals to extend the \'etale topology into homotopy theory, namely via the notion of finite cover introduced by Mathew and via the notion of separable commutative algebra introduced by Balmer. We show that finite covers are precisely those separable commutative algebras with underlying dualizable module,  which have a locally constant and finite degree function. We then use Galois theory to classify separable commutative algebras in numerous categories of interest. Examples include the category of modules over a connective $\E_\infty$-ring $R$ which is either connective or even periodic with $\pi_0(R)$ regular Noetherian in which $2$ acts invertibly, the stable module category of a finite group of $p$-rank one and the derived category of a qcqs scheme.
\end{abstract}

\setcounter{tocdepth}{1}
\tableofcontents

\section{Introduction}

Tensor-triangular geometry applies algebro-geometric methods to study triangulated categories. This abstraction allows the  transport of ideas and techniques between areas of mathematics such as algebraic geometry, modular representation theory, equivariant stable homotopy theory and noncommutative geometry. The profound impact on algebraic geometry brought by Grothendieck's work on \'etale cohomology advocates for a generalization of the \'etale topology in tensor-triangular geometry. We compare two proposals for this: the first one is due to Balmer~\cite{Balmer2011} via the notion of separable commutative algebra, and the second one is due to Mathew~\cite{Mathew2016} via the notion of finite cover. The goal of this paper is twofold:
\begin{itemize}
    \item[(1)] Provide a comprehensive comparison between Balmer's and Mathew's work. 
    \item[(2)] Use this comparison to derive new classification results for separable algebras.
\end{itemize}

We now recall the two approaches. 

\subsection*{\'Etaleness via separability} 
The \'etale site of a field $k$ consists of all \'etale $k$-algebras $A$. Most concretely, these can be described as finite
products of finite separable field extensions of $k$, but can also be characterised by the existence of a bimodule
section to the multiplication map of $A$. This latter characterisation generalizes immediately to tensor triangular geometry and
was taken by Balmer as the starting point of his theory. 
Many more examples of separable algebras emerge in this context: Balmer--Dell'Ambrogio--Sanders~\cite{BalmerDellAmbrogioSanders15} showed that restriction to a subgroup of finite index in the equivariant stable homotopy category, equivariant $KK$-category and equivariant derived categories all yield commutative separable algebra objects. In algebraic geometry, Balmer~\cite{Balmer2016-etale} showed that \'etale morphisms of schemes yield examples of separable algebras. As a consistency check, a result of Neeman~\cite{Neeman2018} shows that the only separable commutative algebra objects in the perfect derived category of a Noetherian scheme are those arising from finite \'etale morphisms (and smashing localizations).


\subsection*{\'Etaleness via Galois theory} 
Another way to characterise \'etale $k$-algebras $A$ over a field $k$ is by demanding that $A\otimes_k\bar k$ be
a finite product of copies of $\bar k$, where $\bar k$ denotes a separable closure of $k$. The approach of Mathew
then relies on stipulating that a good replacement for the \'etale topology is the descendable topology. Accordingly,
he defines a commutative algebra to be a {\em finite cover} if it is a finite product of copies of the unit,
locally for the descendable topology.
See \cite{SAG}*{Appendix D.3} for an account of the descendable topology.
He then shows that this notion enjoys excellent properties, both formally and computationally. For example, the finite covers organize into a Galois category in the sense of Grothendieck.
Here again, he establishes a consistency check by showing that the finite covers in the (unbounded) derived category
of a scheme are exactly the finite \'etale covers.

\subsection*{Main results} 
Let $\C$ be a stable homotopy theory and let $\C^\dual\subseteq \C$ be the full subcategory of dualizable objects. 
For any separable commutative algebra object $A\in\C^\dual$, Balmer defined a notion of degree measuring the size of the separable algebra~\cite{Balmer2014}. The degree can be organized into a function
\[
\deg(A) \colon\Spc(\C^\dual) \to \mathbb{Z} \cup\{\infty\}
\]
on the Balmer spectrum of $\C^\dual$. We say that the degree function is \emph{finite} if it takes finite values on all prime ideals and \emph{locally constant} if for each prime ideal there exists some neighborhood on which the degree function is constant.
The following is our comparison result.

\begin{thmx}[see Corollary~\ref{cor-sep-lcf=cov}.]\label{thm-intro1}
    Let $\C$ be a stable homotopy theory with unit object $\1\in\C$. Suppose that $\pi_0(\1)$ decomposes as a finite product of indecomposable rings. 
    Then we have an equality
    \[
    \CAlg^{\cov}(\C)=\CAlg^{\sep,\mathrm{lcf}}(\C^\dual)
    \]
    between the finite covers of $\C$ and those separable commutative algebras of $\C$ which have a dualizable underlying module, and whose degree function is finite and locally constant.
\end{thmx}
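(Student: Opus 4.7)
The plan is to prove the two inclusions of the claimed equality. The hypothesis that $\pi_0(\1)$ splits as a finite product of indecomposable rings is used to control the locally constant degree functions in play: the corresponding idempotents in $\pi_0(\1)$ induce a clopen decomposition of the Balmer spectrum $\Spc(\C^{\dual})$, compatible with a splitting $\C\simeq\prod_{i}\C_i$ of the ambient stable homotopy theory. Both sides of the equality are visibly compatible with such a product decomposition, so the analysis can be carried out on each factor separately.

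For the inclusion $\CAlg^{\cov}(\C)\subseteq\CAlg^{\sep,\mathrm{lcf}}(\C^{\dual})$, a finite cover $B$ is by definition descendably locally equivalent to some trivial cover $\1^n$. The trivial cover is manifestly separable, dualizable, and has constant degree $n$, so it suffices to show that these three properties descend along descendable morphisms. Dualizability descending is standard. For separability, I would use that the bimodule section of the multiplication, existing locally, can be assembled globally by effective descent. For the degree function, the key is that Balmer's notion of degree is itself local on the Balmer spectrum and well-behaved under base change, so its local constancy and finiteness propagate along any descendable cover.

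For the reverse inclusion, let $A\in\CAlg^{\sep,\mathrm{lcf}}(\C^{\dual})$. The goal is to produce a descendable morphism $\1\to\widetilde A$ over which $A$ becomes a finite product of copies of the unit. The first step is to show that $\1\to A$ is itself descendable: combining the separability section $A\to A\otimes A$ of multiplication with the dualizability of $A$ and the finiteness of its degree should furnish the finite-length resolution characterizing descendability in the sense of \cite{Mathew2016}. The second step is an iterative trivialization: separability forces a splitting $A\otimes_{\1}A\simeq A\times A'$ in which the first factor is the diagonal and $A'$ is again separable and dualizable, of strictly smaller degree than $A$ when measured over $A$. Iterating this process and composing the resulting descendable extensions eventually yields the desired trivialization over which $A$ splits as a product of copies of $\1$.

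The main obstacle is the hard inclusion: establishing that $\1\to A$ is descendable using only separability, dualizability and finiteness of degree, and then verifying that the iterative splitting really does preserve all the relevant hypotheses at each stage. Mathew's descent criteria and Balmer's formalism for the degree of separable algebras should ultimately align to give the claimed trivialization, but this alignment --- that ``Balmer degree $n$'' on the one hand and ``locally a product of $n$ copies'' on the other coincide --- is the crux of the comparison theorem.
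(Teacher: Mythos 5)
Your overall skeleton --- prove both inclusions, and use the splitting $\C\simeq\prod_i\C_i$ into connected factors coming from the finite product decomposition of $\pi_0(\1)$ so that ``locally constant'' becomes ``constant'' on each piece --- matches the structure of the paper (Theorems~\ref{thm-(a)}, \ref{thm-(b)} and Corollary~\ref{cor-sep-lcf=cov}). However, in both directions you are relying on claims that are either not obviously true or require substantial further work that is precisely the technical content of the paper.

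For the inclusion $\CAlg^{\cov}(\C)\subseteq\CAlg^{\sep,\lcf}(\C^\dual)$, you propose to descend separability, dualizability and local constancy of the degree function from the split situation $A\otimes A'\simeq\prod_i A'[e_i^{-1}]$. Dualizability does descend (Lemma~\ref{lem-detect-dualizable-obj}). But ``the bimodule section can be assembled globally by effective descent'' hides a genuine difficulty: separability is an existence condition, and for existence conditions to glue along the cosimplicial descent diagram one needs the relevant space of choices to be contractible, not merely nonempty. That is Lemma~\ref{lem-unique-section} (uniqueness of the section for \emph{commutative} separable algebras), which the authors highlight as new even for ordinary rings. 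Likewise, ``local constancy propagates along any descendable cover'' is not automatic: descendability gives surjectivity of $\Spc(\mod{A'}^\omega)\to\Spc(\C^\dual)$, but not that this map is a quotient with degree constant on fibers, which is what you need to push local constancy forward. The paper gets this by trading the descendable $A'$ for a $G$-Galois extension $B$ (Proposition~\ref{prop-G-galois-are-G-torsor}(b)), identifying $\Spc(\mod{B}^\omega)\to\Spc(\C^\dual)$ with the quotient by the $G$-action (Lemma~\ref{lem-quotientbyG}), and using $G$-invariance of the degree. It does \emph{not} descend the conditions from the trivial cover; instead it proves directly that any weak finite cover is a finite product $\prod B_\alpha^{hK_\alpha}$ of homotopy fixed points of Galois extensions (Theorem~\ref{thm-fin-cover-sep}) and invokes Rognes-style arguments for separability and dualizability of such fixed points.

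For the reverse inclusion your outline has a factual error. You claim that ``$\1\to A$ is itself descendable'' follows from separability, dualizability, and finiteness of degree. It does not: $A=\1[e^{-1}]$ for a nontrivial idempotent $e$ is separable, dualizable, and of finite degree, but not descendable, since its support is a proper subset of $\Spc(\C^\dual)$. What saves you is that \emph{after} reducing to a connected factor and discarding the case $A=0$, a constant degree function must be a constant $d\geq 1$, so $\supp(A)=\Spc(\C^\dual)$ and descendability follows --- but this is a consequence of constancy on a connected spectrum, not of finiteness. More seriously, even granting descendability of $A$, tensoring $A$ with itself does not produce a product of copies of the unit: $A\otimes A\simeq A\times A'$ where $A'$ is in general not trivial. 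The correct object to take is the top term $B:=A^{[d]}$ of Balmer's splitting tower (Construction~\ref{construct-splitting-tower}/\ref{con-tt-tower-2ring}); one then needs (a) $\supp(A^{[d]})=\Spc(\C^\dual)$, which is Lemma~\ref{lem-descend-degree-geq1}, and (b) the crucial identification $F_B(A)\simeq B^{\times d}$, which is not a formal iteration but a theorem of Balmer (\cite{Balmer2014}*{Theorem 3.9(c)}) that you would need to import. Your ``iterative trivialization'' gestures in the right direction but does not nail down either of these points, and (b) in particular --- that Balmer's degree $d$ means ``locally a product of $d$ copies'' over the top of the tower --- is exactly the crux you yourself identify and leave open.
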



We use this comparison result along with some new structural results about commutative separable algebras in order to obtain the following.

\begin{thmx}\label{thm-intro-2} There is an essentially complete\footnote{modulo the possible existence of separable commutative algebras having infinite degree in the stable homotopy theories considered here.} classification of the separable and commutative algebras with underlying dualizable module in the following homotopy theories:
    \begin{itemize}
        \item[(1)] Modules over a connective $\E_\infty$-ring, see Proposition~\ref{prop-connective-sep}.
        \item[(2)] Modules over an even periodic $\E_\infty$-ring $R$ with $\pi_0(R)$ regular and Noetherian with $2\in \pi_0(R)^\times$, see Theorem~\ref{thm-sep-even-periodic}.
        \item[(3)] Complete modules over an adic $\E_\infty$-ring, see Theorem~\ref{thm-complete-sep}.
        \item[(4)] Modules over Lubin-Tate theories over perfect fields of characteristic $p>0$, see Theorem~\ref{thm-Lubin-Tate}. 
        \item[(5)] Modules over topological complex and real K-theories, see Theorems \ref{complex-k-theory} and \ref{real-k-theory}.
        \item[(6)] Quasi-coherent sheaves over an even periodic refinements of a regular Noetherian Deligne-Mumford stack defined over $\spec(\Z[1/2])$, see Theorem~\ref{thm-even-ref-DM}.
        \item[(7)] Derived categories of  qcqs schemes, see Corollary~\ref{cor-sep-scheme}.
        \item[(8)] The stable module category of a finite group of $p$-rank one with coefficients in a separably closed field of characteristic $p>0$, see Theorem~\ref{thm-stable-module-cat}.
    \end{itemize}
\end{thmx}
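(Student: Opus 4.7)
The overall strategy is to exploit Theorem~\ref{thm-intro1} to reduce the classification problem in each case to two independent tasks: first, to understand the finite covers in the given category via Mathew's Galois theory, and second, to show that every separable commutative algebra with dualizable underlying module and finite degree function is actually a finite cover, i.e.\ that its degree function is automatically locally constant. The locally constancy step would have to be handled case by case, but a uniform template suggests itself: spread-out a separable algebra over a Zariski-like decomposition of $\Spc(\C^\dual)$ into pieces on which the degree is constant, and then appeal to known classifications on each piece. The second task, after this reduction, amounts to computing the absolute Galois groupoid (in Mathew's sense) of the category in question.

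For parts (1)--(4), the plan is to leverage algebraic structure to reduce to classical finite \'etale algebras. For (1), a connective $\E_\infty$-ring $R$ admits a Postnikov-type analysis: the $t$-structure on $\mathrm{Mod}_R$ should let one show that the $\pi_0$ of a separable algebra is finite \'etale over $\pi_0(R)$ in the classical sense, and conversely finite \'etale algebras lift uniquely to connective separable $R$-algebras. For (2), evenness plus regularity of $\pi_0(R)$ means that every dualizable $R$-module is perfect and concentrated in even homotopy, so the classification should again reduce to finite \'etale algebras over $\pi_0(R)$. For (3), completeness with respect to an adic ideal should be handled by passing to the underlying ring modulo the ideal and invoking a Hensel-type lifting, combined with (1) or (2). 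Part (4) would proceed by choosing an even periodic affine cover of the stack and gluing the classifications obtained in (2) via descent along the resulting hypercover; the classification on the stack then reduces to finite \'etale algebras on the underlying ordinary Deligne--Mumford stack.

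For part (5), one would start from Neeman's theorem identifying the separable algebras in $\mathrm{Perf}(X)$ with finite \'etale morphisms, and then pass from perfect objects to the full derived category $\mathrm{D}_{\mathrm{qc}}(X)$ by establishing that any separable algebra with dualizable underlying module is already perfect --- this is where Zariski descent and the decomposition of $\Spc$ into connected components enter. For part (6), the key input is the well-developed theory of the Balmer spectrum of $\stmod(kG)$ for $G$ of $p$-rank one (cyclic, generalized quaternion, semidihedral, dihedral in the $p=2$ case), together with explicit knowledge of the Galois theory of these categories coming from restriction to subgroups and from projective geometry of the cohomology ring; combined with Theorem~\ref{thm-intro1} and classical separable closure of the coefficient field, this should pin down all finite covers.

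The main obstacle I expect lies in the locally constancy reduction, particularly in cases (4) and (6). In (4), because the spectrum of a stack is not literally a scheme, establishing that the degree function is locally constant requires careful descent through the even periodic cover. In (6), the Balmer spectrum of $\stmod(kG)$ is a projective variety (the projectivization of cohomology) whose connectedness properties and non-Noetherian subtleties make the argument delicate; moreover, one must rule out exotic separable algebras not coming from subgroups or from finite \'etale algebras over $k$. The footnoted caveat about infinite-degree separable algebras is of course outside the scope of the approach described here, as Theorem~\ref{thm-intro1} only addresses the finite degree case.
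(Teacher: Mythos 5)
Your high-level picture (reduce to finite covers via Theorem~A and then compute the Galois theory) captures part of the spirit, but you've misread where the actual work happens, and there are a few concrete errors worth flagging.

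First, the paper does not proceed by first checking that the degree function is locally constant and then invoking Theorem~A. For cases~(1) and~(2) the classification is proved directly: the paper shows that \emph{every} separable commutative algebra with underlying perfect module (no finiteness of degree assumed) is finite \'etale over $\pi_0$, using reduction to residue fields, a Tor spectral sequence argument for connectivity, the Stacks Project criterion for flatness, and (in the even periodic case) a carefully engineered homology theory valued in super vector spaces plus the classification of separable superalgebras over a field. Local constancy of the degree function comes out for free because finite \'etale algebras have locally constant rank. Your proposed ``spread-out over a Zariski-like decomposition of $\Spc$'' step is not part of the argument, and in the even periodic case it is also false that dualizable $R$-modules are ``concentrated in even homotopy'' --- what the paper proves is that separable algebras are even, via the $Q_*$ functor.

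Second, cases~(3), (4) and~(5) do not go through a local-constancy check at all; they go through the limit/descent machinery developed in Section~\ref{sec-descent}, specifically Theorem~\ref{thm-descent} and Proposition~\ref{prop-descent}, which say that the equality $\CAlg^{\wcov}=\CAlg^{\sep,\mathrm{f}}$ (resp.\ $\CAlg^{\sep}$) is stable under arbitrary limits of stable homotopy theories, after first proving $\Perf$-vs-dualizable identifications and an inverse limit description of the relevant category (Theorem~\ref{thm-dual=perfect}, Corollary~\ref{lem-compl-mod}). For~(5) in particular, the paper does \emph{not} start from Neeman's theorem (that is explicitly mentioned only as an alternative route via Noetherian approximation); rather, qcqs schemes are treated as a special case of spectral Deligne--Mumford stacks, which are handled by the limit-over-affines description plus the descent theorem, with the connective affine pieces handled by~(1). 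Also note that in $\QCoh$ of a qcqs scheme, dualizable already means perfect, so the ``establish that every dualizable separable algebra is perfect'' step you propose is a non-step.

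Third, for case~(6) you have missed the decisive simplification: when $G$ has $p$-rank one, $\Spc(\stmod(G;k))$ is a \emph{single point} (Corollary~\ref{cor-spectrum-point-rank-one}), so local constancy of the degree function is vacuous, and Theorem~A applies with no further geometric input. Your worry about ``connectedness properties and non-Noetherian subtleties'' of the projective support variety is therefore misplaced. You also list semidihedral and dihedral $2$-groups as $p$-rank one; those have $2$-rank two, and the correct classification (used in Lemma~\ref{lem-class-rank-one}) is that the Sylow $p$-subgroup is cyclic or, for $p=2$, generalized quaternion. Finally, the paper needs to exhibit the explicit $W$-torsor $A_P^G = k[G/P]$ (Lemma~\ref{lem-rank-one-galois}) to make the Galois correspondence concrete; this is not something that falls out of ``restriction to subgroups and projective geometry of the cohomology ring'' without the double coset computation done there.
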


We conclude the introduction by a section-wise overview:\\
Section \ref{sec:prelim} collects preliminaries on category theory, including a brief review of the ``small'' and ``large'' homotopy theories we are using and culminating in a description of categories of modules in a limit stable homotopy theory.\\
Section \ref{sec:connected_stable} reviews connected components of stable homotopy theories and their relation to the unit of the category, and the Balmer spectrum of its dualizable objects. We need this to formulate the mild technical hypothesis we need for our comparison result, namely that there be only finitely many connected components.\\
Section \ref{sec:separable} collects various results from the literature for the ease of reference. Most notably, Lemma \ref{lem-module-over-sep-alg}
records a variant of the fact that the formation of modules for a separable algebra commutes over passage to the homotopy category. This result is of key importance, as it allows us to freely pass between the infinity-categorical and the triangulated categorical level.\\
In Section \ref{sec:degree} we show that Balmer's splitting tower for a separable algebra in a tt-categories uniquely lifts to the setting of infinity categories. We relate the resulting local degree function on the Balmer spectrum with the splitting tower, and record its functoriality. We note that separable algebras which are finite products of retracts of the unit object always have locally constant degree functions.\\
In Section \ref{sec:descent-separable} we show that the formation of separable algebras commutes over arbitrary limits of homotopy theories. This result was suggested by the analogous result of Mathew for weak finite covers. The key point in the proof is the uniqueness of splitting idempotents for {\em commutative} separable algebras. This last result seems new even in the case of ordinary commutative rings.\\
In Section \ref{sec:axiomatic_galois} we collect results from Mathew's work on Galois theory which we will have to use later on: The precice working of his Galois correspondence and some observations about $G$-torsors.\\
Section \ref{finite_covers_and_sep} is the core of part 1. Here, we establish our comparison result equating finite covers and suitable separable algebras. Deducing  separability properties of (weak) finite covers is essentially a revisit of results of Rognes in the present more general setting, coupled with descent results for the Balmer spectrum. To see that certain separable algebras are finite covers, the observation is that the final term in their tt-tower is {\em descendable} and splits the algebra.\\
Section \ref{sec-descent} is motivated by the observation that for certain basic homotopy theories, (weak) finite covers agree with separable algebras (with neccessary extra condition). The present section shows that this simple relation is preserved under limits of homotopy theories. 
To see this, we need to revisit the notion of limits of Galois categories and equate Mathew's notion of rank in Galois theory with Balmer's notion of degree of separable algebras.\\
Section \ref{sec:modules} is the first of part 2 of the paper, about specific computations. Here we consider categories of modules over $\mathbb E_\infty$-rings which are either connective or even periodic and regular Noetherian in which $2$ acts invertibly. In both cases Mathew showed that all finite covers are classical, and we extend this result to include all separable algebras with underlying dualizable module. The approach is very standard, relying on residue fields, respectively suitable homology theories, to reduce to the case of ordinary (graded) commutative rings.\\
As a first application of our descent results, Section \ref{sec-complete} determines the separable algebras in the category of complete modules over a complete $I$-adic $\mathbb E_\infty$-ring.
This stable homotopy theory is a bit more complicated to access, e.g. its unit is not compact. The key here is a result of independent interest: The only dualizable modules for the completed tensor product are the perfect modules\footnote{we include an example of Mathew showing this requires connectivity}, and these in turn organize into a limit over a suitable $I$-adic tower.\\
In Section \ref{sec:chromotopy} we classify separable algebras over Lubin-Tate theories of perfect fields of characteristic $p>0$, and using descent techniques, we access the $L_n$- and $K(n)$-local categories. We also treat the case of topological K-theories and global sections of suitable non-connective spectral Deligne-Mumford stacks.\\
In Section \ref{sec:spectral-DM} we show that the theories of Mathew and Balmer yield the same result when applied to the categories of quasi-coherent sheaves on a spectral Deligne-Mumford stack. When specialized to Noetherian qcqs schemes, this recovers a special case of previous work of Neeman.\\
In the final Section \ref{sec:stable-mod} we consider stable module categories for finite groups and classify separable algebras there in the case of groups of $p$-rank one. This generalizes previous results of Balmer and Carlson which indeed were the initial motivation for the present work. Our proof is arguably a lot more direct than theirs, which for example relied on unexpected and subtle properties of the Kelly radical. We explain why extending these results to the case of an elementary abelian $p$-group of rank two likely requires new ideas.


\subsection*{Acknowledgements} We thank Jacob Lurie for useful remarks on section \ref{sec-complete}, and Paul Balmer. We also thank Drew Heard, Marc Hoyois, Lars Kadison and Massimo Pippi for useful conversations. We thank Maxime Ramzi for pointing out a mistake in a previous draft, for helpful discussions about this paper and for sharing a draft of his forthcoming work~\cite{Maxime} which has substantial overlap with the material in our sections~\ref{sec:separable} and \ref{sec-descent}.  We also thanks the anonymous referee for helpful comments on earlier versions of this manuscript.

The authors are supported by the SFB 1085 Higher Invariants in Regensburg. This material is partially based upon work supported by the Swedish Research Council under grant no.~2016-06596 while LP was in residence at Institut Mittag-Leffler in Djursholm, Sweden during the semester ''Higher algebraic structures in algebra, topology and geometry''. LP would also like to thank the Hausdorff Research Institute for Mathematics for the hospitality in the context of the Trimester program ''Spectral Methods in Algebra, Geometry, and Topology'', funded by the Deutsche Forschungsgemeinschaft (DFG, German Research Foundation) under Germany’s Excellence Strategy – EXC-2047/1 – 390685813.

\part{Finite covers and separable commutative algebras}

In this part of the paper we relate the notion of finite cover introduced by Mathew with the notion of separable commutative algebra introduced into homotopy theory by Balmer. After recalling the relevant background we show that finite covers coincide with those separable commutative algebras which have a dualizable underlying module and a locally constant and finite degree function, see Corollary~\ref{cor-sep-lcf=cov}.

\section{Preliminaries}\label{sec:prelim}

In this section we recall the notation and terminology from homotopy theory that we use throughout the paper. We start by introducing the notion of a $2$-ring and of a stable homotopy theory and then discuss various results about limits and module categories.\\

\begin{notation}
    Let $\Cat_\infty^{\mathrm{perf}}$ denote the $\infty$-category of essentially small, idempotent complete, stable $\infty$-categories and exact functors, see~\cite{Mathew2016}*{Definition 2.3}. 
\end{notation}

Recall from~\cite{Mathew2016}*{Definition 2.13} the relevant symmetric monoidal structure on $\Cat_\infty^{\mathrm{perf}}$. 

\begin{definition}
 We let $\twoRing$ denote the $\infty$-category of commutative algebra objects in $\Cat_\infty^{\perf}$. More concretely, this is the $\infty$-category of 
 essentially small, idempotent complete, stable $\infty$-categories equipped with a symmetric monoidal structure whose tensor products are exact in each variable. 
 A functor between such categories is assumed to be symmetric monoidal and exact. We will refer to any $\C \in \twoRing$ simply as a $2$-\emph{ring}. 
\end{definition}

We will also need a big variant of the previous definitions. 

\begin{notation}
    Let $\mathsf{Pr}^L $ denote $\infty$-category of presentable $\infty$-categories and colimit-preserving functors, and let $\Pr$ denote the full subcategory of $\mathsf{Pr}^L$ spanned by the stable $\infty$-categories. 
\end{notation}

Recall from~\cite{HA}*{Proposition 4.8.1.15} that the $\infty$-category $\mathsf{Pr}^L$ inherits a symmetric monoidal structure from the $\infty$-category of (large) $ \infty$-categories $\widehat{\Cat}_\infty$ whose commutative algebra objects are precisely the symmetric monoidal and presentable $\infty$-categories whose tensor product preserves colimits in each variable. For the stable version, we recall from~\cite{HA}*{Proposition 4.8.2.18} that the $\infty$-category of 
spectra $\Sp$ is an idempotent commutative algebra in $\mathsf{Pr}^L$ and that $\mod{\Sp}(\mathsf{Pr}^L)\simeq \Pr$. It then follows from~\cite{HA}*{Proposition 4.8.2.10} that $\Pr$ inherits a symmetric monoidal structure.

\begin{definition}
    We let $\CAlg(\Pr)$ denote the $\infty$-category of commutative algebra objects in $\Pr$. More concretely, this is the $\infty$-category of presentable, symmetric monoidal, stable $\infty$- categories such that the monoidal structure commutes with colimits in each variables. A functor between such categories is assumed to be symmetric monoidal and colimit-preserving. We will refer to any $\C \in \CAlg(\Pr)$ simply as a \emph{stable homotopy theory}. We note that any such $\C$ is closed symmetric monoidal and write $\uHom$ and $D=\uHom(-,\1)$ for the internal hom and duality functor respectively.
\end{definition}

There is a canonical way to produce $2$-rings from a stable homotopy theory:

\begin{lemma}\label{lem-dual-is-2ring}
 For any $\C \in \CAlg(\Pr)$, the full subcategory of dualizable objects $\C^\dual\subseteq \C$ naturally admits the structure of a $2$-ring. 
\end{lemma}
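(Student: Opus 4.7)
The plan is to verify each of the defining properties of a 2-ring for $\C^\dual$ in turn: that it carries an induced symmetric monoidal structure, that it is stable and idempotent complete, that its tensor product is exact in each variable, and, most delicately, that it is essentially small.

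First, I would observe that $\C^\dual \subseteq \C$ is closed under the symmetric monoidal structure. The unit $\1$ is self-dual, and if $X, Y$ are dualizable with duals $DX, DY$, then $X \otimes Y$ is dualizable with dual $DY \otimes DX$, the required evaluation and coevaluation maps being constructed from those of $X$ and $Y$ using symmetry. Hence $\C^\dual$ inherits a symmetric monoidal structure from $\C$.

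Next, I would verify the stable and idempotent complete structure. The subcategory $\C^\dual$ is closed under shifts, since $\1[n]$ is invertible (hence dualizable) and $X[n] \simeq X \otimes \1[n]$. It is closed under cofiber sequences: given $X \to Y \to Z$ in $\C$ with $X, Y$ dualizable, a dual for $Z$ is obtained from the induced fiber sequence $DZ \to DY \to DX$, with duality data inherited from that of $X$ and $Y$. This shows $\C^\dual$ is stable. For idempotent completeness, note that retracts of dualizable objects are dualizable (the duality data restricts); since $\C$ is presentable, hence idempotent complete, any idempotent in $\C^\dual$ splits in $\C$, and the splitting object is a retract of a dualizable object, so lies in $\C^\dual$. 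Exactness of $\otimes$ in each variable restricts from $\C$, where the tensor product preserves all colimits by assumption.

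The only nonformal point, and the main obstacle, is essential smallness. For this I would choose a regular cardinal $\kappa$ such that $\1 \in \C$ is $\kappa$-compact; such $\kappa$ exists because $\C$ is presentable. For any dualizable $X$ with dual $DX$, the adjunction yields a natural equivalence
\[
\Map_\C(X, -) \simeq \Map_\C(\1, DX \otimes -).
\]
Tensoring with $DX$ preserves all colimits, and $\Map_\C(\1, -)$ preserves $\kappa$-filtered colimits by choice of $\kappa$, so the composite preserves $\kappa$-filtered colimits. Hence every dualizable object is $\kappa$-compact, so $\C^\dual$ is contained in the essentially small subcategory $\C^\kappa$ of $\kappa$-compact objects, and is therefore itself essentially small.
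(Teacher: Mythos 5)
Your proposal is correct and follows essentially the same approach as the paper: check closure of $\C^\dual$ under the monoidal, stable, and retract structure, then show essential smallness by observing that every dualizable object is $\kappa$-compact once $\1$ is. The only places where the paper gives more detail are (i) deriving closure under cofibers from the fact that the natural transformation $DX\otimes(-)\to\uHom(X,-)$ is an equivalence exactly when $X$ is dualizable and is exact in $X$ --- a cleaner route than your appeal to ``duality data inherited from that of $X$ and $Y$'', which leaves the verification of the triangle identities for $Z$ implicit --- and (ii) the existence of a regular cardinal $\kappa$ with $\1$ being $\kappa$-compact, which you invoke without justification but which the paper proves by writing $\1$ as a $\kappa$-filtered colimit of $\kappa$-compact objects and then enlarging $\kappa$ so that this diagram becomes $\kappa$-small.
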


\begin{proof}
 See the discussion surrounding~\cite{Mathew2016}*{Definition 2.15}. For the convenience of the reader we flesh out the argument here.
 Since an object $X\in\C$ is dualizable if and only if the natural transformation
 \[ D(X)\otimes (-)\longrightarrow \uHom(X,-)\]
 is an equivalence, and both $D(X)$ and $\uHom(X,-)$ are
 exact functors in $X$, it follows that $\C^{\mathrm{dual}}\subseteq\C$ is a stable and thick subcategory. It is in particular stable under retracts in $\C$, and hence idempotent complete. If both $X$ and $Y$ are dualizable, then we leave it to the reader to check that the given duality data easily provide one for $X\otimes Y$, see~\cite{Mathew2016}*{Definition 2.15} for the definition of a duality datum. It is only left to show that $\C^\dual$ is essentially small, for then $\C^{\mathrm{dual}}$ endowed with the tensor product of $\C$, will be a $2$-ring.\\
 We first claim that the unit $\1\in\C$ is $\lambda$-compact for some regular cardinal $\lambda$. Since $\C$ is in particular $\kappa$-accessible for some regular cardinal $\kappa$, it follows from~\cite{HTT}*{Proposition 5.4.2.2, (2)} that $\1$ is a $\kappa$-filtered colimit of $\kappa$-compact objects. 
  Choose a regular cardinal $\lambda$ such that $\lambda \ge  \kappa$, and such that the above colimit is $\lambda$-small. The existence of $\lambda$ is secured by the existence of arbitrarily large regular cardinals. Since any $\kappa$-compact object is $\lambda$-compact, we have written $\1$ as a $\lambda$-small colimit of $\lambda$-compact objects, hence $\1$ is $\lambda$-compact by~\cite{HTT}*{Corollary 5.3.4.15}. To see that every dualizable object $X$ is $\lambda$-compact consider the following sequences of equivalences
 \[
 \Hom_\C(X,-)\simeq
 \Hom_\C(\1,\uHom(X,-))\simeq\Hom_\C(\1, (-) \otimes DX).
 \]
 As the functors $\Hom_{\C}(\1,-)$ and $(-)\otimes DX$ commute with $\lambda$-filtered colimits, $X$ is $\lambda$-compact, as claimed. We conclude, since the category of $\lambda$-compact objects is essentially small by~\cite{HTT}*{Remark 5.4.2.13}.\\
\end{proof}

  Conversely, we can promote every $2$-ring $\C$ to a stable homotopy theory:
  The $\omega$-$\mathrm{Ind}$-completion $\mathrm{Ind}(\C)$ is presentable ~\cite{HTT}*{Theorem 5.5.1.1}, stable~\cite{HA}*{Proposition 1.1.3.6} and symmetric monoidal~\cite{HA}*{Corollary 4.8.1.14}, hence a stable homotopy theory. This has the property that the subcategory of compact objects of $\mathrm{Ind}(\C)$ coincides with $\C$, and that the Yoneda embedding $j \colon \C \to \mathrm{Ind}(\C)$ is symmetric monoidal.

The following result describes how to calculate limits of stable homotopy theories.

\begin{lemma}\label{lem-limits-preserved}
 The $\infty$-category $\CAlg(\Pr)$ admits all limits and these are preserved by the 
 forgetful functor $\CAlg(\Pr) \to \widehat{\Cat}_\infty$ into the large $\infty$-category of 
 $\infty$-categories.
\end{lemma}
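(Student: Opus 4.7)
The plan is to factor the forgetful functor as
\[
\CAlg(\Pr) \longrightarrow \Pr \longrightarrow \mathsf{Pr}^L \longrightarrow \widehat{\Cat}_\infty
\]
and to argue separately that each arrow creates small limits. This reduces the lemma to a chain of standard results from \cite{HTT} and \cite{HA}.

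For the rightmost arrow I would invoke \cite{HTT}*{Theorem 5.5.3.18}, which says that $\mathsf{Pr}^L$ admits all small limits and that the inclusion $\mathsf{Pr}^L \hookrightarrow \widehat{\Cat}_\infty$ preserves them. Concretely, one replaces the structure morphisms of a given diagram by their right adjoints and computes the limit in $\widehat{\Cat}_\infty$ of the resulting $\mathsf{Pr}^R$-diagram. For the middle arrow, the identification $\Pr \simeq \mod{\Sp}(\mathsf{Pr}^L)$ from \cite{HA}*{Proposition 4.8.2.18} combined with the general fact that the forgetful functor from module $\infty$-categories creates limits (see \cite{HA}*{Corollary 4.2.3.3}) shows that $\Pr \to \mathsf{Pr}^L$ creates small limits; in particular, a limit in $\mathsf{Pr}^L$ of stable presentable $\infty$-categories is again stable and presentable. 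For the leftmost arrow I would apply \cite{HA}*{Corollary 3.2.2.5}, which asserts that for any symmetric monoidal $\infty$-category $\CC$ admitting $K$-indexed limits, the forgetful functor $\CAlg(\CC) \to \CC$ creates $K$-indexed limits; since $\Pr$ is symmetric monoidal by the discussion preceding the lemma, we may apply this to $\CC = \Pr$.

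Composing these three preservation statements yields both the existence of small limits in $\CAlg(\Pr)$ and their preservation under the forgetful functor to $\widehat{\Cat}_\infty$. I do not expect any real obstacle here: the proof is essentially a book-keeping exercise of known Lurie results. The only subtlety worth flagging is that, thanks to Theorem 5.5.3.18, limits in $\mathsf{Pr}^L$ are genuinely computed on underlying $\infty$-categories (via the $\mathsf{Pr}^R$-description), which is the content that makes the entire chain of preservations align.
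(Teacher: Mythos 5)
Your proposal is correct, with the same overall factorization of the forgetful functor as the paper. The one genuine difference is the middle step. The paper proves that $\Pr\to\mathsf{Pr}^L$ preserves limits directly: a limit in $\mathsf{Pr}^L$ of stable presentable $\infty$-categories is again stable by \cite{HA}*{Theorem 1.1.4.4}, so the limit lands in $\Pr$, and one appeals to the fact that $\Pr\subseteq\mathsf{Pr}^L$ is a full subcategory. You instead invoke the identification $\Pr\simeq\mod{\Sp}(\mathsf{Pr}^L)$ and the general machinery of module $\infty$-categories. This is perfectly valid and arguably more uniform — it treats the middle and leftmost arrows by the same HA Chapter 3/4 toolkit — though it carries slightly more overhead. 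One small citation point: \cite{HA}*{Corollary 4.2.3.3} gives only existence and \emph{preservation} of limits in $\mathrm{Mod}_A(\CC)\to\CC$; to justify the word ``creates'' you should also cite \cite{HA}*{Corollary 4.2.3.2} for conservativity of the forgetful functor (and then creation is immediate from preservation plus conservativity plus existence of limits in the base). With that amendment the argument is complete, and both approaches buy you the same thing — the identification of the underlying $\infty$-category of the limit — just seen either through a stability criterion or through the monadicity of the idempotent algebra $\Sp$.
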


\begin{proof}
The forgetful functor $\CAlg(\Pr) \to \widehat{\Cat}_\infty$ factors as the composite 
\[
\CAlg(\Pr) \to \Pr \to \mathsf{Pr}^L \to \widehat{\Cat}_\infty.
\]
Limits in $\mathsf{Pr}^L$ exists and the forgetful functor $\mathsf{Pr}^L \to \widehat{\Cat}_\infty$ preserves them by~\cite{HTT}*{Proposition 5.5.3.13}. Using this together with the fact that limits of stable $\infty$-categories are again stable~\cite{HA}*{Theorem 1.1.4.4}, we deduce that $\Pr$ admits limits and that 
these are preserved by the forgetful functor $\Pr \to \widehat{\Cat}_\infty$. For the final step, note that the symmetric monoidal $\infty$-category $\Pr$ defines~\cite{HA}*{Example 2.1.2.18} a cocartesian fibration of $\infty$-operads $(\Pr)^\otimes\to \mathrm{Fin}_*$ whose fibre over $\langle n \rangle \in \mathrm{Fin}_*$ is equivalent to $\prod_{i=1}^n \Pr$ by the Segal condition. Thus by~\cite{HA}*{Corollary 3.2.2.5}, the $\infty$-category $\CAlg(\Pr)$ admits limits, and these are preserved by the forgetful functor $\CAlg(\Pr)\to \Pr$. 
\end{proof}

As an application of the previous lemma, we record the following useful result about commutative algebras and limits.

\begin{lemma}\label{lem-calg-limits}
 Let $K$ be a simplicial set and let $p\colon K \to \CAlg(\Pr)$ be a functor. 
 Then the canonical functor
 \[
 \CAlg(\lim_K p) \xrightarrow{\simeq} \lim_{k \in K}\CAlg(p(k))
 \]
 is an equivalence in $\widehat{\Cat}_\infty$.
\end{lemma}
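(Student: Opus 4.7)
My plan is to deduce this formally from the fact, due to Lurie, that the functor $\CAlg\colon \mathrm{Op}_\infty \to \widehat{\Cat}_\infty$ (on the $\infty$-category of $\infty$-operads) preserves small limits; see \cite{HA}*{Corollary 3.2.2.5}. Given this, the only real task is to verify that the limit $\C := \lim_K p$, taken in $\CAlg(\Pr)$, has underlying $\infty$-operad equivalent to the $\mathrm{Op}_\infty$-limit of the underlying $\infty$-operads $p(k)^\otimes$; once this is known, applying \cite{HA}*{Corollary 3.2.2.5} to the diagram $k \mapsto p(k)^\otimes$ produces the desired equivalence $\CAlg(\C) \simeq \lim_{k\in K}\CAlg(p(k))$.

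First I would recall that $\CAlg(\Pr)$ is by definition the $\infty$-category of commutative algebra objects in the symmetric monoidal $\infty$-category $\Pr$, which in turn is encoded by a cocartesian fibration of $\infty$-operads $(\Pr)^\otimes \to \mathrm{Fin}_*$; this is exactly the setup already invoked in the proof of Lemma~\ref{lem-limits-preserved}. A diagram $p \colon K \to \CAlg(\Pr)$ therefore produces, by post-composition with the forgetful functor $\CAlg(\Pr) \to \mathrm{Op}_\infty$ (sending a symmetric monoidal $\infty$-category to its underlying $\infty$-operad), a diagram $p^\otimes\colon K \to \mathrm{Op}_\infty$.

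Next I would identify the two candidate limits. By Lemma~\ref{lem-limits-preserved}, the limit $\C = \lim_K p$ exists in $\CAlg(\Pr)$ and is computed by the underlying diagram of $\infty$-categories. Inspecting the proof of that lemma, the same reasoning (using once more \cite{HA}*{Corollary 3.2.2.5} applied at the operadic level, together with the Segal condition characterizing symmetric monoidal structures among $\infty$-operads) shows that the underlying $\infty$-operad $\C^\otimes$ coincides with the limit of $p^\otimes$ in $\mathrm{Op}_\infty$. Concretely, a lift of an edge in $\mathrm{Fin}_*$ in $\C^\otimes$ is the same datum as a compatible system of such lifts in each $p(k)^\otimes$, which is exactly the universal property of the operadic limit.

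Finally, applying \cite{HA}*{Corollary 3.2.2.5} to $p^\otimes$ yields $\CAlg(\C^\otimes) \simeq \lim_{k \in K} \CAlg(p(k)^\otimes)$, and this is precisely the map displayed in the statement. I expect the main technical hurdle to be the identification in the second step, that is, the careful check that no information is lost when forming $\lim_K p$ first in $\CAlg(\Pr)$ and then taking the underlying operad, versus forming the limit directly in $\mathrm{Op}_\infty$; this is a bookkeeping argument about forgetful functors between $\Pr$, $\mathsf{Pr}^L$, $\widehat{\Cat}_\infty$ and $\mathrm{Op}_\infty$ that parallels the chain already used in Lemma~\ref{lem-limits-preserved}.
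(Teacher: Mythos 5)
Your overall strategy is close in spirit to the paper's, but I want to flag both a miscitation and a place where the argument is thinner than it looks. The paper proceeds by first applying Lemma~\ref{lem-limits-preserved} to compute $\lim_K p$ as an $\infty$-category, and then uses a hands-on pointwise description of commutative algebra objects (as sections of the operadic fibration satisfying a limit-closed condition) to finish. You instead route through the statement that $\CAlg\colon\mathrm{Op}_\infty\to\widehat{\Cat}_\infty$ preserves small limits, citing \cite{HA}*{Corollary 3.2.2.5} for it. But that corollary — as the paper itself uses it in the proof of Lemma~\ref{lem-limits-preserved} — says that for a fixed (nice enough) $\infty$-operad $\C$, the $\infty$-category $\CAlg(\C)$ admits limits and that the forgetful functor preserves them. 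That is a statement about limits \emph{inside} a single $\CAlg(\C)$, not about $\CAlg$ preserving limits in the operad variable. The fact you actually need — that $\Alg_{\mathrm{Comm}}(-)$ is a mapping-out-of-a-fixed-operad construction and hence commutes with limits of the target — is true, but requires a different reference (or a short independent argument using that $\mathrm{Op}_\infty$ is closed under limits in $(\widehat{\Cat}_\infty)_{/\mathrm{Fin}_*}$ and that $\CAlg(\C)$ is a full subcategory of $\Fun_{\mathrm{Fin}_*}(\mathrm{Fin}_*, \C^\otimes)$ cut out by a limit-closed condition).

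The second gap is the identification you acknowledge as the ``main technical hurdle'': that the underlying $\infty$-operad of $\lim_K p$, formed in $\CAlg(\Pr)$, agrees with the limit of the $p(k)^\otimes$ in $\mathrm{Op}_\infty$. Your sketch of this step invokes \cite{HA}*{Corollary 3.2.2.5} ``at the operadic level,'' which makes the argument circular given that this is not the statement that corollary supplies. What you actually need here is the chain: limits in $\CAlg(\Pr)$ are computed in $\Pr$, hence in $\widehat{\Cat}_\infty$ (Lemma~\ref{lem-limits-preserved}); the resulting symmetric monoidal structure on $\lim_K p$ is the ``pointwise'' one, i.e.\ its operadic fibration is the limit in $(\widehat{\Cat}_\infty)_{/\mathrm{Fin}_*}$ of the $p(k)^\otimes$; and limits in $\mathrm{Op}_\infty$ are computed in $(\widehat{\Cat}_\infty)_{/\mathrm{Fin}_*}$. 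Once those three facts are in place your argument goes through, and the resulting proof is slightly more abstract than the paper's but not really shorter; the paper's pointwise argument folds the second and third of these into a single elementary verification, which is why it avoids needing an explicit ``$\CAlg$ preserves operadic limits'' lemma.
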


\begin{proof}
 By Lemma~\ref{lem-limits-preserved}, limits in $\CAlg(\Pr)$ can be calculated in 
 $\widehat{\Cat}_\infty$.  Recall that a commutative algebra object in a symmetric monoidal $\infty$-category $\C$ is a section of the associated cocartesian fibration defined in~\cite{HA}*{Example 2.1.2.18}. Equivalently, a commutative algebra object is a functor $\mathrm{Fin}_*\to \C$ satisfying the Segal conditions (certain maps into a product are equivalences). As limits in functor categories are calculated pointwise we see that the canonical map $\Fun(\mathrm{Fin}_*, \lim_i \C_i)\to \lim_i \Fun(\mathrm{Fin}_*, \C_i)$ is an equivalence. It then suffices to check that a limit of commutative algebras is again a commutative algebra. This can be checked pointwise and it holds as limits preserves equivalences and commutes with products. 
\end{proof}

Let $\C$ be a stable homotopy theory and consider $A\in\CAlg(\C)$. As discussed in~\cite{HA}*{Section 4.5}, there is an $\infty$-category $\mod{A}(\C)$ of $A$-modules internal to $\C$. This is again a stable homotopy theory by~\cite{HA}*{Proposition 4.2.3.4, Corollary 4.2.3.7 and Theorem 4.5.2.1} and comes with a lax monoidal forgetful functor 
\[ U_A \colon \mod{A}(\C) \to \C\]
which is conservative~\cite{HA}*{Corollary 4.2.3.2} and (co)continuous~\cite{HA}*{Corollaries 4.2.3.3 and 4.2.3.5}. It follows that the forgetful functor $U_A$ admits left and right adjoints $F_A$ and $R_A$ respectively. We recall how module categories behaves with respect to lax symmetric monoidal functors.

\begin{remark}\label{rem-transport}
Let $F \colon \C \to \D$ be a lax monoidal functor between stable homotopy theories. For any commutative algebra $A \in \CAlg(\C)$, there is a commutative diagram of lax monoidal functor 
\[
\begin{tikzcd}
\mod{A}(\C) \arrow[r,"\overline{F}"]\arrow[d,"U_A"'] &\mod{F(A)}(\D) \arrow[d,"U_{F(A)}"]\\
\C \arrow[r, "F"] & \D,
\end{tikzcd}
\]
with $U_A$ and $U_{F(A)}$ denoting the forgetful functors:
See the discussion in \cite{ergus2022hopf}*{Remark 1.1.11} or \cite{Robalo}*{Section 3.3.9}. If in addition $F$ is symmetric monoidal and preserves colimits, then $\overline{F}$ is also symmetric monoidal and preserves colimits: see~\cite{Robalo}*{Proposition 3.18} for monoidality and to see the preservation of colimits one can use conservativity and cocontinuity of $U_{F(A)}$ together with the commutative diagram above. In particular, in this case,  $\overline{F}$ is a left adjoint, so that the above diagram extends as follows:

\begin{equation}\label{diag:radj}
\begin{tikzcd}
\mod{A}(\C) \arrow[r, shift left, "\overline{F}"] \arrow[d, shift left, "U_A"] &\mod{F(A)}(\D) \arrow[d,shift left, "U_{F(A)}"] \arrow[l, shift left, "\overline{G}"]\\
\C \arrow[u, shift left, "F_A"]\arrow[r, shift left, "F"] & \D \arrow[u, shift left, "F_{F(A)}"]\arrow[l, shift left, "G"],
\end{tikzcd}
\end{equation}

where $G$ (resp. $\overline{G}$) denotes the 
right adjoint of $F$ (resp. $\overline{F}$) and $F_A$ (resp. $F_{F(A)}$) denotes the free-module functor. Since $F_A(X)=A\otimes X$, we see that the canonical natural transformation 
\[ \overline{F}\circ F_A\simeq F_{F(A)}\circ F \]
is an equivalence, and a formal argument using the present adjunctions shows, that also the exchange transformation
\[ U_A\circ \overline{G}\simeq G\circ U_{F(A)}\]
is an equivalence; i.e. the diagram (\ref{diag:radj}) is right adjointable.

\end{remark}

\begin{lemma}\label{lem-module-limits}
  Let $K$ be a simplicial set and $p\colon K \to \CAlg(\Pr)$ be a functor. Consider $A\in\CAlg(\lim_K p)$ and write $A_k\in\CAlg(p(k))$ for its $k$-component. Then the canonical functor
  \[
  \mod{A}(\lim_K p) \to \lim_k \mod{A_k}(p(k))
  \]
  is an equivalence in $\widehat{\Cat}_\infty$.
\end{lemma}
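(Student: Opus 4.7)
The plan is to construct a comparison functor and then identify both sides as algebras for the same monad on $\lim_K p$ via Barr--Beck--Lurie.

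First I would construct the comparison functor. For each vertex $k \in K$, the projection $\pi_k \colon \lim_K p \to p(k)$ is a morphism in $\CAlg(\Pr)$, hence symmetric monoidal and colimit-preserving, and it sends $A$ to $A_k$. By Remark~\ref{rem-transport} it induces a symmetric monoidal colimit-preserving functor $\overline{\pi_k} \colon \mod{A}(\lim_K p) \to \mod{A_k}(p(k))$ commuting with the forgetful functors. The naturality of this construction in the pair $(\C, A)$, together with the equivalence $\overline{F}\circ F_A \simeq F_{F(A)} \circ F$ from Remark~\ref{rem-transport} applied to each edge of $K$, ensures these assemble into a single functor $\Phi \colon \mod{A}(\lim_K p) \to \lim_k \mod{A_k}(p(k))$ sitting in a commutative square over the identity of $\lim_K p$, where the right-hand vertical map $U \colon \lim_k \mod{A_k}(p(k)) \to \lim_K p$ is induced by the family $(U_{A_k})_k$ and we identify the target using Lemma~\ref{lem-limits-preserved}.

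The core of the argument is to verify that both vertical forgetful functors are monadic and that the induced monads on $\lim_K p$ agree. The left-hand $U_A$ is monadic with left adjoint $F_A = A \otimes (-)$ by standard facts (\cite{HA}*{Theorem 4.7.3.5}). For the right-hand $U$: it is conservative, since equivalences in a limit of $\infty$-categories are detected pointwise and each $U_{A_k}$ is conservative; a left adjoint $F \colon \lim_K p \to \lim_k \mod{A_k}(p(k))$ is obtained by assembling the functors $F_{A_k} \circ \pi_k$, with compatibility along edges of $K$ again supplied by $\overline{F} \circ F_A \simeq F_{F(A)} \circ F$ from Remark~\ref{rem-transport}. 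Barr--Beck--Lurie then yields monadicity of $U$.

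It remains to identify the two monads. Both are endofunctors of $\lim_K p$. Under the identification of Lemma~\ref{lem-limits-preserved}, the symmetric monoidal structure on $\lim_K p$ is such that each $\pi_k$ is symmetric monoidal; hence the monad $U_A \circ F_A = A \otimes (-)$ satisfies $\pi_k(A \otimes X) \simeq A_k \otimes \pi_k(X)$. On the other hand, $U \circ F$ has $k$-component $U_{A_k} \circ F_{A_k} \circ \pi_k = A_k \otimes \pi_k(-)$. The two monads are therefore naturally equivalent, and Barr--Beck--Lurie identifies both categories with algebras over this common monad via $\Phi$. The main obstacle will be the careful construction of the left adjoint $F$ and the verification of its compatibility with the transition maps in the diagram; once that adjoint pair is in place, the rest is a formal application of Barr--Beck--Lurie.
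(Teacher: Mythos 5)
Your route via Barr--Beck--Lurie is genuinely different from the paper's. The paper builds the adjunction $(\overline{F},\overline{G})$ between $\mod{A}(\lim_K p)$ and $\lim_k\mod{A_k}(p(k))$ directly from the compatible family of vertexwise adjunctions, using \cite{HA}*{Corollary 4.7.4.18, (2)} (the tool that handles the coherence you flag as the ``main obstacle''), and then shows the unit and counit are equivalences by applying the conservative, limit-preserving forgetful functors and reducing to the base adjunction between $\lim_K p$ and $\lim_k p(k)$, which is an equivalence by construction. No monadicity enters. Your approach is heavier but viable, and the monad identification you sketch is correct: both monads restrict along each $\pi_k$ to $A_k\otimes\pi_k(-)$, and the conclusion should then be drawn via the recognition criterion for morphisms of monadic adjunctions (\cite{HA}*{Corollary 4.7.3.16}), using that $\Phi$ lies over $\lim_K p$ and carries free modules to free modules.

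There is, however, a genuine omission: you assert ``Barr--Beck--Lurie then yields monadicity of $U$'' after checking only that $U$ is conservative and has a left adjoint. The theorem has a third hypothesis, namely that $U$-split simplicial objects in $\lim_k\mod{A_k}(p(k))$ admit geometric realizations which $U$ preserves, and you do not verify it. It does hold here, but for a reason worth making explicit: the transition functors $\overline{F}$ of the diagram $k\mapsto\mod{A_k}(p(k))$ are colimit-preserving by Remark~\ref{rem-transport}, so this limit is computed in $\Pr^L$; it is therefore presentable with colimits computed pointwise, and since each $U_{A_k}$ preserves colimits, so does $U$. Without this check your invocation of Barr--Beck--Lurie is unjustified. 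Relatedly, ``assembling'' the $F_{A_k}\circ\pi_k$ into a functor to the limit and upgrading the pointwise equivalence of monads to a coherent one both require more than edgewise compatibility; the clean way to supply the coherence is exactly the citation of \cite{HA}*{Corollary 4.7.4.18} that the paper uses.
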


\begin{proof}
 Set $\C=\lim_K p$ and $\C_k=p(k)$ so that $\C=\lim_k \C_k$. By assumption the projection functor $F_k \colon \C \to \C_k$ is a symmetric monoidal left adjoint (say with right adjoint $G_k$) so by Remark~\ref{rem-transport} there is a diagram of adjunctions
 \[
 \begin{tikzcd}
 \mod{A}(\C)\arrow[r, shift left, "\overline{F}_k"] \arrow[d,"U"'] & \mod{A_k}(\C_k) \arrow[l, shift left, "\overline{G}_k"]\arrow[d,"U_k"]\\
 \C \arrow[r,shift left, "F_k"] & \C_k \arrow[l, shift left, "G_k"]
 \end{tikzcd}
 \]
 satisfying 
\begin{equation}\label{eq-fgt}
 U_k\circ  \overline{F}_k \simeq F_k \circ U \quad \mathrm{and}\quad U \circ \overline{G}_k=G_k\circ U_k.
 \end{equation}
 Using \cite{HA}*{Corollary 4.7.4.18, (2)} this induces another diagram of adjunctions
 \[
 \begin{tikzcd}
 \mod{A}(\C)\arrow[r, shift left, "\overline{F}"] \arrow[d,"U"'] & \lim_k\mod{A_k}(\C_k) \arrow[l, shift left, "\overline{G}"]\arrow[d,"\lim U_k"]\\
 \C \arrow[r,shift left, "F"] & \lim_k \C_k \arrow[l, shift left, "G"],
 \end{tikzcd}
 \]
 given on objects by
 \[
 F(X)=\{F_k(X)\}_k \quad \mathrm{and} \quad\overline{F}(M)=\{\overline{F}_k(M)\}_k
 \]
 and 
 \[
 G(\{X_k\}_k)=\lim_k G_k(X_k) \quad \mathrm{and}\quad \overline{G}(\{M_k\}_k)=\lim_k \overline{G}_k(M_k)
 \]
 where $X\in\C$, $X_k\in\C_k$, $M\in\mod{A}(\C)$ and $M_k\in\mod{A_k}(\C_k)$. Note that by our assumption the unit and counit maps of the adjunction $(F,G)$ are equivalences. We claim that the unit and counit of the adjunction $(\overline{F},\overline{G})$ are also equivalences. This will conclude the proof of the lemma.

 For any $M\in\mod{A}(\C)$, we will show that the unit map $M \to \overline{G}\,\overline{F}M=\lim_k \overline{G}_k\overline{F}_k M$ is an equivalence. By conservativity of the forgetful functor, we can instead check that the map $UM \to U \lim_k \overline{G}_k\overline{F}_k M$ is an equivalence. Using that $U$ preserves limits and~(\ref{eq-fgt}) we can identify this with the map $UM \to \lim_k G_k F_k UM=GFUM$ which is the unit of the adjunction $(F,G)$. Hence an equivalence by assumption.

 Finally let us consider the counit map $\overline{F}\,\overline{G}\{M_k\}\to \{M_k\}$. It will suffices to verify that for all $k\in K$, the map $U_k\overline{F}_k \lim_k \overline{G_k}\{M_k\}\to U_k M_k$ is an equivalence. Using that $U$ preserves limits and equations~(\ref{eq-fgt}) we can identify this with the map $F_k G \{U_k M\}\to U_k M$ which is the $k$-component of the counit map $FG\{U_k M_k\}\to \{U_k M_k\}$, hence an equivalence.
\end{proof}

\section{Connected stable homotopy theories}\label{sec:connected_stable}
In this section we recall the notion of a connected stable homotopy theory following~\cite{Mathew2016}, introduce a slight generalization (fin-connected stable homotopy theories), and give a characterization using the Balmer spectrum. 
We need to record some results about idempotent elements and indecomposable commutative algebras. We fix a stable homotopy theory
$\C\in\CAlg(\Pr)$. Recall that an {\em idempotent} of $A\in \CAlg(\C)$ is by definition an idempotent of the commutative ring 
\[ \pi_0(A):=\pi_0\left( \Hom_\C(\1,A)\right).\]

\begin{lemma}\label{lem-idempotent-splitting}
Let $e$ be an idempotent of $A\in \CAlg(\C)$.  Then, there is a splitting
\[
A \simeq A[e^{-1}]\times A[(1-e)^{-1}]
\]
in $\CAlg(\C)$ such that $\pi_0(A[e^{-1}])=e\pi_0(A)$, and similarly for $(1-e)$. 
Conversely, given a splitting of $A\simeq B \times C$ in $\CAlg(\C)$, there exists an idempotent 
$e$ of $A$ such that $B \simeq A[e^{-1}]$ and $C\simeq A[(1-e)^{-1}]$.
\end{lemma}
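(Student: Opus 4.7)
The claim divides into two implications. The converse direction (from splitting to idempotent) is essentially formal: given $A \simeq B \times C$ in $\CAlg(\C)$, applying $\pi_0$ yields $\pi_0(A) \cong \pi_0(B) \times \pi_0(C)$, and $e := (1_B, 0_C)$ tautologically satisfies $e^2 = e$. The identifications $B \simeq A[e^{-1}]$ and $C \simeq A[(1-e)^{-1}]$ then follow because $e$ is a unit on $B$ and zero on $C$ (and symmetrically for $1-e$), so the universal property of localization isolates the two factors.

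For the forward direction, given $e \in \pi_0(A)$ with $e^2 = e$, I would construct $A[e^{-1}]$ in $\mod{A}(\C)$ as the filtered colimit of the telescope $A \xrightarrow{\cdot e} A \xrightarrow{\cdot e} A \xrightarrow{\cdot e} \cdots$, which the smashing-localization formalism for commutative algebras (\cite{HA}*{Section 7.2.3}) promotes to a canonical commutative $A$-algebra. Because $e^2 = e$, one computes $\pi_0(A[e^{-1}]) = \pi_0(A)[e^{-1}] = e\pi_0(A)$. The analogous telescope built from $1-e$ yields $A[(1-e)^{-1}]$ with $\pi_0 = (1-e)\pi_0(A)$.

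To exhibit the splitting, I would examine the canonical map $\varphi\colon A \to A[e^{-1}] \times A[(1-e)^{-1}]$ in $\CAlg(\C)$ induced by the two localizations. The crucial observation is that $A[e^{-1}] \otimes_A A[(1-e)^{-1}] \simeq 0$: both $e$ and $1-e$ become units in this $A$-algebra, so their product $e(1-e) = 0$ is invertible and the algebra is the zero object. Combined with the complementarity of the two smashing localizations, this yields a bicartesian square
\[
\begin{tikzcd}
A \ar[r]\ar[d] & A[e^{-1}] \ar[d]\\
A[(1-e)^{-1}] \ar[r] & 0
\end{tikzcd}
\]
in $\mod{A}(\C)$; stability identifies the pullback with the product, so $\varphi$ is an equivalence in $\mod{A}(\C)$ and hence, after applying $U_A$, in $\C$. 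Since products in $\CAlg(\C)$ are computed at the underlying level (Lemma~\ref{lem-calg-limits}), $\varphi$ is an equivalence in $\CAlg(\C)$ as required. The main obstacle is making the smashing-localization step and the bicartesian property rigorous in an abstract stable homotopy theory (as opposed to spectra), which is handled uniformly by the formalism of \cite{HA}*{Section 7.2.3}.
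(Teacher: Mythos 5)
Your overall strategy is sound, and the telescope construction of $A[e^{-1}]$, the computation of its $\pi_0$, and the vanishing $A[e^{-1}]\otimes_A A[(1-e)^{-1}]\simeq 0$ are all correct. The step that deserves more care is the assertion that the resulting square is bicartesian in $\mod{A}(\C)$. That square is a pushout in $\CAlg_A(\C)$ by definition of the relative tensor product, but this does not by itself make it a pullback in $\mod{A}(\C)$: with $0$ in the lower-right corner, the pullback property is precisely the identification $A\simeq A[e^{-1}]\oplus A[(1-e)^{-1}]$ you are trying to establish, so as written the argument risks circularity. One clean way to close the gap: let $C$ be the cofiber of $A\to A[e^{-1}]\oplus A[(1-e)^{-1}]$ in $\mod{A}(\C)$. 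Since both localizations are smashing and idempotent, applying $-\otimes_A A[e^{-1}]$ and $-\otimes_A A[(1-e)^{-1}]$ gives $C\otimes_A A[e^{-1}]=0=C\otimes_A A[(1-e)^{-1}]$. For an idempotent, acyclicity means the element acts as zero, so both $e$ and $1-e$ annihilate $C$, hence $1=e+(1-e)$ does too and $C=0$. This joint conservativity is the precise content of your informal appeal to ``complementarity'' and should be spelled out.

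For comparison, the paper defers to the discussion after Mathew's Definition 2.38, which avoids the descent framing entirely: multiplication by $e$ defines a homotopy idempotent of $A$ in $\mod{A}(\C)$, which splits as $A\simeq eA\oplus (1-e)A$ because $\mod{A}(\C)$ is idempotent complete; the telescope defining $A[e^{-1}]$ is the identity on $eA$ and zero on $(1-e)A$, so it collapses to $eA$, giving $A[e^{-1}]\simeq eA$ and $A[(1-e)^{-1}]\simeq (1-e)A$ and hence the splitting immediately. Both routes work; yours is more Zariski-descent flavored, while the cited one is slightly more elementary once one recalls that idempotents split in presentable stable $\infty$-categories. Your converse direction is correct.
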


\begin{proof}
 See the discussion after~\cite{Mathew2016}*{Definition 2.38}.
\end{proof}

\begin{definition}
 We say that $A \in \CAlg(\C)$ is \emph{indecomposable} if $A$ is nonzero and if it does not decompose as a 
  product of two nonzero algebras.
\end{definition}

\begin{remark}
    Using Lemma~\ref{lem-idempotent-splitting} we see that $A \in\CAlg(\C)$ is indecomposable if and only if the 
 discrete ring $\pi_0(A)$ is indecomposable. 
\end{remark}

\begin{definition}\label{def-connected}
 Let $\C$ be a stable homotopy theory.
 \begin{itemize}
 \item We say that $\C$ is \emph{connected} if $\1$ is indecomposable in $\CAlg(\C)$. 
 This is also equivalent to $\pi_0(\1)$ being indecomposable as a discrete ring.
  Note that if $\C$ is connected then 
  $\1 \not \simeq 0$ so $\C$ is nonzero.
 \item We say that $\C$ is \emph{fin-connected} if $\1$ decomposes as a non-empty finite product of indecomposable rings.
 \end{itemize}
\end{definition}

\begin{example}
 Any finite product of connected stable homotopy theories is fin-connected.
\end{example}

We give another useful characterization of connected and fin-connected stable homotopy theories using the Balmer spectrum. First we need a little bit of preparation.

\begin{lemma}\label{lem-balmer-spectrum-decomposes}
 Let $\C$ be a stable homotopy theory and suppose that the Balmer spectrum $\Spc(\C^\dual)$ is the disjoint union of two Thomason subsets $U$ and $V$.
 Write $\C_U^\dual, \C^\dual_V\subseteq \C^\dual$ for the full subcategories of objects with support contained in $U$ and $V$ respectively. 
 Then there is a decomposition $\C^\dual \simeq \C^\dual_U \times \C^\dual_V$ with 
 $\Spc(\C_U^\dual)=U$ and $\Spc(\C^\dual_V)=V$.
\end{lemma}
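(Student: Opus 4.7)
The plan is to translate the topological decomposition of $\Spc(\C^\dual)$ into an algebraic decomposition of the unit $\1$ as a product of commutative algebras, and then transport this along module theory, finally restricting to dualizable objects.

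First I would note that since $U$ and $V$ are complementary Thomason subsets of the spectral space $\Spc(\C^\dual)$, both are clopen. By Balmer's classification of clopen subsets of the tt-spectrum (as in \cite{Balmer2007}, or more precisely the result equating clopens with idempotents of $\pi_0(\1)$), we obtain an idempotent $e\in\pi_0(\1)$ whose associated summand has support $U$, while $1-e$ corresponds to $V$. Then Lemma~\ref{lem-idempotent-splitting} provides a splitting
\[
\1\simeq A_U\times A_V \quad\text{in}\quad \CAlg(\C),
\]
with $A_U:=\1[e^{-1}]$ having support $U$ and $A_V:=\1[(1-e)^{-1}]$ having support $V$. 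In a closed symmetric monoidal stable homotopy theory, such a splitting of the unit yields an equivalence of stable homotopy theories $\C\simeq\mod{A_U}(\C)\times\mod{A_V}(\C)$, and passing to dualizable objects (which is preserved under products) gives $\C^\dual\simeq \mod{A_U}(\C)^\dual\times\mod{A_V}(\C)^\dual$.

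Next I would identify $\mod{A_U}(\C)^\dual$ with $\C^\dual_U$ and similarly for $V$. Given $X\in\C^\dual$, having an $A_U$-module structure is equivalent to the unit map $X\to A_U\otimes X$ being an equivalence, which by the product decomposition of $\1$ is in turn equivalent to $A_V\otimes X\simeq 0$. Using the tensor product formula for supports together with $\supp(A_V)=V$, the latter means precisely that $\supp(X)\cap V=\emptyset$, i.e. $\supp(X)\subseteq U$. Finally, to compute the Balmer spectrum, I would use that the product decomposition of tt-categories induces a disjoint union decomposition $\Spc(\C^\dual)\simeq \Spc(\C^\dual_U)\sqcup\Spc(\C^\dual_V)$; since every object of $\C^\dual_U$ has support in $U$, the image of $\Spc(\C^\dual_U)\to\Spc(\C^\dual)$ lands in $U$, and comparing with the given decomposition $\Spc(\C^\dual)=U\sqcup V$ forces $\Spc(\C^\dual_U)=U$ and $\Spc(\C^\dual_V)=V$.

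The main obstacle I anticipate is the very first step: producing the idempotent in $\pi_0(\1)$ from the clopen decomposition of $\Spc(\C^\dual)$. This is a nontrivial result of Balmer and, depending on how the authors want to present it, either it has to be cited as a black box or established directly by recognizing the two complementary Thomason ideals $\{X\in\C^\dual : \supp(X)\subseteq U\}$ and $\{X\in\C^\dual : \supp(X)\subseteq V\}$ as smashing and checking that their Balmer--Favi idempotents are dualizable (using that both $U$ and $V$ are Thomason), then observing that these idempotents live in $\pi_0(\1)$ and sum to $1$. Once the idempotent is in hand, the rest is formal.
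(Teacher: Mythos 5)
Your proof is correct, but it takes a genuinely different route from the paper's. The paper stays entirely inside the small rigid tt-category $\C^\dual$: it cites a theorem of Balmer asserting that every object decomposes as $X_U\oplus X_V$ along the Thomason partition, with vanishing $\Hom$ in both directions, and then runs a localization-sequence argument (Bondal) plus a Balmer--Krause--Stevenson lemma to get the categorical splitting and the spectrum computation. You instead first pass to the unit: you produce an idempotent of $\pi_0(\1)$, split $\1$ as $A_U\times A_V$ in $\CAlg(\C)$, deduce $\C\simeq\mod{A_U}(\C)\times\mod{A_V}(\C)$, and then restrict to dualizables and use the support formula $\supp(X\otimes Y)=\supp(X)\cap\supp(Y)$ to identify $\mod{A_U}(\C)^\dual$ with $\C^\dual_U$. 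Both arguments ultimately rest on a nontrivial Balmer input (for you, the bijection between clopen subsets of $\Spc$ and idempotents of $\pi_0(\1)$; for the paper, the object-level decomposition theorem), so neither is more elementary, but yours passes through the big category $\C$ and the commutative-algebra splitting of $\1$, while the paper never leaves $\C^\dual$.

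One step you state without justification deserves a line: that complementary Thomason subsets of a spectral space are automatically clopen. This is true, but not a triviality. Each Thomason subset is a union of constructibly-clopen closed sets; since both $U$ and $V=U^c$ are such unions, both are open in the constructible topology, so $U$ is constructibly-clopen, hence constructibly-compact, hence a \emph{finite} union of closed subsets of $\Spc(\C^\dual)$ and therefore closed; the same for $V$, so $U$ is clopen. (Equivalently, under Hochster duality $U$ is clopen in the dual spectral space, and clopens of a spectral space and of its Hochster dual coincide.) The paper sidesteps this by citing a statement formulated directly for Thomason partitions. Also, your concluding spectrum computation ("comparing with the given decomposition forces $\Spc(\C^\dual_U)=U$") would benefit from one more sentence identifying the image of $\Spc(\C^\dual_U)\hookrightarrow\Spc(\C^\dual)$ with $\{\p : \C^\dual_V\subseteq\p\}=\Spc(\C^\dual)\setminus\supp(\C^\dual_V)=U$, using that $V$ is Thomason so that $\supp(\C^\dual_V)=V$; this is exactly what the paper outsources to its appendix citation.
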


\begin{proof}
 By~\cite{Balmer2}*{Theorem 2.11 and Corollary 2.8}, every object $X\in \C^{\dual}$ is equivalent to a direct sum $X_U \oplus X_V$ with $X_U\in \C^\dual_U$ and $X_V \in \C^\dual_V$, and we have
 \[ \Hom_\C( \C_U^{\mathrm{dual}}, \C_V^{\mathrm{dual}}) = \Hom_\C( \C_V^{\mathrm{dual}}, \C_U^{\mathrm{dual}}) =0 .\]
 In particular, the functor $\C_{U}^{\dual} \times \C_{V}^{\dual} \to \C^{\dual}$ sending $(X_U, X_V)$ to $X_U \oplus X_V$ is an equivalence. The claim on the Balmer spectra follows by direct verification.
\end{proof}

\begin{proposition}\label{prop-char-connected-Balmer}
 Let $\C$ be a stable homotopy theory. Then $\C$ is connected if and only if $\Spc(\C^\dual)$ 
 is connected and nonempty. 
 Moreover, $\C$ is fin-connected if and only if $\Spc(\C^\dual)$ is non-empty and has finitely many connected components. 
\end{proposition}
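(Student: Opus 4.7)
The strategy is to translate between two dictionaries: decompositions of $\1\in\CAlg(\C)$ are controlled by idempotents of $\pi_0(\1)$ via Lemma~\ref{lem-idempotent-splitting}, while nontrivial clopen decompositions of the spectral space $\Spc(\C^\dual)$ are controlled by product decompositions of the $2$-ring $\C^\dual$ via Lemma~\ref{lem-balmer-spectrum-decomposes}. Since $\Spc(\C^\dual)$ is a spectral space, every clopen subset is automatically quasi-compact and hence Thomason, so these two principles are compatible.

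For the connected case I would argue by contradiction in each direction. First, assume $\C$ is connected, so $\1$ is indecomposable and in particular $\1\neq 0$, forcing $\Spc(\C^\dual)\neq\emptyset$; a hypothetical nontrivial clopen decomposition $\Spc(\C^\dual)=U\sqcup V$ would produce via Lemma~\ref{lem-balmer-spectrum-decomposes} a splitting $\C^\dual\simeq \C_U^\dual\times \C_V^\dual$ with both factors nonzero, hence $\1\simeq \1_U\times \1_V$ nontrivially in $\CAlg(\C)$, contradicting indecomposability. Conversely, assume $\Spc(\C^\dual)$ is connected and nonempty. Nonemptiness precludes $\1\simeq 0$; a nontrivial decomposition $\1\simeq A\times B$ gives by Lemma~\ref{lem-idempotent-splitting} orthogonal idempotents $e,1-e$ producing nonzero dualizable objects $A,B$ with $A\oplus B\simeq\1$ and $A\otimes B\simeq 0$, so $\supp(A)\sqcup\supp(B)=\Spc(\C^\dual)$ is a nontrivial clopen decomposition (both supports are nonempty by the standard fact that a nonzero dualizable object in a nonzero tt-category has nonempty support), contradicting connectedness.

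For the fin-connected case I would iterate. If $\1\simeq R_1\times\cdots\times R_n$ in $\CAlg(\C)$ with each $R_i$ indecomposable nonzero, then extension of scalars along each projection produces an equivalence $\C\simeq\prod_{i=1}^n\mod{R_i}(\C)$ of stable homotopy theories (modules over a finite product of commutative algebras split as a product of module categories), so $\C^\dual\simeq\prod_i\mod{R_i}(\C)^\dual$ and $\Spc(\C^\dual)=\bigsqcup_i\Spc(\mod{R_i}(\C)^\dual)$. Each $\mod{R_i}(\C)$ has unit $R_i$, which is indecomposable, so by the connected case its Balmer spectrum is nonempty and connected; hence $\Spc(\C^\dual)$ has exactly $n$ connected components. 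Conversely, if $\Spc(\C^\dual)$ is nonempty with finitely many connected components $X_1,\ldots,X_n$, each $X_i$ is clopen hence Thomason, and iterated application of Lemma~\ref{lem-balmer-spectrum-decomposes} yields $\C^\dual\simeq\prod_i\C_i^\dual$ with $\Spc(\C_i^\dual)=X_i$ connected; the connected direction applied to each $\C_i$ then shows the corresponding product decomposition of $\1$ has indecomposable factors.

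The main obstacle I anticipate is essentially bookkeeping: I must verify that a product decomposition of the $2$-ring $\C^\dual$ produced by Lemma~\ref{lem-balmer-spectrum-decomposes} gives a genuine product decomposition of $\1$ in $\CAlg(\C)$ (not merely an additive splitting of the underlying object), and dually that the idempotent of Lemma~\ref{lem-idempotent-splitting} really yields a \emph{clopen} (not just closed) decomposition of $\Spc(\C^\dual)$. Both of these ultimately hinge on the orthogonality of the idempotents $e$ and $1-e$, which ensures simultaneously that the two tensor factors multiply to zero and that the two supports are disjoint; once this is tracked carefully, the rest of the argument is formal.
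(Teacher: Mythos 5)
Your argument is correct and, except for one local variant, follows the paper's route: both directions use Lemma~\ref{lem-balmer-spectrum-decomposes} (spectrum decomposition $\Rightarrow$ tt-category decomposition), Lemma~\ref{lem-idempotent-splitting} (idempotent $\Rightarrow$ algebra splitting), and the observation that clopen subsets of a spectral space are quasi-compact hence Thomason; the fin-connected case is handled by iteration/induction in both.

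The one place you genuinely diverge is the backward direction of the connected case. The paper passes from a nontrivial splitting $\1\simeq\1_0\times\1_1$ to a product decomposition $\C\simeq\C_0\times\C_1$, then to $\C^\dual\simeq\C_0^\dual\times\C_1^\dual$ via~\cite{HA}*{Proposition 4.6.1.11}, and finally invokes the fact that $\Spc$ of a finite product of tt-categories is the disjoint union of the spectra (citing~\cite{BSKS}*{Theorem A.5}). You instead stay inside $\C^\dual$: the two dualizable summands $A=\1[e^{-1}]$, $B=\1[(1-e)^{-1}]$ are nonzero, satisfy $A\oplus B\simeq\1$ and $A\otimes B\simeq 0$, so $\supp(A)$ and $\supp(B)$ are disjoint closed sets covering $\Spc(\C^\dual)$, both nonempty since a nonzero object of a rigid tt-category has nonempty support. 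This is slightly more elementary in that it avoids the compatibility of $\Spc$ with products (though you do re-use that compatibility in your fin-connected forward direction, so the saving is local rather than global). Either route is fine; yours makes the underlying mechanism — orthogonality of $e$ and $1-e$ simultaneously killing $A\otimes B$ and separating the supports — slightly more visible, which is exactly the bookkeeping point you flag at the end.

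One small caution for your fin-connected argument: you appeal to "$\mod{R_i}(\C)$ has unit $R_i$, which is indecomposable". What you need is that $R_i$ is indecomposable as the unit of the \emph{new} stable homotopy theory $\mod{R_i}(\C)$, i.e. that $\pi_0$ of the unit there is an indecomposable ring. This does hold because $\pi_0(\1_{\mod{R_i}(\C)})=\pi_0\Hom_{\mod{R_i}(\C)}(R_i,R_i)=\pi_0\Hom_\C(\1,R_i)=\pi_0(R_i)$, but it is worth saying explicitly rather than leaving the reader to identify the two $\pi_0$'s.
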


\begin{proof}
 If $\C$ is connected, then $\1\not \simeq 0$ and so $\C^\dual$ is nonzero. 
 It follows from~\cite{Balmer}*{Lemma 2.2} that $\Spc(\C^\dual)$ is nonempty. Now suppose by 
 contradiction that $\Spc(\C^\dual)$ is disconnected so 
 that there exist two nonempty open subsets $U$ and $V$ such that $\Spc(\C^\dual)=U \sqcup V$. 
 Note that $U$ and $V$ 
 are quasi-compact as any cover can be extended to a cover of all $\Spc(\C^\dual)$, 
 which by quasi-compactness admits a finite subcover. Therefore $U$ and $V$ are Thomason and so 
 by Lemma~\ref{lem-balmer-spectrum-decomposes} we get a decomposition 
 $\C^\dual\simeq \C^\dual_U \times \C^\dual_V$ which in turns gives a decomposition of $\1$ as a 
 product of two nonzero rings. This contradicts our starting assumption, so $\Spc(\C^\dual)$ 
 must be connected. 
 Conversely, let $\Spc(\C^\dual)$ be connected and nonempty,
 and suppose that $\C$ is not connected so that $\1$ decomposes nontrivially as 
 $\1_0 \times \1_1$ in $\CAlg(\C)$.  It 
 then follows that $\C \simeq \C_0 \times \C_1$ and so $\C^\dual \simeq \C_0^\dual \times \C_1^\dual$ 
 by~\cite{HA}*{Proposition 4.6.1.11}. By~\cite{BSKS}*{Theorem A.5} or by direct verification, we then obtain a nontrivial 
 decomposition $\Spc(\C^\dual)\simeq \Spc(\C_0^\dual) \sqcup \Spc(\C_1^\dual)$ contradicting our 
 starting assumption. Therefore $\C$ must be connected. 
 If $\C$ is fin-connected then $\C \simeq \prod_{i=1}^n \C_i$ for connected $\C_i$ 
 and by the same argument as above we see that 
 $\Spc(\C^\dual) \simeq \bigsqcup_{i=1}^n \Spc(\C_i^\dual)$ where each $\Spc(\C_i^\dual)$ is 
 connected. Finally, if $\Spc(\C^{\mathrm{dual}})$ is non-empty with finitely many connected components, an easy induction on the number of connected components and using the splitting established above shows that $\C$ is fin-connected.
\end{proof}

\section{Separable commutative algebras}\label{sec:separable}

 In this section we recall the notion of a separable commutative algebra and list some important properties that we will use throughout the paper.
 We record one version of the surprising fact that for a {\em separable} commutative algebra, the formation of modules commutes over the passage to the homotopy category, see Lemma~\ref{lem-module-over-sep-alg}.
 
We set the following definition, directly paralleling ~\cite{Balmer2011}*{Definition 3.1}.

\begin{definition}\label{def-sep-alg}
 Let $\C$ be a symmetric monoidal $\infty$-category and $A\in \CAlg(\C)$.
 We say that $A$ is \emph{separable} if the multiplication map 
 $\mu \colon A \otimes A \to A$ admits a section as a map of $(A,A)$-bimodules. Equivalently, $\sigma$ is a map $\sigma \colon A \to A \otimes A$ in $\C$ such that $\mu \sigma = 1_A$ and $(1_A \otimes \mu) \circ (\sigma \otimes 1_A) = \sigma \mu = (\mu \otimes 1_A) \circ ( 1_A\otimes \sigma) \colon A \otimes A \to A \otimes A$. The commutative separable algebras span a full subcategory $\CAlg^\sep(\C)\subseteq \CAlg(\C)$.
\end{definition}

\begin{example}\label{ex-tensor-is-sep}
 If $A,B \in \CAlg(\C)$ are separable, then $A \otimes B$ is separable, as seen by 
 tensoring the individual bimodule sections. 
\end{example}

\begin{example}\label{ex-separable-product-retract}
 For $A,B \in \CAlg(\C)$, we can form the commutative algebra $A \times B$ with 
 componentwise structure. Then $A$ and $B$ are  separable if and only if $A \times B$ is so. Indeed if we have 
 bimodule sections for $\mu_A$ and $\mu_B$, we can combine then to get a bimodule section for 
 $\mu_{A \times B}$. Conversely if $\mu_{A \times B}$ admits a bimodule section, then by restricting 
 this section along each component we get sections for $\mu_A$ and $\mu_B$. 
\end{example}

We now discuss how separable algebras behaves under symmetric monoidal functors. 

\begin{construction}\label{con-smf-pres-sep}
 A symmetric monoidal functor $F \colon \C \to \D$ between symmetric monoidal $\infty$-categories induces 
 a functor $ \CAlg(\C)\to \CAlg(\D)$ on commutative algebras. 
 We claim that this preserves separable algebras. 
 To see this consider $A\in \CAlg^\sep(\C)$ so that the multiplication map $\mu_A$ admits a 
 $A$-bimodule section $\sigma_A$. Note that since $F$ is symmetric monoidal we have a commutative 
 diagram
 \[
 \begin{tikzcd}
  & F(A) & \\
  F(A) \arrow[ur, "1"] \arrow[r,"F(\sigma_A)"'] & F(A \otimes A) \arrow[u,"F(\mu_A)"'] & 
  F(A)\otimes F(A) \arrow[l,"\sim"] \arrow[ul,"\mu_{F(A)}"'].
 \end{tikzcd}
 \]
 Then the bottom composite defines a $F(A)$-bimodule section for $F(A)$ as required.
 Thus we get a well-defined functor $\CAlg^\sep(\C)\to \CAlg^\sep(\D)$ as claimed. 
\end{construction}

\begin{remark}\label{rem-sep-Balmer}
 Our definition of a separable algebra is identical to that given by Balmer in~\cite{Balmer2011}, with the only caveat that we work in the setting of $\infty$-categories as opposed to that of triangulated categories.
 We can relate the two notions by considering the canonical symmetric monoidal functor $u\colon \C \to \mathrm{h}\C$ into the homotopy category of $\C$. 
 We deduce from Construction~\ref{con-smf-pres-sep} that if $A$ is separable in $\C$, then $uA$ is 
 separable in $\mathrm{h}\C$. In fact, in forthcoming work Ramzi~\cite{Maxime} will show that the converse also holds.
\end{remark}

\begin{remark}\label{rem-sep-counit-section}
 Let $\C$ be a stable homotopy theory, and $A\in\CAlg(\C)$ be separable with multiplication map $\mu$, unit map $\eta$ and bimodule section $\sigma$. Set $\gamma:=\sigma \circ \eta \colon \1 \to A \otimes A$. Separability tells us that 
 \begin{equation}\label{eq-separability}
 \mu \circ \gamma=\eta \quad \mathrm{and} \quad (\mu \otimes 1_A)\circ (1_A \otimes \gamma)=(1_A \otimes \mu)\circ(\gamma \otimes 1_A).
 \end{equation}
 Consider the extension-restriction adjunction 
 \[
 F_A \colon \C \rightleftarrows \mod{A}(\C): U_A.
 \]
 If $A$ is separable, then the counit $\epsilon\colon F_A U_A \Rightarrow \mathrm{Id}$ admits a section $\xi \colon \mathrm{Id}\Rightarrow F_A U_A$ given by the composite 
 \[
 \xi_M \colon M \xrightarrow{\gamma \otimes 1_M} A \otimes A \otimes M \xrightarrow{1_A \otimes \mathrm{act}} A \otimes M= A \otimes U_A M
 \]
 compare with~\cite{Balmer2011}*{Proposition 3.11} (see also~\cite{Monads}*{2.9}(1)).
 One can check that this composite is $A$-linear and a section of $\epsilon_M$ using the identities in~(\ref{eq-separability}). For instance, the fact that $\xi_M$ is a section for $\epsilon_M$ follows from the following commutative diagram which uses the first identity in~(\ref{eq-separability}): 
 \[
 \begin{tikzcd}
  M \arrow[r,"\gamma \otimes 1_M"] \arrow[dr,"\eta \otimes 1_M"'] & A \otimes A \otimes M \arrow[d,"\mu \otimes 1_M"]\arrow[r,"1_A \otimes \mathrm{act}"] & A \otimes M \arrow[r,"\epsilon_M"] \arrow[d,"\mathrm{act}"]& M \\
              & A \otimes M \arrow[r,"\mathrm{act}"']  & M \arrow[ur, "1_M"']     & .
 \end{tikzcd}
 \]
 In particular, the above implies that $U_A\colon \mod{A}(\C)\to \C$ is faithful.
\end{remark}

The following is a remarkable consequence of separability and under stronger assumptions has already appeared in \cite{BCHNP}*{Remark 2.34}, based on \cite{DellAmbrogioBeren2018} and \cite{Sanders21pp}*{Proposition 3.8}. Here we give a more direct proof.

\begin{lemma}\label{lem-module-over-sep-alg}
 Let $\C$ be a stable homotopy theory and let $A \in \CAlg(\C)$ be separable. The symmetric monoidal functor $u\colon \C \to \mathrm{h}\C$ induces a symmetric monoidal equivalence 
 \[
 \mathrm{h}\mod{A}(\C) \simeq \mod{uA}(\mathrm{h}\C)
 \]
 which is compatible with the forgetful functor to $\mathrm{h}\C$.
\end{lemma}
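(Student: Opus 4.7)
Since $u\colon \C \to \mathrm{h}\C$ is symmetric monoidal, Remark~\ref{rem-transport} provides a symmetric monoidal functor $\overline{u}\colon \mod{A}(\C) \to \mod{uA}(\mathrm{h}\C)$ compatible with the forgetful functors. Because $\mod{uA}(\mathrm{h}\C)$ is a $1$-category, $\overline{u}$ factors through the homotopy category, yielding a symmetric monoidal functor
\[ \Phi\colon \mathrm{h}\mod{A}(\C) \longrightarrow \mod{uA}(\mathrm{h}\C). \]
The plan is to prove that $\Phi$ is fully faithful and essentially surjective; the symmetric monoidality and the compatibility with the forgetful functors are then automatic from the construction.

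For full faithfulness, I would exploit the natural section $\xi\colon \mathrm{id} \Rightarrow F_A U_A$ of the counit from Remark~\ref{rem-sep-counit-section}. It exhibits every $A$-module as a retract of a free one and implies in particular that $U_A$ is faithful, so the induced map
\[ \pi_0\Map_{\mod{A}(\C)}(M, N) \longrightarrow \pi_0\Map_\C(U_A M, U_A N) \]
is injective and lands in the subset of $A$-linear maps, i.e.\ in $\Hom_{\mod{uA}(\mathrm{h}\C)}(\Phi M, \Phi N)$. Conversely, given an $A$-linear map $g\colon uU_A M \to uU_A N$ in $\mathrm{h}\C$, I would pick any lift $\tilde g$ to $\C$ and form
\[ h := \epsilon_N \circ (1_A \otimes \tilde g) \circ \xi_M\colon M \longrightarrow N \text{ in } \mod{A}(\C), \]
and verify, using the separability identities (\ref{eq-separability}) together with the $\mathrm{h}\C$-linearity of $g$, that $U_A(h) = g$ in $\mathrm{h}\C$. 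This shows the image of $\Phi$ on homs is exactly $\Hom_{\mod{uA}(\mathrm{h}\C)}(\Phi M, \Phi N)$.

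For essential surjectivity, given $(M,\alpha)\in\mod{uA}(\mathrm{h}\C)$, I would choose an arbitrary lift of $\alpha$ to a morphism in $\C$ and form the $A$-linear endomorphism of the free module
\[ e := (1_A \otimes \alpha)\circ(\sigma \otimes 1_M)\colon A \otimes M \longrightarrow A \otimes M \]
in $\mod{A}(\C)$, where $A$-linearity follows from the bimodule property of $\sigma$. A direct calculation using (\ref{eq-separability}) and the module axioms (valid in $\mathrm{h}\C$) shows that $\Phi(e)$ is an idempotent in the $1$-category $\mod{uA}(\mathrm{h}\C)$ whose strict split is $M$, with retraction $\alpha$ and section $e \circ (\eta \otimes 1_M)$. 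The main obstacle is then to split $e$ already at the level of the $\infty$-category $\mod{A}(\C)$. For this I would invoke the idempotent-completeness of $\mod{A}(\C)$ as a presentable stable $\infty$-category (cf.\ \cite{HA}*{Warning 1.2.4.8 and Proposition 4.4.5.14}): a homotopy idempotent in a stable $\infty$-category with countable colimits refines uniquely to an $\infty$-idempotent and splits. The resulting split $N\in\mod{A}(\C)$ satisfies $\Phi(N)\simeq M$ because $\Phi$ preserves the retract data, concluding essential surjectivity.
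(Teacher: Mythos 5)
Your argument contains a genuine gap at the very first step, and it is precisely the step the paper spends the first half of its proof on. You claim that Remark~\ref{rem-transport} provides a \emph{symmetric} monoidal functor $\overline{u}\colon\mod{A}(\C)\to\mod{uA}(\mathrm{h}\C)$ from the fact that $u$ is symmetric monoidal. But Remark~\ref{rem-transport} only promotes $\overline{F}$ from lax monoidal to strong monoidal under the additional hypothesis that $F$ \emph{preserves colimits}, and $u\colon\C\to\mathrm{h}\C$ emphatically does not. The relative tensor product $M\otimes_A N$ is a geometric realization, and there is no reason for $u$ to commute with that colimit, so the comparison map $\alpha_{M,N}\colon uM\otimes_{uA} uN\to u(M\otimes_A N)$ is only a priori lax. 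The paper proves that $\alpha_{M,N}$ is in fact an isomorphism by first handling free modules $M = A\otimes X$ (where the bar construction is split, hence a universal colimit preserved by any functor), and then using the section $\xi$ from Remark~\ref{rem-sep-counit-section} to exhibit an arbitrary $M$ as a retract of a free module. This is a genuine use of separability and cannot be bypassed by invoking Remark~\ref{rem-transport}. An equivalence of underlying categories equipped with a lax monoidal structure need not be strong monoidal, so you cannot postpone this verification to after proving $\Phi$ is an equivalence either.

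Your treatment of full faithfulness and essential surjectivity, by contrast, is correct and runs parallel to the paper's. The key ideas are the same: faithfulness of $U_A$ from the section $\xi$, reduction to free modules, and splitting a homotopy idempotent using the existence of countable coproducts (the paper invokes Bökstedt--Neeman at the triangulated level in Remark~\ref{rem-idempotent}, while you phrase it at the $\infty$-categorical level; both are fine). Your explicit formula $e = (1_A\otimes\alpha)\circ(\sigma\otimes 1_M)$ for the idempotent with image $M$ is a nice shortcut compared to the paper's lift-then-split on $uA\otimes U_AM'$, and the formula $h = \epsilon_N\circ(1_A\otimes\tilde g)\circ\xi_M$ for producing preimages of $uA$-linear maps makes the fullness argument more constructive. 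If you replace the misapplication of Remark~\ref{rem-transport} by the paper's separability argument for strong monoidality (or an equivalent one), your proof would be complete.
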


\begin{remark}\label{rem-idempotent}
    In the situation of Lemma~\ref{lem-module-over-sep-alg}, the triangulated categories $\mathrm{h}\mod{A}(\C)$ and $\mod{uA}(\mathrm{h}\C)$ have small direct sums (since $\C$ is cocomplete). It then follows from \cite{BokstedtNeeman}*{Proposition 3.2} that these triangulated categories are idempotent complete. 
\end{remark}

\begin{proof}
Throughout the proof we will need to use that $uA$ is separable in the sense of Balmer (see Remark~\ref{rem-sep-Balmer}), so we can apply all the results of~\cite{Balmer2011}. Also, given $M$ in $\mathrm{h}\mod{A}(\C)$ or $\mod{uA}(\mathrm{h}\C)$, we denote by $U_AM$ the underlying object in $\mathrm{h}\C$ (which can also be seen as object in $\C$).

Let us start by noting that the symmetric monoidal functor $u\colon \C \to \mathrm{h}\C$ induces a lax symmetric monoidal functor at the level of modules $\mod{A}(\C)\to \mod{uA}(\mathrm{h}\C)$ by Remark~\ref{rem-transport}. We claim that this is in fact symmetric monoidal, i.e. that the canonical map $\alpha_{M,N}\colon uM\otimes_{uA}uN \to u(M \otimes_A N)$ is an equivalence for all $A$-modules $M$ and $N$. To this end recall from~\cite{HA}*{Section 4.4.2} that we can calculate $M \otimes_A N$ (resp. $uM \otimes_{uA}uN$) as the geometric realization of the simplicial object $n \mapsto A^{\otimes n} \otimes M \otimes N$ (resp. $n \mapsto uA^{\otimes n} \otimes uM \otimes uN$). We observe that the canonical map $\alpha_{M,N}$ is an equivalence if $M$ is of the form $A \otimes X$ for some $X\in \C$. This is due to the fact that the  simplicial object $n \mapsto A^{\otimes n} \otimes M \otimes N$ has an “extra degeneracy” or splitting \cite{HA}*{Section 4.7.2}, which in particular implies that it is a universal colimit diagram (hence it is preserved by any functor). Since $A$ is separable, for a general $A$-module $M$, we have a retraction diagram 
\[
M \xrightarrow{\xi_M} A \otimes U_A M \xrightarrow{\epsilon_M} M
\]
by Remark~\ref{rem-sep-counit-section}. Therefore for all $L\in \mod{uA}(\mathrm{h}\C)$, we have a commutative diagram 
\[\resizebox{\columnwidth}{!}{$\displaystyle
\begin{tikzcd}[ampersand replacement=\&]
\Hom_{ \mod{uA}(\mathrm{h}\C)}(L,uM \otimes_{uA}uN) \arrow[rr] \arrow[d,"(\alpha_{M,N})_*"]\& \& \Hom_{\mod{uA}(\mathrm{h}\C)}(L, u(A \otimes U_AM)\otimes_{uA} uN) \arrow[d,"(\alpha_{A \otimes U_AM,N)_*}","\sim"']\arrow[rr]\& \& \Hom_{\mod{uA}(\mathrm{h}\C)}(L,uM \otimes_{uA} uN) \arrow[d,"(\alpha_{M,N})_*"]\\
 \Hom_{\mod{uA}(\mathrm{h}\C)}(L, u(M \otimes_A N))\arrow[rr] \& \& \Hom_{\mod{uA}(\mathrm{h}\C)}(L, u(A \otimes U_AM \otimes_A N)) \arrow[rr] \& \& \Hom_{\mod{uA}(\mathrm{h}\C)}(L, u(M \otimes_A N))	
\end{tikzcd}
$}
\]
where the top and bottom horizontal arrows compose to the the respective identities. A simple diagram chase then shows that $(\alpha_{M,N})_*$ is an equivalence for all $L$, which in turn shows that $\alpha_{M,N}$ is an equivalence. This conclude the proof that $\mod{A}(\C)\to \mod{uA}(\mathrm{h}\C)$ is symmetric monoidal. Note that $\mod{A}(\C)\to \mod{uA}(\mathrm{h}\C)$ factors through $\mathrm{h}\mod{A}(\C)$ by~\cite{HTT}*{Proposition 1.2.3.1} as the target is equivalent to the nerve of a $1$-category. We abuse notation and still denote by $u$ the resulting symmetric monoidal functor $\mathrm{h}\mod{A}(\C) \to \mod{uA}(\mathrm{h}\C)$. 

Next we check that this functor induces an equivalence of underlying $\infty$-categories. To this end let $\mathrm{h}\mod{A}(\C)^{\mathrm{free}}$ and $ \mod{uA}(\mathrm{h}\C)^{\mathrm{free}}$ be the full subcategories spanned by the free modules, that is those of the form $A \otimes X$ and $uA \otimes X$ for some $X \in \C$. We have an induced functor $\mathrm{h}\mod{A}(\C)^{\mathrm{free}}\to \mod{uA}(\mathrm{h}\C)^{\mathrm{free}}$ which is evidently essentially surjective, and fully faithful
\[
\Hom_{\mathrm{h}\mod{A}(\C)}(A \otimes X, A \otimes Y)\simeq \Hom_{\mod{uA}(\mathrm{h}\C)}(uA \otimes X, uA \otimes Y)
\]
as one can verified by noting that both sides are equivalent to $\Hom_{\mathrm{h}\C}(X, uA \otimes Y)$ using the corresponding extension-of-scalars-fortgetful adjunctions. Passing to the idempotent completion, this yields an equivalence $$(\mathrm{h}\mod{A}(\C)^{\mathrm{free}})^{\sharp}\simeq \mod{uA}(\mathrm{h}\C)^{\mathrm{free}})^{\sharp}.$$ It is only left to note that
\[
(\mathrm{h}\mod{A}(\C)^{\mathrm{free}})^{\sharp}=\mathrm{h}\mod{A}(\C) \quad \mathrm{and} \quad  \mod{uA}(\mathrm{h}\C)^{\mathrm{free}})^{\sharp} = \mod{uA}(\mathrm{h}\C)
\]
which follows from Remark~\ref{rem-idempotent} together with the fact that, by separability of $A$, any module is a retract of a free one: see~\cite{Balmer2011}*{Remark 3.9 and Proposition 3.11} for the statement in $\mod{uA}(\mathrm{h}\C)$ and Remark~\ref{rem-sep-counit-section} for the statement in $\mod{A}(\C)$.    
\end{proof}

We finally explain how the previous result relates to work of Balmer~\cite{Balmer2011}.

\begin{remark}
 In the situation of Lemma~\ref{lem-module-over-sep-alg}, there is a commutative diagram 
 \[
 \begin{tikzcd}
  \mathrm{h}\mod{A}(\C) \arrow[dr, "\mathrm{fgt}"']\arrow[rr, "\sim"] & & \mod{uA}(\mathrm{h}\C) \arrow[dl, "\mathrm{fgt}"]\\
   & \mathrm{h}\C.& 
 \end{tikzcd}
 \]
 In particular $\mod{uA}(\mathrm{h}\C)$ inherits a triangulated structure from $\mathrm{h}\mod{A}(\C)$. This triangulated structure coincides with that of~\cite{Balmer2011} by~\cite{Balmer2011}*{Main Theorem 5.17(c)}. Furthermore the extension of scalars functor $F_A \colon \C \to \mod{A}(\C)$ induces a functor $hF_A \colon \mathrm{h}\C \to \mathrm{h}\mod{A}(\C)$ on homotopy categories which identifies via Lemma~\ref{lem-module-over-sep-alg} with the extension of scalars  functor $\mathrm{h}\C \to \mod{uA}(\mathrm{h}\C)$ defined in~\cite{Balmer2011}*{Definition 2.4}.
\end{remark}

\section{Degree functions of separable algebras}\label{sec:degree}

In this section we export the notion of separable algebra of finite tt-degree into the world of $\infty$-categories. We use this to introduce the degree function of a separable algebra and list a few of its important properties. 

\begin{theorem}\label{thm-splitting}
 Let $\D\in\CAlg(\Pr)$ be given, and let $f \colon A \to B$ and $g \colon B \to A$ be maps in 
 $\CAlg^\sep(\D)$ such that $gf = 1_A$. 
 Then there exists $C \in \CAlg^\sep(\D)$ and an equivalence in $\CAlg(\D)$
 $h \colon B \xrightarrow{\sim} A \times C$ such that $\mathrm{pr}_1 h = g$. 
 In particular, $C$ becomes an $A$-algebra via $\mathrm{pr}_2 h f$. 
 Moreover, if $C'$ is another $A$-algebra and $h'\colon B \xrightarrow{\sim} A \times C'$ is another 
 equivalence such that $\mathrm{pr}_1h'= g$, then there exists an equivalence of $A$-algebras 
 $\ell \colon C \xrightarrow{\sim} C'$ such that $h' = (1 \times \ell)h$. 
\end{theorem}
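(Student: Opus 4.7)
The plan is to reduce the statement to its triangulated counterpart due to Balmer~\cite{Balmer2011}, exploiting that, by Lemma~\ref{lem-idempotent-splitting}, splittings of a commutative algebra $B \in \CAlg(\D)$ correspond to idempotents of $\pi_0(B)$ — a datum that lives purely at the level of the homotopy category.

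First I would apply the symmetric monoidal functor $u \colon \D \to \mathrm{h}\D$. By Construction~\ref{con-smf-pres-sep} and Remark~\ref{rem-sep-Balmer}, the images $uA$ and $uB$ are separable commutative algebras in $\mathrm{h}\D$ in the sense of Balmer, and $ug \circ uf = 1_{uA}$. Balmer's splitting theorem for separable algebras in tt-categories (see~\cite{Balmer2011}) then produces a decomposition $\bar h \colon uB \xrightarrow{\sim} uA \times \bar C$ in $\CAlg(\mathrm{h}\D)$ with $\mathrm{pr}_1 \bar h = ug$ and $\bar C$ separable. By Lemma~\ref{lem-idempotent-splitting} applied in $\mathrm{h}\D$, this corresponds to an idempotent $e \in \pi_0(uB) = \pi_0(B)$.

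Since $\pi_0(B)$ agrees with $\pi_0(uB)$ tautologically, the element $e$ is equally an idempotent of $B \in \CAlg(\D)$, and Lemma~\ref{lem-idempotent-splitting} applied in $\D$ yields a splitting $h \colon B \xrightarrow{\sim} B[e^{-1}] \times B[(1-e)^{-1}]$ in $\CAlg(\D)$. To identify the first factor with $A$ compatibly with $g$, I would verify that the canonical map $B[e^{-1}] \to A$ induced by $g$ (which exists since $\pi_0(g)(e)=1$) and the composite $A \xrightarrow{f} B \to B[e^{-1}]$ are mutually inverse; this check can be made after applying $u$, where the statement is exactly Balmer's. Setting $C := B[(1-e)^{-1}]$, its separability follows at once from Example~\ref{ex-separable-product-retract} applied to the equivalence $B \simeq A \times C$.

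For uniqueness, any other splitting $h' \colon B \xrightarrow{\sim} A \times C'$ with $\mathrm{pr}_1 h' = g$ determines an idempotent $e'$ of $\pi_0(B)$, and the constraint $\mathrm{pr}_1 h' = g$ forces $(1 - e')\pi_0(B) = \ker(\pi_0(g))$ — a characterisation independent of the splitting, so $e' = e$. Both splittings therefore factor through the canonical decomposition associated to $e$, and the uniqueness clause of Lemma~\ref{lem-idempotent-splitting} supplies the required $A$-algebra equivalence $\ell \colon C \xrightarrow{\sim} C'$ with $h' = (1 \times \ell) h$. The principal technical point is the passage from Balmer's triangulated splitting to the $\infty$-categorical level, but this is resolved cleanly by the fact that the correspondence of Lemma~\ref{lem-idempotent-splitting} is governed entirely by $\pi_0$, so that the existence lifts automatically once the correct idempotent has been pinned down.
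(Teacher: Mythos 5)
Your approach is valid but genuinely different from the paper's. You invoke Balmer's triangulated splitting theorem directly — this is Lemma 2.1 of \cite{Balmer2014}, not \cite{Balmer2011} as you write — and then lift through the idempotent $e\in\pi_0(B)$. The paper instead \emph{reproves} the key bimodule splitting in the $\infty$-categorical setting: it works in $\mod{B^e}(\D)$ with $B^e=B\otimes B^\op$, forms the fibre sequence $C\to B\xrightarrow{g}A\xrightarrow{z}\Sigma C$, uses separability of $B^e$ (hence faithfulness of $U_{B^e}$) to see $z=0$, and only then passes to $\mathrm{h}\D$ to apply Balmer's Lemma 2.2 for the ring structure and Lemma 2.3 for uniqueness; it finishes, like you, by extracting the idempotent and applying Lemma~\ref{lem-idempotent-splitting}. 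Your route is arguably cleaner in one respect: by identifying $B[e^{-1}]$ with $A$ directly via the algebra map $g$, you avoid the paper's side-check that the ring structure Lemma 2.2 puts on $uA$ agrees with the original one. One caveat: if you outsource the triangulated splitting to Balmer you should, as the paper does for Lemmas 2.2 and 2.3, note that the proof of Lemma 2.1 does not use essential smallness, since $\mathrm{h}\D$ is not essentially small.

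There is one imprecision worth fixing in the uniqueness step. Your computation that the idempotent $e'$ attached to $h'$ satisfies $(1-e')\pi_0(B)=\ker(\pi_0(g))$, hence $e'=e$, is correct. But you then appeal to ``the uniqueness clause of Lemma~\ref{lem-idempotent-splitting}'' to produce $\ell$, and that lemma has no such clause. What you actually need is the following elaboration of its converse direction: given a splitting $h\colon B\simeq A\times C$ in $\CAlg(\D)$ with idempotent $e$, the map $\mathrm{pr}_1h$ inverts $e$ and $\mathrm{pr}_2h$ inverts $1-e$, so $h$ factors through the canonical decomposition as $h=(\alpha\times\beta)\circ(\mathrm{loc})$ with $\alpha\colon B[e^{-1}]\xrightarrow{\sim}A$ and $\beta\colon B[(1-e)^{-1}]\xrightarrow{\sim}C$. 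The constraint $\mathrm{pr}_1 h=g=\mathrm{pr}_1 h'$ forces $\alpha=\alpha'$ (both are the map induced by $g$ by the universal property of $B[e^{-1}]$), and then $\ell:=\beta'\circ\beta^{-1}$ satisfies $h'=(1\times\ell)h$; that $\ell$ is a map of $A$-algebras follows since the $A$-algebra structures on $C$ and $C'$ are by definition $\mathrm{pr}_2hf$ and $\mathrm{pr}_2h'f=\ell\circ\mathrm{pr}_2hf$. The paper instead cites Balmer's Lemma 2.3 at this point. Either way the step is not difficult, but you should spell it out rather than attribute it to a nonexistent clause of Lemma~\ref{lem-idempotent-splitting}.
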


\begin{proof}
    This is an elaboration of~\cite{Balmer2014}*{Lemma 2.1}. Let $B^e:=B \otimes B^{\op}$ be the enveloping algebra so that $B^e$-modules are the same as $(B,B)$-bimodule. We equip $A$ with a structure of $(B,B)$-bimodule such that $g$ is $B^e$-linear. We then complete $g$ to a fibre sequence
    \[
    C \to B\xrightarrow{g} A\xrightarrow{z} \Sigma C
    \]
    in $\mod{B^e}(\D)$. Applying the exact functor $U_{B^e}$ to the above fibre sequence and using that $g$ is split by $f$ in $\D$, we deduce that $U_{B^e}(z)=0$ in $\D$. 
    As $B^e$ is separable by Example~\ref{ex-tensor-is-sep}, the forgetful functor $U_{B^e}$ is faithful by Remark~\ref{rem-sep-counit-section}. Thus, $z=0$ in $\mod{B^e}(\D)$ too. This gives an equivalence $h\colon B \xrightarrow{\sim} A\oplus C$ in $\mod{B^e}(\D)$ satisfying $\mathrm{pr}_1 h=g$. Applying the canonical functor $u\colon \D \to \mathrm{h}\D$,
    we obtain a decomposition $u(h) \colon uB \xrightarrow{\sim} uA\oplus uC$ in $\mod{uB^e}(\mathrm{h}\D)$. By~\cite{Balmer2014}*{Lemma 2.2} (noting that the proof does not rely on the tt-category being essentially small), we can equip $uA$ and $uC$ with ring structures such that $u(h)$ is a ring isomorphism. Moreover this new ring structure on $uA$ agrees with the original one: this follows from the fact that $u(g)\colon uB \to uA$ is a split epimorphism which is a homomorphism with respect to both ring structures on $uA$ (the old one by assumption and the new one because $u(h)=\big(\begin{smallmatrix}  u(g) \\  \ast\end{smallmatrix}\big)$ is a homomorphism). Therefore we get a splitting of rings $u(h) \colon uB \simeq uA \times uC$ in $\mathrm{h}\D$ satisfying $\mathrm{pr}_1 u(h)=u(g)$. By applying $\pi_0$ to $u(h)$ and noting that $\pi_0(B)=\pi_0(uB)$ (and similarly for $A$ and $C$), we get a splitting of commutative rings
 \[
 \pi_0(h)\colon \pi_0(B)\simeq \pi_0(A) \times \pi_0(C)
 \]
 satisfying $\mathrm{pr}_1 \pi_0(h) =\pi_0(g)$. 
 Therefore there exists an idempotent $e \in \pi_0(B)$ corresponding to $(1, 0)$ under 
 $\pi_0(h)$. Set $C:=B[(1-e)^{-1}] \in \CAlg(\D)$ and note that $A \simeq B[e^{-1}]$.
 Then by Lemma~\ref{lem-idempotent-splitting} we get a decomposition 
 $h\colon B \simeq A \times  C$ in $\CAlg(\D)$ lifting the decomposition in $\mathrm{h}\D$. 
 By construction, we find that $\mathrm{pr}_1 h \simeq g$. Note that $C$ is separable by 
 Example~\ref{ex-separable-product-retract}. Finally, for uniqueness of $C$ as an $A$-algebra, with the notation of the theorem, we obtain an equivalence $k:=h' h^{-1} \colon \1_A \times C \xrightarrow{\sim} \1_A \times C'$ 
 of commutative algebras in $\mod{A}(\D)$ such that $\mathrm{pr}_1 k\simeq \mathrm{pr}_1$. 
 This means that $k$ has the form 
 \[
 \begin{pmatrix} 1 & 0 \\ s & l\end{pmatrix}.
 \]
 Now we can conclude by~\cite{Balmer2014}*{Lemma 2.3} (again noting that the proof does not rely on the tt-category being essentially small).
\end{proof}

\begin{corollary}\label{cor-sep}
 Let $\D \in \CAlg(\Pr)$ be given and consider $A \in \CAlg^\sep(\D)$ with multiplication map $\mu$ 
 and unit map $\eta$. Then there is an equivalence in $\CAlg(\D)$ $h \colon A \otimes A \xrightarrow{\sim} A \times A'$ for some 
 $A' \in \CAlg^\sep(\D)$ in such a way that $\mathrm{pr}_1 h =\mu$. 
 Moreover, the $A$-algebra $A'$ is unique up to equivalence with this property.
\end{corollary}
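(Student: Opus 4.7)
The plan is to apply Theorem~\ref{thm-splitting} to a suitable retraction in $\CAlg^{\sep}(\D)$. Take $B := A\otimes A$ with its standard commutative algebra structure (say, viewed as $A\otimes A$ with componentwise multiplication coming from the symmetric monoidal structure on $\CAlg(\D)$), set $g := \mu \colon A\otimes A \to A$, and let $f \colon A \to A \otimes A$ be the unit map on the right tensor factor, namely $f := 1_A \otimes \eta$. Then the unit axiom for $A$ gives $g\circ f = \mu\circ (1_A\otimes \eta) = 1_A$.

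Next I verify the hypotheses of Theorem~\ref{thm-splitting}. The algebra $A$ is separable by assumption. The algebra $B = A\otimes A$ is separable by Example~\ref{ex-tensor-is-sep}. Both $f$ and $g$ are maps in $\CAlg(\D)$: for $g$ this is because $A$ is commutative, so $\mu$ is a map of commutative algebras, and for $f = 1_A\otimes \eta$ this follows from $\eta \colon \1 \to A$ being the unit morphism in $\CAlg(\D)$ combined with the fact that $-\otimes A$ preserves the structure. Thus $f,g$ are morphisms in $\CAlg^{\sep}(\D)$ with $gf = 1_A$.

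Applying Theorem~\ref{thm-splitting} to this retraction produces $A' \in \CAlg^{\sep}(\D)$ and an equivalence $h \colon A\otimes A \xrightarrow{\sim} A\times A'$ in $\CAlg(\D)$ satisfying $\mathrm{pr}_1\circ h = g = \mu$, exactly as required. The uniqueness of $A'$ as an $A$-algebra (where the $A$-algebra structure on $A'$ is induced by $\mathrm{pr}_2\circ h\circ f$) follows directly from the corresponding uniqueness statement in Theorem~\ref{thm-splitting}: if $h' \colon A\otimes A \xrightarrow{\sim} A \times A''$ is another such equivalence with $\mathrm{pr}_1\circ h' = \mu$, the theorem yields an equivalence of $A$-algebras $A' \xrightarrow{\sim} A''$ intertwining $h$ and $h'$.

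There is no real obstacle here beyond choosing the retraction correctly; the content is entirely packaged in Theorem~\ref{thm-splitting}, whose hypotheses are met since separability is preserved by tensor products (Example~\ref{ex-tensor-is-sep}) and the standard unit/multiplication diagram supplies the required splitting $gf = 1_A$.
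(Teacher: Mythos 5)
Your proof is correct and takes exactly the same route as the paper's: apply Theorem~\ref{thm-splitting} to $B=A\otimes A$ with $g=\mu$ and $f=1_A\otimes\eta$, using Example~\ref{ex-tensor-is-sep} to see $B$ is separable. The paper states this in a single line; your version just spells out the hypothesis verification explicitly.
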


\begin{proof}
  Apply Theorem~\ref{thm-splitting} to $B=A \otimes A$ with 
  $g=\mu \colon A \otimes A \to A$ and $f=1_A \otimes \eta \colon A \to A \otimes A$.
\end{proof}


We can then construct the splitting tower following~\cite{Balmer2014}.

\begin{construction}\label{construct-splitting-tower}
 Let $\D \in \CAlg(\Pr)$ be given, and consider some $A\in \CAlg^\sep(\D)$. We construct {\em the splitting tower of $A$} in $\CAlg(\D)$
\[
A^{[0]} \to A^{[1]} \to A^{[2]} \to \ldots \to A^{[n]} \to \ldots,
\]
where $A^{[0]}:=\1$, $A^{[1]}:=A$ and the first map in the tower is the unit map  $\eta \colon A^{[0]} \to A^{[1]}$. For $n\geq 1$, we define 
$A^{[n+1]}$ to be $(A^{[n]})'$ in the notation of Corollary~\ref{cor-sep} applied to the commutative 
separable algebra $A^{[n]}$ in $\mod{A^{[n-1]}}(\C)$. Equivalently, the separable commutative $A^{[n]}$-algebra $A^{[n+1]}$ is characterized by the existence of an equivalence $h\colon A^{[n]}\otimes_{A^{[n-1]}}A^{[n]}\simeq A^{[n]}\times A^{[n+1]}$ of $A^{[n]}$-algebras such that $\mathrm{pr_1}h=\mu$. 
\end{construction}

\begin{remark}\label{rem-tower-functorial}
    The splitting tower is functorial as in ~\cite{Balmer2014}*{Theorem 3.7(a)}, i.e. given a functor $F\colon \D \to \D'$ in $\CAlg(\Pr)$, some $A \in\CAlg^{\mathrm{sep}}(\D)$ and $n\geq 0$, then $F(A^{[n]})\simeq F(A)^{[n]}$ as commutative algebra objects. 
\end{remark}

\begin{definition}
Let $0 \not =\D \in\CAlg(\Pr)$ be given,  and assume that  $A \in \CAlg^\sep(\D)$. We say that $A$ has \emph{finite degree} $n\geq 0$ if $A^{[n]}\not = 0$ and $A^{[n+1]}=0$. We say that $A$ has {\em infinite degree} if $A^{[n]}\not =0$ for all $n\geq 0$. 
\end{definition}

\begin{example}\label{ex-degree}
 Note that $A$ has degree $0$ if and only if $A\simeq 0$ and that $A$ has degree $1$ 
 if and only if $A$ is nonzero and the multiplication map $\mu\colon A \otimes A \to A$ is an equivalence.  
\end{example}

To establish a finiteness property of the splitting tower, we will need the following standard fact.

\begin{lemma}\label{lem-extend-restrict-compacts}
Assume that $\C\in\twoRing$ and $A\in\CAlg(\C)$ are given, and let
\[ 
F_A\colon \Ind(\C)\rightleftarrows\mod{A}(\Ind(\C)): U_A\]
denote the free-forgetful adjunction. Then $F_A$ and $U_A$ preserves compact objects.
\end{lemma}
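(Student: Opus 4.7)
The plan is to treat the two assertions separately, starting with the easier one.

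For the fact that $F_A$ preserves compact objects, the cleanest route is to invoke the standard characterization: a left adjoint between presentable $\infty$-categories preserves compact objects if and only if its right adjoint commutes with filtered colimits. Since $U_A\colon \mod{A}(\Ind(\C))\to \Ind(\C)$ preserves all colimits by \cite{HA}*{Corollary 4.2.3.5} (and not merely filtered ones), the conclusion for $F_A$ is immediate.

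For the fact that $U_A$ preserves compact objects, the strategy is to reduce to the special case of free modules on a generating set. Since $\Ind(\C)$ is compactly generated by the (essential) image of $\C$ under Yoneda, the collection $\{F_A(X) : X \in \C\}$ generates $\mod{A}(\Ind(\C))$ under colimits; moreover, each such $F_A(X)$ is compact in $\mod{A}(\Ind(\C))$ by the first part. Thus the compact objects of $\mod{A}(\Ind(\C))$ are precisely the idempotent closure of the thick subcategory generated by $\{F_A(X):X\in\C\}$. Since $U_A$ is exact and preserves retracts, it suffices to verify that $U_A(F_A(X)) = A\otimes X$ is compact in $\Ind(\C)$ whenever $X\in\C$. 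But $A\in\CAlg(\C)$ has underlying object in $\C$, and $\C$ carries a symmetric monoidal structure (it is a $2$-ring), so $A\otimes X$ again lies in $\C$ and is therefore compact in $\Ind(\C)$.

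The only step requiring a little care is the identification of the compact objects of $\mod{A}(\Ind(\C))$ with the thick subcategory generated by the free modules on compact objects of $\Ind(\C)$; this should follow from a standard argument (e.g.\ along the lines of \cite{HA}*{Corollary 4.2.3.7 or Proposition 7.2.4.2}), using that $\Ind(\C)$ is compactly generated and that $U_A$ is conservative and colimit-preserving. Once this is in place, the rest of the proof is a routine assembly of these observations.
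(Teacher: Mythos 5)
Your proof is correct, and it takes a genuinely different route for the second (and harder) half. For the $F_A$ half you and the paper agree: $U_A$ is colimit-preserving, so $F_A$ preserves compacts.

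For $U_A$, the paper argues at the level of adjoints: it shows that the right adjoint $R_A$ of $U_A$ commutes with filtered colimits, which it checks by testing against the compact generators $F_A(Y)$ and reducing to the observation that $A\otimes Y\in\C=\Ind(\C)^\omega$. You instead invoke the structural theorem that, in a compactly generated stable $\infty$-category, the subcategory of compact objects is exactly the thick closure of any set of compact generators; since $\{F_A(X):X\in\C\}$ is such a set, and $U_A$ is exact (so $U_A^{-1}(\Ind(\C)^\omega)$ is thick), the claim reduces to $U_A F_A(X)=A\otimes X\in\C$. Both routes bottom out on the same key point — namely $A\otimes X\in\C$ because $\C$ is a $2$-ring and $A$ has underlying object in $\C$ — and both are valid. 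The paper's argument is more hands-on and avoids appealing to Neeman/Thomason-style generation theorems, proving as a byproduct that $R_A$ preserves filtered colimits; yours is shorter and delegates the bookkeeping to a standard but slightly heavier structural result. Two small stylistic remarks: a thick subcategory is already idempotent-complete by convention, so ``idempotent closure of the thick subcategory'' is redundant; and the precise HA reference for the generation theorem is not the one you guess, though your hedging (``should follow from a standard argument'') acknowledges this.
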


\begin{proof} 
To ease the notation let us set $\D:=\Ind(\C)$. 
The first claim then follows from $\Hom_{\mod{A}(\D)}(F_A(X),-)\simeq \Hom_\D(X, U_A(-))$ and the fact that $U_A$ preserves all colimits. 
For the second claim recall that $U_A$ admits a right adjoint $R_A$. We claim that $R_A$ preserves filtered colimits. This will conclude the proof since 
\[
\Hom_\D(U_A(M),-)\simeq\Hom_{\mod{A}(\D)}(M,R_A(-)),
\]
and both functors on the right hand side commute with filtered colimits if $M$ is compact.

To prove the claim we need to show that the canonical map $\colim R_A(X_i)\to R_A(\colim X_i)$ is an equivalence for any filtered diagram $(i\mapsto X_i)$ in $\D$. 
This is equivalent to checking that the canonical map
\begin{equation}\label{eq-free}
   \Hom_{\mod{A}(\D)}(F_A(G_i), \colim R_A(X_i))\to \Hom_{\mod{A}(\D)}(F_A(G_i), R_A(\colim X_i)) 
\end{equation}
is an equivalence for $\{G_i\}$ a set of compact generators of $\C$, as $\{F_A(G_i)\}$ provides a set of compact generators for $\mod{A}(\D)$. Using that $F_A(G_i)$ is compact in $\mod{A}(\D)$ 
and the adjunction $(U_A,R_A)$, we can rewrite the source of (\ref{eq-free}) as $\colim \Hom_{\D}(A\otimes G_i, X_i)$. This agrees with the target of (\ref{eq-free}) using the adjunction and the fact that $A\otimes G_i\in \C=\D^\omega$. Therefore the map (\ref{eq-free}) is an equivalence.
\end{proof}

\begin{remark}
    It follows from Lemma~\ref{lem-extend-restrict-compacts} that there is an induced free-forgetful adjunction 
    \[ 
F_A\colon \C=\Ind(\C)^\omega\rightleftarrows\mod{A}(\Ind(\C))^\omega: U_A.
\] 
\end{remark}

\begin{lemma}\label{lem-tt-tower-and-compact}
Let $\C\in\twoRing$ and $A\in\CAlg^{\mathrm{sep}}(\C)$ be given. Then we have $A^{[n]}\in \C$
for all $n\ge 0$.
\end{lemma}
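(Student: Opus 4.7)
The plan is to proceed by induction on $n\ge 0$, viewing the splitting tower inside $\CAlg(\Ind(\C))$ (the base category of Construction~\ref{construct-splitting-tower}, accessed via the symmetric monoidal Yoneda embedding). The base cases are immediate: $A^{[0]}=\1$ and $A^{[1]}=A$ both lie in $\C$ by assumption. For the inductive step, suppose $A^{[n]}\in\C$ for some $n\ge 1$. By Corollary~\ref{cor-sep} applied in $\mod{A^{[n-1]}}(\Ind(\C))$, we have an equivalence $A^{[n]}\otimes_{A^{[n-1]}}A^{[n]}\simeq A^{[n]}\times A^{[n+1]}$, so $A^{[n+1]}$ is a retract of $A^{[n]}\otimes_{A^{[n-1]}}A^{[n]}$ in $\Ind(\C)$. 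Since $\C$ is idempotent complete, it suffices to show $A^{[n]}\otimes_{A^{[n-1]}}A^{[n]}\in\C$.

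The key step will be to exhibit $A^{[n]}\otimes_{A^{[n-1]}}A^{[n]}$ itself as a retract of $A^{[n]}\otimes A^{[n]}$ in $\Ind(\C)$, using separability of $A^{[n-1]}$. Concretely, one uses the identification
\[
A^{[n]}\otimes_{A^{[n-1]}}A^{[n]}\simeq A^{[n-1]}\otimes_{A^{[n-1]}\otimes A^{[n-1]}}\bigl(A^{[n]}\otimes A^{[n]}\bigr),
\]
where $A^{[n]}\otimes A^{[n]}$ carries the natural left $A^{[n-1]}\otimes A^{[n-1]}$-module structure coming from the two tensor factors. The bimodule section $\sigma\colon A^{[n-1]}\to A^{[n-1]}\otimes A^{[n-1]}$ supplied by separability splits the multiplication map in $\mod{A^{[n-1]}\otimes A^{[n-1]}}(\Ind(\C))$; applying $(-)\otimes_{A^{[n-1]}\otimes A^{[n-1]}}(A^{[n]}\otimes A^{[n]})$ to this split retraction then produces the desired retract diagram in $\Ind(\C)$.

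Granted this, the conclusion is automatic: $A^{[n]}\otimes A^{[n]}\in\C$ since $A^{[n]}\in\C$ by the inductive hypothesis and $\C$ is closed under $\otimes$; idempotent completeness of $\C$ then forces $A^{[n]}\otimes_{A^{[n-1]}}A^{[n]}\in\C$, and hence $A^{[n+1]}\in\C$ as a further retract. The main obstacle is the middle step: one must keep the $A^{[n-1]}$-linear structure distinct from the $A^{[n-1]}\otimes A^{[n-1]}$-linear structure carefully enough to be sure the retraction lives in $\Ind(\C)$ after forgetting, rather than just in some module category. Everything else is formal, relying only on the 2-ring axioms (symmetric monoidal closure and idempotent completeness) together with separability of $A^{[n-1]}$ inside the splitting tower.
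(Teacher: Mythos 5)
Your proof takes a genuinely different route from the paper's, and the key retraction idea is sound, but there is one gap that you gloss over: you need $A^{[n-1]}$ to be separable as a commutative algebra in $\Ind(\C)$, i.e.\ \emph{over the unit} $\1$, so that the bimodule section $\sigma\colon A^{[n-1]}\to A^{[n-1]}\otimes A^{[n-1]}$ you invoke lives in $\mod{A^{[n-1]}\otimes A^{[n-1]}}(\Ind(\C))$. However, Construction~\ref{construct-splitting-tower} and Corollary~\ref{cor-sep} only hand you separability of $A^{[n-1]}$ \emph{relative to} $A^{[n-2]}$, i.e.\ a section to $A^{[n-1]}\otimes_{A^{[n-2]}}A^{[n-1]}\to A^{[n-1]}$. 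Passing from there to separability over $\1$ requires transitivity of separability along the chain $\1\to A\to\cdots\to A^{[n-2]}\to A^{[n-1]}$. This is true (one factors $\mu\colon B\otimes B\to B$ as $B\otimes B\to B\otimes_A B\to B$; separability of $A$ splits the first arrow by applying $(-)\otimes_{A\otimes A}(B\otimes B)$ to a bimodule section for $A$, and separability of $B$ over $A$ splits the second, both as $B\otimes B$-module maps), but it is an extra lemma not stated anywhere in the paper, so you need to either prove it inline or cite it. Once that is in place your argument closes up cleanly: $A^{[n]}\otimes_{A^{[n-1]}}A^{[n]}\simeq A^{[n-1]}\otimes_{A^{[n-1]}\otimes A^{[n-1]}}(A^{[n]}\otimes A^{[n]})$ is a retract of $A^{[n]}\otimes A^{[n]}\in\C$, and hence $A^{[n+1]}$, a further retract, lies in $\C$ by idempotent completeness.

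By contrast, the paper sidesteps transitivity entirely by strengthening the inductive hypothesis: it proves that $A^{[n]}$ is a compact object of $\mod{A^{[i]}}(\Ind(\C))$ for every $0\le i\le n$ simultaneously, using the relative splitting $A^{[n-1]}\otimes_{A^{[n-2]}}A^{[n-1]}\simeq A^{[n-1]}\times A^{[n]}$ as-is (over $A^{[n-2]}$), base-change along $A^{[n-2]}\to A^{[n-1]}$, and Lemma~\ref{lem-extend-restrict-compacts} to push compactness down the chain of forgetful functors. This is somewhat more bookkeeping but stays entirely within the facts already established in the section, and as a bonus it records the stronger statement about compactness in the intermediate module categories, which the paper goes on to use. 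Your version, once the transitivity lemma is supplied, is arguably shorter and more conceptual; the paper's version trades that for self-containedness and a stronger conclusion.
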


\begin{proof}
Note that a priori we have $A^{[n]}\in\Ind(\C)$ by construction.
We claim more strongly that for all $n\ge 0$ and $0\leq i\leq n-1$ we have $A^{[n]}\in\mod{A^{[i]}}(\Ind(\C))^\omega$, omitting all forgetful functors from the notation. The stated claim will then be the cases with $i=0$.
The cases $n=i=0$ and $n=1$, $i=0$ are clear since $A^{[0]}=\1, A^{[1]}=A \in \C=\Ind(\C)^\omega$.
We now proceed by induction on $n\ge 2$:
By construction, we have a splitting
\[ A^{[n-1]}\otimes A^{[n-1]} \simeq A^{[n-1]} \times A^{[n]}\mbox{ in }\mod{A^{[n-2]}}(\Ind(\C)).\]
This shows that $A^{[n]}$ is a retract of the compact object on the left hand side (using the induction hypothesis and Lemma~\ref{lem-extend-restrict-compacts}). Applying forgetful
functors and Lemma \ref{lem-extend-restrict-compacts}, we obtain the claim for $0\leq i\leq n-2$. The above splitting is also $A^{[n-1]}$-linear, for the $A^ {[n-1]}$-action through the left tensor factor and the map $A^{[n-1]}\to A^{[n]}$ in the splitting tower. Since the left hand side is a compact $A^{[n-1]}$-module by base-change, so is its retract $A^{[n]}$. This settles the induction and the proof.
\end{proof}

This allows us to adapt the construction of the splitting tower to $2$-rings.

\begin{construction}\label{con-tt-tower-2ring}
Let $\C\in\twoRing$ and $A\in\CAlg^{\mathrm{sep}}(\C)$ be given. Then the {\em splitting tower of $A\in\C$} is obtained by embedding $\C\subseteq\D$ into its ind-completion as the compact objects, applying Construction \ref{construct-splitting-tower} to $A\in\CAlg^{\mathrm{sep}}(\D)$, and noting that the resulting tower takes values in $\C\subseteq\D$ by Lemma \ref{lem-tt-tower-and-compact}. 
\end{construction}

We note the following application for future reference.

\begin{lemma}\label{lem-tt-tower}
    Let $\C\in\twoRing$ and $A \in\CAlg^{\mathrm{sep}}(\C)$ be given. Then there is an equivalence of tt-categories
    $\mod{uA}(\mathrm{h}\C) \simeq\mathrm{h}\mod{A}(\Ind(\C))^\omega$.
\end{lemma}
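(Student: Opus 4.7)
The plan is to reduce to Lemma~\ref{lem-module-over-sep-alg} applied to the stable homotopy theory $\D:=\Ind(\C)$. Since the Yoneda embedding $\C\hookrightarrow\D$ is symmetric monoidal, $A$ remains separable in $\D$ by Construction~\ref{con-smf-pres-sep}, and Lemma~\ref{lem-module-over-sep-alg} yields a symmetric monoidal equivalence of triangulated categories
\[
\mathrm{h}\mod{A}(\D)\xrightarrow{\sim}\mod{uA}(\mathrm{h}\D),
\]
compatible with the forgetful functors to $\mathrm{h}\D$. The task is then to restrict this equivalence to identify both sides of the claim.

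On the $\mathrm{h}\mod{A}(\D)$-side, the $\infty$-category $\mod{A}(\D)$ is compactly generated by $\{F_A(X):X\in\C\}$, which are compact by Lemma~\ref{lem-extend-restrict-compacts}. For any compactly generated stable $\infty$-category, an object is compact if and only if its image in the homotopy category is compact, since both conditions say that $\map(M,-)$ preserves filtered colimits, a property detected on homotopy groups of mapping spectra. Hence the compact objects of $\mathrm{h}\mod{A}(\D)$ are exactly those lying in $\mathrm{h}\mod{A}(\D)^\omega$, recovering the right-hand side of the lemma.

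On the $\mod{uA}(\mathrm{h}\D)$-side, I would identify its compact objects with Balmer's module category $\mod{uA}(\mathrm{h}\C)$, viewed as the full subcategory of $\mod{uA}(\mathrm{h}\D)$ consisting of modules whose underlying object lies in $\mathrm{h}\C=(\mathrm{h}\D)^c$. One direction uses that $U_{uA}$ preserves compact objects by Lemma~\ref{lem-extend-restrict-compacts}. For the converse, given $M$ with $U_{uA}M\in\mathrm{h}\C$, separability (Remark~\ref{rem-sep-counit-section}) provides a section of the counit $F_{uA}U_{uA}M\to M$, realizing $M$ as a retract of a compact object; idempotent completeness (Remark~\ref{rem-idempotent}) then yields compactness of $M$.

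Combining these two identifications produces the desired tt-equivalence, with the monoidal structure transported from Lemma~\ref{lem-module-over-sep-alg}. The main technical point is the retract-based characterization of compact $uA$-modules via separability; the remaining steps amount to unraveling definitions and invoking the commutation of compactness with passage to homotopy categories.
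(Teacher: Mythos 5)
Your proof is correct and takes essentially the same approach as the paper: both apply Lemma~\ref{lem-module-over-sep-alg} to $\Ind(\C)$ and then use Lemma~\ref{lem-extend-restrict-compacts} together with the separability retract $M \to A \otimes U_A M \to M$ to characterize compact modules as exactly those with compact underlying object. You merely spell out (what the paper leaves implicit) the observation that compactness is preserved under passage to the homotopy category.
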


\begin{proof}
    Recall from Lemma~\ref{lem-module-over-sep-alg} that there is an equivalence of tt-categories 
    \[
    \mathrm{h}\mod{A}(\Ind(\C))\simeq \mod{uA}(\mathrm{h}\Ind(\C))
    \]
    and that $\Ind(\C)^\omega=\C$.
    Thus, we only need to check that $M\in\mod{uA}(\mathrm{h}\Ind(\C))$ is compact if and only if $U_AM\in\mathrm{h}\Ind(\C)$ is compact. The forward implication follows from Lemma~\ref{lem-extend-restrict-compacts}. The backward implication follows from the same lemma and separability as $M$ is a retract of $A\otimes U_A M$.
\end{proof}

\begin{remark}
    It follows from Lemma~\ref{lem-tt-tower} that the tower of Construction~\ref{con-tt-tower-2ring} lifts the splitting tower of $uA\in\CAlg^{\mathrm{sep}}(\mathrm{h}\C)$ constructed by Balmer~\cite{Balmer2014}.
\end{remark}

We now list some useful properties that the degree satisfies. 

\begin{proposition}\label{prop-properties-degree}
 Let $\C \in \twoRing$ and consider $A,B \in \CAlg(\C)$ with $A$ separable. Denote by $F_B\colon \C \to \mod{B}(\Ind(\C))^\omega$ 
 the functor $X \mapsto B \otimes X$. 
 \begin{itemize}
 \item[(a)] We have $\deg(F_B(A))\leq \deg(A)$. This is an equality if $B$ is faithful or $B$ is separable and $A$ is a $B$-algebra.
 \item[(b)] We have $\deg(\1^{\times n})=n$ for all $n\geq 1.$
 \end{itemize}
\end{proposition}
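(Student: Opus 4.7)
For part (a), my plan is to invoke the functoriality of the splitting tower recorded in Remark~\ref{rem-tower-functorial}, applied to the symmetric monoidal left adjoint $F_B = B\otimes(-)\colon \Ind(\C) \to \mod{B}(\Ind(\C))$, which lies in $\CAlg(\Pr)$. Since $F_B$ preserves compact objects by Lemma~\ref{lem-extend-restrict-compacts}, it sends the splitting tower of $A$ in $\C$ to the splitting tower of $F_B(A)$ in $\mod{B}(\Ind(\C))^\omega$, yielding $F_B(A)^{[n]} \simeq F_B(A^{[n]}) = B\otimes A^{[n]}$ for every $n \ge 0$. The inequality $\deg(F_B(A)) \le \deg(A)$ is then immediate: vanishing of $A^{[n+1]}$ forces vanishing of $F_B(A)^{[n+1]}$.

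For the equality assertions, the task reduces to the converse implication $B\otimes A^{[n]} = 0 \Rightarrow A^{[n]} = 0$. If $B$ is faithful, then $B\otimes(-)$ is conservative and the implication holds by definition. If instead $A$ is a $B$-algebra, with structure map $\phi\colon B \to A$, then for every $n \ge 1$ the splitting tower endows $A^{[n]}$ with an $A$-algebra structure (by construction) and hence, via $\phi$, with a $B$-algebra structure. In particular $A^{[n]}$ is a $B$-module, which realises it as a retract of $B\otimes A^{[n]}$ in $\C$ via the unit $\eta_B \otimes 1$ and the $B$-action map, giving the required implication. The case $n = 0$ handles itself since $A^{[0]} = \1$ is nonzero.

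For part (b), my plan is to compute the splitting tower of $R_n := \1^{\times n}$ directly and show by induction on $k$ that its underlying object in $\C$ is $\1^{\times n(n-1)\cdots(n-k+1)}$. The base step exploits the equivalence $\mod{R_n}(\C) \simeq \C^{\times n}$: under this identification $R_n \otimes R_n \simeq \1^{\times n^2}$ corresponds to $(\1^{\times n}, \ldots, \1^{\times n})$ as an $R_n$-algebra, and the multiplication $R_n \otimes R_n \to R_n$ restricts in the $i$-th factor to the $i$-th projection $\1^{\times n} \to \1$, whose complementary summand is $\1^{\times(n-1)}$. Hence $R_n^{[2]}$ corresponds to $(\1^{\times(n-1)}, \ldots, \1^{\times(n-1)})$ in $\C^{\times n}$, i.e.\ to $\1^{\times n(n-1)}$ in $\C$. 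The inductive step iterates this componentwise splitting inside the chain of nested module categories $\mod{R_n^{[k-1]}}(\cdots \mod{R_n}(\C))$, using that each $R_n^{[k-1]}$ is locally a product of copies of $\1$. At each stage every factor $\1^{\times m}$ gets replaced by $m$ copies of $\1^{\times (m-1)}$, yielding the claimed formula. At step $k = n+1$ one factor becomes $\1^{\times 0} = 0$, so $R_n^{[n+1]} = 0$, while $R_n^{[n]} \simeq \1^{\times n!}$ is nonzero. This gives $\deg(R_n) = n$.

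The main obstacle, I expect, will be the bookkeeping in part (b): one must carefully unwind what it means for $R_n^{[k]}$ to live in the nested module category $\mod{R_n^{[k-1]}}(\cdots \mod{R_n^{[0]}}(\C))$, and track how the iterated forgetful functors assemble into $\C$. Once the chain of equivalences $\mod{\1^{\times m}}(-) \simeq (-)^{\times m}$ is set up, however, the combinatorics is transparent and the induction unwinds without further trouble.
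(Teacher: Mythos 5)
Your proof is correct, and it is a genuinely different route from the paper's, which simply defers to external references (\cite{Bregje}*{Prop.~3.5, Cor.~3.7} and \cite{Balmer2014}*{Thm.~3.7(b)}) rather than supplying an argument. For part~(a), invoking the functoriality of the splitting tower (Remark~\ref{rem-tower-functorial}) for the symmetric monoidal left adjoint $F_B\colon\Ind(\C)\to\mod{B}(\Ind(\C))$, combined with the observation that any $B$-module $M$ is a retract of $B\otimes M$ in $\C$ via the unit axiom, gives exactly the needed conservativity on the relevant objects. Notably, your argument for the ``$A$ is a $B$-algebra'' case does not use separability of $B$ at all; that hypothesis is an artefact of the triangulated setting (where one can only form $\mod{B}$ for $B$ separable), and your $\infty$-categorical argument dispenses with it. One small imprecision: you write that faithfulness of $B\otimes(-)$ makes the implication hold ``by definition''; what you actually use is that a faithful exact functor between stable $\infty$-categories is conservative, which is a short argument (if $F(X)\simeq 0$ then $F(\mathrm{id}_X)=F(0_X)$, so $\mathrm{id}_X=0_X$ by faithfulness) rather than a definition. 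For part~(b), your inductive computation of the tower of $\1^{\times n}$ via the equivalence $\mod{\1^{\times m}}(\D)\simeq\D^{\times m}$ is correct: tracking the nested module categories componentwise does yield $(\1^{\times n})^{[k]}\simeq\1^{\times n(n-1)\cdots(n-k+1)}$ as objects of $\C$, so $(\1^{\times n})^{[n]}\simeq\1^{\times n!}\neq 0$ and $(\1^{\times n})^{[n+1]}\simeq 0$, giving $\deg(\1^{\times n})=n$. The bookkeeping you flag as the main obstacle is indeed the only delicate point, and your outline handles it adequately; the approach is more hands-on than the paper's citation but entirely self-contained.
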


\begin{proof}
 For part (a) see ~\cite{Balmer2014}*{Theorem 3.7(a) and (b)} and ~\cite{Bregje}*{Proposition 3.5(c)}. Part (b) is proved in ~\cite{Balmer2014}*{Theorem 3.9(a)}.
\end{proof}

\begin{definition}\label{def-degree}
 Let $\C$ be a $2$-ring and consider $A\in \CAlg^\sep(\C)$. 
 For any prime $\p \in \Spc(\C)$, we can consider the local category $\C_{\p}$ which is defined 
 as the idempotent completion of the Verdier quotient $\C/\p$. 
 There is a canonical symmetric monoidal exact functor $q_{\p}\colon \C \to \C_\p$. We define 
 the $\p$-\emph{local degree} of $A$ to be the degree of $q_\p(A)\in\CAlg^{\mathrm{sep}}(\C_\p)$. We can assemble the local degrees 
 into a function 
 \[
 \deg(A)\colon \Spc(\C) \to \overline{\Z}=\Z \cup \{\infty\}, \qquad \p \mapsto \deg(A)(\p):= \deg(q_{\p}(A)).
 \]
 We say that that the degree function is:
 \begin{itemize}
 \item \emph{finite} if $\deg(A)(\p)$ is finite for all $\p \in \Spc(\C)$. By~\cite{Balmer2014}*{Theorem 3.8} this is equivalent to $A$ having finite degree. Furthermore we have the formula  
 \begin{equation}\label{eq-local-degree}
 \deg(A)=\max_{\p\in \supp(A)} \deg(A)(\p).
 \end{equation}
 \item \emph{locally constant} if the degree function is constant on a suitable open neighbourhood of every point of $\Spc(\C)$.
 \end{itemize}
\end{definition}

\begin{remark}
 The category $\C_{\p}$ is local in the sense that $X \otimes Y =0$ implies $X=0$ or $Y=0$, 
 see~\cite{Balmer2010}*{Proposition 4.2, Example 4.3}. This is also equivalent to $\Spc(\C_\p)$ having exactly one close 
 point. In fact, $\Spc(\C_\p)$ is homeomorphic to the subspace 
 $\{\q \mid \p \subseteq \q \}\subseteq \Spc(\C)$ by~\cite{Balmer}*{Proposition 3.11, Corollary 3.14}.
\end{remark}

The next result shows that separable algebras of finite degree are closed under finite 
products. 

\begin{lemma}\label{lem-product-fin-degree}
 Let $\C$ be a $2$-ring and let $A= \prod_{i=1}^n A_i \in \CAlg(\C)$ be separable. Then \[\deg(A)(\p)= \sum_{i=1}^n \deg(A_i)(\p)\mbox{ for all }\p\in\Spc(\C).\] 
 In particular, $A$ has finite degree if and only if $A_i$ has finite degree for all $i$. 
\end{lemma}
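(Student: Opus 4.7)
The plan is to reduce to the local two-factor case and then use a faithful base change that splits the product into copies of the unit, where Proposition~\ref{prop-properties-degree}(b) applies. Since $q_\p\colon\C\to\C_\p$ is symmetric monoidal and preserves finite products, we have $q_\p(\prod_i A_i)\simeq\prod_i q_\p(A_i)$, so we may replace $\C$ by the local $2$-ring $\C_\p$. A straightforward induction on $n$ then reduces the statement to proving $\deg(A\times B)=\deg(A)+\deg(B)$ for $A,B\in\CAlg^{\sep}(\C)$ in a local $2$-ring $\C$, with the convention $\infty+k=\infty$.

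Suppose first that $a:=\deg(A)$ and $b:=\deg(B)$ are both finite. The key input, extracted from Balmer's splitting tower theory~\cite{Balmer2014} and transported to our $\infty$-categorical setting via Construction~\ref{con-tt-tower-2ring} and Remark~\ref{rem-tower-functorial}, is the splitting identity $A\otimes A^{[a]}\simeq (A^{[a]})^{\times a}$ of $A^{[a]}$-algebras, and analogously for $B$. Let $D:=A^{[a]}\otimes B^{[b]}$, which is separable by Example~\ref{ex-tensor-is-sep} and nonzero in the local $2$-ring $\C$, hence faithful. Then
\[
(A\times B)\otimes D \;\simeq\; (A\otimes D)\times(B\otimes D) \;\simeq\; D^{\times a}\times D^{\times b} \;\simeq\; D^{\times(a+b)}.
\]
Applying Proposition~\ref{prop-properties-degree}(a) with the faithful $D$ together with Proposition~\ref{prop-properties-degree}(b) in $\mod{D}(\C)$ yields $\deg_\C(A\times B)=\deg_{\mod{D}(\C)}(D^{\times(a+b)})=a+b$.

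For the remaining case, assume $\deg(A)=\infty$; we show that $(A\times B)^{[n]}$ always contains $A^{[n]}$ as a direct ring factor, which forces $\deg(A\times B)=\infty$. Proceed by induction on $n\ge 0$, with $n=0,1$ immediate. For the inductive step, use the inductive factorization $(A\times B)^{[n-1]}\simeq A^{[n-1]}\times Y_{n-1}$ and the resulting decomposition $\mod{(A\times B)^{[n-1]}}(\C)\simeq\mod{A^{[n-1]}}(\C)\times\mod{Y_{n-1}}(\C)$: under this, the $A^{[n]}$-summand of $(A\times B)^{[n]}$ lies in the $\mod{A^{[n-1]}}$-component, so the defining splitting
\[
(A\times B)^{[n]}\otimes_{(A\times B)^{[n-1]}}(A\times B)^{[n]}\simeq (A\times B)^{[n]}\times(A\times B)^{[n+1]}
\]
restricts there to $A^{[n]}\otimes_{A^{[n-1]}}A^{[n]}\simeq A^{[n]}\times A^{[n+1]}$, exhibiting $A^{[n+1]}$ as a direct factor of $(A\times B)^{[n+1]}$. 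The main obstacle is carefully tracking these iterated factorizations through the changing base rings $(A\times B)^{[k]}$; but the $A$-summand propagates cleanly because the induction simultaneously maintains the $A^{[k]}$-splitting of each $(A\times B)^{[k]}$, and the new $A^{[k+1]}$-summand always arises from the pure $A^{[k-1]}$-component under the decomposition.
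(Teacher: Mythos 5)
Your proof takes a genuinely different route from the paper's, which simply reduces to a citation of Balmer's Corollary 3.12 in \cite{Balmer2014} (after noting that $q_\p$ preserves finite products). You re-derive the additivity, and your finite-degree argument is a clean way to see \emph{why} Balmer's result holds: after base-changing along $D := A^{[a]} \otimes B^{[b]}$, both factors become split, $(A\times B)\otimes D \simeq D^{\times(a+b)}$, and one reads off the degree using Proposition~\ref{prop-properties-degree}. The key inputs — that $D$ is faithful (which follows since a nonzero object in a local tt-category is $\otimes$-conservative), and that $A\otimes A^{[a]}\simeq (A^{[a]})^{\times a}$ for $A$ of finite degree $a$ (Balmer's Theorem~3.9(c), transported along Construction~\ref{con-tt-tower-2ring}) — are both legitimate, though worth noting that the paper only explicitly invokes Theorem~3.9(c) under a constant-degree hypothesis in Proposition~\ref{prop-separable-constant-degree-equal-split-ring}; Balmer's statement applies to any separable algebra of finite degree in a local tt-category, so your usage is fine.

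The infinite-degree case is the weak link and is not quite proved. The idea — show $A^{[n]}$ is a direct ring factor of $(A\times B)^{[n]}$ for every $n$ — is sound, but the induction as written needs a strengthened hypothesis: one must carry along not only the factorizations $(A\times B)^{[k]}\simeq A^{[k]}\times Y_k$ for $k\le n$, but also the assertion that the transition maps $(A\times B)^{[k-1]}\to(A\times B)^{[k]}$ in the splitting tower respect these decompositions. Only then does the identification $\mod{(A\times B)^{[n-1]}}(\C)\simeq\mod{A^{[n-1]}}(\C)\times\mod{Y_{n-1}}(\C)$ allow you to locate the $A^{[n]}$-summand in the first factor and extract $A^{[n+1]}$ from the defining splitting, using the uniqueness clause of Theorem~\ref{thm-splitting}. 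You flag this but do not spell it out, so as written there is a gap that a careful reader would want closed; the paper avoids the issue entirely by citing Balmer, whose corollary already covers the $\infty$ case.
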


\begin{proof}
 For all $\p \in \Spc(\C)$, the functor $q_\p \colon \C \to \C_\p$ is 
 exact so $q_{\p}(A)= \prod_{i=1}^n q_{\p}(A_i)$. The claim that $\deg(A)(\p)= \sum_{i=1}^n \deg(A_i)(\p)$ for all $\p$ then follows from ~\cite{Balmer2014}*{Corollary 3.12}. The second claim follows from the first one and (\ref{eq-local-degree}).
\end{proof}

\begin{example}\label{ex-locally-constant-degree-function}
 Let $\C$ be a $2$-ring and consider the commutative algebra $\1^{\times n}$ for 
 some integer $n \geq 1$. Then the degree function  $\deg(\1^{\times n})\colon \Spc(\C) \to \Z$ 
 is constant with value $n$. 
 Indeed for all $\p \in \Spc(\C)$, the functor $q_\p \colon \C \to \C_\p$ is 
 exact and symmetric monoidal so 
 \[
 q_\p(\1^{\times n})=q_\p(\1)^{\times n} = \1_\p^{\times n}
 \]
 where $\1_\p$ denotes the unit object of $\C_\p$. Therefore the claim follows from 
 Proposition~\ref{prop-properties-degree}(b). 
\end{example}

We can obtain more examples of separable algebras with locally constant and finite degree function as follows. 

\begin{lemma}\label{lem-locally-constant}
 Let $\C$ be a $2$-ring and consider $\1[e^{-1}]\in \CAlg(\C)$ for an idempotent $e$ of $\1$. Then 
 \[
 \deg(\1[e^{-1}])(\p)=
 \begin{cases}
 1 & \mathrm{if} \;  \p\in \supp(\1[e^{-1}] )\\
 0 & \;\mathrm{otherwise.}
 \end{cases}
 \] 
 In particular, the degree 
 function $\deg(\1[e^{-1}])\colon \Spc(\C) \to \Z$ is locally constant.  More generally if $e_1, \ldots, e_n$ are idempotent elements of $\1$, then $\prod_{i=1}^n \1[e_i^{-1}]$ has finite degree and its degree function is locally constant.
\end{lemma}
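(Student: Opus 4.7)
The plan is as follows. Set $A:=\1[e^{-1}]$ and $B:=\1[(1-e)^{-1}]$, so that by Lemma~\ref{lem-idempotent-splitting} we have a splitting $\1\simeq A\times B$ in $\CAlg(\C)$. The key claim is that $A$ is a \emph{smashing idempotent}, i.e. that the multiplication $\mu_A\colon A\otimes A\to A$ is an equivalence in $\C$. To see this, note that the splitting of $\1$ induces, via Lemma~\ref{lem-module-limits} applied to $K=\Delta^0\sqcup\Delta^0$, an equivalence of stable homotopy theories $\C\simeq\mod{A}(\C)\times\mod{B}(\C)$ under which $A$ corresponds to $(A,0)$ and $B$ corresponds to $(0,B)$. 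In particular $A\otimes B\simeq 0$ and $\mu_A$ corresponds to $\mathrm{id}_A$ on the first factor, hence is an equivalence.

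Now fix $\p\in\Spc(\C)$. Since $q_\p\colon\C\to\C_\p$ is symmetric monoidal and exact, the image $q_\p(A)$ is still a separable commutative algebra with multiplication $q_\p(\mu_A)$, and the latter remains an equivalence. Thus $q_\p(A)$ has degree at most $1$, with degree exactly $0$ precisely when $q_\p(A)\simeq 0$, i.e.\ when $\p\notin\supp(A)$, and degree $1$ otherwise. This gives the formula for $\deg(A)(\p)$.

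To upgrade this to local constancy, I would argue that $\supp(A)$ is in fact clopen in $\Spc(\C)$. The object $A$ is dualizable, being a retract of $\1$ through the factorization $\1\xrightarrow{e}\1=\1\to A\to\1$; symmetrically $B$ is dualizable. Hence both $\supp(A)$ and $\supp(B)$ are Thomason (closed with quasi-compact complement). Since $\1\simeq A\oplus B$ and $A\otimes B\simeq 0$, we get $\Spc(\C)=\supp(\1)=\supp(A)\cup\supp(B)$ with $\supp(A)\cap\supp(B)=\supp(A\otimes B)=\emptyset$. So $\supp(A)$ and its complement $\supp(B)$ are both closed, hence clopen, and $\deg(A)$ is locally constant, taking the value $1$ on $\supp(A)$ and $0$ on $\supp(B)$.

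For the product statement, each $\1[e_i^{-1}]$ is separable (its multiplication is an equivalence, which is trivially a bimodule section), so $\prod_{i=1}^{n}\1[e_i^{-1}]$ is separable by Example~\ref{ex-separable-product-retract}. Applying Lemma~\ref{lem-product-fin-degree} gives
\[
\deg\Bigl(\prod_{i=1}^{n}\1[e_i^{-1}]\Bigr)(\p)=\sum_{i=1}^{n}\deg(\1[e_i^{-1}])(\p)\leq n
\]
for every $\p\in\Spc(\C)$, which is finite, and is a finite sum of locally constant $\Z$-valued functions, hence locally constant. The only genuine content to verify is the smashing property of $\1[e^{-1}]$; everything else is a formal consequence of the machinery assembled in previous sections.
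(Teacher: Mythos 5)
Your proof is correct in substance and follows the same overall strategy as the paper: identify $\1[e^{-1}]$ as a smashing idempotent, compute the local degree, and deduce local constancy from a clopen decomposition of $\Spc(\C)$. Two remarks on the details and a minor comparison with the paper's argument.

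First, the invocation of Lemma~\ref{lem-module-limits} with $K=\Delta^0\sqcup\Delta^0$ does not give what you want: that lemma concerns modules in a \emph{limit} of stable homotopy theories (so for two points it yields $\mod{(A_0,A_1)}(\C_0\times\C_1)\simeq\mod{A_0}(\C_0)\times\mod{A_1}(\C_1)$), not the decomposition of modules over a product algebra $A\times B$ internal to a single $\C$. The decomposition $\C\simeq\mod{A}(\C)\times\mod{B}(\C)$ you want is a standard fact, but it is not Lemma~\ref{lem-module-limits}. More to the point, you do not need it at all: writing $A=\1[e^{-1}]$ as the retract of $\1$ split off by the idempotent $e$, the object $A\otimes A$ is the retract split off by $e\otimes e=e^2=e$ (hence $\simeq A$, and one checks this identifies with $\mu_A$), while $A\otimes B$ is the retract split off by the orthogonal product $e(1-e)=0$, hence vanishes. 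This gives both the smashing property and $A\otimes B\simeq 0$ directly, with no detour through module categories.

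Second, the dualizability and Thomason observations are red herrings: in any $2$-ring the support of an arbitrary object is closed, and clopenness of $\supp(A)$ and $\supp(B)$ then follows merely from $\supp(A)\cap\supp(B)=\supp(A\otimes B)=\emptyset$ and $\supp(A)\cup\supp(B)=\supp(\1)=\Spc(\C)$, without any quasi-compactness claim.

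Finally, a genuine (if mild) point of difference from the paper: to compute the local degree, the paper shows that if $\p\in\supp(\1[e^{-1}])$ then $q_\p(\1[e^{-1}])\simeq\1_\p$ exactly, appealing to the locality of $\C_\p$. You instead observe that $q_\p$ preserves the equivalence $\mu_A$, so $q_\p(A)$ always has degree $\le 1$, with degree $1$ precisely when it is nonzero. Your argument is somewhat more robust since it avoids the locality of $\C_\p$ entirely. Both routes are valid; the conclusion and the final step via Lemma~\ref{lem-product-fin-degree} agree.
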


\begin{proof}
 Recall that $\p \in \supp(\1[e^{-1}])$ if and only if $\1[e^{-1}]\not\in \p$.
 Clearly, if $\1[e^{-1}]\in \p$ then its image is zero in $\C_\p$ and so it has zero degree. 
 Now suppose that $\1[e^{-1}]\not \in \p$ so that $q_\p(\1[e^{-1}]) \not =0$. Then
 \[
 0 < \deg(q_\p(\1[e^{-1}]))\leq \deg(\1)=1
 \]
 using Proposition~\ref{prop-properties-degree}.
 
 The decomposition $\1\simeq\1[e^{-1}]\times \1[(1-e)^{-1}]$ induces a decomposition of 
 $\C$ as $\C_0 \times \C_1$ which in turns gives a decomposition 
 \[
 \Spc(\C_0) \sqcup \Spc(\C_1) \simeq \Spc(\C),
 \]
 by Lemma~\ref{lem-balmer-spectrum-decomposes}. 
 The previous paragraph shows that the degree function is constant and equal to $1$ on 
 $\Spc(\C_0)$, and equal to $0$ on $\Spc(\C_1)$. 
 In particular, the degree function is locally constant. 
 
 If we have several idempotent elements of $\1$, a similar argument as above gives a decomposition $\Spc(\C)\simeq \bigsqcup_i \Spc(\C_i)$ where each $\deg(\1[e_i^{-1}])$ is constant with value $1$ in $\Spc(\C_i)$ and zero otherwise. By Lemma~\ref{lem-product-fin-degree}, we have $\deg(\prod_i \1[e_i^{-1}])(\p)=\sum_i \deg(\1[e_i^{-1}])(\p)$ which is then finite and locally constant.
\end{proof}

The next result makes explicit the relation between $\supp(A^{[d]})$ and the degree function $\deg(A)$. (The notion of descendability used here is reviewed in Section~\ref{sec:axiomatic_galois}).

\begin{lemma}\label{lem-descend-degree-geq1}
 Let $\C$ be a $2$-ring and consider $A\in \CAlg^\sep(\C)$. Then for all $\p \in \Spc(\C)$ 
 and $d \in \Z_{\geq 0}$ we have $\p \in \supp(A^{[d]})$ if and only if $d \leq \deg(A)(\p)$. 
 Moreover if $\C$ is rigid (which means that $\C=\C^\dual$), then $A$ is descendable in 
 $\C$ if and only if $\deg(A)(\p)\geq 1$ for all $\p \in \Spc(\C)$. 
\end{lemma}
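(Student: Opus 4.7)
The plan is as follows. The first assertion reduces to the functoriality of the splitting tower established in Remark~\ref{rem-tower-functorial}, applied to the localization functor $q_\p\colon\C\to\C_\p$ (which lies in $\CAlg(\Pr)$ after passing to Ind-completions, or more directly in $\twoRing$ via Construction~\ref{con-tt-tower-2ring}). This gives $q_\p(A^{[d]})\simeq q_\p(A)^{[d]}$, so the condition $\p\in\supp(A^{[d]})$ translates into $q_\p(A)^{[d]}\neq 0$. By the very definition of $\deg(A)(\p)=\deg(q_\p(A))$, this last condition holds precisely when $d\leq \deg(A)(\p)$, with the convention that the inequality is always satisfied when the degree is infinite.

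For the second assertion, first note that specializing the already proved statement to $d=1$ and recalling that $A^{[1]}=A$ yields the reformulation
\[ \deg(A)(\p)\geq 1\mbox{ for all }\p\in\Spc(\C) \iff \supp(A)=\Spc(\C). \]
So it suffices to show that when $\C$ is rigid, descendability of $A$ is equivalent to $\supp(A)=\Spc(\C)$. One direction is immediate: if $A$ is descendable, then $\1$ lies in the thick tensor-ideal $\Thick^\otimes(A)$ generated by $A$ (this is the content of the cobar characterization of descendability, compare Section~\ref{sec:axiomatic_galois}), so $\supp(A)=\Spc(\C)$ by the monotonicity of support on thick tensor-ideals.

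For the converse, assume $\supp(A)=\Spc(\C)$. Since $\C$ is rigid, Balmer's tt-classification of radical thick tensor-ideals via Thomason subsets forces $\Thick^\otimes(A)=\C$, and in particular $\1\in\Thick^\otimes(A)$. In the rigid setting, membership of $\1$ in $\Thick^\otimes(A)$ means exactly that $\1$ is a retract of an object obtained by finitely many extensions and sums from $A$; unpacking this in terms of the augmented Amitsur/cobar complex $\1\to A^{\otimes\bullet+1}$ gives the finite vanishing of a sufficiently high iterated cofibre, which is precisely Mathew's criterion for descendability. The main obstacle is this last step, the passage from the abstract membership $\1\in\Thick^\otimes(A)$ to a uniformly bounded witness; I expect to handle it by a standard cone-length induction, exploiting that in a rigid category the cofibre of the unit $\1\to A$ is again dualizable, so that tensor powers of this cofibre keep track of the cobar truncations and a single finite bound suffices.
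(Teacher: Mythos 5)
Your proof of the first assertion is correct and identical in spirit to the paper's: reduce to the functoriality of the splitting tower (Remark~\ref{rem-tower-functorial}) and unwind the definition of the local degree. No issues there.

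For the second assertion, the paper simply invokes Lemma~\ref{lem-desc}, which already records the equivalence between descendability of $A$ and $\supp(A)=\Spc(\C)$ in a rigid $2$-ring, and then specializes the first assertion to $d=1$. You instead re-derive this equivalence inline. Your re-derivation is essentially sound up to the point where you establish $\1\in\Thick^\otimes(A)$ via Balmer's classification of (radical) thick tensor ideals. But your final paragraph is based on a misreading of the conventions in play: in this paper, descendability in a $2$-ring is \emph{defined} (following Mathew's Definition 3.18, as recalled at the beginning of Section~\ref{sec:axiomatic_galois}) by the condition that the thick tensor ideal generated by $A$ is all of $\C$ — equivalently, that $\1\in\Thick^\otimes(A)$. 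There is no residual gap to bridge to a cobar/pro-nilpotence formulation; that reformulation is a theorem of Mathew (his Proposition 3.20) and is not needed here. So the ``main obstacle'' you flag, and the cone-length induction you propose to overcome it, are superfluous; once you have $\Thick^\otimes(A)=\C$, you are done by definition. You should simply cite Lemma~\ref{lem-desc} and avoid the detour. Note also that the step ``Balmer's classification forces $\Thick^\otimes(A)=\C$'' tacitly uses that in a rigid $2$-ring every thick tensor ideal is radical; the paper flags this by citing \cite{Balmer2}*{Proposition 2.4} alongside \cite{Balmer}*{Theorem 4.10}, and it is worth making the same remark explicit.
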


\begin{proof}
 Recall that $\p \in \supp(A^{[d]})$ if and only if $A^{[d]} \not \in \p$. This is equivalent to 
 $0 \not = q_\p(A^{[d]})=q_\p(A)^{[d]}$ by the functoriality of the splitting tower, see Remark~\ref{rem-tower-functorial} . Finally 
 $q_\p(A)^{[d]} \not =0$ if and only if $\deg(A)(\p) \geq d$. 
   
 For the second claim note that $A$ is descendable if and only if $\supp(A)=\Spc(\C)$, see Lemma~\ref{lem-desc}. Since $A=A^{[1]}$, the previous 
 paragraph tells us that this equivalent to $\deg(A)(\p)\geq 1$ for all primes $\p$.
\end{proof}
We note the following functoriality of the degree function.

\begin{proposition}\label{prop-diagram commutes}
 Let $F\colon \C \to \D$ be a symmetric monoidal exact functor between $2$-rings and let $\varphi \colon \Spc(\D) \to \Spc(\C)$  be 
 the induced map between Balmer spectra. For every $A \in \CAlg^\sep(\C)$ 
 and $\p \in \Spc(\D)$, we have 
 \[
 \deg(F(A))(\p)=\deg(A)(\varphi(\p)).
 \]
\end{proposition}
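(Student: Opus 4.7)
The plan is to compute both sides of the claimed equality directly from the splitting tower of $A$, using that this tower is preserved by symmetric monoidal exact functors (Remark~\ref{rem-tower-functorial}) and that $\varphi$ is given by $\varphi(\p)=F^{-1}(\p)$.

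First I would apply Remark~\ref{rem-tower-functorial} to the Ind-completion $\Ind(F)\colon \Ind(\C)\to\Ind(\D)$ of $F$; in view of Lemma~\ref{lem-tt-tower-and-compact} this yields equivalences $F(A^{[n]})\simeq F(A)^{[n]}$ in $\D$ for every $n\ge 0$. The same reasoning applied to the canonical tt-functor $q_\q\colon \C\to \C_\q$ gives $q_\q(A)^{[n]}\simeq q_\q(A^{[n]})$ for every prime $\q\in\Spc(\C)$, and likewise in $\D$.

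Next I would rewrite the local degree purely in terms of containment in the thick tensor ideal $\q$: by definition, $\deg(A)(\q)=\deg(q_\q(A))$ is the supremum of the $n\ge 0$ such that $q_\q(A)^{[n]}\neq 0$ in $\C_\q$, with the convention that this supremum equals $+\infty$ when the set is unbounded. Since $\C_\q$ is the idempotent completion of the Verdier quotient $\C/\q$, an object $X\in\C$ satisfies $q_\q(X)=0$ if and only if $X\in\q$, and combining this with the preservation of the splitting tower by $q_\q$ established above yields
\[
\deg(A)(\q)=\sup\{n\ge 0 : A^{[n]}\notin\q\}.
\]

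Finally, specializing to $\q=\varphi(\p)=F^{-1}(\p)$ and chaining the identifications, I obtain
\begin{align*}
\deg(A)(\varphi(\p)) &= \sup\{n\ge 0 : A^{[n]}\notin F^{-1}(\p)\} \\
&= \sup\{n\ge 0 : F(A^{[n]})\notin \p\} \\
&= \sup\{n\ge 0 : F(A)^{[n]}\notin \p\} \\
&= \deg(F(A))(\p),
\end{align*}
which is the desired equality. The whole argument is a direct unpacking of definitions once the functoriality of the splitting tower is in hand, so I anticipate no genuine obstacle; the only nontrivial input is Remark~\ref{rem-tower-functorial}, i.e.\ that the splitting tower construction commutes with symmetric monoidal exact functors (in particular with both $F$ and the localization functors $q_\q$).
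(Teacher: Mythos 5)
Your proof is correct, but it takes a genuinely different route from the paper's. The paper constructs a commutative square of localized tt-categories
\[
\begin{tikzcd}
\C \arrow[r,"F"]\arrow[d,"q_{\varphi(\p)}"'] & \D\arrow[d,"q_\p"]\\
\C_{\varphi(\p)} \arrow[r,"\overline{F}^\sharp"] & \D_\p,
\end{tikzcd}
\]
using the universal property of Verdier quotient and idempotent completion together with $F(\varphi(\p))\subseteq\p$, then proves that $\overline{F}^\sharp$ is conservative (by first checking conservativity of the induced functor on Verdier quotients via $\supp(F(X))=\varphi^{-1}(\supp(X))$, then extending to the idempotent completion), and finally appeals to Balmer's external result that conservative tt-functors preserve degree. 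You instead bypass the diagram entirely: you rewrite the local degree intrinsically as $\deg(A)(\q)=\sup\{n: A^{[n]}\notin\q\}$ using functoriality of the splitting tower under $q_\q$, plug in $\varphi(\p)=F^{-1}(\p)$, and transport along $F(A^{[n]})\simeq F(A)^{[n]}$. Both arguments ultimately rest on the same two inputs — functoriality of the splitting tower and the description of $\varphi$ as a preimage — but your version is more self-contained and avoids the conservativity argument and the external citation, at the cost of not producing the localized commutative square, which the paper does not reuse anyway. One small remark: your $\sup$ description tacitly uses that $\{n: A^{[n]}\in\q\}$ is upward closed; this does hold, since $q_\q(A^{[n]})=0$ forces $q_\q(A^{[n]})^{[1]}\otimes_{q_\q(A^{[n-1]})} q_\q(A^{[n]})=0$ and hence its retract $q_\q(A^{[n+1]})=0$, but it is worth saying explicitly. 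Note also that what you reconstruct from first principles is essentially the first assertion of Lemma~\ref{lem-descend-degree-geq1}, which in the paper appears only after this proposition and so cannot be cited here.
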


\begin{proof}
 For all $\p \in \Spc(\D)$ we have a commutative diagram 
 \[
 \begin{tikzcd}
 \C \arrow[r, "F"] \arrow[dd, bend right=80,"q_{\varphi(\p)}"']\arrow[d] & \D \arrow[d]\arrow[dd, bend left=80, "q_{\p}"]\\
 \C/\varphi(\p) \arrow[d] \arrow[r,"\overline{F}"] & \D/\p \arrow[d]\\
 \C_{\varphi(\p)} \arrow[r,"\overline{F}^\sharp"] & \D_\p.
 \end{tikzcd}
 \]
 Here we used the universal property of idempotent completion and the fact that $F(\varphi(\p))\subseteq \p$. 
 We need to check that $\deg(q_{\varphi(\p)}(A))=\deg(q_\p(F(A)))$. 
 We claim that $\overline{F}^\sharp$ is conservative so that 
 by~\cite{Balmer2014}*{Theorem 3.7(b)} we find 
 \[
 \deg(q_{\varphi(\p)}(A))=\deg(\overline{F}^\sharp(q_{\varphi(\p)}(A)))=\deg(q_\p(F(A)))
 \]
 as required.
 To prove the claim let us first show that $\overline{F}$ is conservative. Pick 
 $\overline{X} \in \C/\varphi(\p)$ such that $\overline{F}(\overline{X})\simeq 0$.   
 Choose $X \in \C$ such that its image under $\C \to \C/\varphi(\p)$ is 
 $\overline{X}$. Note that $\overline{F}(\overline{X})\simeq 0$ if and only if 
 $F(X)\in \p$, which means that $\p \not \in \supp(F(X))$.  
 But $\supp(F(X))=\varphi^{-1}(\supp(X))$ by~\cite{Balmer}*{Proposition 3.6} so we deduce that 
 $\varphi(\p) \not \in \supp(X)$. In other words, $X \in \varphi(\p)$ and so $\overline{X}\simeq 0$ 
 as required. 
 
 Finally, consider an object of $\C_{\varphi(\p)}$, which is a pair $(\overline{X}, e)$ where 
 $\overline{X}\in \C/\varphi(\p)$ and $e$ is an idempotent of $\overline{X}$. Suppose that its 
 image under $\overline{F}^\sharp$ is equivalent to the zero object. This means that 
 $\overline{F}(\overline{X})\simeq 0$ and $\overline{F}(e)$ is an equivalence. 
 By conservativity of $\overline{F}$ we conclude that the pair $(\overline{X}, e)$ is equivalent to 
 zero in $\C_{\varphi(\p)}$ as claimed. 
\end{proof}

\section{Descent for separable commutative algebras}\label{sec:descent-separable}

In this section we prove that the formation of commutative separable algebras respects all limits of stable homotopy theories and deduce some consequences for the degree function. We start by giving a characterization of separability.  

\begin{definition}
    Let $\C\in\CAlg(\Pr)$ be given  and consider $A\in\CAlg(\C)$ with multiplication map $\mu$. 
    We define a space $s(A)$ via the pullback square
    \[
    \begin{tikzcd}
        s(A) \arrow[r] \arrow[d] & \Hom_{\mod{A\otimes A}(\C)}(A, A\otimes A) \arrow[d,"\mu_*"] \\
        * \arrow[r,"\mathrm{id}_A"] & \Hom_{\mod{A\otimes A}(\C)}(A,A).
    \end{tikzcd}
    \]
    Clearly, $s(A)$ is non-empty if and only if $A$ is separable.
\end{definition}

We now show that the bimodule section witnessing separability is essentially unique. We emphasize that this is only true for {\em commutative} algebras, see for example~\cite{Kadison}*{Example 1.5}.

\begin{lemma}\label{lem-unique-section}
The commutative algebra $A\in\CAlg(\C)$ is separable if and only if $s(A)$ is contractible. 
\end{lemma}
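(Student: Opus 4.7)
The first direction is immediate: if $s(A)$ is contractible then it is non-empty, which by definition means $A$ is separable.

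For the converse, suppose $A$ is separable. The plan is to use Corollary~\ref{cor-sep} to obtain an equivalence $h\colon A\otimes A \xrightarrow{\sim} A\times A'$ in $\CAlg(\C)$ with $\mathrm{pr}_1 h = \mu$, for some $A'\in\CAlg^{\sep}(\C)$. Viewed in $\mod{A\otimes A}(\C)$, this yields a fiber sequence
\[ A' \longrightarrow A\otimes A \xrightarrow{\;\mu\;} A. \]
Applying $\map_{\mod{A\otimes A}(\C)}(A,-)$ and comparing the resulting fiber sequence of spectra with the defining pullback of $s(A)$ shows that $s(A)$ is either empty or homotopy equivalent to $\Omega^\infty \map_{\mod{A\otimes A}(\C)}(A, A')$. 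Separability rules out the first case, so it suffices to prove that this mapping spectrum vanishes.

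For this I would invoke the decomposition of module categories: the splitting $A\otimes A \simeq A \times A'$ in $\CAlg(\C)$ induces, via the mutually orthogonal idempotents in $\pi_0(A\otimes A)$ (in the spirit of Lemma~\ref{lem-idempotent-splitting}), an equivalence of stable $\infty$-categories
\[ \mod{A\otimes A}(\C)\; \simeq\; \mod{A}(\C)\times \mod{A'}(\C). \]
Under this equivalence, the $A\otimes A$-module $A$ corresponds to $(A,0)$ — because the $A\otimes A$-action on $A$ factors through $\mu=\mathrm{pr}_1$, so the second factor acts as zero — while $A'$ corresponds to $(0,A')$. Mapping spectra in a product $\infty$-category split as products, hence
\[ \map_{\mod{A\otimes A}(\C)}(A,A') \;\simeq\; \map_{\mod{A}(\C)}(A,0)\times \map_{\mod{A'}(\C)}(0,A') \;\simeq\; 0, \]
as desired.

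The main technical obstacle will be justifying cleanly the equivalence $\mod{A\otimes A}(\C) \simeq \mod{A}(\C)\times \mod{A'}(\C)$ in the $\infty$-categorical setting and verifying that $A$ and $A'$ map to the expected coordinate modules under it. This is a standard consequence of the product decomposition of commutative algebras (and can also be phrased via the extension-of-scalars functors along the two projections), but it deserves to be spelled out since it is the only place where the commutativity hypothesis is genuinely used — one cannot in general split $A\otimes A^{\op}$ as a product of algebras for a non-commutative $A$, which is exactly why the uniqueness statement fails outside $\CAlg(\C)$.
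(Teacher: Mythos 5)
Your proof is correct and follows essentially the same route as the paper: both use Corollary~\ref{cor-sep} to split $A\otimes A\simeq A\times A'$ with $\mathrm{pr}_1=\mu$, both reduce to showing that the mapping object from $A$ to the fiber of $\mu$ over $A\otimes A$ vanishes, and both conclude via the orthogonality of the two factors of $A\times A'$. Your spelling out of the decomposition $\mod{A\otimes A}(\C)\simeq\mod{A}(\C)\times\mod{A'}(\C)$ is just a more explicit rendering of the paper's terse appeal to orthogonality, and your translation argument for $s(A)$ being empty or equivalent to $\Omega^\infty\map(A,A')$ replaces the paper's explicit $H$-space shift by $+s$; the substance is the same.
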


\begin{proof}
If $s(A)$ is contractible, then it is non-empty and so a bimodule section for the multiplication map of $A$ exists, i.e. $A$ is separable.\\
Conversely, suppose that $A$ is separable with section $s$ and let $J$ denote the fibre of the multiplication map $\mu \colon A \otimes A \to A$ formed in $(A,A)$-bimodules.
Then consider the following diagram of spaces

\[
\begin{tikzcd}
  \Hom_{\mathrm{Mod}_{A\otimes A}(\C)}(A,A\otimes A) \arrow[rr, "\mu_*"]\ar[d,"+s"] & & \Hom_{\mathrm{Mod}_{A\otimes A}(\C)}(A,A)\ar[d, "+\mathrm{id}_A"]\\
   \Hom_{\mathrm{Mod}_{A\otimes A}(\C)}(A,A\otimes A) \arrow[rr, "\mu_*"] & & \Hom_{\mathrm{Mod}_{A\otimes A}(\C)}(A,A) 
 \end{tikzcd}
 \]
in which the vertical maps are equivalences, using the $H$-space structure of the mapping spaces. This is a commutative square since for all $f\in   \Hom_{\mathrm{Mod}_{A\otimes A}(\C)}(A,A\otimes A)$, we have  
\[
\mu\circ (f+s)=\mu \circ f + \mu \circ s=\mu \circ f + \mathrm{id}_A
\]
using that $s$ is a section for $\mu$.
By taking horizontal fibres we obtain the equivalence
\begin{equation}\label{s(A)}
\Hom_{\mod{A\otimes A}(\C)}(A,J)\xrightarrow{\simeq}s(A).
\end{equation}
Using Corollary~\ref{cor-sep}, we can identify
\[ 
\Hom_{\mathrm{Mod}_{A\otimes A}(\C)}(A,J)\simeq
\Hom_{\mathrm{Mod}_{A\times A'}(\C)}(A,A'),
\]
which is contractible since $A$ and $A'$ are orthogonal. 
\end{proof}

\begin{proposition}\label{prop-descent-sep}
   Let $K$ be a simplicial set and $p \colon K \to \CAlg(\Pr)$ be a functor.
   Then the canonical functors induced by the projections
   \[
   \CAlg^{\sep}(\lim_K p) \xrightarrow{\sim} \lim_{k\in K} \CAlg^{\sep}(p(k))
   \]
and 
 \[
 \CAlg^\sep((\lim_K p)^\dual) \xrightarrow{\sim}  \lim_{k\in K}\CAlg^\sep(p(k)^\dual)
 \]
are equivalences.
   \end{proposition}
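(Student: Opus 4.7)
The plan is to use the characterization of separability as contractibility of the space $s(A)$ from Lemma~\ref{lem-unique-section}, together with the observation that $s(-)$ is compatible with limits. By Lemma~\ref{lem-calg-limits}, the underlying functor $\CAlg(\lim_K p) \to \lim_k \CAlg(p(k))$ is already an equivalence, so the task reduces to matching up the corresponding full subcategories of separable objects. Writing $A$ for an object of $\CAlg(\lim_K p)$ with components $A_k$, one direction is free: each projection $\lim_K p \to p(k)$ is symmetric monoidal, so Construction~\ref{con-smf-pres-sep} gives that $A$ separable implies every $A_k$ separable.

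For the converse, the key claim is that the canonical map $s(A) \to \lim_k s(A_k)$ is an equivalence. To see this, I would first invoke Lemma~\ref{lem-module-limits} -- using that the tensor product $A \otimes A$ in the limit projects to $A_k \otimes A_k$, since the projections are symmetric monoidal -- to identify $\mod{A \otimes A}(\lim_K p) \simeq \lim_k \mod{A_k \otimes A_k}(p(k))$. Mapping spaces in a limit of $\infty$-categories are themselves limits of mapping spaces, so both vertices in the pullback square defining $s(A)$ arise as limits over $k$ of the analogous vertices defining $s(A_k)$; since pullbacks commute with arbitrary limits, this gives $s(A) \simeq \lim_k s(A_k)$. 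Granting the claim, if each $A_k$ is separable then Lemma~\ref{lem-unique-section} renders each $s(A_k)$ contractible, so $s(A)$ is contractible as a limit of contractible spaces, and a final application of Lemma~\ref{lem-unique-section} shows $A$ is separable.

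For the second equivalence it suffices in addition to show $(\lim_K p)^\dual \simeq \lim_k p(k)^\dual$ in $\widehat{\Cat}_\infty$. One inclusion is immediate since symmetric monoidal functors preserve dualizable objects; conversely, given $(X_k)$ with each $X_k$ dualizable in $p(k)$ with dual $X_k^\vee$, the functoriality of duality under symmetric monoidal functors means the $X_k^\vee$ and their evaluation/coevaluation maps assemble into duality data in the limit for $(X_k)$. Combined with the first equivalence, this yields the second by restricting both sides to those algebras whose underlying module is dualizable.

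The only genuinely technical point is the compatibility $s(A) \simeq \lim_k s(A_k)$, and this reduces quickly to Lemma~\ref{lem-module-limits} plus the fact that limits commute with limits; everything else is formal, so I do not expect a significant obstacle.
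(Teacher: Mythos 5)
Your proof is correct and follows essentially the same route as the paper: both arguments reduce, via Lemma~\ref{lem-calg-limits} and Lemma~\ref{lem-module-limits}, to showing $s(A)\simeq\lim_k s(A_k)$ and then invoking the contractibility criterion of Lemma~\ref{lem-unique-section}, with the second equivalence deduced from the first together with the fact that dualizable objects commute with limits (\cite{HA}*{Proposition 4.6.1.11}). The only cosmetic difference is that you sketch the proof that dualizable objects are preserved under limits rather than citing Lurie, and you make slightly more explicit the (correct, but worth noting) fact that a limit of contractible spaces is contractible.
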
 

\begin{proof} 
The second isomorphism follows from the first and the fact (\cite{HA}*{Proposition 4.6.1.11}) that the formation of dualizable objects commutes over limits.

Let us set $\C=\lim_K p$ and $\C_k=p(k)$ so that $\C=\lim_k \C_k$. We note that the first functor is fully faithful as it is a subfunctor of the equivalence  of Lemma~\ref{lem-calg-limits}. 
Now consider $A \in \CAlg(\C)$ such that all images $A_k\in\CAlg^\sep(\C_k)$ are separable. To conclude the proof, we need to show that $A$ is separable. We note that there is an equivalence $\mod{A\otimes A}(\C)\simeq \lim_k \mod{A_k \otimes A_k}(\C_k)$ by Lemma~\ref{lem-module-limits}. Thus we can calculate the mapping spaces in the limit category rather than in $\mod{A\otimes A}(\C)$. Since mapping spaces in limits are limits, (\ref{s(A)}) shows that the canonical map
\[
s(A)\xrightarrow{\sim}\lim_k s(A_k)
\]
is an equivalence. Since all $s(A_k)$ are contractible the limit $s(A)$ is non-empty (in fact, contractible by Lemma \ref{lem-unique-section}), and $A$ is separable.
\end{proof}

\begin{example}
Let $G$ be a finite group and consider a stable homotopy theory $\C$. Since $\Fun(BG, \C)=\lim_{BG}\C$ we immediately deduce that 
\[
\CAlg^{\sep}(\Fun(BG,\C))=\Fun(BG, \CAlg^{\sep}(\C)).
\]
\end{example}

We deduce some consequences for the degree of a commutative algebra in a limit.
\begin{corollary}
     Let $K$ be a simplicial set and $p \colon K \to \CAlg(\Pr)$ be a functor. Consider $A\in\CAlg^{\sep}((\lim_K p)^\dual)$ and for each $k\in K$, let $A_k\in\CAlg^{\mathrm{sep}}(p(k)^{\mathrm{dual}})$ be its image under the projection functor $\lim_K p \to p(k)$. Then 
     \[ \deg(A)=\sup_{k\in K}(\deg(A_k))\in\N\cup\{ \infty \}.\]
     
\end{corollary}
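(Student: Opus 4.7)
The plan is to reduce the statement to two ingredients: functoriality of the splitting tower, and joint conservativity of the projections on zero objects.

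For the first ingredient, I would invoke Remark~\ref{rem-tower-functorial}, which says that for every morphism $F \colon \D \to \D'$ in $\CAlg(\Pr)$ and every $B \in \CAlg^{\sep}(\D)$, there is a natural equivalence $F(B^{[n]}) \simeq F(B)^{[n]}$ for all $n \geq 0$. Applying this to the canonical projections $F_k \colon \C \to p(k)$, which belong to $\CAlg(\Pr)$ by construction of the limit, I get $F_k(A^{[n]}) \simeq (A_k)^{[n]}$ for every $k \in K$ and $n \geq 0$.

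For the second ingredient, by Lemma~\ref{lem-limits-preserved} the forgetful functor $\CAlg(\Pr) \to \widehat{\Cat}_\infty$ preserves the limit $\C = \lim_K p$, so the family of projections $\{F_k\}_{k \in K}$ is jointly conservative. Since each $F_k$ is exact (being colimit-preserving and stable), it preserves zero objects, and hence an object $X \in \C$ satisfies $X \simeq 0$ if and only if $F_k(X) \simeq 0$ in $p(k)$ for every $k \in K$.

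Combining the two, $A^{[n]} \simeq 0$ in $\C$ if and only if $(A_k)^{[n]} \simeq 0$ for every $k \in K$. Unwinding the definition of degree, this is the assertion that $\deg(A) < n$ if and only if $\deg(A_k) < n$ for every $k \in K$, for every $n \geq 0$, which is exactly the equality $\deg(A) = \sup_{k \in K} \deg(A_k)$ in $\N \cup \{\infty\}$ (both being finite and equal, or both being $\infty$). No single step is hard here; the only mildly delicate point is verifying the joint conservativity of the projections on zero objects, which is ensured by computing the limit underlying level via Lemma~\ref{lem-limits-preserved}.
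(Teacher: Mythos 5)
Your proof is correct and follows essentially the same route as the paper: invoke functoriality of the splitting tower to identify $F_k(A^{[n]})$ with $(A_k)^{[n]}$, and then use that an object of a limit of $\infty$-categories vanishes if and only if all its projections vanish. The paper states this in two sentences; your version just spells out the reduction from degree equality to vanishing of splitting-tower stages more explicitly.
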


\begin{proof}
 The functoriality of the splitting tower implies that, for each $n\ge 0$, we have $A^{[n]}=\{A_k^{[n]}\}_k$. Since an object in the limit is zero if and only if it is so at every vertex, the claim follows.   
\end{proof}

\begin{corollary}\label{cor-bounded-degree}
      Let $K$ be a simplicial set and $p \colon K \to \CAlg(\Pr)$ be a functor.
      Then the equivalence 
       \[
 \CAlg^\sep((\lim_K p)^\dual) \xrightarrow{\sim}  \lim_{k\in K}\CAlg^\sep(p(k)^\dual)
 \]
   from Proposition~\ref{prop-descent-sep} identifies algebras of finite degree with families of algebras of bounded degree.   
    
\end{corollary}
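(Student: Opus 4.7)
The plan is to derive the statement as an essentially immediate consequence of the preceding corollary. Given $A \in \CAlg^\sep((\lim_K p)^\dual)$ corresponding to a family $\{A_k\}_{k \in K}$ under the equivalence of Proposition~\ref{prop-descent-sep}, the preceding corollary gives
$$\deg(A) = \sup_{k \in K} \deg(A_k) \in \N \cup \{\infty\}.$$

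I would then observe the purely order-theoretic fact that a supremum of a family in $\N \cup \{\infty\}$ lies in $\N$ if and only if every term lies in $\N$ and there exists a common upper bound $N \in \N$ with $\deg(A_k) \leq N$ for every $k \in K$. Indeed, the existence of such an $N$ forces $\sup_k \deg(A_k) \leq N < \infty$, while conversely any finite supremum is itself such a bound.

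Combining the two bullet points: $A$ has finite degree precisely when the corresponding family $\{A_k\}_k$ is of bounded degree, so the equivalence of Proposition~\ref{prop-descent-sep} restricts to the claimed bijection. There is no real obstacle here, since the substantive content has already been absorbed into the preceding corollary---and ultimately into Remark~\ref{rem-tower-functorial}, which guarantees that the splitting tower of $A$ is computed coordinate-wise, together with Proposition~\ref{prop-descent-sep}, which ensures that vanishing of a separable algebra in the limit can be checked termwise.
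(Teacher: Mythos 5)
Your proof is correct and is exactly the argument the paper implicitly intends: the paper gives no proof for this corollary, treating it as an immediate consequence of the preceding one, and your two-step reduction (invoke $\deg(A)=\sup_k\deg(A_k)$, then note that a supremum in $\N\cup\{\infty\}$ is finite if and only if the family is bounded) is precisely that immediate consequence spelled out.
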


\section{Axiomatic Galois theory}\label{sec:axiomatic_galois}

In this section we recall the necessary background on axiomatic Galois theory following~\cite{Mathew2016}. We start by giving the definition of a Galois category and listing a few simple examples. We then introduce the notion of a (weak) finite cover which will be of key importance throughout this paper and recall that they can be organized into a Galois category. We also explain how to associate a profinite groupoid to any such  category via the  Galois correspondence. We end this section by discussing $G$-torsors in a Galois category and showing that Galois extensions are examples of such.

\begin{definition}
A \emph{Galois category} is a category $\C$ such that 
\begin{itemize}
    \item[(a)] $\C$ admits finite limits and coproducts, and the initial object $\emptyset$ is empty (see~\cite{Mathew2016}*{Definition 5.11});
    \item[(b)] coproducts are disjoint and distributive in $\C$ (see~\cite{Mathew2016}*{Definition 5.12, Definition 5.13});
    \item[(c)] Given an object $x$ in $\C$, there is an effective descent morphism~\cite{Mathew2016}*{Definition 5.14}
    $x' \to *$ into the terminal object of $\C$ and a decomposition $x'= x_1' \coprod \ldots \coprod x_n'$ such that each map $x \times x_i' \to x_i'$ 
    is isomorphic to the fold map $x \times x_i' \simeq \coprod_{S_i} x_i' \to x_i'$ for a finite set $S_i$.
\end{itemize}
The collection of Galois categories and functors between them (which are required to preserve coproducts, effective descent morphisms, and finite limits) can be organized into a $2$-category $\GalCat$. 
\end{definition}

\begin{example}\label{ex-finset_G}
Let $G$ be a finite group. The category $\FinSet_G$ of finite sets with a left $G$-action is a Galois category by~\cite{Mathew2016}*{Example 5.30}. The only axiom that requires verification is (c). Given a finite $G$-set $T$, there is an effective descent morphism $G \to \ast$ such that  $T \times G$ decomposes in $\FinSet_G$ as a disjoint union of finite copies of $G$ (since it is free).
\end{example}

\begin{example}\label{ex-fin-gpd}
 Let $\G$ be a finite groupoid. The category $\Fun(\G, \FinSet)$ is a finite product of Galois categories so it is again a Galois category.
\end{example}

Before giving the main examples of Galois categories, we will need to introduce some terminology. Recall from~\cite{Mathew2016}*{Definition 3.18} that $A \in\CAlg(\C)$ is called {\em descendable} if the thick tensor ideal generated by $A$ is all of $\C$. We give the following useful characterization of descendability which we will use repeatedly.  

\begin{lemma}\label{lem-desc}
 Let $\C$ be a $2$-ring which is rigid, i.e we have $\C=\C^\dual$. Then, the following are equivalent for $A\in \CAlg(\C)$:
 \begin{itemize}
     \item[(a)] $A$ is descendable in $\C$;
     \item[(b)] $\supp(A)=\Spc(\C)$.
 \end{itemize}
\end{lemma}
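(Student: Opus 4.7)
The plan is to prove the two implications separately. The direction (a)$\Rightarrow$(b) is essentially formal; all the content lies in (b)$\Rightarrow$(a), which will reduce, via Balmer's description of the spectrum of a Verdier quotient, to the basic fact that a non-trivial essentially small tt-category has non-empty Balmer spectrum.

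For (a)$\Rightarrow$(b), assume $A$ is descendable, so that $\1\in\thick^\otimes(A)$. For any $\p\in\Spc(\C)$, if we had $A\in\p$ then, since $\p$ is a thick $\otimes$-ideal, we would have $\thick^\otimes(A)\subseteq\p$, and in particular $\1\in\p$. This contradicts the definition of a prime. Hence $A\notin\p$ for every $\p$, which is precisely $\supp(A)=\Spc(\C)$.

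For (b)$\Rightarrow$(a), set $J:=\thick^\otimes(A)\subseteq\C$. Using that support is closed under shifts, cofibres, and retracts, and satisfies $\supp(X\otimes Y)\subseteq\supp(X)$, one checks that
\[
\supp(J)=\bigcup_{x\in J}\supp(x)=\supp(A),
\]
which equals $\Spc(\C)$ by hypothesis. Consequently no prime $\p$ of $\C$ can contain $J$: indeed, $J\subseteq\p$ is equivalent to $\p\notin\supp(J)$. Balmer's identification $\Spc(\C/J)\cong\{\p\in\Spc(\C):J\subseteq\p\}$ now gives $\Spc(\C/J)=\emptyset$, and since $\C/J$ is essentially small we may invoke \cite{Balmer}*{Lemma 2.2} to conclude that its unit is the zero object. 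In other words $\1\in J$, so $J=\C$ and $A$ is descendable.

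The only non-trivial step is the last one in (b)$\Rightarrow$(a), and it is exactly the place where rigidity plays a role: it ensures that we are working in a well-behaved tt-setting in which the Balmer spectrum detects the zero object, so that the emptiness of $\Spc(\C/J)$ can be converted into the algebraic conclusion $\1\in J$.
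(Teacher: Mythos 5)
Your proof is correct, but it takes a genuinely different route from the paper. The paper's proof is a two-line appeal to Balmer's classification of thick $\otimes$-ideals by Thomason subsets (\cite{Balmer}*{Theorem 4.10}), coupled with the fact that rigidity forces every thick $\otimes$-ideal to be radical (\cite{Balmer2}*{Proposition 2.4}); the bijection then immediately converts $\supp(A)=\Spc(\C)$ into $\thick^\otimes(A)=\C$. You instead argue ``from first principles'': after the easy direction, you pass to the Verdier quotient $\C/\thick^\otimes(A)$, use Balmer's identification of $\Spc$ of a quotient as the subspace of primes containing the ideal, and conclude from the nonemptiness of the spectrum of a nonzero essentially small tt-category (\cite{Balmer}*{Lemma 2.2}) that the quotient is zero. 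Both proofs are valid; yours is more elementary and avoids the classification theorem.

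One comment on your closing remark: the place you attribute to rigidity — ``the Balmer spectrum detects the zero object'' — is exactly \cite{Balmer}*{Lemma 2.2}, which holds for an arbitrary essentially small tt-category with no rigidity assumption. In fact, none of the steps in your argument use rigidity, so your proof actually establishes the lemma under weaker hypotheses than stated. In the paper's proof, by contrast, rigidity is essential: the classification theorem only classifies \emph{radical} thick $\otimes$-ideals, and rigidity is what guarantees (via \cite{Balmer2}*{Proposition 2.4}) that the radical condition is automatic. So the two proofs genuinely differ in whether they touch the rigidity hypothesis at all.
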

\begin{proof}
$A$ is descendable if and only if $A$ and $\1$ generate the same 
 thick ideal in $\C$. By the classification of thick ideals~\cite{Balmer}*{Theorem 4.10} (together with~\cite{Balmer2}*{Proposition 2.4}) this is equivalent to $\supp(A)=\Spc(\C)$. 
\end{proof}

 We are finally ready to introduce (weak) finite covers following~\cite{Mathew2016}*{Definitions 6.1 and 6.2}.
 
 \begin{definition}
 Let $\C\in\CAlg(\Pr)$ and some $A\in \CAlg(\C)$ be given.
 \begin{itemize}
 \item We say that $A$ is a \emph{weak finite cover} if there exists $A'\in \CAlg(\C)$ 
 such that 
 \begin{itemize}
 \item[(i)] $A'$ is faithful (i.e. $A'\otimes-:\C\to\C$ is faithful).
 \item[(ii)] $A'\otimes -\colon \C \to \C$ commutes with all limits.
 \item[(iii)] there is an equivalence 
 \[
 A \otimes A' \simeq \prod_{i=1}^n A'[e_{i}^{-1}] \qquad \mathrm{in} \;\;\CAlg(\mod{A'}(\C))
 \]
 where each $e_i$ is an idempotent of $A'$.\\
 The weak finite covers span a full subcategory 
 $\CAlg^\wcov(\C)\subseteq \CAlg(\C)$.
 \end{itemize}
 \item We say that $A$ is a \emph{finite cover} if there exists $A'\in\CAlg(\C)$  
 such that 
 \begin{itemize}
 \item[(i)] $A'$ is descendable in $\C$;
 \item[(ii)] there is an equivalence 
 \[
 A \otimes A' \simeq \prod_{i=1}^n A'[e_{i}^{-1}] \qquad \mathrm{in} \;\;\CAlg(\mod{A'}(\C))
 \]
 where each $e_i$ is an idempotent of $A'$.\\
 The finite covers span a full subcategory $\CAlg^\cov(\C)\subseteq \CAlg(\C)$.
 \end{itemize}
 \end{itemize}
 \end{definition}

As already mentioned at the beginning of this section, (weak) finite covers form a Galois category. 
\begin{proposition}\label{prop-wcov=cov}
 Let $\C\in\CAlg(\Pr)$ be given. Then, the $\infty$-categories
 \[
 \CAlg^\cov(\C)^{\op} \quad \mathrm{and}\quad \CAlg^\wcov(\C)^{\op}
 \]
 are Galois categories. We always have the containment $\CAlg^\cov(\C)\subseteq \CAlg^\wcov(\C)$, and this is an equality if $\1\in \C$ is compact.  
 Furthermore, any functor $F \colon \C \to \D$ in $\CAlg(\Pr)$ induces functors 
 \[
 \CAlg^\wcov(\C) \to \CAlg^\wcov(\D) \quad \mathrm{and} \quad \CAlg^\cov(\C) \to \CAlg^\cov(\D).
 \]
\end{proposition}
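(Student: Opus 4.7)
The plan is to verify the three Galois category axioms for $\CAlg^\cov(\C)^{\op}$ and $\CAlg^\wcov(\C)^{\op}$ in parallel, following the template of~\cite{Mathew2016}*{Proposition 6.5} for the finite-cover case and extending the arguments to the weak setting. Most axioms reduce to structural properties of commutative algebras under tensor and product, combined with Lemma~\ref{lem-idempotent-splitting} for manipulating idempotents. The containment and functoriality claims will then follow from standard properties of descendability.

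First I would check axioms (a) and (b). The terminal object of the opposite is the unit $\1$ (trivially a (weak) finite cover with $A' = \1$), while the initial object is the zero algebra $0$, which is empty in the opposite since any morphism into it forces its source to vanish. Finite coproducts in the opposite are finite products $A \times B$ in $\CAlg(\C)$; these remain (weak) finite covers by taking the joint witness $A' \otimes B'$, since both descendability and the faithful-plus-limit-preservation conditions are stable under tensor products of algebras, and the idempotent data combine componentwise. Finite limits in the opposite are relative tensor products $A \otimes_\1 B$, and the splittings combine multiplicatively after base change along $A' \otimes B'$. Axiom (b) reduces, after base change along a common witness, to disjointness and distributivity of finite products of idempotent localizations of an algebra, which is immediate from Lemma~\ref{lem-idempotent-splitting}.

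Axiom (c) is then essentially tautological from the definitions: the witness $A'$ in the definition of a (weak) finite cover is itself an effective descent morphism (this is automatic for descendable algebras by~\cite{Mathew2016}*{Proposition 3.23} and follows for faithful, limit-preserving algebras by the Barr--Beck--Lurie criterion), and the defining decomposition $A \otimes A' \simeq \prod_i A'[e_i^{-1}]$ provides precisely the required presentation after base change, once one reinterprets each $A'[e_i^{-1}]$ as an $i$-th factor of the coproduct decomposition of $A'$ in the opposite category.

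For the containment $\CAlg^\cov(\C) \subseteq \CAlg^\wcov(\C)$, a descendable $A'$ is faithful because $\1$ lies in the thick tensor ideal generated by $A'$, and $A' \otimes -$ preserves arbitrary limits thanks to the finite-index-of-nilpotence characterization of descendability; for equality when $\1$ is compact, any faithful $A'$ with limit-preserving tensor is descendable because compactness of $\1$ forces the cobar resolution of $\1$ by $A'$ to truncate in finitely many steps. Functoriality under $F \in \CAlg(\Pr)$ is routine: symmetric monoidal colimit-preserving functors send tensor products, idempotents and localizations $A'[e^{-1}]$ to the corresponding objects, and preserve both descendability (via the thick tensor ideal formulation) and the faithful-limit-preservation conditions. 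The main obstacle I anticipate is extracting limit-preservation of $A' \otimes -$ from descendability, since this is what makes the containment $\cov \subseteq \wcov$ non-trivial even before invoking compactness of the unit, and it requires carefully unpacking Mathew's finite-index-of-nilpotence criterion.
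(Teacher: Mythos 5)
The paper's own proof of this proposition consists of two citations: the first two claims are \cite{Mathew2016}*{Theorem 6.5} and the third is \cite{Mathew2016}*{Proposition 6.6}. Your proposal therefore does not trace the paper's argument at all but attempts to reprove Mathew's results from scratch; as a high-level summary of his strategy it is largely on target, but the functoriality step contains a genuine gap. You claim functoriality is ``routine'' because a symmetric monoidal colimit-preserving $F$ ``preserves the faithful-limit-preservation conditions,'' so that pushing forward the original witness $A'$ yields a witness for $F(A)$ in $\D$. This does not follow. Since $F$ need not preserve limits, there is no reason for $F(A')\otimes_\D -$ to commute with arbitrary limits merely because $A'\otimes_\C -$ does, and faithfulness of a general commutative algebra is likewise not formally preserved by $F$ (the projection formula $X\otimes G(Y)\simeq G(F(X)\otimes Y)$ needed to transport faithfulness along the adjunction holds for dualizable $X$, not for arbitrary algebras). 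Mathew's actual argument for \cite{Mathew2016}*{Proposition 6.6} replaces the given witness by a faithful $G$-Galois extension splitting $A$ (Proposition~\ref{prop-G-galois-are-G-torsor}); such a Galois extension is dualizable (Proposition~\ref{prop-galois-is-descendable}), its Tate construction vanishes, and these two facts are exactly what guarantee that its image under $F$ is again a faithful Galois extension witnessing that $F(A)$ is a weak finite cover. That substitution, not a formal pushforward of the original witness, is the content of the functoriality statement, and your sketch elides it.

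Two smaller points. First, verifying axiom~(a) requires all finite limits in $\CAlg^{\cov}(\C)^{\op}$ — that is, all finite colimits of commutative algebras, including coequalizers — and saying these ``are relative tensor products $A\otimes_\1 B$'' only addresses coproducts and pushouts. Second, the claimed equality $\CAlg^{\cov}(\C)=\CAlg^{\wcov}(\C)$ when $\1$ is compact is not obtained by truncating the cobar resolution along the originally given $A'$; Mathew's route is again to pass to a faithful $G$-Galois witness and observe that compactness of $\1$ forces it to be descendable (cf.\ Proposition~\ref{prop-galois-is-descendable}).
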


\begin{proof}
    The first two claims are proved in~\cite{Mathew2016}*{Theorem 6.5}. Implicitly, we are also claiming that the $\infty$-categories of (weak) finite covers are in fact 1-categories, which is consequence of~\cite{Mathew2016}*{Proposition 5.28}. The final claim is~\cite{Mathew2016}*{Proposition 6.6}.
\end{proof}

We now turn to discuss the Galois correspondence. To this end, we note that the collection of finite groupoids, functors and natural transformations can be organized into a $2$-category which we denote by $\Gpd_{\fin}$. 
Using Example~\ref{ex-fin-gpd}, we see that there is a functor 
\begin{equation}\label{eq:Galcor}
\Gpd_{\fin}^{\op}\to \GalCat, \quad \G \mapsto \Fun(\G, \FinSet).
\end{equation}
Now by~\cite{Mathew2016}*{Proposition 5.34}, $\GalCat$ admits filtered colimits and these are calculated at the level of underlying categories. 
It follows that (\ref{eq:Galcor}) uniquely extends to a colimit preserving functor 
\begin{equation}\label{functor}
\mathrm{Pro}(\Gpd_{\fin})^{\op}\simeq \Ind(\Gpd_{\fin}^{\op}) \to \GalCat. 
\end{equation}
\begin{theorem}[Galois correspondence]\label{thm-Galois-correspondence}
 The functor~(\ref{functor}) is an equivalence of $2$-categories.
\end{theorem}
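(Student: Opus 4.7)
The plan is to reduce the statement to (a groupoid version of) Grothendieck's classical Galois correspondence. The functor in question is, by construction, the left Kan extension of $\Gpd_{\fin}^{\op}\to\GalCat,\ \G\mapsto\Fun(\G,\FinSet)$ along the Yoneda embedding $\Gpd_{\fin}^{\op}\hookrightarrow\Ind(\Gpd_{\fin}^{\op})$. So it suffices to prove three things: (a) the restricted functor on finite groupoids is fully faithful as a $2$-functor; (b) its image consists of ``compact'' objects, i.e.\ Galois categories out of which mapping commutes with filtered colimits in $\GalCat$; (c) every Galois category can be written as a filtered colimit of ones of the form $\Fun(\G,\FinSet)$ with $\G$ finite.

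First I would establish (a). Decomposing any finite groupoid into its connected components $\G=\coprod_i BG_i$ and using that $\Fun(\coprod_i BG_i,\FinSet)\simeq\prod_i\FinSet_{G_i}$, fully faithfulness reduces to the case of two classifying spaces $BG$ and $BG'$ of finite groups. There, a Galois functor $\FinSet_G\to\FinSet_{G'}$ is determined by the image of $G/1$ together with a $G$-equivariant structure, which is in bijection with pairs $(T,\varphi\colon G'\to G)$ for $T$ a transitive $G'$-set; natural transformations between such functors correspond to conjugation, precisely recovering $\Fun(BG',BG)$. Claim (b) is a formal consequence: in the model $\Fun(\G,\FinSet)$ a Galois functor is essentially determined by its value on the finitely many generating objects $\G/x\in\Fun(\G,\FinSet)$, so mapping out of such a category commutes with the filtered colimits in $\GalCat$ (which, by \cite{Mathew2016}*{Proposition 5.34}, are computed on underlying categories).

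The hard part is (c), essential surjectivity. Given a Galois category $\CC$, I would first decompose it into connected Galois categories, using the Boolean algebra of idempotents/clopen subobjects of the terminal object $\ast$: every object breaks functorially as a finite coproduct of objects supported on a single connected component. For each resulting connected piece $\CC_\alpha$ one applies (the $\infty$-categorical incarnation of) Grothendieck's classical Galois correspondence: pick a fiber functor $F\colon\CC_\alpha\to\FinSet$ (existence follows from axiom (c), by passing to an effective descent cover and taking the resulting limit representation), set $G_\alpha:=\Aut(F)$ as a profinite group, and verify that $F$ lifts to an equivalence $\CC_\alpha\simeq\FinSet_{G_\alpha}^{\cts}$ that is written as a filtered colimit over the finite quotients of $G_\alpha$ of the categories $\FinSet_{G_\alpha/N}$. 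Assembling the $BG_\alpha$ yields a pro-finite groupoid $\G=\lim G$ (with $G$ ranging over the resulting truncations) whose image under the functor is $\CC$.

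The main obstacle is this last step: producing the fiber functor and checking that its automorphism group, equipped with the profinite topology coming from the finite quotients arising as stabilizers in $\CC_\alpha$, realizes $\CC_\alpha$ faithfully. Once this classical input is available, the rest is bookkeeping: the decomposition in (c) matches the structure of $\Ind(\Gpd_{\fin}^{\op})$ (whose objects are cofiltered systems of finite groupoids), and the $2$-categorical content on natural transformations is forced by (a) together with the fact that a pseudo-natural transformation between pro-finite groupoids is determined level-wise.
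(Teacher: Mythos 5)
The paper does not give a proof of this theorem: it is stated as a direct citation of \cite{Mathew2016}*{Theorem 5.36}. What you have written is therefore not comparable against an argument in the paper but against Mathew's proof, which it reconstructs faithfully in outline. The reduction to (a) fully faithfulness, (b) compactness of the image, and (c) generation under filtered colimits is exactly the correct deconstruction of the universal property of the Ind-completion and matches the structure of Mathew's argument (his Propositions 5.33 and 5.34 together with his Theorem 5.28 handle (a)/(b), and the connected-plus-Grothendieck step handles (c)).

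Two small imprecisions are worth flagging. First, in step (a) you assert that a Galois functor $\FinSet_G\to\FinSet_{G'}$ is determined by ``pairs $(T,\varphi\colon G'\to G)$ with $T$ a transitive $G'$-set.'' That is not quite right: the image $F(G)$ of the standard $G$-torsor need not be a transitive $G'$-set (e.g.\ the trivial homomorphism $\varphi$ gives $F(G)\simeq\coprod_{|G|}\ast$). The correct formulation is that $F(G)$ is a $(G',G)$-biset with free and transitive right $G$-action, and such bisets up to isomorphism are in bijection with group homomorphisms $G'\to G$ up to conjugacy; the datum $T$ is redundant once $\varphi$ is chosen. Second, in step (c) the passage from ``each connected piece $\CC_\alpha$ is $\FinSet^{\cts}_{G_\alpha}$'' to ``assembling the $BG_\alpha$ yields a profinite groupoid'' glosses over the fact that the set of connected components of $\CC$ is itself profinite rather than discrete, so the assembly is not a coproduct of the $BG_\alpha$ but a pro-object whose finite stages only see finitely many components at a time; this is precisely what forces you into $\mathrm{Pro}(\Gpd_{\fin})$ rather than a disjoint union of groups. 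Neither of these affects the validity of the strategy, and the hard analytic input — existence of a fiber functor and the identification of $\CC_\alpha$ with continuous finite $G_\alpha$-sets — is, as you note, the classical core of Grothendieck's theorem, which you are right to treat as an external ingredient.
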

\begin{proof}
 See~\cite{Mathew2016}*{Theorem 5.36}.
\end{proof}
Therefore we can make the following definition following~\cite{Mathew2016}*{Definition 6.8}.

\begin{definition}
 Let $\C$ be a stable homotopy theory. The \emph{Galois groupoid} $\pi_{\leq 1}(\C)$ of $\C$ is the profinite groupoid associated to the 
 Galois category $\CAlg^\cov(\C)^{\op}$ via the Galois correspondence. 
 The \emph{weak Galois groupoid} $\pi_{\leq 1}^\weak(\C)$ of $\C$ is the profinite groupoid associated 
 to the Galois category $\CAlg^\wcov(\C)^{\op}$. 
\end{definition}

\begin{remark}\label{rem-galois-connected}
 If $\C$ is connected, these Galois groupoids can be 
 represented by profinite groups, denoted by $\pi_1(\C)$ and $\pi_1^\weak(\C)$.
\end{remark}

For a profinite group $G$, we let $\FinSet_G^{\cts}$ denote the category of finite discrete sets with a continuous left $G$-action, i.e., a $G$-action that factors through $G/U$ for $U$ an open normal subgroup (which is automatically of finite index). In other words, \[\FinSet_G^{\cts} =\colim_{U \subseteq G}\FinSet_{G/U}.\]
By specializing the Galois correspondence to the case where $\C$ is connected and using Remark~\ref{rem-galois-connected}, we obtain the following result.
 
\begin{corollary}\label{cor-con-galois-cor}
Let $\C$ be a connected stable homotopy theory. Then there 
are equivalences of Galois categories 
\[
\CAlg^\cov(\C)^{\op} \simeq \FinSet^\cts_{\pi_1(\C)} \quad \mathrm{and} \quad \CAlg^\wcov(\C)^{\op} \simeq \FinSet^\cts_{\pi_1^\weak(\C)}.
\]
\end{corollary}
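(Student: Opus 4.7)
The plan is to deduce this corollary from the Galois correspondence (Theorem~\ref{thm-Galois-correspondence}) by unwinding what happens when the pro-groupoid is representable by a profinite group. By Remark~\ref{rem-galois-connected}, connectedness of $\C$ ensures that both $\pi_{\leq 1}(\C)$ and $\pi_{\leq 1}^\weak(\C)$ are representable by profinite groups $\pi_1(\C)$ and $\pi_1^\weak(\C)$; equivalently, they are representable by pro-objects in \emph{one-object} finite groupoids.

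First I would write $\pi_1(\C) = \lim_U \pi_1(\C)/U$ as the cofiltered limit of its finite quotients by open normal subgroups $U$. Viewing each quotient $\pi_1(\C)/U$ as a one-object groupoid $B(\pi_1(\C)/U) \in \Gpd_{\fin}$, this presents $\pi_{\leq 1}(\C)$ as a pro-object in finite groupoids. Since the functor~(\ref{functor}) is the colimit-preserving extension of $\G \mapsto \Fun(\G, \FinSet)$, and since $\Fun(BG, \FinSet) \simeq \FinSet_G$ for any finite group $G$ (a functor out of the one-object groupoid $BG$ is precisely a set with a $G$-action), applying the equivalence of Theorem~\ref{thm-Galois-correspondence} yields
\[
\CAlg^\cov(\C)^{\op} \simeq \colim_U \Fun(B(\pi_1(\C)/U), \FinSet) \simeq \colim_U \FinSet_{\pi_1(\C)/U} = \FinSet^\cts_{\pi_1(\C)},
\]
where the final equality is exactly the definition of $\FinSet^\cts_{\pi_1(\C)}$ recalled in the text immediately preceding the statement.

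The same argument applied verbatim with $\pi_1^\weak(\C)$ replacing $\pi_1(\C)$ and with $\CAlg^\wcov(\C)$ replacing $\CAlg^\cov(\C)$ gives the second equivalence. The only real content of this proof is Remark~\ref{rem-galois-connected}, i.e., the translation between connectedness of $\C$ and connectedness of the associated pro-groupoid (so that it is representable by a profinite \emph{group} rather than a general profinite groupoid). Once this is granted, the corollary is a routine unwinding of the Galois correspondence, so no substantial obstacle remains.
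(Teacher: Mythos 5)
Your argument is correct and is exactly the intended derivation: the paper leaves the proof implicit, stating only that the corollary follows by specializing the Galois correspondence to the connected case via Remark~\ref{rem-galois-connected} and the definition of $\FinSet^\cts_G = \colim_U \FinSet_{G/U}$. Your unwinding — presenting $\pi_{\leq 1}(\C)$ as a pro-object in one-object groupoids, identifying $\Fun(B(\pi_1(\C)/U),\FinSet)\simeq\FinSet_{\pi_1(\C)/U}$, and using that the functor~(\ref{functor}) is colimit-preserving — is precisely what the phrase ``specializing the Galois correspondence'' amounts to.
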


We finish this section by recalling the notion of $G$-torsor in a Galois category and list some examples. 

\begin{definition}[\cite{Mathew2016}*{Definition 5.31}]
Let $\C$ be a Galois category and let $G$ be a finite group. A $G$-\emph{torsor} in $\C$ is an object $x\in \C$ with a $G$-action such that there exists an effective descent morphism $y \to \ast$ such that $y\times x \to y \in \C_{/y}$, as an object with $G$-action, is given by 
\begin{equation}\label{G-torsor}
y \times x \simeq \coprod_G y
\end{equation}
where $G$-acts on the latter by permuting the summands. 
\end{definition}

\begin{example}\label{ex-G-torsor-G}
The Galois category $\FinSet_G$ admits a $G$-torsor which is given by $x=G$. Indeed the canonical right $G$-action makes $x$ into an object with $G$-action in $\FinSet_G$. Consider $y=G\in \FinSet_G$ with its canonical left $G$-action. 
We have already noted that there is an effective descent morphism $y \to \ast$ in $\FinSet_G$, and there is a canonical map
\[
\coprod_G y \to y \times x, \qquad y_g \mapsto ( y_g, g)
\]
which is a $G$-equivariant bijection with respect to both the left and right $G$-actions on the source and on the target. Therefore it provides an equivalence of $G$-objects in $\FinSet_G$.  
In fact we can say a little more: up to isomorphism $G$ is the unique $G$-torsor which is indecomposable in $\FinSet_G$. To see this consider a $G$-torsor $x$ and an effective descent morphism $y \to \ast$ such that~(\ref{G-torsor}) holds. If $x$ is indecomposable in $\FinSet_G$, then $x\simeq G/H$ for some subgroup $H\subseteq G$. However, the equivalence~(\ref{G-torsor}) implies that $G$ acts freely on $x$, forcing $H=1$. 
\end{example}

In order to give other examples of $G$-torsors, we recall the following definition.

\begin{definition}\label{def-galois-extension}
 Let $\C\in\CAlg(\Pr)$ be given. An object $A\in\CAlg(\C)$ with an action of a finite group $G$ in $\CAlg(\C)$ is a 
 \emph{$G$-Galois extension} if:
 \begin{itemize}
 \item The canonical map $ \1 \to A^{hG}$ is an equivalence;
 \item The map $h\colon A\otimes A \to \prod_{G} A$ in $\mod{A}(\C)$, given informally by 
 $a_1 \otimes a_2\mapsto (a_1 g(a_2))_{g\in G}$, is an equivalence.
 \end{itemize}
 We say that a $G$-Galois extension $A$ is \emph{faithful} if furthermore $A$ is faithful.
\end{definition}

The following result provides more examples of $G$-torsors. 

\begin{proposition}\label{prop-G-galois-are-G-torsor}
 Let $G$ be a finite group, and $\C\in\CAlg(\Pr)$. 
  \begin{itemize}
   \item[(a)] The $G$-torsors in the Galois category 
   $\CAlg^\wcov(\C)$ are precisely the faithful $G$-Galois extensions. Moreover, given $A \in \CAlg^\wcov(\C)$, there exists a faithful $G$-Galois 
   extension $B$ such that 
   \[
   B \otimes A\simeq\prod_{i=1}^k B[e_i^{-1}] \qquad \mathrm{in}\;\;\CAlg(\mod{B}(\C))
   \]
   where each $e_i$ is an idempotent of $B$.
   \item[(b)] The $G$-torsors in the Galois category $\CAlg^\cov(\C)$ are precisely the (faithful) $G$-Galois extensions which are descendable. Moreover, given $A \in \CAlg^\cov(\C)$, there exists a 
   (faithful) $G$-Galois extension $B$ which is descendable and such that 
   \[
   B \otimes A\simeq\prod_{i=1}^k B[e_i^{-1}] \qquad \mathrm{in}\;\;\CAlg(\mod{B}(\C))
   \]
   where each $e_i$ is an idempotent of $B$.
  \end{itemize}
\end{proposition}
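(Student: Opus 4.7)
My plan is to prove (a) and (b) in parallel, the arguments being formally identical: in (a) the witnessing object from the definition of weak finite cover must be faithful with $(-)\otimes B$ continuous, while in (b) it must be descendable. I focus on (a).

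For the direction ``Galois implies torsor'', let $A$ be a faithful $G$-Galois extension. The Galois equivalence $h\colon A\otimes A\simeq \prod_G A$ is already of the form required of a weak finite cover with test algebra $A'=A$, the idempotents being the projections in $\pi_0(\prod_G A)$; faithfulness holds by assumption, and continuity of $A\otimes-$ reduces, after faithful descent along $\1\to A$, to dualizability of $A$ over itself, which is immediate from the Galois splitting exhibiting $A$ as a finite free $A$-module. The $G$-torsor axiom in $\CAlg^\wcov(\C)^{\op}$ requires a weak cover $B$ and a $G$-equivariant equivalence $B\otimes A\simeq \prod_G B$ of $B$-algebras; taking $B=A$, this is precisely the Galois axiom.

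For the converse, let $A$ be a $G$-torsor witnessed by a faithful continuous $B\in\CAlg^\wcov(\C)$ with a $G$-equivariant equivalence $\phi\colon B\otimes A\simeq\prod_G B$ of $B$-algebras. I verify each Galois axiom by base-changing to $B$ and descending along faithfulness: (i) if $A\otimes X\simeq 0$ then via $\phi$ and projection onto a factor we get $B\otimes X\simeq 0$, hence $X\simeq 0$; (ii) $B\otimes -$ applied to $\1\to A^{hG}$ commutes with the $G$-fixed points limit by continuity, and under $\phi$ this map becomes $B\to (\prod_G B)^{hG}\simeq B$, the identity; (iii) tensoring the canonical map $h\colon A\otimes A\to\prod_G A$ with $B$ and applying $\phi$ twice on both sides identifies it with the evident isomorphism $\prod_G B\otimes_B \prod_G B\simeq \prod_{G\times G}B$.

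The moreover clause follows from the Galois correspondence of Theorem~\ref{thm-Galois-correspondence}: writing $\CAlg^\wcov(\C)^{\op}\simeq \Fun(\G,\FinSet)$ for the weak Galois groupoid $\G$, our algebra $A$ corresponds to a finite continuous object $X$. Choosing a finite quotient $G$ of $\G$ through which the action on $X$ factors, the free $G$-set $G$ is a $G$-torsor in $\FinSet_G$ by Example~\ref{ex-G-torsor-G}, and the standard isomorphism $G\times X\simeq\coprod_X G$ of $G$-sets over $G$ produces, via the correspondence and the torsor-equals-Galois equivalence just established, a faithful $G$-Galois extension $B$ with $B\otimes A\simeq\prod_{i=1}^k B[e_i^{-1}]$. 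Part (b) is proved identically with ``faithful and continuous'' replaced throughout by ``descendable'', noting that descendability automatically implies faithfulness (explaining the parenthetical ``(faithful)'' in the statement). The main obstacle I anticipate is the continuity step in the forward direction of (a)---dualizability of $A$ in $\C$---which requires a non-formal descent argument for dualizability along the faithful map $\1\to A$; for (b) the analogous issue does not arise since descendability is hypothesized directly.
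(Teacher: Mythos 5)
Your proof has the right overall architecture (Galois implies torsor, torsor implies Galois, then deduce the ``moreover'' splitting clause via the Galois correspondence), which does match the structure of Mathew's argument that the paper cites (Propositions 6.13, 6.14 and Corollaries 5.41, 6.15 of \cite{Mathew2016}). However, there are two genuine gaps, and the second one creates a circularity.

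First, in the forward direction you need $A\otimes-$ to be continuous, i.e.\ you need $A$ to be dualizable in $\C$. You flag this as an ``obstacle'' but leave it unresolved, and the heuristic you offer (``$A$ is a finite free $A$-module, descend along $\1\to A$'') does not close the gap: the only descent result for dualizability available at this point (Lemma~\ref{lem-detect-dualizable-obj}) already presupposes that the descent algebra is dualizable and faithful, so invoking it for $\1\to A$ is circular. This is precisely why dualizability of faithful $G$-Galois extensions is a separate, non-formal theorem (Proposition~\ref{prop-galois-is-descendable}, Mathew's Proposition 6.14), whose proof uses the Galois structure to directly build a duality datum rather than a descent argument.

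Second, in the converse direction you write ``let $A$ be a $G$-torsor witnessed by a faithful continuous $B\in\CAlg^\wcov(\C)$.'' The torsor axiom only supplies an effective descent morphism in the Galois category $\CAlg^\wcov(\C)^{\op}$, which does not a priori yield a $B$ that is faithful in $\C$ with $B\otimes-$ limit-preserving; nonzero weak finite covers need not be faithful (e.g.\ a nontrivial idempotent factor of $\1$). To justify the assumption you would naturally appeal to the ``moreover'' clause to replace $B$ by a $G$-Galois extension --- but your proof of the ``moreover'' clause uses the ``torsor-equals-Galois equivalence just established,'' so the argument is circular as written. The standard way to break the cycle is to first establish the abstract fact (Mathew's Corollary 5.41, a purely Galois-categorical statement) that every object of a Galois category can be trivialized by a torsor-shaped effective descent morphism, and only then run the descent verification for the Galois axioms. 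As written, your proof does not do this, so both the converse direction and the moreover clause are incomplete.

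Otherwise the descent-based verification of the three Galois axioms in the converse direction is the right idea, and, assuming one has a faithful dualizable witness in hand, the identifications in steps (i)--(iii) do go through (with some care about $G$-equivariance and the diagonal $B$-module structure on $\prod_G B$). For comparison, the paper simply delegates both statements to Mathew, so your proposal is a genuine attempt to re-derive the result rather than a restatement of the paper's argument.
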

 
 Recall that descendability implies faithfullness by~\cite{Mathew2016}*{Proposition 3.19}. This explains the use 
 of the parenthesis in part (b) above.
 
\begin{proof}
 The first part of (a) and (b) is proved in~\cite{Mathew2016}*{Proposition 6.13 and Corollary 6.15}. The rest follows from~\cite{Mathew2016}*{Corollary 5.41}.
\end{proof}

Finally we record the following result.

\begin{proposition}\label{prop-galois-is-descendable}
 Let $\C\in\CAlg(\Pr)$ be given, and $A\in\CAlg(\C)$ be a faithful $G$-Galois extension. Then $A\in\C^{\mathrm{dual}}$. 
 Moreover if $\1$ is compact in $\C$ then $A$ is descendable in $\C$.
\end{proposition}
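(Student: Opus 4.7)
Plan. The statement has two independent parts. The descendability claim under the hypothesis that $\1$ is compact is considerably easier, so I would handle it first.

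For descendability when $\1$ is compact: Proposition~\ref{prop-G-galois-are-G-torsor}(a) identifies the faithful $G$-Galois extension $A$ as a $G$-torsor in $\CAlg^\wcov(\C)$, and in particular $A\in\CAlg^\wcov(\C)$. When $\1$ is compact, the containment $\CAlg^\cov(\C)\subseteq\CAlg^\wcov(\C)$ recorded in the text just before is an equality. Hence $A$ is also a $G$-torsor in $\CAlg^\cov(\C)$, and the characterization in Proposition~\ref{prop-G-galois-are-G-torsor}(b) immediately yields that $A$ is descendable.

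For dualizability of $A\in\C$: the key input is the defining Galois isomorphism $h\colon A\otimes A\xrightarrow{\sim}\prod_G A$ in $\mod{A}(\C)$. Since $G$ is finite and $\C$ is stable, $\prod_G A\simeq\bigoplus_G A$, so $A\otimes A$ is a free $A$-module of rank $|G|$ and in particular dualizable over $A$. To descend this to dualizability in $\C$, I would exhibit $A$ as self-dual by constructing explicit duality data $\eta\colon\1\to A\otimes A$ and $\epsilon\colon A\otimes A\to\1$. For the evaluation, take $\epsilon$ to be the composite $A\otimes A\xrightarrow{\mu}A\xrightarrow{\mathrm{tr}}\1$, where $\mathrm{tr}$ is the canonical map $A\to A_{hG}$ followed by the norm $A_{hG}\to A^{hG}\simeq\1$ available for the finite group $G$ in the stable setting. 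For the coevaluation, take $\eta$ to be the transport along $h^{-1}$ of the ``diagonal'' map $\1\to\prod_G A$ whose $g$-component is the unit $\eta_A\colon\1\to A$ for every $g\in G$. The two zig-zag identities then reduce to statements that can be verified after applying $A\otimes-$, where they become identities between maps of free $A$-modules governed by the $G$-equivariance of $h$ and the defining property of the norm; by faithfulness of $A$ they descend back to $\C$.

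The principal obstacle is the verification of the zig-zag identities for the constructed duality data, which requires a careful bookkeeping of the $G$-equivariant structure. This computation is essentially classical, going back to Rognes' work on Galois extensions of structured ring spectra, so the cleanest presentation in the paper is likely to simply cite it. An alternative bypass, entirely in keeping with the logical flow of the excerpt, is to use Proposition~\ref{prop-G-galois-are-G-torsor}(a) to identify $A$ as a weak finite cover and then invoke Mathew's observation that every weak finite cover has dualizable underlying module, avoiding the explicit duality data altogether.
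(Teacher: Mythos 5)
The paper's own proof is simply a citation to Mathew (\cite{Mathew2016}*{Proposition 6.14 and Theorem 3.38}), so any attempt at a self-contained argument is necessarily a different route. Your primary approach to dualizability --- exhibiting explicit (self-)duality data from the Galois isomorphism $h\colon A\otimes A\xrightarrow{\sim}\prod_G A$ and a trace $A\to\1$ --- is precisely what Rognes and Mathew do, so conceptually it is sound. Two caveats, though. First, the phrase ``by faithfulness of $A$ they descend back to $\C$'' is not justified: in this paper (and in Mathew's), $A$ \emph{faithful} means that $A\otimes X\simeq 0$ forces $X\simeq 0$, which gives conservativity of $A\otimes -$ but does not give injectivity on hom-sets. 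So knowing that two maps agree after tensoring with $A$ does not, by itself, let you conclude that they agree in $\C$; the zig-zag verification must be carried out directly (as in Rognes) or one must invoke comonadic descent along $A$, which in turn needs $A\otimes-$ to preserve limits --- i.e.\ dualizability, the thing being proved.

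Second, your ``alternative bypass'' via the fact that every weak finite cover is dualizable is circular within the paper's architecture: that statement appears as Lemma~\ref{lem-detect-dualizable-obj}, and its proof explicitly invokes Proposition~\ref{prop-galois-is-descendable} to secure that the auxiliary Galois extension $B$ is dualizable. Similarly, for a faithful $G$-Galois extension $A$ to even lie in $\CAlg^\wcov(\C)$ one needs $A\otimes-$ to commute with limits, which again presupposes that $A$ is dualizable; so deducing dualizability from $A\in\CAlg^\wcov(\C)$ begs the question. Your route to descendability through $G$-torsors and Proposition~\ref{prop-G-galois-are-G-torsor}(b) has the same flavour of concern: it is a valid argument provided that Proposition~\ref{prop-G-galois-are-G-torsor} (really Mathew's Corollary~6.15) does not itself secretly rest on descendability of $A$, but in any case it is a genuinely different route from Mathew's Theorem~3.38, which establishes descendability directly from the nilpotence/thick-ideal framework without invoking the torsor machinery. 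The safest version of your proof would be to prove dualizability first, self-containedly, by verifying the zig-zags (noting they are identities of maps in the homotopy category and can be checked there using the explicit description of $h$), and only then pass to the descendability claim.
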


\begin{proof}
 See~\cite{Mathew2016}*{Proposition 6.14 and Theorem 3.38}.
\end{proof}

\section{Finite covers and separability}\label{finite_covers_and_sep}

The goal of this section is to show that finite covers are precisely the separable 
commutative algebras with underlying perfect module and locally constant and finite degree function. Let us first introduce some 
notation.
 
\begin{definition}
 Consider $\C \in \CAlg(\Pr)$.
 \begin{itemize} 
  \item We denote by $\CAlg^{\sep,\mathrm{f}}(\C)$ the full subcategory of $\CAlg(\C)$ spanned by the separable commutative algebras of finite degree. 
  \item  We denote by $\CAlg^{\sep,\cf}(\C^\dual)$ the full subcategory of 
 $\CAlg(\C^\dual)$ spanned by the separable commutative algebras with constant and 
 finite degree function.
 \item We denote by $\CAlg^{\sep,\lcf}(\C^\dual)$ the full subcategory of 
 $\CAlg(\C^\dual)$ spanned by the separable commutative algebras with locally constant and 
 finite degree function.
 \end{itemize}
\end{definition}
Our proof can be divided in four steps:
\begin{itemize}
    \item[(1)] We show that any weak finite cover is separable of finite degree and dualizable: 
    $\CAlg^{\wcov}(\C)\subseteq \CAlg^{\sep,\mathrm{f}}(\C^\dual)$, see Corollary~\ref{cor-wcov-sep-dual}. This step relies on results of Rognes which, for completeness, we recall below. 
    \item[(2)] We show that finite covers are separable, dualizable and have locally constant and finite degree function: $\CAlg^{\cov}(\C)\subseteq \CAlg^{\sep, \lcf}(\C^\dual)$ see Theorem~\ref{thm-(a)}.
    \item[(3)] If $\C$ is fin-connected, we show that any separable and dualizable commutative algebra with locally constant and finite degree function is a finite cover: $\CAlg^{\sep,\lcf}(\C^\dual)\subseteq \CAlg^{\cov}(\C)$ see Theorem~\ref{thm-(b)}.
    \item[(4)] Combining the previous steps, we deduce that $\CAlg^{\sep,\lcf}(\C^\dual)=\CAlg^{\cov}(\C)$, see Corollary~\ref{cor-sep-lcf=cov}.
\end{itemize}

To prove step one, we will first show that any faithful $G$-Galois extension is separable and has a dualizable underlying module, and then deduce the general result from this.

\begin{proposition}\label{prop-galois-is-separable}
 Let $A\in\CAlg(\C)$ be a $G$-Galois extension. Then $A$ is separable of degree $|G|$.
\end{proposition}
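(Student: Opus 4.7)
The plan has two separate components: establishing separability and computing the degree. I would handle them in this order, since the degree computation uses formal properties of the degree function which take separability as a hypothesis.

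For separability, I would use the defining equivalence $h\colon A\otimes A\xrightarrow{\sim}\prod_G A$ in $\CAlg(\mod{A}(\C))$ from Definition~\ref{def-galois-extension}. A direct computation with the formula $h(a_1\otimes a_2)=(a_1 g(a_2))_{g\in G}$ shows that $h$ transports the multiplication $\mu\colon A\otimes A\to A$ to the projection $\pi_e$ onto the coordinate indexed by the identity element $e\in G$. The $A$-bimodule structure on $A\otimes A$ induces on $\prod_G A$ the componentwise left action and a right action twisted so that $(x_g)\cdot a=(x_g\cdot g(a))_g$. With respect to these structures, the inclusion $\iota_e\colon A\hookrightarrow\prod_G A$ at the identity coordinate is a bimodule map (the twisted right action reduces to ordinary multiplication on the $e$-th slot and kills all other slots), and it is clearly a section of $\pi_e$. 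Conjugating by $h$ produces the required bimodule section of $\mu$, so $A$ is separable.

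For the degree, I would invoke Proposition~\ref{prop-properties-degree}, applied in the $2$-ring of dualizable objects (note $A\in\C^\dual$ by Proposition~\ref{prop-galois-is-descendable}). Since $A$ is faithful, part (a) with $B=A$ gives $\deg(A)=\deg(F_A(A))$, where $F_A(A)=A\otimes A$ viewed as an $A$-algebra via the left tensor factor. The Galois equivalence $h$ identifies this with $\1_A^{\times|G|}$ in $\CAlg(\mod{A}(\C)^\dual)$, so part (b) yields $\deg(F_A(A))=|G|$. Combining, $\deg(A)=|G|$.

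The main obstacle is bookkeeping the bimodule structure in the first step: the Galois equivalence $h$ is a priori stated only on one side, and one must verify that the two natural $A$-actions on $A\otimes A$ translate into the diagonal and twisted actions on $\prod_G A$ described above, so that $\iota_e$ is bilinear and not merely left-$A$-linear. Once this is in hand the degree calculation is formal from results already recorded in Section~\ref{sec:degree}.
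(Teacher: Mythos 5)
Your proposal is correct and follows essentially the same route as the paper's proof (which in turn is a reproduction of Rognes' argument): transport the bimodule structure along the Galois equivalence $h$ to $\prod_G A$, check that the inclusion $i_e$ at the identity coordinate is a bimodule section of $\mathrm{pr}_e$, and conjugate back to get $\sigma=h^{-1}\circ i_e$; then compute the degree via Proposition~\ref{prop-properties-degree}(a) and (b) after base change to $A$-modules. Your extra care in noting that $A\in\C^{\dual}$ (via Proposition~\ref{prop-galois-is-descendable}) so that the degree machinery applies in the $2$-ring $\C^{\dual}$ is a point the paper leaves implicit, but the substance is identical.
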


\begin{proof}
 The first claim is essentially~\cite{Rognes2008}*{Lemma 9.1.2}; we reproduce the argument here for 
 completeness. Since $\C$ is additive and $G$ is finite, we have a natural equivalence $\coprod_G A \simeq \prod_G A$. Therefore the canonical inclusion $\{e\}\to G$ of the neutral 
 element induces a map  
 \[
 i_e= A \to  \prod_G A.
 \] 
 Endow $\prod_G A$ with the $A$-bimodule structure making $h$ into an $A$-bimodule map. Informally, this is given by 
 \[
 a_0. (a_g)_{g}. a_1:=(a_0a_g g(a_1))_{g} \qquad  a_0,a_1 \in A \quad \mathrm{and} \quad (a_g)_g \in \prod_G A.
 \]
 Note that the map $i_e$ is also an $A$-bimodule map. The required $A$-bimodule section $\sigma$ to $\mu$ is described 
 by the following diagram
 \begin{equation}\label{eq-diagram-galois-sep}
 \begin{tikzcd}
 A \arrow[r, dotted, "\sigma"] \arrow[rd, "i_e"']& A\otimes A \arrow[d, "\simeq"', "h"] \arrow[r,"\mu"] 
 & A \\
  & \prod_G A \arrow[ru, "\mathrm{pr}_e"']& .
 \end{tikzcd}
 \end{equation}
 To calculate the degree note that $F_A(A)= \1_{A}^{\times |G|}$ in $\mod{A}(\C)$ and apply 
 Proposition~\ref{prop-properties-degree} (b) and (a).
\end{proof}

\begin{lemma}[\cite{Rognes2008}*{Lemma 6.2.6}]\label{lem-dual-nu-map}
 Let $A\in\Fun(BG, \CAlg(\C))$ for a finite group $G$, and let 
 $X\in\C$ be dualizable. Then the canonical map 
 \[
 \nu_X \colon  X \otimes A^{hG}\to (X \otimes A)^{hG}
 \]
 is an equivalence.
\end{lemma}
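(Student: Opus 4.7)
The plan is to use that dualizability of $X$ makes the functor $X \otimes (-)\colon \C \to \C$ preserve arbitrary limits, and in particular the limit defining $(-)^{hG}$.

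More precisely, I would first record that for $X$ dualizable with dual $DX$ and duality data consisting of evaluation $\mathrm{ev}\colon DX \otimes X \to \1$ and coevaluation $\mathrm{coev}\colon \1 \to X \otimes DX$, the functor $X \otimes (-)$ sits in an adjunction $DX \otimes (-) \dashv X \otimes (-)$ (since $X \otimes (-)\simeq \uHom(DX,-)$). Consequently $X \otimes (-)$ preserves all small limits. Since $A^{hG}$ is by definition the limit $\lim_{BG}A$ computed in $\C$, and $G$ acts on $X \otimes A$ only through the second factor (the $G$-action on $X$ being trivial), this yields a natural equivalence
\[
X \otimes A^{hG} \;=\; X \otimes \lim_{BG} A \;\xrightarrow{\;\simeq\;}\; \lim_{BG}(X \otimes A) \;=\; (X \otimes A)^{hG}.
\]

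The only remaining point is to identify this limit-commutation equivalence with the canonical map $\nu_X$. The map $\nu_X$ is, by construction, the adjoint under the universal property of $(X \otimes A)^{hG}$ of the $G$-equivariant map $X \otimes A^{hG} \to X \otimes A$ obtained by tensoring the canonical map $A^{hG}\to A$ with $X$. Tracing through, the limit-commutation equivalence is the unique map $X \otimes A^{hG} \to (X \otimes A)^{hG}$ whose composite with the projection to $X \otimes A$ is precisely $X \otimes (A^{hG}\to A)$, and hence agrees with $\nu_X$. Alternatively, for any $Y \in \C$ one can check at the level of mapping spectra that
\[
\Map_\C(Y, X \otimes A^{hG}) \simeq \Map_\C(Y \otimes DX, A^{hG}) \simeq \Map_\C(Y \otimes DX, A)^{hG} \simeq \Map_\C(Y, X \otimes A)^{hG} \simeq \Map_\C(Y, (X\otimes A)^{hG}),
\]
with the composite natural in $Y$ and induced by $\nu_X$.

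The main obstacle is the small amount of naturality bookkeeping required to match the adjoint-based definition of $\nu_X$ with the equivalence coming from limit preservation; everything else is formal once the adjunction $DX\otimes(-)\dashv X\otimes(-)$ is in hand.
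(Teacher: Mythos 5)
Your proposal is correct and follows essentially the same route as the paper: the paper's proof is exactly the one-line observation that $A^{hG}=\lim_{BG}A$ and that $X\otimes(-)$ preserves limits because $X$ is dualizable. Your elaboration of why dualizability gives limit-preservation (via $X\otimes(-)\simeq\uHom(DX,-)$ being a right adjoint) and the bookkeeping identifying the limit-commutation map with $\nu_X$ are both fine but go beyond what the paper records.
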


\begin{proof}
    The result follows from the fact that $A^{hG}=\lim_{BG} A$ and that $X \otimes -$ preserves limits since $X$ is dualizable. 
\end{proof}

The next result gives the first part of step one. 

\begin{lemma}[\cite{Rognes2008}*{Lemma 6.2.4}]\label{lem-detect-dualizable-obj}
 Let $A\in\CAlg(\C)$ be dualizable and faithful. 
 If $X\in \C$ is such that $A\otimes X$ is dualizable in $\mod{A}(\C)$, 
 then $X$ is dualizable. In particular any weak finite cover is dualizable.
\end{lemma}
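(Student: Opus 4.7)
The plan is to use the characterization that, in a presentable closed symmetric monoidal stable $\infty$-category such as $\C$, an object $X$ is dualizable if and only if the functor $\uHom_\C(X, -)\colon\C\to\C$ preserves small colimits (equivalently, the natural map $DX\otimes Y\to\uHom_\C(X, Y)$ is an equivalence for all $Y$). Given this, it suffices to show that for every diagram $(W_i)$ in $\C$ the canonical comparison map $\colim \uHom_\C(X, W_i)\to\uHom_\C(X,\colim W_i)$ is an equivalence.

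The strategy is to transport colimit-preservation across the free-forgetful adjunction $F_A\dashv U_A$. The hypothesis that $A\otimes X$ is dualizable in $\mod{A}(\C)$ says that $\uHom_A(A\otimes X, -)$ preserves colimits in $\mod{A}(\C)$. A direct Yoneda argument using $F_A\dashv U_A$ and $F_A$ symmetric monoidal supplies a natural identification $U_A\uHom_A(A\otimes X, M)\simeq\uHom_\C(X, U_A M)$ in $\C$. Applying $U_A$ and this identification to the diagram of free modules $(A\otimes W_i)$ yields an equivalence $\colim\uHom_\C(X, A\otimes W_i)\xrightarrow{\sim}\uHom_\C(X, A\otimes\colim W_i)$ in $\C$. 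Next, since $A$ is dualizable, there is the standard natural equivalence $\uHom_\C(X, A\otimes V)\simeq A\otimes \uHom_\C(X, V)$, obtained from the chain $\uHom(X,\uHom(DA, V))\simeq\uHom(X\otimes DA, V)\simeq\uHom(DA,\uHom(X, V))$. Substituting this into both sides converts the previous equivalence into $A\otimes\bigl(\colim\uHom_\C(X, W_i)\bigr)\xrightarrow{\sim} A\otimes\uHom_\C(X,\colim W_i)$, and the conservativity of $A\otimes -$, i.e.\ the faithfulness of $A$, yields the desired equivalence.

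For the \emph{in particular} clause, let $A$ be a weak finite cover with witness $A'$. Since $A'\otimes -$ preserves all colimits (automatic) and all limits (by definition of weak finite cover), the adjoint functor theorem furnishes a left adjoint $L$ to $A'\otimes -$. The functor $A'\otimes -$ is $\C$-linear, so $L$ is $\C$-linear as well, hence of the form $K\otimes -$ for the single object $K=L(\1)\in\C$; this is exactly the condition for $A'$ to be dualizable. The splitting $A\otimes A'\simeq\prod_{i=1}^n A'[e_i^{-1}]$ in $\mod{A'}(\C)$ exhibits $A\otimes A'$ as a finite coproduct of retracts of the unit $A'$, hence as a dualizable object of $\mod{A'}(\C)$. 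Applying the main part of the lemma with the dualizable and faithful algebra $A'$ and the object $X=A$ then gives dualizability of $A$ in $\C$.

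The one point that calls for care is verifying that the equivalence obtained after tensoring with $A$ really is the $A$-tensor of the canonical comparison map, so that conservativity of $A\otimes -$ may be invoked to descend. Since every intermediate identification is natural in $(W_i)$ and is induced either by the unit/counit of $F_A\dashv U_A$ or by the dualizability data of $A$, the required compatibility should reduce to a routine diagram chase; no deeper obstacle is anticipated.
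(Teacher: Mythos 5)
Your argument rests on two characterizations that do not hold for an arbitrary $\C\in\CAlg(\Pr)$, and since the lemma is stated in that generality this is a genuine gap rather than a stylistic shortcut. The first is the claim that $X$ is dualizable if and only if $\uHom_\C(X,-)$ preserves small colimits; the second, used in the \emph{in particular} clause, is that $A'\otimes-$ preserving limits forces $A'$ to be dualizable. Both fail in $\C=\Fun(\mathbb N,\Sp)$ with the Day convolution monoidal structure induced by addition on $\mathbb N$. There $\uHom(X,Y)_i=\prod_{j\ge 0}\uHom_\Sp(X_j,Y_{i+j})$ and the unit is $S^0$ concentrated in degree $0$. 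Take $X$ supported in degrees $0,1$ with both $X_0,X_1$ finite and $X_1\neq 0$: then $\uHom(X,-)$ preserves colimits (a finite product of colimit-preserving functors), and $X\otimes-$ preserves products; but $DX=\uHom(X,\1)$ is concentrated in degree $0$, and a direct computation shows that the triangle composite $X\to X\otimes DX\otimes X\to X$ vanishes in degree $1$, because the two maps hit disjoint Day-convolution summands. So $X$ is not dualizable — morally its dual would need a degree $-1$ piece, which $\mathbb N$ does not provide. Exactly the same example defeats the claim that limit-preservation of $A'\otimes-$ implies dualizability of $A'$; the intermediate step that the left adjoint $L$ of $A'\otimes-$ is $\C$-linear (rather than merely oplax $\C$-linear) is also not justified.

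The parts of your argument that \emph{are} correct — the natural identification $U_A\uHom_A(A\otimes X,M)\simeq\uHom_\C(X,U_AM)$, the swap $\uHom_\C(X,A\otimes V)\simeq A\otimes\uHom_\C(X,V)$ via dualizability of $A$, and descent along the conservative functor $A\otimes-$ — do establish that $\uHom_\C(X,-)$ preserves colimits, but that conclusion is strictly weaker than dualizability. The paper's proof sidesteps this: faithfulness and dualizability of $A$ make the free--forgetful adjunction $F_A\dashv U_A$ comonadic, so $\C\simeq\Tot(\mod{A^{\otimes\bullet+1}}(\C))$, and dualizability is checked cosimplicial-levelwise via \cite{HA}*{Proposition 4.6.1.11} using that each $A^{\otimes n}\otimes X$ is the base change of the dualizable $A\otimes X$. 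For the \emph{in particular} clause, the paper does not try to show the generic witness $A'$ is dualizable; instead it replaces $A'$ by a faithful $G$-Galois extension $B$ (Proposition~\ref{prop-G-galois-are-G-torsor}), which is dualizable by Proposition~\ref{prop-galois-is-descendable}, and then applies the first part with $B$ in place of $A$.
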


\begin{proof}
Consider the free-forgetful adjuction $F_A \colon \C \to \mod{A}(\C): U_A$. 
We note that $F_A=A\otimes-$ is conservative and preserves all limits since $A$ is faithful and dualizable.
Therefore by the $\infty$-categorical version of the Barr-Beck theorem~\cite{Mathew2016}*{Theorem 3.3}, the adjunction $(F_A, U_A)$ is comonadic. 
An application of~\cite{HA}*{Theorem 4.7.5.2} shows that $\C \simeq \Tot(\mod{A^{\otimes \bullet +1}}(\C))$. 
By~\cite{HA}*{Proposition 4.6.1.11}, the object $X$ is dualizable in $\C$ if and only if $A^{\otimes n}\otimes X$ is dualizable in $\mod{A^{\otimes n}}(\C)$ for all $n\geq 1$. 
This now follows from our assumption and the fact that the free functor preserves dualizable objects.

  For the second claim consider a weak finite cover $A$. Then by 
 Proposition~\ref{prop-G-galois-are-G-torsor} there exists a faithful $G$-Galois extension $B$ such 
 that $F_B(A)=\prod_{i=1}^k B[e_i^{-1}]$. 
 Note that $F_B(A)$ is dualizable in $\mod{B}(\C)$ as it is a finite product of retracts of the unit 
 objects. Moreover $B$ is dualizable by Proposition~\ref{prop-galois-is-descendable} and faithful 
 by definition. So by the previous paragraph $A$ is dualizable.
\end{proof}

\begin{lemma}[\cite{Rognes2008}*{Lemma 7.2.5}]\label{lem-kappa-equivalence}
 Let $A$ be a faithful $G$-Galois extension. For every subgroup $K\subseteq G$, the commutative 
 algebra $A^{hK}$ is dualizable in $\C$ and the canonical map 
 $\kappa \colon A^{hK} \otimes A^{hK}\to (A\otimes A)^{h(K\times K)}$ is an 
 equivalence.
\end{lemma}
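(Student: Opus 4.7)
The plan is to prove dualizability of $A^{hK}$ first, and then bootstrap this to the $\kappa$-equivalence by moving tensor products past the homotopy fixed points using Lemma~\ref{lem-dual-nu-map}.

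For dualizability, the faithfulness of $A$ together with Lemma~\ref{lem-detect-dualizable-obj} reduces the problem to checking that $A\otimes A^{hK}$ is dualizable in $\mod{A}(\C)$. Since $A$ itself is dualizable (Proposition~\ref{prop-galois-is-descendable}), Lemma~\ref{lem-dual-nu-map} applied to the $K$-object $A$ yields $A\otimes A^{hK}\simeq (A\otimes A)^{hK}$, where $K$ acts through its given action on the right-hand tensor factor. Under the Galois equivalence $h\colon A\otimes A\simeq \prod_G A$, this $K$-action becomes the permutation action of $K\subseteq G$ on the $G$-indexed product. Because $G$ is a free $K$-set under this action, a choice of coset representatives identifies $\prod_G A$ $K$-equivariantly with a product over $G/K$ of copies of the coinduced module $\mathrm{Map}(K,A)$. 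A standard Shapiro-type argument then identifies the $K$-homotopy fixed points with $\prod_{G/K} A$, a finite product of free rank-one $A$-modules, which is manifestly dualizable in $\mod{A}(\C)$.

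For the $\kappa$-equivalence, we now have both $A$ and $A^{hK}$ dualizable. Writing $A^{hK}=\lim_{BK}A$, two applications of Lemma~\ref{lem-dual-nu-map}, the first using dualizability of $A^{hK}$ in the second tensor slot and the second using dualizability of $A$, give
\[
A^{hK}\otimes A^{hK}\;\simeq\;\lim_{BK}\bigl(A\otimes A^{hK}\bigr)\;\simeq\;\lim_{BK}\lim_{BK}\bigl(A\otimes A\bigr)\;\simeq\;(A\otimes A)^{h(K\times K)},
\]
using $B(K\times K)\simeq BK\times BK$ for the last equivalence. Tracing through the construction identifies this composite with the canonical map $\kappa$ of the statement.

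The main subtlety is keeping the various $K$-actions straight. In the dualizability argument only one $K$ acts on $A\otimes A$, and one must verify that after commuting $A\otimes -$ past the limit this residual action is precisely the permutation action on $\prod_G A$ induced by $K\subseteq G$ acting on $G$ by translation. In the $\kappa$-argument, the two independent $K$-actions produced by the successive applications of Lemma~\ref{lem-dual-nu-map} act on different tensor factors, and one must check that they assemble to the diagonal-free $K\times K$-action on $A\otimes A$, so that the iterated limit indeed computes $(A\otimes A)^{h(K\times K)}$. Once this bookkeeping is in place, both claims follow formally from the Galois equivalence $A\otimes A\simeq \prod_G A$ and the commutation of tensor products with limits against dualizable objects.
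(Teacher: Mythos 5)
Your proof follows the paper's argument exactly: reduce dualizability of $A^{hK}$ to that of $A\otimes A^{hK}$ over $A$ via Lemma~\ref{lem-detect-dualizable-obj}, compute $A\otimes A^{hK}\simeq(\prod_G A)^{hK}\simeq\prod_{G/K}A$ using Lemma~\ref{lem-dual-nu-map} and the Galois equivalence, and then factor $\kappa$ through two applications of $\nu$. The only cosmetic difference is that you phrase the last computation via coinduction and a Shapiro-type identification and the $\kappa$-argument via iterated limits, whereas the paper invokes these same facts more tersely; the substance is identical.
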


\begin{proof}
 Note that $A$ is dualizable in $\C$ by Proposition~\ref{prop-galois-is-descendable}. Thus by 
 Lemma~\ref{lem-detect-dualizable-obj}, we can prove that $A^{hK}$ is dualizable in $\C$ 
 by checking that $A\otimes A^{hK}$ is dualizable in $\mod{A}(\C)$.  
 The equivalence $h\colon A \otimes A \to\prod_G A$ is $K$-equivariant with respect to the left 
 $K$-action on $A \otimes A$ via the right copy of $A$, and a right action on $\prod_G A$ via 
 $(a_g)_g. k=(a_{gk})_g$ for $k\in K$ and $(a_g)_g\in\prod_G A$. Then using Lemma~\ref{lem-dual-nu-map} we see that 
 \[
 A\otimes A^{hK}\simeq (A \otimes A)^{hK} \simeq (\prod_{G} A)^{hK} \simeq \prod_{G/K} A,
 \]
 where in the last equivalence we used that $K$ acts on $\prod_G A$ only via the indexing set $G$. 
 The right hand side is dualizable in $\mod{A}(\C)$ since it is a finite product of the unit 
 object. Therefore $A^{hK}$ is dualizable in $\C$.
 
 For the second claim we note that the map $\kappa$ factors as the composite 
 \[
 A^{hK}\otimes A^{hK} \xrightarrow{\nu} (A\otimes A^{hK})^{hK} \xrightarrow{\nu}(A\otimes A)^{h(K\times K)}.
 \]
 We have already noted that $A$ is dualizable in $\C$, and by the previous paragraph $A^{hK}$ is 
 dualizable too. Therefore the two maps above are equivalences by Lemma~\ref{lem-dual-nu-map} 
\end{proof}

\begin{proposition}\label{prop-fixed-points-galois-is-separable}
 Let $A$ be a faithful $G$-Galois extension. For every subgroup $K\subseteq G$, the commutative 
 algebra $B=A^{hK}$ is separable in $\C$. 
\end{proposition}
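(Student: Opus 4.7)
The plan is to produce a $B$-bimodule section of $\mu_B \colon B \otimes B \to B$ directly, by combining Lemma~\ref{lem-kappa-equivalence} with the Galois splitting $h \colon A \otimes A \xrightarrow{\sim} \prod_G A$ from the proof of Proposition~\ref{prop-galois-is-separable}.

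First, I would chain $\kappa$ and $h$ to obtain an $A$-bimodule equivalence $B \otimes B \simeq \Fun(G, A)^{h(K \times K)}$, where the $K \times K$-action on $\Fun(G, A)$ acts on $G$ by $(k_1, k_2) \cdot g = k_1 g k_2^{-1}$ and on values in $A$ via the Galois action of $k_1$, as recorded in the proof of Lemma~\ref{lem-kappa-equivalence}. Next, I would decompose $G$ into $K \times K$-orbits (that is, into double cosets in $K \backslash G / K$): the orbit of $e$ is $K$ itself, with stabilizer the diagonal $K_\Delta \subset K \times K$. This yields a splitting of $A$-bimodules with $K\times K$-action
\[ \Fun(G, A) \simeq \Fun(K, A) \oplus \Fun(G \setminus K, A), \]
and passing to homotopy fixed points gives a $B$-bimodule splitting $B \otimes B \simeq B' \oplus C$ with $B' := \Fun(K, A)^{h(K \times K)}$ and $C := \Fun(G \setminus K, A)^{h(K \times K)}$. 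Since $\Fun(K, A) \simeq \mathrm{CoInd}_{K_\Delta}^{K \times K}(A)$ with $K_\Delta$ acting on $A$ through its projection onto $K$ (acting by the Galois action), a Shapiro-type identification gives $B' \simeq A^{hK_\Delta} = A^{hK} = B$.

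To conclude I would verify that under the resulting $B$-bimodule splitting $B \otimes B \simeq B \oplus C$, the multiplication $\mu_B$ is the projection onto the first summand. Granting this, the inclusion of $B$ into $B \oplus C$ is the desired $B$-bimodule section of $\mu_B$, establishing separability. This last compatibility check is the main obstacle: it requires careful bookkeeping of the explicit equivalences $h$, $\kappa$, and the Shapiro isomorphism. Concretely, under $h$ the multiplication $\mu$ corresponds to evaluation at $e \in G$, and the claim reduces to showing that the composite $B \xrightarrow{\sim} \Fun(K, A)^{h(K\times K)} \to \Fun(K, A)^{hK_\Delta} \xrightarrow{\mathrm{ev}_e} A^{hK} = B$ is the identity, which is the unit--counit identity for the induction--restriction adjunction, while the contribution from $C$ vanishes because $e \notin G \setminus K$.
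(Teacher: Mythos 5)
Your proposal is correct and follows essentially the same route as the paper's proof: both reduce via $\kappa$ to $(A\otimes A)^{h(K\times K)}$, apply $h$ to identify this with $(\prod_G A)^{h(K\times K)}$, isolate the $(K\times K)$-invariant summand $\prod_K A$ (your $\Fun(K,A)$, the distinguished double coset of $e$), and then use the Shapiro-type identification $A^{hK}\simeq(\prod_K A)^{h(K\times K)}$ to produce the $B$-bimodule section of $\mu_B$. The paper phrases the orbit decomposition as the $(K\times K)$-equivariant retraction $\mathrm{pr}_K\circ i_K = \mathrm{id}$ rather than as a direct-sum splitting, but in an additive category this is the same thing, and the final "careful bookkeeping" you flag is exactly the commutative diagram the paper writes down.
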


\begin{proof}
 This is essentially~\cite{Rognes2008}*{Proposition 9.1.4}; we record the argument here for 
 completeness. The canonical map 
 $h\colon A \otimes A \to \prod_G A$ is $(K \times K)$-equivariant with respect to the 
 actions given by $(k,k'). (a \otimes a')=k.a \otimes k'.a'$ on the source, and 
 $(k,k'). (g \mapsto a_g)=(g \mapsto k .a_{k^{-1}gk'})$ on the target.  
 We note that there are maps
 \[
 \prod_K A \xrightarrow{i_K} \prod_G A \xrightarrow{\mathrm{pr_K}} \prod_K A
 \]
 whose composite is the identity.
 The first map is induced by the inclusion $K \to G$ and uses the equivalence 
 $\coprod_K A \simeq \prod_K A$ coming from the fact that $\C$ is additive and $K$ is 
 finite. The map $\mathrm{pr}_K$ is the projection onto the $K$-factors. 
 The maps $i_K$ and $\mathrm{pr_K}$ are $(K \times K)$-equivariant where the action on 
 $\prod_K A$ is defined in a way similar to that of $\prod_G A$.  
 The equivalence $A \to (\prod_K A)^{hK}$ induces an equivalence 
 $A^{hK}\to (\prod_K A)^{h(K\times K)}$ which makes the following diagram commute
 \[
 \begin{tikzcd}[column sep=large, row sep=large]
  A^{hK}\arrow[r, dotted] \arrow[d, dotted, "="] & A^{hK}\otimes A^{hK} \arrow[d,"\kappa", "\simeq"'] \arrow[r,"\mu"] & A^{hK}\arrow[d,"="] \\
  A^{hK} \arrow[d, "\simeq"] \arrow[r,dotted] & (A\otimes A)^{h(K\times K)} \arrow[d, "h^{h(K\times K)}", "\simeq"'] \arrow[r, "\mu^{h(K\times K)}"] & A^{hK} \arrow[d, "\simeq"]\\
  (\prod_K A)^{h(K\times K)} \arrow[r, "i_K^{h(K\times K)}"] & (\prod_G A)^{h(K\times K)} 
  \arrow[r,"\mathrm{pr}_K^{h(K\times K)}"] & (\prod_K A)^{h(K\times K)}  .
 \end{tikzcd}
 \]
 The map $\kappa$ is an equivalence by Lemma~\ref{lem-kappa-equivalence}, and the maps 
 $h^{h(K\times K)}$ and $\mathrm{pr}_K^{h(K \times K)}\circ i_K^{h(K\times K)}$ are 
 obtained from equivalences by passing to homotopy $(K\times K)$-fixed points. 
 A diagram chase then shows that $\mu$ admits a section as claimed. 
\end{proof}

\begin{theorem}\label{thm-fin-cover-sep}
  Let $\C\in \CAlg(\Pr)$ and consider a weak finite cover $A\in \CAlg(\C)$. Then there exist finite collections of finite groups $\{G_\alpha\}$ and $\{K_{j_\alpha}\}$ 
  with $K_{j_\alpha} \subseteq G_\alpha$, and faithful $G_\alpha$-Galois extension $B_\alpha \in \CAlg(\C)$ such that $A=\prod_{\alpha}\prod_{j_\alpha} B_\alpha^{hK_{j_\alpha}}$. 
  In particular, $A$ is separable of finite degree.
\end{theorem}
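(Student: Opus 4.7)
The plan is to apply the Galois correspondence (Theorem~\ref{thm-Galois-correspondence}) to transfer the question into the Galois category of finite continuous sets over the profinite groupoid $\G = \pi_{\leq 1}^\weak(\C)$, where the statement becomes an elementary orbit decomposition, and then translate the result back through the contravariant equivalence into $\CAlg^\wcov(\C)$.

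Via Theorem~\ref{thm-Galois-correspondence}, $\CAlg^\wcov(\C)^\op$ is equivalent to the Galois category associated to $\G$. Decomposing $\G$ into its connected components $BG_\alpha^{\mathrm{prof}}$ (each $G_\alpha^{\mathrm{prof}}$ a profinite group), the cover $A$ corresponds to a finite continuous functor $X\colon \G\to\FinSet$; finiteness forces $X$ to be nontrivial on only finitely many components, and on each such component $\alpha$ the continuous $G_\alpha^{\mathrm{prof}}$-set $X|_\alpha$ splits as a finite disjoint union of transitive orbits $G_\alpha^{\mathrm{prof}}/K_{j_\alpha}^{\mathrm{prof}}$ for open subgroups $K_{j_\alpha}^{\mathrm{prof}}\subseteq G_\alpha^{\mathrm{prof}}$. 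Since only finitely many subgroups appear per component, I can choose a common open normal subgroup $N_\alpha\subseteq G_\alpha^{\mathrm{prof}}$ contained in each $K_{j_\alpha}^{\mathrm{prof}}$, and work with the finite quotients $G_\alpha:=G_\alpha^{\mathrm{prof}}/N_\alpha$ and $K_{j_\alpha}:=K_{j_\alpha}^{\mathrm{prof}}/N_\alpha$. The torsor $G_\alpha\in\FinSet_{G_\alpha}$ from Example~\ref{ex-G-torsor-G} corresponds under the Galois equivalence to a faithful $G_\alpha$-Galois extension $B_\alpha\in\CAlg(\C)$ (supplied by Proposition~\ref{prop-G-galois-are-G-torsor}(a)), and the transitive orbit $G_\alpha/K_{j_\alpha}$, being the quotient of $G_\alpha$ by the right $K_{j_\alpha}$-action, corresponds to the homotopy fixed-point algebra $B_\alpha^{hK_{j_\alpha}}$. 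Since the correspondence exchanges disjoint unions with products, the decomposition of $X$ translates into $A = \prod_\alpha \prod_{j_\alpha} B_\alpha^{hK_{j_\alpha}}$ in $\CAlg(\C)$.

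For the ``in particular'' assertion, Proposition~\ref{prop-fixed-points-galois-is-separable} implies that each factor $B_\alpha^{hK_{j_\alpha}}$ is separable, and Example~\ref{ex-separable-product-retract} then gives separability of the finite product $A$. For finiteness of the degree, the proof of Lemma~\ref{lem-kappa-equivalence} yields $B_\alpha\otimes B_\alpha^{hK_{j_\alpha}}\simeq\prod_{G_\alpha/K_{j_\alpha}}B_\alpha$ in $\mod{B_\alpha}(\C)$; combined with the faithfulness of $B_\alpha$ and Proposition~\ref{prop-properties-degree}(a)--(b), this gives $\deg(B_\alpha^{hK_{j_\alpha}})=|G_\alpha/K_{j_\alpha}|$, after which Lemma~\ref{lem-product-fin-degree} ensures that $A$ has finite degree. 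The main subtlety I expect is the identification, under the contravariant Galois equivalence, of the transitive orbit $G_\alpha/K_{j_\alpha}$ with the homotopy fixed-point algebra $B_\alpha^{hK_{j_\alpha}}$; this requires unwinding the functoriality of the correspondence to transport the elementary group-theoretic orbit decomposition into a genuine product decomposition of $A$ in $\CAlg(\C)$.
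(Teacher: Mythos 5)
Your proof follows the same overall strategy as the paper: invoke the Galois correspondence (Theorem~\ref{thm-Galois-correspondence}), decompose into connected components and orbits, identify the torsor with a faithful Galois extension via Proposition~\ref{prop-G-galois-are-G-torsor}, and transport orbits to homotopy fixed points. The paper proceeds slightly differently on two points, neither of which affects correctness. First, rather than working directly with the profinite groupoid $\pi_{\leq 1}^{\weak}(\C)$ and then choosing common open normal subgroups $N_\alpha$ to reduce to finite quotients, the paper uses the filtered colimit description $\CAlg^\wcov(\C)^\op\simeq\colim_\lambda\Fun(B\G_\lambda,\FinSet)$ to land immediately at a single finite groupoid $\G_\lambda$ containing $A$; this avoids having to argue separately that the finite continuous $\G$-set factors through a finite quotient. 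Second, for finiteness of the degree, the paper does not compute $\deg(B_\alpha^{hK_{j_\alpha}})$ directly; it instead returns to the definition of weak finite cover — there is a faithful $B$ with $B\otimes A\simeq\prod_{i=1}^s B[e_i^{-1}]$ — and applies Proposition~\ref{prop-properties-degree}(a) together with Lemmas~\ref{lem-product-fin-degree} and~\ref{lem-locally-constant}. Your alternative, using the identification $B_\alpha\otimes B_\alpha^{hK_{j_\alpha}}\simeq\prod_{G_\alpha/K_{j_\alpha}}B_\alpha$ from the proof of Lemma~\ref{lem-kappa-equivalence} plus Proposition~\ref{prop-properties-degree}(a)--(b), is valid and has the small advantage of producing the exact value $\deg(B_\alpha^{hK_{j_\alpha}})=|G_\alpha/K_{j_\alpha}|$.
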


\begin{proof}
 By the Galois correspondence (Theorem~\ref{thm-Galois-correspondence}) we have an equivalence of Galois categories
 \[
 \CAlg^{\wcov}(\C)^{\op} \simeq \colim_{\lambda\in \Lambda} \Fun(B\G_{\lambda}, \Set)
 \]
 where $\Lambda$ is filtered and the $\G_{\lambda}$'s are finite groupoid. In particular, we can find a finite groupoid $\G=\G_{\lambda}$ for some $\lambda\in \Lambda$ and a functor of Galois categories
 \[
 F \colon \Fun(B\G, \FinSet) \to \CAlg^{\wcov}(\C)^{\op}
 \]
 whose essential image contains $A$. Say that $X \in \Fun(B\G, \FinSet)$ is such that $F(X)\simeq A$. We decompose the finite groupoid into its finite connected components $B\G\simeq \coprod_{\alpha} BG_{\alpha}$ and so rewrite $X=\coprod_{\alpha} X_\alpha$ where $X_\alpha$ is a finite $G_\alpha$-set. Thus, we have functors of Galois categories 
 \[
 F_\alpha \colon \FinSet_{G_{\alpha}} \to \CAlg^\wcov(\C)^{\op}, \quad X_\alpha \mapsto A_\alpha
 \]
 such that $\prod_\alpha A_\alpha\simeq A$. 
 The Galois category $\FinSet_{G_\alpha}$ has a natural $G_\alpha$-torsor which is given by $G_{\alpha}$, see Example~\ref{ex-G-torsor-G}. Passing this along $F_\alpha$ gives us a faithful $G_{\alpha}$-Galois extension $B_{\alpha}\in \CAlg(\C)$ by 
 Proposition~\ref{prop-G-galois-are-G-torsor}.
 We decompose $X_\alpha$ even further into its orbits $X_\alpha\simeq \coprod_{j_{\alpha}} G_{\alpha}/K_{j_{\alpha}}$ and note that $X_\alpha=\coprod_{j_\alpha}(G_{\alpha})_{hK_{j_\alpha}}$. Thus after applying $F_\alpha$ to this identity we find that $A_\alpha\simeq \prod_{j_\alpha} B_\alpha^{K_{j_\alpha}}$. Putting all together, $A \simeq \prod_{\alpha} \prod_{j_\alpha}B_\alpha^{K_{j_\alpha}}$. The fact that $A$ is separable follows from 
 Proposition~\ref{prop-fixed-points-galois-is-separable} and Example~\ref{ex-separable-product-retract}. To calculate the degree of $A$, recall that by definition of weak finite cover, there exists a faithful commutative algebra $B$ and an equivalence 
 \[
 B \otimes A \simeq \prod_{i=1}^s B[e_i^{-1}] \in \CAlg(\mod{B}(\C))
 \]
 for some idempotent elements $e_i$'s of $B$. By Proposition~\ref{prop-properties-degree}(a), the degree of $A$ agrees with the degree of $B \otimes A$ calculated in $\mod{B}(\C)$. Now the claim follows from Lemmas~\ref{lem-product-fin-degree} and~\ref{lem-locally-constant}.
\end{proof}

We are finally ready to prove step one.

\begin{corollary}\label{cor-wcov-sep-dual}
   For any $\C \in \CAlg(\Pr)$, there is an inclusion $$\CAlg^{\wcov}(\C)\subseteq \CAlg^{\sep, \mathrm{f}}(\C^\dual).$$
\end{corollary}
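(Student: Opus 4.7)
The plan is essentially to invoke the two preceding results in combination. Let $A\in\CAlg^{\wcov}(\C)$ be given. First, by Theorem~\ref{thm-fin-cover-sep}, $A$ can be written as a finite product $\prod_\alpha \prod_{j_\alpha} B_\alpha^{hK_{j_\alpha}}$ of homotopy fixed points of faithful Galois extensions, and the same theorem asserts that such $A$ is separable of finite degree. This takes care of the ``separable of finite degree'' aspect of the claim; no further work is needed.

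It remains only to observe that $A$ has dualizable underlying object, i.e.\ $A\in\C^{\dual}$. But this is exactly the last sentence of Lemma~\ref{lem-detect-dualizable-obj}: applied to the weak finite cover $A$, it produces a faithful Galois extension $B$ (via Proposition~\ref{prop-G-galois-are-G-torsor}) such that $F_B(A)\simeq\prod_{i=1}^k B[e_i^{-1}]$ is dualizable in $\mod{B}(\C)$ as a finite product of retracts of the unit, while $B$ itself is dualizable (Proposition~\ref{prop-galois-is-descendable}) and faithful; the detection principle then forces $A$ to be dualizable in $\C$.

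Combining the two yields $A\in\CAlg^{\sep,\mathrm{f}}(\C^{\dual})$, which is the desired inclusion. There is no real obstacle here: the substance of the argument has already been carried out in Theorem~\ref{thm-fin-cover-sep} and Lemma~\ref{lem-detect-dualizable-obj}, and the corollary is simply the conjunction of their conclusions.
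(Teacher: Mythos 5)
Your proposal is correct and matches the paper's proof exactly: the paper also proves this by combining Theorem~\ref{thm-fin-cover-sep} (separability and finite degree) with Lemma~\ref{lem-detect-dualizable-obj} (dualizability). Your additional unpacking of how the lemma is applied is faithful to the argument already given in the proof of that lemma.
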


\begin{proof}
Combine Lemma~\ref{lem-detect-dualizable-obj} and Theorem~\ref{thm-fin-cover-sep}.
\end{proof}

Before proving step two we will need the following result.

\begin{lemma}\label{lem-quotientbyG}
 Let $\C\in\CAlg(\Pr)$ and $A \in \CAlg(\C^\dual)$ be a faithful $G$-Galois extension which is descendable. Then the map 
 induced by the extension of scalars functor 
 \[
 \Spc(\mod{A}(\Ind(\C^\dual))^\omega) \to \Spc(\C^\dual)
 \]
 is isomorphic to the quotient map 
 \[
 \Spc(\mod{A}(\Ind(\C^\dual))^\omega) \to \Spc(\mod{A}(\C)^\dual)/G.
 \]
\end{lemma}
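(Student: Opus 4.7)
The plan is to exhibit $\pi : \Spc(\mod{A}(\Ind(\C^\dual))^\omega) \to \Spc(\C^\dual)$ as the quotient by the natural $G$-action. The $G$-action on $A$ in $\CAlg(\C)$ induces, via pullback of modules, an action of $G$ on $\mod{A}(\Ind(\C^\dual))^\omega$ by symmetric monoidal exact autoequivalences, and hence on its Balmer spectrum. Since the extension-of-scalars functor is $G$-equivariant with respect to the trivial action on $\C^\dual$, the map $\pi$ is constant on $G$-orbits and factors as $\pi = \bar\pi \circ q$, where $q$ is the quotient map. It will suffice to verify that $\bar\pi$ is a bijection, after which standard spectral-space arguments (using that $\pi$ is spectral and that the $G$-action is by homeomorphisms) will upgrade it to a homeomorphism.

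For surjectivity of $\pi$, descendability of $A$ gives $\supp(A) = \Spc(\C^\dual)$ by Lemma~\ref{lem-desc}. For each $\p \in \Spc(\C^\dual)$, the image of $A$ in the local category $\C^\dual_\p$ is therefore non-zero, so the $2$-ring $\mod{q_\p(A)}(\C^\dual_\p)$ is non-zero and admits a prime by~\cite{Balmer}*{Lemma 2.2}; this lifts back along the localization to a prime of $\mod{A}(\Ind(\C^\dual))^\omega$ mapping to $\p$.

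The heart of the argument is injectivity of $\bar\pi$, for which I would exploit the Galois equivalence $A \otimes A \simeq \prod_{g \in G} A$ of $\mod{A}(\C)$-algebras (Proposition~\ref{prop-G-galois-are-G-torsor}). Passing to compact $A$-modules and using that $\Spc$ sends finite products of $2$-rings to disjoint unions, this produces a $G$-equivariant identification
\[
\Spc(\mod{A\otimes A}(\Ind(\C^\dual))^\omega) \simeq \coprod_{g \in G} \Spc(\mod{A}(\Ind(\C^\dual))^\omega),
\]
under which the two tt-projections on the left become the identity and the $G$-action on the right. Given primes $\q_1, \q_2$ with $\pi(\q_1) = \pi(\q_2)$, I would produce a common lift $\mathfrak{r}$ in $\Spc(\mod{A\otimes A}(\Ind(\C^\dual))^\omega)$; the index $g$ of the component to which $\mathfrak{r}$ belongs then certifies $\q_2 = g \cdot \q_1$. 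The main obstacle is the existence of this common lift: it is equivalent to the square of Balmer spectra obtained by applying $\Spc$ to the base-change diagram being cartesian, which I expect will follow from a descent statement for the Balmer spectrum along the descendable separable algebra $A$ (in the spirit of~\cite{BSKS}), combined with the fact that $A \otimes A$ is itself descendable and separable via Example~\ref{ex-tensor-is-sep}.
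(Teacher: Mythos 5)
Your proposal attempts to reprove, from scratch, a Galois-descent theorem for the Balmer spectrum, while the paper simply cites an existing result: it invokes \cite{Bregje}*{Theorem 9.1} (applied to the tt-category $\mathrm{h}\C^\dual$), which states precisely that for a separable commutative algebra $A$ carrying a $G$-action satisfying the Galois/\'etale condition and with $\supp(A)=\Spc(\C^\dual)$, the induced map $\Spc(\mod{uA}(\mathrm{h}\C^\dual)) \to \Spc(\mathrm{h}\C^\dual)$ is the quotient by $G$. The paper then transports this along the equivalence $\mod{uA}(\mathrm{h}\C^\dual)\simeq \mathrm{h}\,\mod{A}(\Ind(\C^\dual))^\omega$ of Lemma~\ref{lem-tt-tower}. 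So the paper's proof is essentially a two-line reduction to prior work.

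Your argument has a genuine gap exactly where you flag it. Surjectivity and $G$-invariance of $\pi$ are fine (surjectivity follows from $\supp(A)=\Spc(\C^\dual)$ via~\cite{Balmer2018}*{Theorem 1.7}, which the paper cites elsewhere). But for injectivity of $\bar\pi$, you need that
\[
\Spc(\mod{A\otimes A}(\Ind(\C^\dual))^\omega) \longrightarrow \Spc(\mod{A}(\Ind(\C^\dual))^\omega)\times_{\Spc(\C^\dual)}\Spc(\mod{A}(\Ind(\C^\dual))^\omega)
\]
is \emph{surjective}; you phrase this as wanting the square to be ``cartesian,'' but cartesianness is stronger than what you need (surjectivity onto the fiber product suffices for producing the common lift $\mathfrak{r}$, and you only need that). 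Either way, this surjectivity onto the fiber product is precisely the hard content of descent for the Balmer spectrum along a separable descendable extension, and you leave it as an expectation rather than a proof. It does not follow formally from descendability plus Example~\ref{ex-tensor-is-sep}; one needs the actual going-up/going-down control over primes that comes with a separable extension. This is what~\cite{Bregje}*{Theorem 9.1} (building on the earlier Balmer spectrum descent results you allude to) establishes. To close the gap, you should either cite that theorem directly --- in which case the rest of your scaffolding becomes unnecessary, since the cited theorem already delivers the quotient identification --- or supply the surjectivity-onto-fiber-product argument in full, which would amount to reproving a nontrivial result. You would also need to justify the topological upgrade at the end (that a continuous $G$-invariant spectral surjection inducing a bijection on the quotient is actually a homeomorphism onto the quotient space), which you only gesture at.
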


\begin{proof}
From ~\cite{Bregje}*{Theorem 9.1} with ${\mathcal K}=\mathrm{h}\C^{\mathrm{dual}}$ there we obtain
\[ \Spc(\C^{\mathrm{dual}}) = \Spc(\mathrm{h}\C^{\mathrm{dual}}) \simeq \Spc(\mod{uA}(\mathrm{h}\C^\dual))/G,\]
using that $\mathrm{supp}(A)=\Spc(\C^{\mathrm{dual}})$ because $A$ is descendable. The claim then follows from $\mod{uA}(\mathrm{h}\C^\dual)\simeq \mathrm{h}\mod{A}(\Ind(\C^\dual))^\omega$, see Lemma~\ref{lem-tt-tower}.

\end{proof}

We now prove step two.

\begin{theorem}\label{thm-(a)}
 For any $\C \in \CAlg(\Pr)$, there is an inclusion 
 \[
 \CAlg^\cov(\C)\subseteq \CAlg^{\sep,\lcf}(\C^\dual).
 \]
\end{theorem}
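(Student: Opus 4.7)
The plan is as follows. Let $A \in \CAlg^{\cov}(\C)$ be a finite cover. By Corollary~\ref{cor-wcov-sep-dual}, we already know that $A$ is separable, dualizable, and has finite degree at every prime. So the only remaining task is to show that the degree function $\deg(A)\colon\Spc(\C^\dual)\to\Z$ is locally constant.

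First, by Proposition~\ref{prop-G-galois-are-G-torsor}(b), I may upgrade the defining data of a finite cover: there exists a descendable faithful $G$-Galois extension $B\in\CAlg(\C)$ and an equivalence
\[
B \otimes A \simeq \prod_{i=1}^{k} B[e_i^{-1}] \quad\text{in } \CAlg(\mod{B}(\C)),
\]
where the $e_i$ are idempotents of $B$. Consider the symmetric monoidal exact functor $F_B \colon \C^\dual \to \mod{B}(\Ind(\C^\dual))^\omega$, which is well-defined by Lemma~\ref{lem-extend-restrict-compacts} since $B \in \C^\dual$ by Proposition~\ref{prop-galois-is-descendable}. The image $F_B(A) = B \otimes A$ is a finite product of localizations of the unit at idempotents, so by Lemma~\ref{lem-locally-constant} it has locally constant and finite degree function on $\Spc(\mod{B}(\Ind(\C^\dual))^\omega)$.

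By Proposition~\ref{prop-diagram commutes}, the induced map $\varphi\colon \Spc(\mod{B}(\Ind(\C^\dual))^\omega)\to\Spc(\C^\dual)$ between Balmer spectra satisfies
\[
\deg(F_B(A))(\p) = \deg(A)(\varphi(\p))\quad\text{for all }\p.
\]
By Lemma~\ref{lem-quotientbyG}, $\varphi$ is identified with the quotient map by a finite group action, in particular it is a continuous open surjection. Local constancy descends along such maps: given a point $\q\in\Spc(\C^\dual)$, pick any preimage $\p\in\varphi^{-1}(\q)$ and an open neighbourhood $U\ni\p$ on which $\deg(F_B(A))$ is constant; then $\varphi(U)$ is an open neighbourhood of $\q$ on which $\deg(A)$ is constant with the same value, by the displayed equation above and surjectivity of $\varphi$ restricted to $U \to \varphi(U)$. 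Hence $\deg(A)$ is locally constant, and combining with Corollary~\ref{cor-wcov-sep-dual} we conclude $A \in \CAlg^{\sep,\lcf}(\C^\dual)$.

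The main point that required care is the descent of local constancy along $\varphi$; this works precisely because the quotient map by a finite group is open, so that the image of a trivializing neighbourhood upstairs is itself a trivializing neighbourhood downstairs.
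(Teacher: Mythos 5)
Your proof is correct and follows essentially the same approach as the paper's: reduce via Proposition~\ref{prop-G-galois-are-G-torsor}(b) to a descendable $G$-Galois extension $B$, note $\deg(F_B(A))$ is locally constant by Lemma~\ref{lem-locally-constant}, identify $\varphi$ with the quotient map via Lemma~\ref{lem-quotientbyG}, and use Proposition~\ref{prop-diagram commutes} to descend local constancy. Your final step is in fact slightly cleaner than the paper's: the paper separately verifies that $\deg(F_B(A))$ is constant on $G$-orbits by invoking the equivalence $F_B(A)\cdot g \simeq F_B(A)$, but this is already automatic from the identity $\deg(F_B(A)) = \deg(A)\circ\varphi$; your argument bypasses that redundancy and makes the relevant topological input (openness of the quotient map) explicit.
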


\begin{proof}
 Consider $A\in \CAlg^\cov(\C)$. Since a finite cover is a weak finite cover, we deduce that the underlying module of $A$ is dualizable by Lemma~\ref{lem-detect-dualizable-obj}, and that $A$ is separable of finite degree by Theorem~\ref{thm-fin-cover-sep}.\\ 
 It remains to show that the degree function is locally 
 constant. By Proposition~\ref{prop-G-galois-are-G-torsor}(b), there exists a 
 faithful $G$-Galois extension $B$ which admits descent such that 
 $F_{B}(A)=\prod_{i=1}^k B[e_i^{-1}]$. By Lemma~\ref{lem-quotientbyG} we 
 can identify the map 
 \[
 \varphi\colon \Spc(\mod{B}(\Ind(\C^\dual))^\omega)\to \Spc(\C^\dual)
 \]
 induced by the extension of scalars functor with the quotient map by the action of $G$. 
 Here an element $g \in G$ acts on a prime ideal $\p$ via $\p.g :=g^{-1}(\p)$.  
 By Proposition~\ref{prop-diagram commutes} we know that 
 \begin{equation}\label{eq-degree}
 \deg(F_B(A))(\p)=\deg(A)(\varphi(\p))
 \end{equation}
 for all $\p\in \Spc(\mod{B}(\Ind(\C^\dual))^\omega)$. Given primes $\p, \q$ with 
 $\varphi(\p)=\varphi(\q)$, there exists $g \in G$ such that $\p.g=\q$. Then multiplication 
 by $g$ defines an equivalence 
 \[
 \mod{B}(\Ind(\C^\dual))^\omega_{\q} \simeq \mod{B}(\Ind(\C^\dual))^\omega_\p
 \]
 which shows that $\deg(F_B(A))(\p)=\deg(F_B(A))(\p.g)$ for all $g \in G$, using that $F_B(A)g\simeq F_B(A)$. In other words 
 $\deg(F_B(A))$ is constant on the orbits of $G$. This fact together with~(\ref{eq-degree}) 
 implies that $\deg(A)$ is locally constant whenever $\deg(F_B(A))$ is so. 
 The latter degree function was shown to be locally constant 
 in Lemma~\ref{lem-locally-constant}.
\end{proof}

  The degree function of a {\em weak} finite cover need not be locally constant, see Example~\ref{ex-wc-not-lc-degree}.

We now turn to giving conditions under which a separable algebra is a finite cover.

\begin{proposition}\label{prop-separable-constant-degree-equal-split-ring}
 Consider $A \in \CAlg^{\mathrm{sep}}(\C^\dual)$. Then the following are equivalent:
 \begin{itemize}
 \item[(a)] $A$ has constant degree $ d\in \Z_{\geq 0}$.
 \item[(b)] There exists $B \in \CAlg(\C^\dual)$ descendable such that $F_B(A) = B^{\times d}$ 
 in $\mod{B}(\Ind(\C^\dual))^\omega$.
 \end{itemize} 
 In particular we have an inclusion $\CAlg^{\sep,\cf}(\C^\dual)\subseteq \CAlg^\cov(\C)$. 
\end{proposition}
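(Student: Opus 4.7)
The plan is to prove (b) $\Rightarrow$ (a) directly, (a) $\Rightarrow$ (b) by induction on $d$, and then to deduce the last sentence as a formal consequence of (b).

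For (b) $\Rightarrow$ (a): since $B$ is descendable it is in particular faithful, and the comparison map $\varphi \colon \Spc(\mod{B}(\Ind(\C^\dual))^\omega) \to \Spc(\C^\dual)$ is surjective because its image equals $\supp(B) = \Spc(\C^\dual)$ by Lemma~\ref{lem-desc}. Combining Proposition~\ref{prop-diagram commutes} with Example~\ref{ex-locally-constant-degree-function} will give
\[ \deg(A)(\varphi(\p)) = \deg(F_B(A))(\p) = \deg(B^{\times d})(\p) = d \]
for every $\p$, and surjectivity of $\varphi$ then forces $\deg(A)$ to be constantly $d$.

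For (a) $\Rightarrow$ (b), I will induct on $d \geq 0$. The base cases $d = 0$ (take $B = \1$, since $A \simeq 0$) and $d = 1$ (take $B = A$, which is descendable by Lemma~\ref{lem-descend-degree-geq1} and satisfies $A \otimes A \simeq A$ because $A^{[2]} = 0$) are direct. For the inductive step with $d \geq 2$, Corollary~\ref{cor-sep} produces a decomposition $A \otimes A \simeq A \times A'$ as $A$-algebras with $A'$ separable; since $A$ is dualizable, $A \otimes A$ is dualizable in $\mod{A}(\C)$, so its summand $A'$ is too. A short degree computation combining Proposition~\ref{prop-properties-degree}, Proposition~\ref{prop-diagram commutes} and Lemma~\ref{lem-product-fin-degree} (using that $\deg(F_A(A))$ is identically $d$ and that $\deg(\1_A) = 1$) will show $\deg(A')$ is constantly $d-1$ in $\mod{A}(\C)$. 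I then apply the induction hypothesis inside the stable homotopy theory $\mod{A}(\C) \in \CAlg(\Pr)$ to obtain a descendable $B \in \CAlg(\mod{A}(\C)^\dual)$ with $F_B(A') \simeq B^{\times(d-1)}$ in $\mod{B}$.

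Viewing $B$ as an algebra in $\C$ via the forgetful functor, an inductive unwinding of the splitting tower (Construction~\ref{con-tt-tower-2ring}) identifies $B$ with $A^{[d]}$ in $\CAlg(\C)$. Then $\deg(A) = d$ together with Lemma~\ref{lem-descend-degree-geq1} yields $\supp(B) = \Spc(\C^\dual)$, so $B$ is descendable in $\C^\dual$; the finite tower of thick-tensor-ideal steps witnessing this transports to $\C$, so $B$ is descendable there as well. Base-changing the equivalence $A \otimes A \simeq A \times A'$ along the map $A \to B$ then gives
\[ B \otimes A \simeq B \otimes_A (A \times A') \simeq B \times F_B(A') \simeq B \times B^{\times(d-1)} \simeq B^{\times d}, \]
as desired. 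The inclusion $\CAlg^{\sep,\cf}(\C^\dual) \subseteq \CAlg^\cov(\C)$ follows immediately, because $B^{\times d}$ can be written as $\prod_{i=1}^d B[1^{-1}]$, matching the definition of a finite cover with witness $B$.

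The main obstacle will be bookkeeping the local degree function of $A'$ in $\mod{A}(\C)$ to confirm it is constantly $d-1$, and verifying that descendability of $B$ produced inside $\mod{A}(\C)^\dual$ transports to descendability in $\C$. Both rely on the functoriality of the splitting tower (Remark~\ref{rem-tower-functorial}) and the inductive identification $B \simeq A^{[d]}$.
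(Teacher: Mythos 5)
Your proof is correct in outline, but takes a genuinely different route on the direction (a)$\Rightarrow$(b). The paper's proof is short: it sets $B=A^{[d]}$, deduces descendability from Lemma~\ref{lem-descend-degree-geq1} together with Lemma~\ref{lem-desc}, and then simply cites~\cite{Balmer2014}*{Theorem 3.9(c)} for the splitting $F_B(A)\simeq B^{\times d}$. Your induction on $d$ re-derives that theorem of Balmer from scratch, using Corollary~\ref{cor-sep} to peel off one factor, the additivity of local degree on products (Lemma~\ref{lem-product-fin-degree}), and the shift in the splitting tower $(A^{[2]})^{[n]}_{\mod{A}(\C)}\simeq A^{[n+1]}$ under passage to $A$-modules. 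This makes the argument more self-contained at the cost of length; the base-change computation $B\otimes_\1 A\simeq B\otimes_A(A\otimes_\1 A)\simeq B\times F_B(A')$ does close the induction correctly. Your handling of the ``In particular'' clause via $B^{\times d}=\prod_{i=1}^d B[1^{-1}]$ is fine.

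Two points deserve tightening. First, in (b)$\Rightarrow$(a) you justify surjectivity of $\varphi$ by asserting that its image equals $\supp(B)$ and attributing this to Lemma~\ref{lem-desc}; but that lemma only identifies $\supp(B)$ with $\Spc(\C^\dual)$ once $B$ is descendable, and says nothing about the image of the induced map on spectra. The identification of that image with $\supp(B)$ is a nontrivial theorem, for which the paper cites~\cite{Balmer2018}*{Theorem 1.7}; you should do likewise. Second, as stated your induction hypothesis would only grant you \emph{some} descendable $B'\in\CAlg(\mod{A}(\C)^\dual)$ with $F_{B'}(A')\simeq(B')^{\times(d-1)}$, not the specific choice $B'=(A')^{[d-1]}$, so the subsequent identification with $A^{[d]}$ is not immediate. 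You should strengthen the inductive statement to assert that one may always take $B=A^{[d]}$; with that fixed, the shift in the tower identifies $B'$ with $A^{[d]}$ and the rest of the argument is sound. The remark about ``transporting the finite tower of thick-tensor-ideal steps'' is also not needed: once $\supp(A^{[d]})=\Spc(\C^\dual)$ you get descendability in $\C^\dual$ by Lemma~\ref{lem-desc}, and this trivially implies descendability in $\C$ since $\1\in\thick^\otimes_{\C^\dual}(B)\subseteq\thick^\otimes_{\C}(B)$.
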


 This is a slight modification of~\cite{Bregje}*{Proposition 8.4}.
 
\begin{proof}
 Assume that (a) holds and put $B=A^{[d]}\in\CAlg(\C^{\mathrm{dual}})$. We claim that $B$ is descendable. 
 By part (a) we have $\deg(A)(\p)=d$ for all $\p \in \Spc(\C^\dual)$ and so 
 $\supp(B)=\Spc(\C^\dual)$ by the first part of Lemma~\ref{lem-descend-degree-geq1}.  By Lemma~\ref{lem-desc}, this is equivalent to $B$ being descendable. 
 It is only left to show that $F_B(A) = B^{\times d}$. 
 This is~\cite{Balmer2014}*{Theorem 3.9(c)}.\\ 
  Conversely if $B$ is descendable then $\supp(B)=\Spc(\C^\dual)$. It follows from ~\cite{Balmersurj}*{Theorem 1.7} that the map 
 $\Spc(F_B) \colon \Spc(\mod{B}(\Ind(\C^\dual))^\omega) \to \Spc(\C^\dual)$ is surjective. Then $A$ has constant degree if and only if $F_B(A)$ has constant degree 
 by~\cite{Bregje}*{Lemma 8.3}. By direct verification $B^{\times d}$ has constant degree equal to $d$, see 
 Example~\ref{ex-locally-constant-degree-function}.
\end{proof}

We now prove step three.

\begin{theorem}\label{thm-(b)}
 Let $\C \in \CAlg(\Pr)$ be fin-connected. 
 Then there is an inclusion 
 $\CAlg^{\sep,\lcf}(\C^\dual)\subseteq \CAlg^\cov(\C)$. 
\end{theorem}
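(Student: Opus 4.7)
The plan is to reduce to the constant-degree case already handled by Proposition~\ref{prop-separable-constant-degree-equal-split-ring}, exploiting the trivial observation that a locally constant function on a connected space is constant. By Proposition~\ref{prop-char-connected-Balmer} (and its proof), fin-connectedness of $\C$ yields a decomposition $\C \simeq \prod_{i=1}^n \C_i$ in $\CAlg(\Pr)$ with each $\C_i$ connected, inducing the decomposition $\Spc(\C^\dual) \simeq \bigsqcup_{i=1}^n \Spc(\C_i^\dual)$ into its non-empty connected components. Under this product decomposition the given $A$ corresponds to a tuple $(A_i)_{i=1}^n$ with $A_i \in \CAlg(\C_i^\dual)$.

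I would then check that each $A_i$ lies in $\CAlg^{\sep,\cf}(\C_i^\dual)$. Separability and dualizability are preserved componentwise, the former by Example~\ref{ex-separable-product-retract} and the latter because the projections $\C \to \C_i$ are symmetric monoidal exact. Applying the functoriality of the degree function (Proposition~\ref{prop-diagram commutes}) to the projection $\C^\dual \to \C_i^\dual$ identifies $\deg(A_i)$ with the restriction of $\deg(A)$ to $\Spc(\C_i^\dual)$; since this subspace is connected, the restriction is not just locally constant but in fact constant with some finite value $d_i$. Proposition~\ref{prop-separable-constant-degree-equal-split-ring} then furnishes, for each $i$, a descendable $B_i \in \CAlg(\C_i^\dual)$ together with an equivalence $F_{B_i}(A_i) \simeq B_i^{\times d_i}$ in $\CAlg(\mod{B_i}(\Ind(\C_i^\dual))^\omega)$.

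Finally, I would glue these local witnesses. Set $B := (B_i)_i \in \CAlg(\C)$; since $\supp(B) = \bigsqcup_i \supp(B_i) = \bigsqcup_i \Spc(\C_i^\dual) = \Spc(\C^\dual)$, the algebra $B$ is descendable in $\C$ by Lemma~\ref{lem-desc}. For each $i$, let $f_i \in \pi_0(B) \simeq \prod_j \pi_0(B_j)$ denote the idempotent with a $1$ in the $i$-th slot and zeros elsewhere; then by Lemma~\ref{lem-idempotent-splitting}, $B[f_i^{-1}]$ is the summand of $B$ concentrated in the $i$-th factor of the product decomposition of $\C$. A componentwise computation, legitimate via the product decomposition $\mod{B}(\C) \simeq \prod_i \mod{B_i}(\C_i)$ furnished by Lemma~\ref{lem-module-limits}, then yields
\[
F_B(A) \simeq \prod_{i=1}^n \prod_{j=1}^{d_i} B[f_i^{-1}]
\]
in $\CAlg(\mod{B}(\C))$, exhibiting $A$ as a finite cover. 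I expect this last componentwise identification to be the only mildly delicate step, but it is essentially automatic given the compatibility of free-module functors with the product decomposition of $\CAlg(\Pr)$, cf.\ Remark~\ref{rem-transport}.
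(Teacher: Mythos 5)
Your proof is correct and follows essentially the same route as the paper's: decompose $\C\simeq\prod_i\C_i$ into connected factors, use Proposition~\ref{prop-diagram commutes} to identify $\deg(A_i)$ with the restriction of $\deg(A)$ to the connected component $\Spc(\C_i^\dual)$, observe that a locally constant function there is constant, and invoke Proposition~\ref{prop-separable-constant-degree-equal-split-ring}. The only difference is cosmetic: the paper cites the decomposition $\CAlg^\cov(\C)=\prod_i\CAlg^\cov(\C_i)$ from \cite{Mathew2016}*{Proposition 7.2}, which makes the reduction componentwise and silent about the witness, whereas you explicitly glue the descendable algebras $B_i$ into $B=(B_i)_i$ and write out $F_B(A)\simeq\prod_i B[f_i^{-1}]^{\times d_i}$; your hand-built gluing is just an unrolling of that product decomposition of finite covers. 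One small point of hygiene to keep in mind is that Proposition~\ref{prop-separable-constant-degree-equal-split-ring} produces its splitting in $\mod{B_i}(\Ind(\C_i^\dual))^\omega$, while the finite-cover condition asks for an equivalence in $\CAlg(\mod{B}(\C))$; this passage is legitimate because the canonical symmetric monoidal functor $\mod{B}(\Ind(\C^\dual))^\omega\to\mod{B}(\C)$ is fully faithful, but it is worth recording explicitly so the comparison of categories is not left implicit.
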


\begin{proof} 
 By assumption $\C\simeq \prod_{i=1}^n \C_i$ where each $\C_i$ is connected. 
 We then get decomposition
  \[
  \CAlg^\sep (\C^\dual)\simeq \prod_{i=1}^n \CAlg^\sep(\C_i^\dual) \quad \mathrm{and} \quad
  \CAlg^\cov(\C)= \prod_{i=1}^n \CAlg^\cov(\C_i)
  \]
  by Proposition~\ref{prop-descent-sep}, cf.~\cite{Mathew2016}*{Proposition 7.2}.  
  The canonical projection maps $\pi_i \colon \C^\dual \to \C_i^\dual$ induce maps on Balmer spectra 
  $\varphi_i \colon \Spc(\C_i^\dual) \to \Spc(\C^\dual)$. For $A \in\CAlg^{\sep} (\C^\dual)$, Proposition~\ref{prop-diagram commutes} tells us that
  \[
  \deg(A)(\varphi_i(\p))=\deg(\pi_i(A))(\p)
  \]
  for all $\p \in \Spc(\C_i^\dual)$ and $1 \leq i \leq n$. This formula together with the fact that  
  the map $\varphi_i$ is the inclusion of a connected component (see proof of 
  Proposition~\ref{prop-char-connected-Balmer}), yield the equivalence
  \[
    \CAlg^{\sep,\lcf} (\C^\dual)\simeq \prod_{i=1}^n \CAlg^{\sep,\cf}(\C_i^\dual),
  \]  
  using that every locally constant $\Z$-valued function on the connected space $\Spc(\C_i^{\mathrm{dual}})$ is constant.
  Thus it suffices to show that $\CAlg^{\sep,\cf}(\C_i^\dual)\subseteq \CAlg^\cov(\C_i)$ 
  for all $1 \leq i \leq n$. 
  This is the content of Proposition~\ref{prop-separable-constant-degree-equal-split-ring}.
\end{proof}

We are finally ready to state and prove the main result of this section.

\begin{corollary}\label{cor-sep-lcf=cov}
 Let $\C \in \CAlg(\Pr)$ be fin-connected. 
 Then there is an equality \[
 \CAlg^{\sep,\lcf}(\C^\dual)= \CAlg^\cov(\C).
 \]
\end{corollary}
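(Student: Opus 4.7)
The plan is essentially to combine the two inclusions that have already been established in the preceding theorems, since the corollary is merely the assertion that both inclusions hold simultaneously under the fin-connectedness hypothesis.

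First I would observe that the inclusion $\CAlg^{\cov}(\C)\subseteq \CAlg^{\sep,\lcf}(\C^\dual)$ is exactly Theorem~\ref{thm-(a)}, and this one does not even require the fin-connectedness assumption. That theorem was proved by exhibiting, for a finite cover $A$, a descendable faithful $G$-Galois extension $B$ that trivializes $A$ into a product of localizations $\prod_i B[e_i^{-1}]$; the dualizability and separability then follow from Lemma~\ref{lem-detect-dualizable-obj} and Theorem~\ref{thm-fin-cover-sep}, while local constancy of the degree function is obtained by transporting the local constancy of $\deg(F_B(A))$ (Lemma~\ref{lem-locally-constant}) along the quotient map $\Spc(\mod{B}(\Ind(\C^\dual))^\omega)\to \Spc(\C^\dual)$ identified in Lemma~\ref{lem-quotientbyG}, together with the $G$-invariance of the local degree of $F_B(A)$ recorded in Proposition~\ref{prop-diagram commutes}.

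For the reverse inclusion $\CAlg^{\sep,\lcf}(\C^\dual)\subseteq \CAlg^{\cov}(\C)$, I would invoke Theorem~\ref{thm-(b)}. This is where the fin-connectedness of $\C$ enters: writing $\C\simeq \prod_{i=1}^n \C_i$ with each $\C_i$ connected, Proposition~\ref{prop-descent-sep} (and the analogous descent statement for finite covers) reduces the claim to each factor $\C_i$. On each connected factor, a locally constant integer-valued function on the connected space $\Spc(\C_i^\dual)$ is necessarily constant, so the problem reduces to showing $\CAlg^{\sep,\cf}(\C_i^\dual)\subseteq \CAlg^{\cov}(\C_i)$, which is Proposition~\ref{prop-separable-constant-degree-equal-split-ring}: the top of the splitting tower $A^{[d]}$ serves as the descendable witness that splits $A$ into $d$ copies of the unit.

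Putting the two inclusions together yields the equality, and no further work is needed. The only subtlety worth flagging is that fin-connectedness is used only in the inclusion from separable to cover; the other direction holds universally.
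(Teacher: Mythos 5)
Your proposal matches the paper's proof exactly: it combines Theorem~\ref{thm-(a)} for the inclusion $\CAlg^{\cov}(\C)\subseteq\CAlg^{\sep,\lcf}(\C^\dual)$ with Theorem~\ref{thm-(b)} for the reverse inclusion, noting correctly that fin-connectedness is only needed for the latter.
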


\begin{proof}
 Combining Theorems~\ref{thm-(a)} and~\ref{thm-(b)} 
 we get inclusions
 \[
 \CAlg^\cov(\C) \subseteq \CAlg^{\sep,\lcf}(\C^\dual) \subseteq \CAlg^\cov(\C).
 \]
 Therefore the inclusions in the previous display are in fact equalities.
\end{proof}

\section{Weak finite covers and separable algebras in limit categories}\label{sec-descent}

As we will see in the second part of this paper, there are many situations in which the containment of Lemma \ref{cor-wcov-sep-dual}
\[
\CAlg^{\wcov}(\C)\subseteq \CAlg^{\sep,\mathrm{f}}(\C^\dual)
\]
is in fact an equality. In this section we show that this property is preserved under arbitrary limits of stable homotopy theories, see Theorem~\ref{thm-descent}. To this end, we will need to discuss limits of Galois categories in details, following~\cite{Mathew2016}. We start by recalling the rank of an object in a Galois category from~\cite{Mathew2016}*{Proof of 5.28}.

\begin{definition}
 Let $\C$ be a Galois category and consider an object $x \in \C$.
 By the axiom of a Galois category, there exists an effective descent morphism $x' \to \ast$ and a decomposition $x'= x_1' \sqcup \ldots \sqcup x_n'$ such that each map $x \times x_i' \to x_i'$ identifies with the fold map $x \times x_i' \simeq \sqcup_{S_i} x_i' \to x_i'$ for a finite set $S_i$. We define the \emph{rank} of $x$ to be \[\rk(x):=\sup_i |S_i|.\]
 One can verify that the rank is well-defined, i.e. it does not depend on the effective descent morphism and decomposition chosen, by reducing, via the Galois correspondence, to the case that $\C=\Fun(\G, \FinSet)$ for a finite groupoid $\G$, and then using Example~\ref{ex-rank-groupoid} below.
\end{definition}

\begin{example}\label{ex-rankfinsetG}
 Consider $S\in \FinSet_G$ for some finite group $G$. In this case we have $\rk(S)=|S|$. To see this pick any 
 effective descent morphism $T \to \ast$. By virtue of~\cite{Mathew2016}*{Proposition 5.22(iii)}, we deduce that $T$ is nonempty. Decompose $T$ into its orbits $T \simeq \coprod_i G/H_i$. Each projection map $S \times G/H_i \to G/H_i$ can be written as the fold map $\coprod_{|S|}G/H_i \to G/H_i$, so the rank of $S$ coincides with its cardinality. 
\end{example}

\begin{example}\label{ex-rank-groupoid}
Consider a finite groupoid $\G\simeq\coprod_{i}G_i$. In this case the rank of $X=(X_i) \in \Fun(\G, \FinSet)\simeq \prod_i \FinSet_{G_i}$ is given by $\rk(X)=\sup_i |X_i|$. 
\end{example}

We next verify that the notions of degree for separable algebras and of rank for Galois extensions coincide, in the cases in which they are both defined.

\begin{proposition}\label{prop-degree-rank}
Assume that $\C\in\CAlg(\Pr)$ and that \[ A\in \CAlg^{\wcov}(\C)\subseteq \CAlg^{\mathrm{sep},\mathrm{f}}(\C^{\mathrm{dual}})\] is given. Then we have $\deg(A)=\rk(A)$.
\end{proposition}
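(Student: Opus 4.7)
The plan is to establish $\deg(A)=\rk(A)$ by explicitly computing both sides from the Galois-theoretic structure of $A$, exploiting a well-chosen faithful base change.

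First, by Theorem~\ref{thm-fin-cover-sep} and its proof, I write $A\simeq \prod_\alpha \prod_{j_\alpha} B_\alpha^{hK_{j_\alpha}}$ where $B_\alpha$ is a faithful $G_\alpha$-Galois extension, corresponding under the Galois correspondence to the object $X=\coprod_\alpha X_\alpha$ of $\Fun(B\G,\FinSet)\simeq \prod_\alpha \FinSet_{G_\alpha}$, with $X_\alpha=\coprod_{j_\alpha}G_\alpha/K_{j_\alpha}$. Examples~\ref{ex-rankfinsetG} and~\ref{ex-rank-groupoid} then immediately yield
\[
\rk(A)=\sup_\alpha |X_\alpha|=\sup_\alpha \sum_{j_\alpha}[G_\alpha:K_{j_\alpha}].
\]

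Next, I base change along $B:=\prod_\alpha B_\alpha\in\CAlg(\C)$, which is faithful as a finite product of faithful algebras (by distributivity of $\otimes$ over $\times$ in $\CAlg(\C)$). By Proposition~\ref{prop-properties-degree}(a), $\deg(A)=\deg(F_B(A))$ in $\mod{B}(\Ind(\C^\dual))^\omega\simeq \prod_\alpha \mod{B_\alpha}(\Ind(\C^\dual))^\omega$, so it suffices to compute each component $F_{B_\alpha}(A)=B_\alpha\otimes A$.

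The key input is an orthogonality relation: for $\alpha\neq \beta$ and any subgroups $K\leq G_\alpha$, $K'\leq G_\beta$, one has $B_\alpha^{hK}\otimes B_\beta^{hK'}\simeq 0$. This follows from the fact that the Galois correspondence $\Fun(B\G,\FinSet)\to \CAlg^\wcov(\C)^\op$ is a functor of Galois categories and therefore preserves finite products: the pointwise product of two $\G$-sets supported on distinct connected components of $\G$ is the empty functor, which corresponds under the equivalence to the zero algebra. Combining this orthogonality with the identity $B_\alpha\otimes B_\alpha^{hK_{j_\alpha}}\simeq \prod_{G_\alpha/K_{j_\alpha}} B_\alpha$ extracted from the proof of Lemma~\ref{lem-kappa-equivalence}, and again distributivity of tensor over finite products of algebras, yields
\[
F_{B_\alpha}(A) \simeq B_\alpha^{\times |X_\alpha|}\quad \text{in }\mod{B_\alpha}(\Ind(\C^\dual))^\omega.
\]

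Finally, Proposition~\ref{prop-properties-degree}(b) gives $\deg(B_\alpha^{\times |X_\alpha|})=|X_\alpha|$, and since the Balmer spectrum of a finite product of tt-categories is the disjoint union of the spectra, the global degree in $\prod_\alpha \mod{B_\alpha}(\Ind(\C^\dual))^\omega$ is the supremum of the componentwise degrees. Therefore $\deg(F_B(A))=\sup_\alpha |X_\alpha|=\rk(A)$, completing the argument. The main technical obstacle is establishing the orthogonality $B_\alpha^{hK}\otimes B_\beta^{hK'}\simeq 0$ for $\alpha\neq\beta$; this is essentially combinatorial once the (contravariant) Galois correspondence is invoked, but requires careful bookkeeping about initial objects in Galois categories and the interaction between tensor products of algebras and pointwise products of $\G$-sets.
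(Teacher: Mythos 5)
Your proof is correct, but it takes a genuinely different route from the paper's. The paper goes straight to the definition of a weak finite cover: it picks a faithful, limit-preserving $B$ with $A\otimes B\simeq\prod_i B[e_i^{-1}]$, notes (via the remark after \cite{Mathew2016}*{Proposition 5.28}) that $\rk(A)$ can be read off after base change along $\1\to B$, regroups the idempotents into pairwise orthogonal ones to get $F_B(A)=\prod_j(B[e_j^{-1}])^{\times n_j}$, and then matches $\max_j n_j$ on the rank side against the degree side using Lemma~\ref{lem-product-fin-degree}, Lemma~\ref{lem-locally-constant}, and \cite{Balmer2014}*{Corollary 3.12}. Your proof instead begins with the Galois-theoretic factorization $A\simeq\prod_\alpha\prod_{j_\alpha}B_\alpha^{hK_{j_\alpha}}$ from Theorem~\ref{thm-fin-cover-sep}, reads off $\rk(A)$ directly in $\Fun(B\G,\FinSet)$, and then base changes along $B=\prod_\alpha B_\alpha$, using an orthogonality relation $B_\alpha^{hK}\otimes B_\beta^{hK'}\simeq 0$ for $\alpha\neq\beta$ (which is a nice observation deduced, as you say, from the Galois functor $F$ preserving finite limits and the initial object). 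Your orthogonality lemma plus the identity $B_\alpha\otimes B_\alpha^{hK}\simeq\prod_{G_\alpha/K}B_\alpha$ (as $B_\alpha$-algebras, which it is, being a descent of the defining Galois equivalence) then gives $F_{B_\alpha}(A)\simeq\1_{B_\alpha}^{\times|X_\alpha|}$, and you conclude with Proposition~\ref{prop-properties-degree}. Both approaches implicitly rely on the rank being well-defined and computable via an effective descent morphism; yours exploits more of the Galois-correspondence machinery (and hence the full strength of Theorem~\ref{thm-fin-cover-sep}) while the paper's is shorter and more self-contained. One caveat worth noting: the step $\deg(F_B(A))=\deg(A)$ requires $B=\prod_\alpha B_\alpha$ to be faithful over $\C$; this works because the $B_\alpha$ are supported on complementary components cut out by the subterminal objects $F_\alpha(*)$, so you should state that you are using faithfulness of each $B_\alpha$ on its component rather than over all of $\C$ — the phrase ``finite product of faithful algebras'' is a bit loose, although the conclusion is correct.
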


\begin{proof}
    Before diving into the proof of the proposition it will be helpful to recall some facts about abstract Galois theory.  Let $\CC=\CAlg(\C)^{\op}$ and $\mathcal{E}$ be the collection of maps $A \to B$ in $\CC$ such that the base change functor $B \otimes_A -$ commutes with limits and is conservative. Then by \cite{Mathew2016}*{Lemma 6.4} the pair $(\CC,\mathcal{E})$ forms a Galois context in the sense of \cite{Mathew2016}*{Definition 5.26} and the collection of Galoisable objects in the sense of~\cite{Mathew2016}*{Definition 5.27}) agrees with the weak finite covers.
    After this small digression we are ready to prove the proposition. Let $A$ be a finite cover so that there exists $B \in \CAlg(\C)$ faithful such that $B \otimes- \colon \C \to \C$ preserve limits, and an equivalence of $B$-algebras $A\otimes B \simeq \prod_{i=1}^n B[e_{i}^{-1}]$ for some idempotents $e_i$.  By the the remark following the proof of \cite{Mathew2016}*{Proposition 5.28} (apply to the Galois context defined above), we may assume that $B$ is itself a weak finite cover. It then follows from \cite{Mathew2016}*{Corollary 5.29} that the map $\1\to B$ is an effective descent morphism, so we can compute $\rk(A)$ using base-change along $\1\to B$. 
 We regroup the above decomposition
 \[
 F_B(A)=A\otimes B \simeq \prod_{i=1}^n B[e_{i}^{-1}] =\prod_{j=1}^s (B[e_j^{-1}])^{\times n_j}
 \]
 by insisting that all the $e_j$'s are pairwise orthogonal, and that $B[e_j^{-1}]\neq 0$. Then by definition, the rank of $A$ is given by $\max_j n_j$. Note that $A$ is separable of finite degree by Theorem~\ref{thm-fin-cover-sep}. We now claim that
 \[
 \deg(A)= \deg(F_B(A)) = \rk(A).
 \]
 The first equality follows from Proposition~\ref{prop-properties-degree}(a), so let us discuss the second one. 
 For all Balmer primes $\p$, we know that 
 \[
 \deg(F_B A)(\p)=\sum_{j=1}^s n_j  \deg(B[e_j^{-1}])(\p)
 \]
 by Lemma~\ref{lem-product-fin-degree}. Note that if $j\not =k$, then $\supp(B[e_j^{-1}])\cap \supp(B[e_k^{-1}])= \emptyset$ as the idempotents are orthogonal. Therefore by~\cite{Balmer2014}*{Corollary 3.12} and Lemma~\ref{lem-locally-constant}, we calculate
 \[
 \deg(F_B(A))=\max_{\p} \left\{ \sum_{j=1}^s n_j  \deg(B[e_j^{-1}])(\p) \right\}= \max_j \{n_j\}=\rk(A).
 \]
\end{proof}

We exploit the rank to describe limits of Galois categories as follows.

\begin{lemma}\label{lem-lim-galcat}
 Let $K$ be a simplicial set, and let $F\colon K \to \GalCat$ be a diagram. Then the limit of $F$ exists and is given by \[(\lim F)^{\br}\subseteq \lim F,\]
 namely the full subcategory of the categorical limit spanned by the objects of bounded rank. In particular,  if $K$ has finitely many vertices, then $\lim F = (\lim F)^{\br}$.
\end{lemma}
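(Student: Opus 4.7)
Set $L := (\lim F)^{\br}$, the full subcategory of the categorical limit of $F$ (in $\widehat{\Cat}$) spanned by families $(x_k)_{k \in K}$ with $\sup_k \rk(x_k) < \infty$; note that limits and coproducts in $\lim F$ are computed pointwise. The plan is to verify that $L$ is a Galois category and that the inclusion $L \hookrightarrow \lim F$ together with the projections $\lim F \to F(k)$ exhibit $L$ as the limit of $F$ in $\GalCat$. The universal property is essentially formal once this is shown: my first observation is that any functor of Galois categories $G\colon \CA \to \CD$ is non-increasing on rank, since if $x \in \CA$ has rank $r$ witnessed by an effective descent morphism $x' \to \ast$ and a decomposition $x' = \sqcup_i x'_i$ with $x \times x'_i \simeq \sqcup_{S_i} x'_i$, applying $G$ yields an analogous witness for $G(x)$ with the same sets $S_i$, because $G$ preserves effective descent morphisms, finite coproducts, and finite limits. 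Consequently, any cone $G_k \colon \CA \to F(k)$ in $\GalCat$ factors through $L$, since for each $x \in \CA$ one has $\sup_k \rk(G_k(x)) \leq \rk_{\CA}(x) < \infty$.

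Next I would verify the Galois-category axioms for $L$. Axioms (a) and (b) are pointwise in nature and so transfer immediately from each $F(k)$. Closure of $L$ under finite limits and coproducts uses the estimates $\rk(x \times y) \leq \rk(x) \cdot \rk(y)$ (by taking the product of witnessing effective descent morphisms, with product decomposition indexed by $S_i \times T_j$) and $\rk(x \sqcup y) \leq \rk(x) + \rk(y)$ (by taking their disjoint union), together with the fact that equalizers are subobjects and thus cannot increase rank; the initial object has rank zero and so lies in $L$. The projections $L \to F(k)$ are then visibly morphisms of Galois categories.

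The main obstacle is axiom (c): given $x = (x_k) \in L$ of rank $\leq r$, I need to produce an effective descent morphism within $L$ (not merely in $\lim F$) that trivializes $x$. My strategy is to invoke the Galois correspondence (Theorem~\ref{thm-Galois-correspondence}) to identify each $F(k)$ with $\Fun(\G_k, \FinSet)$ for a profinite groupoid $\G_k$, so that $(x_k)$ corresponds to a coherent family of continuous functors with pointwise cardinalities bounded by $r$. The uniform rank bound allows each $x_k$ to factor through a finite quotient of $\G_k$ of bounded complexity, and these quotients can be arranged into a compatible system along the transition maps of $K$; the associated torsors then assemble into the required effective descent morphism inside $L$, with $|S_i| \leq r$ on each component. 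Finally, the parenthetical claim is immediate: when $K$ has finitely many vertices, $\sup_k \rk(x_k)$ is a finite supremum of finite numbers, so the rank-boundedness condition is automatic and $\lim F = (\lim F)^{\br}$.
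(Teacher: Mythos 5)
The paper's proof is a one-line citation to Mathew's Lemma~5.37 and Remark~5.38, so there is no detailed argument in the paper to compare against; your approach of directly verifying the Galois-category axioms and the universal property for the bounded-rank full subcategory is the natural one, and your rank estimates, together with the reduction of the universal property to the observation that Galois functors are rank-nonincreasing, are correct.

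There is, however, a gap in your verification of axiom (c), which you rightly single out as the crux. Your strategy --- take the kernel $N_k\trianglelefteq\G_k$ of the action on $x_k$, form the finite quotients $\G_k/N_k$, and claim ``the associated torsors assemble into the required effective descent morphism'' --- does not work as stated. For an edge $\phi\colon k\to k'$ of $K$ the transition functor $F(\phi)$ corresponds under the Galois correspondence to a map $\psi\colon\G_{k'}\to\G_k$ of profinite groupoids with $F(\phi)=\psi^*$; since $N_{k'}=\psi^{-1}(N_k)$, the induced map $\G_{k'}/N_{k'}\to\G_k/N_k$ is injective but typically not an isomorphism, and hence $\psi^*(\G_k/N_k)$ is a disjoint union of $[\G_k/N_k:\G_{k'}/N_{k'}]$ copies of $\G_{k'}/N_{k'}$ rather than $\G_{k'}/N_{k'}$ itself. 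Thus $(\G_k/N_k)_k$ is not in general an object of $\lim F$, and the effective descent morphism you want does not exist inside $L$ by this construction. To repair the argument you need a splitting object built functorially from $x_k$ using only operations that every Galois functor preserves (finite limits, finite coproducts, images, complements), so that naturality forces the family into $\lim F$: for example, taking $y_k$ to be the subobject of $(x_k\sqcup\ast)^{\,r+1}$ of ``surjective'' $(r+1)$-tuples works --- it is nonempty, of rank at most $(r+1)^{r+1}$, trivializes $x_k$ with the sets $S_i$ of size at most $r$, and is carried to itself by all the $F(\phi)$.
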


\begin{proof}
 The proof is the same as that of~\cite{Mathew2016}*{Lemma 5.37}, see also~\cite{Mathew2016}*{Remark 5.38}.
\end{proof}

\begin{lemma}\label{lem-wcov-com-limits}
 Let $K$ be a simplicial set and let $p\colon K \to \CAlg(\Pr)$ be a functor. 
 Then the canonical functor 
 \[
 \CAlg^{\wcov}(\lim_K p) \xrightarrow{\sim} (\lim_{k\in K} \CAlg^{\wcov}(p(k)))^{\br}
 \]
 is an equivalence in $\Cat_\infty$ (but also an equivalence of Galois categories).
\end{lemma}

\begin{proof}
Combine~\cite{Mathew2016}*{Proposition 7.1} which equates $\CAlg^{\wcov}(\lim_K p)$ with a limit of Galois categories with Lemma~\ref{lem-lim-galcat} which describes limits of Galois categories.
\end{proof}

We now establish the permanence result formulated at the beginning of this section.

\begin{theorem}\label{thm-descent}
  Let $K$ be a simplicial set and let $p\colon K \to \CAlg(\Pr)$ be a functor. Suppose that for each $k\in K$, we have $\CAlg^{\wcov}(p(k))=\CAlg^{\sep,\mathrm{f}}(p(k)^\dual)$. Then we also have
  \[
  \CAlg^{\wcov}(\lim_K p) = \CAlg^{\sep,\mathrm{f}}((\lim_K p)^\dual).
  \]
 \end{theorem}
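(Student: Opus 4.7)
The plan is to verify both containments, the easier of which is free from the hypothesis. The inclusion $\CAlg^{\wcov}(\lim_K p) \subseteq \CAlg^{\sep,\mathrm{f}}((\lim_K p)^\dual)$ is already given by Corollary~\ref{cor-wcov-sep-dual}, so the entire content is the reverse inclusion. For this, I would begin with an arbitrary $A \in \CAlg^{\sep,\mathrm{f}}((\lim_K p)^\dual)$ and translate it into a compatible family $(A_k)_{k\in K}$ via the equivalence
\[
\CAlg^{\sep}((\lim_K p)^\dual) \xrightarrow{\sim} \lim_{k\in K} \CAlg^{\sep}(p(k)^\dual)
\]
of Proposition~\ref{prop-descent-sep}. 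Using Corollary~\ref{cor-bounded-degree}, the hypothesis that $A$ has finite degree is equivalent to the family $(A_k)_k$ having uniformly bounded degree, say $\deg(A_k) \le d$ for some $d < \infty$ independent of $k$.

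Next, I invoke the local hypothesis: for each $k \in K$ we have $\CAlg^{\wcov}(p(k)) = \CAlg^{\sep,\mathrm{f}}(p(k)^\dual)$, so each $A_k$ is a weak finite cover in $p(k)$. Consequently the family $(A_k)_k$ defines an object of the categorical limit $\lim_{k\in K} \CAlg^{\wcov}(p(k))$. At this point I apply Proposition~\ref{prop-degree-rank}, which identifies the degree of a weak finite cover with its rank in the ambient Galois category; hence $\rk(A_k) = \deg(A_k) \le d$ for all $k$, so $(A_k)_k$ lies in the bounded-rank subcategory $\bigl(\lim_{k\in K} \CAlg^{\wcov}(p(k))\bigr)^{\br}$.

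To finish, I apply Lemma~\ref{lem-wcov-com-limits}, which provides an equivalence
\[
\CAlg^{\wcov}(\lim_K p) \xrightarrow{\sim} \bigl(\lim_{k\in K} \CAlg^{\wcov}(p(k))\bigr)^{\br}.
\]
Chasing $(A_k)_k$ back through this equivalence exhibits $A$ as a weak finite cover of $\lim_K p$, which establishes the desired equality.

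The only genuinely substantive step is the matching of the ``bounded rank'' condition on the Galois side with the ``finite degree'' condition on the separability side; but this is precisely the content of Proposition~\ref{prop-degree-rank} combined with Corollary~\ref{cor-bounded-degree}, so there is no further obstacle. All remaining manipulations are formal diagram chases within equivalences that have been established in the preceding sections.
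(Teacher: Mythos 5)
Your proposal is correct and follows essentially the same route as the paper: you use Corollary~\ref{cor-wcov-sep-dual} for the easy inclusion, then Proposition~\ref{prop-descent-sep} and Corollary~\ref{cor-bounded-degree} to translate a finite-degree separable algebra into a degree-bounded compatible family, apply the hypothesis pointwise together with Proposition~\ref{prop-degree-rank} to identify bounded degree with bounded rank, and conclude via Lemma~\ref{lem-wcov-com-limits}. The paper organizes the same argument around a commutative square of (fully faithful) functors and shows the left vertical arrow is essentially surjective, but the ingredients and their order of application are identical.
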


\begin{proof}
 Consider the following commutative square in $\Cat_\infty:$
 \[
 \begin{tikzcd}
 \CAlg^{\sep,\mathrm{f}}((\lim_K p)^\dual) \arrow[r, hook] & \lim_k \CAlg^{\sep,\mathrm{f}}(p(k)^\dual) \\
   \CAlg^{\wcov}(\lim_K p) \arrow[u, hook] \arrow[r, hook] & \lim_k \CAlg^{\wcov}(p(k))  \arrow[u, "\sim"'].
 \end{tikzcd}
 \]
 The left vertical functor is fully faithful by Corollary~\ref{cor-wcov-sep-dual} and the right vertical is an equivalence by our assumption. 
 The horizontal arrows are fully faithful as they are subfuctors of the equivalence in Lemma~\ref{lem-calg-limits}. We only need to prove that the left vertical arrow is essentially surjective. To this end pick a separable commutative algebra $A$ with underlying dualizable module in $\lim_k p$ which has finite degree. Then its image $(A_k)$ under the top horizontal arrow will have bounded degree by Corollary~\ref{cor-bounded-degree}. Under the right vertical equivalence this corresponds to a compatible sequence of weak finite covers $(A_k')$ which by Proposition~\ref{prop-degree-rank} must have bounded rank.  Then by Lemma~\ref{lem-wcov-com-limits} there exists a weak finite cover $A$ in $\lim_K p$ with components $(A_k')$. By construction the image of $A'$ under the left vertical arrow is $A$.
\end{proof}

For the next result recall that an object $k$ of a simplicial set $K$ is said to be {\em weakly initial} if it maps to any other object of $K$. 

\begin{proposition}\label{prop-descent}
 Let $K$ be a simplicial set and let $p\colon K \to \CAlg(\Pr)$ be a functor. Suppose that $K$ admits a weakly initial object $k_0$ such that $ \CAlg^\wcov(p(k_0))=\CAlg^{\sep}(p(k_0)^\dual) $. Then we also have  
 \[
 \CAlg^\wcov(\lim_K p)= \CAlg^{\sep}((\lim_K p)^\dual). 
 \]
\end{proposition}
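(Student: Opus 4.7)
The plan is to verify the non-trivial inclusion $\CAlg^\sep((\lim_K p)^\dual) \subseteq \CAlg^\wcov(\lim_K p)$, since the reverse inclusion is Corollary~\ref{cor-wcov-sep-dual}. The strategy is to leverage the hypothesis at $k_0$, together with weak initiality, to propagate a uniform degree bound across the whole of $K$, after which Lemma~\ref{lem-wcov-com-limits} assembles the lift.

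Let $A \in \CAlg^\sep((\lim_K p)^\dual)$. By Proposition~\ref{prop-descent-sep}, this corresponds to a compatible family $(A_k)_{k\in K}$ with $A_k \in \CAlg^\sep(p(k)^\dual)$. At the weakly initial vertex $k_0$, the hypothesis gives $A_{k_0} \in \CAlg^\wcov(p(k_0))$, which by Theorem~\ref{thm-fin-cover-sep} has some finite degree $n$. For each $k \in K$, fix a morphism $\alpha_k \colon k_0 \to k$ in $K$ and let $F_{\alpha_k}:=p(\alpha_k) \colon p(k_0) \to p(k)$ be the resulting symmetric monoidal colimit-preserving functor. Since $A$ arises from the limit, the structure map yields an equivalence $F_{\alpha_k}(A_{k_0}) \simeq A_k$ in $\CAlg(p(k))$. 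Functoriality of the weak cover construction (recorded after the definition of finite cover) forces $A_k \in \CAlg^\wcov(p(k))$, and Proposition~\ref{prop-diagram commutes}, applied to the restriction of $F_{\alpha_k}$ to dualizable objects (which remains an exact symmetric monoidal functor between $2$-rings), gives $\deg(A_k) \leq \deg(A_{k_0}) = n$. By Proposition~\ref{prop-degree-rank} we conclude $\rk(A_k) \leq n$ for every $k$.

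Consequently the family $(A_k)$ defines an object of $(\lim_k \CAlg^\wcov(p(k)))^{\br}$, which by Lemma~\ref{lem-wcov-com-limits} corresponds to an element $\tilde A \in \CAlg^\wcov(\lim_K p)$. Both $\tilde A$ and $A$ live in $\CAlg(\lim_K p)$ and have the same projections $(A_k)$, so by the equivalence $\CAlg(\lim_K p) \simeq \lim_k \CAlg(p(k))$ of Lemma~\ref{lem-calg-limits} they must agree. Thus $A \in \CAlg^\wcov(\lim_K p)$, as required.

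The main conceptual difficulty is the need to bound the degree at every vertex simultaneously from information at a single one; weak initiality of $k_0$ is exactly the hypothesis that enables this uniform control via the compatibility maps $F_{\alpha_k}(A_{k_0}) \simeq A_k$. Without a weakly initial vertex, there would be no mechanism preventing $\deg(A_k)$ from blowing up at vertices far away from $k_0$, and one would be forced to impose the finiteness hypothesis pointwise as in Theorem~\ref{thm-descent}.
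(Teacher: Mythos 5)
Your proof is correct and follows the paper's strategy: the easy inclusion is Corollary~\ref{cor-wcov-sep-dual}, weak initiality of $k_0$ gives the uniform degree bound, Proposition~\ref{prop-degree-rank} converts degree to rank, and Lemma~\ref{lem-wcov-com-limits} assembles the lift. The paper packages the same argument via the natural transformation $\iota_\bullet$ and an appeal to an external lemma to show $\lim_k\iota_k$ is an equivalence, whereas you unpack it directly for a single object; one minor stylistic note is that the bound $\deg(A_k)\le\deg(A_{k_0})$ is most directly obtained from the functoriality of the splitting tower (Remark~\ref{rem-tower-functorial}), since $A_k^{[n+1]}\simeq F_{\alpha_k}(A_{k_0}^{[n+1]})=0$, rather than routing through Proposition~\ref{prop-diagram commutes}, which gives the pointwise relation on degree functions and needs an extra appeal to (\ref{eq-local-degree}).
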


\begin{proof}
  Note that we have a natural transformation of functors from $K$ to $\Cat_\infty$
  \[
  \iota_\bullet \colon \CAlg^{\wcov}(p(\bullet)) \to \CAlg^{\sep}(p(\bullet)^\dual)
  \] 
  which is fully faithful at all vertices of $K$, by Corollary~\ref{cor-wcov-sep-dual}. Moreover $\iota_{k_0}$ is 
  an equivalence by our assumption. The same argument as in~\cite{Chromatic}*{Lemma 5.16} applies to show that 
  
  \[
  \lim_k \iota_{k}\colon \lim_k\CAlg^{\wcov}(p(k)) \xrightarrow{\sim} \lim_k\CAlg^{\sep}(p(k)^\dual)
  \]
  is an equivalence. Now there is a commutative diagram 
  \[
  \begin{tikzcd}
  \CAlg^{\sep}((\lim_K p)^\dual) \arrow[r,"\sim"] & \lim_k \CAlg^\sep(p(k)^\dual) \\
  \CAlg^\wcov(\lim_K p) \arrow[u, hook] \arrow[r] & \lim_k \CAlg^{\wcov}(p(k)) \arrow[u, "\sim","\lim_k \iota_{k}"']
  \end{tikzcd}
  \]
    where the top horizontal arrow is an equivalence by Proposition~\ref{prop-descent-sep} and the left vertical arrow is fully faithful by Corollary~\ref{cor-wcov-sep-dual}. We ought to show that the left vertical arrow  is essentially surjective. By the commutativity of the diagram it suffices to show that the bottom horizontal arrow is an equivalence. This would follow from Lemma~\ref{lem-wcov-com-limits} if we knew that any object in $A\in \lim_k \CAlg^{\wcov}(p(k))$ had bounded rank (or bounded degree by Proposition~\ref{prop-degree-rank}). To see this, we write $A=(A_k)_{k\in K}$ and note that for all $k\in K$, we have $\deg(A_{k_0}) \geq \deg(A_k)$ since $k_0$ is weakly initial. It is only left to note that $A_{k_0}$ has finite degree as, by our assumption, it is a weak finite cover. 
\end{proof}

\begin{corollary}\label{cor-descent-sep}
  Assume that $\C\in\CAlg(\Pr)$, and let $A \in \CAlg(\C)$ be descendable. If 
  \[ \CAlg^{\wcov}(\mod{A}(\C)) = \CAlg^{\sep}(\mod{A}(\C)^\dual),\]
  then we also have 
  \[ \CAlg^{\wcov}(\C) = \CAlg^{\sep}(\C^\dual).\]
\end{corollary}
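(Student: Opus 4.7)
The plan is to apply Proposition~\ref{prop-descent} to the Amitsur (Čech) cosimplicial diagram of $A$. Since $A \in \CAlg(\C)$ is descendable, the adjunction $F_A \colon \C \rightleftarrows \mod{A}(\C) : U_A$ is comonadic by the $\infty$-categorical Barr--Beck theorem (as already invoked in the proof of Lemma~\ref{lem-detect-dualizable-obj}), so $\C \simeq \Tot(\mod{A^{\otimes \bullet +1}}(\C))$. I would package this as a functor $p\colon \Delta \to \CAlg(\Pr)$ sending $[n]$ to $\mod{A^{\otimes(n+1)}}(\C)$, with cofaces and codegeneracies given by base-change along the structural maps of the cosimplicial commutative algebra $A^{\otimes \bullet+1}$; these base-change functors are symmetric monoidal and colimit-preserving, so the diagram genuinely lands in $\CAlg(\Pr)$. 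By Lemma~\ref{lem-limits-preserved}, limits in $\CAlg(\Pr)$ are computed in $\widehat{\Cat}_\infty$, so the Barr--Beck equivalence upgrades to $\C \simeq \lim_\Delta p$ in $\CAlg(\Pr)$.

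Next, I would observe that $[0]\in\Delta$ is initial, and in particular weakly initial in the sense of Proposition~\ref{prop-descent}. At this vertex $p([0]) = \mod{A}(\C)$, so the hypothesis of the corollary is precisely the condition $\CAlg^{\wcov}(p([0])) = \CAlg^{\sep}(p([0])^\dual)$ required by Proposition~\ref{prop-descent}. That proposition then immediately yields $\CAlg^{\wcov}(\C) = \CAlg^{\sep}(\C^\dual)$, as desired. Note that a direct application of Theorem~\ref{thm-descent} would be unavailable since we have no a priori control over $\CAlg^{\wcov}(p([n]))$ for $n\geq 1$; it is precisely the use of a \emph{weakly initial} vertex that makes Proposition~\ref{prop-descent} the right tool.

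The only technical issue is the set-up, i.e.\ promoting the Amitsur diagram to a functor $\Delta \to \CAlg(\Pr)$ and identifying its limit with $\C$ at that level. Beyond this standard descent bookkeeping, the corollary is a purely formal consequence of Proposition~\ref{prop-descent}; no new ideas are required.
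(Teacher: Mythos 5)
Your proposal is correct and follows essentially the same route as the paper: descendability gives $\C\simeq\Tot(\mod{A^{\bullet+1}}(\C))$ (the paper cites \cite{Mathew2016}*{Proposition 3.22}, which is exactly the Barr--Beck statement you invoke), and one then applies Proposition~\ref{prop-descent} using that $[0]$ is weakly initial in $\Delta$. One small inaccuracy: $[0]$ is \emph{not} initial in $\Delta$ (there are $n+1$ maps $[0]\to[n]$, and $\Delta$ has no initial object since it excludes the empty ordinal), but it is weakly initial in the paper's sense, which is all that Proposition~\ref{prop-descent} requires.
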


\begin{proof}
 Recall from~\cite{Mathew2016}*{Proposition 3.22} that in this case there is a symmetric monoidal equivalence $\C\simeq \Tot(\mod{A^{\bullet +1}}(\C))$. The claim then follows from Proposition~\ref{prop-descent}.
\end{proof}

\part{Applications: classification of separable commutative algebras}

In this second part of the paper we use Galois theory to classify separable dualizable commutative algebras in various stable homotopy theories of interest. In more detail, we will study categories of modules over an $\E_\infty$-ring $A$, which is either connective or even periodic with $\pi_0(A)$ regular and Noetherian, categories of complete modules over an adic $\E_\infty$-ring, categories of quasi-coherent sheaves on an even periodic derived stack or a spectral Deligne--Mumford stack, and finally the stable module category of a finite group of $p$-rank one.

\section{Modules over an \texorpdfstring{$\E_\infty$}{E}-ring}\label{sec:modules}

In this section we classify all separable commutative algebras with underlying dualizable module in the category of modules over an $\E_\infty$-ring $A$, which is either connective or even periodic with $\pi_0(A)$ regular and Noetherian. As an application, we classify all separable commutative algebras in finite spectra.  

\begin{notation}
  Let $A$ be an $\E_\infty$-ring. The full subcategory of dualizable $A$-modules coincide with the full 
  subcategory of perfect $A$-modules. Therefore we write $\Perf{A}$ instead of $\mod{A}^\dual$.
\end{notation}

We start by recalling the following definition from~\cite{HA}*{Definition 7.5.0.4}.

\begin{definition}
Let $R$ be a discrete commutative ring and let $\phi \colon A \to B$ be a map of $\E_\infty$-rings. 
\begin{itemize}
\item  We let $\Cov_{R}$ denote the category of finite \'etale $R$-algebras. Here by finite we mean that the \'etale $R$-algebra is finitely generated as an $R$-module.
\item We say that $\phi$ is \'etale if $\pi_0(\phi)\colon\pi_0(A) \to \pi_0(B)$ is \'etale (in the sense of classical commutative algebra), and $A\to B$ is flat in the sense of~\cite{HA}*{Definition 7.2.2.10}, i.e. the natural map $\pi_0(B) \otimes_{\pi_0(A)}\pi_*(A) \to \pi_*(B)$ is an isomorphism and $\pi_0(A)\to\pi_0(B)$ is flat. 
\end{itemize}
\end{definition}

\begin{lemma}[\cite{HA}*{Theorem 7.5.4.2}]\label{lem-etale}
  Let $A$ be an $\E_\infty$-ring. The $\infty$-category of \'etale $A$-algebras is equivalent under $\pi_0$ to the category of \'etale $\pi_0(A)$-algebras. 
\end{lemma}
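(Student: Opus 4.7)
The statement is cited as Lurie's Theorem 7.5.4.2 in Higher Algebra, so the plan is really to sketch Lurie's strategy. The functor in question sends an \'etale $A$-algebra $B$ to its underlying discrete algebra $\pi_0(B)$, which is \'etale over $\pi_0(A)$ by definition. The goal is to show this functor is an equivalence of $\infty$-categories (in particular, its target is discrete). The overarching principle is that \'etale morphisms are rigid, so their deformation theory is trivial and the $\infty$-categorical structure collapses to classical commutative algebra data.

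First I would reduce to the connective case. If $A$ is an arbitrary $\E_\infty$-ring with connective cover $\tau_{\geq 0}A\to A$, then the flatness condition in the definition of \'etaleness shows that base change along $\tau_{\geq 0}A\to A$ induces an equivalence between \'etale algebras over $\tau_{\geq 0}A$ and those over $A$: given an \'etale $\tau_{\geq 0}A$-algebra $B_0$, the algebra $B := B_0 \otimes_{\tau_{\geq 0}A} A$ is \'etale over $A$, and conversely any \'etale $A$-algebra arises uniquely this way because its homotopy groups in negative degrees are already determined by the flatness formula. So it suffices to treat connective $A$.

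Assuming $A$ connective, I would proceed by Postnikov tower. Write $A \simeq \lim_n \tau_{\leq n}A$ and note that each $\tau_{\leq n}A \to \tau_{\leq n-1}A$ is a square-zero extension by the shifted $\pi_0(A)$-module $\pi_n(A)[n+1]$, in the sense of \cite{HA}*{Theorem 7.4.1.26}. The key input is the deformation-theoretic classification: for a square-zero extension $\widetilde{R}\to R$ by a module $M$, the $\infty$-groupoid of flat $\widetilde{R}$-algebras lifting a given $R$-algebra $B$ sits in a fiber sequence controlled by $\Map_B(L_{B/R}, M \otimes_R B)$. When $B$ is \'etale over $R$ the cotangent complex $L_{B/R}$ vanishes, so this mapping space is contractible: the lift exists, is unique up to contractible choice, and the space of morphisms between such lifts is discrete and agrees with its image after base change to $R$. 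Applied inductively up the Postnikov tower (and then taking the limit, using that \'etale algebras are determined by their truncations since they are already discrete modulo the nilpotent direction), this shows that the functor $B \mapsto \pi_0(B)$ from \'etale $A$-algebras to \'etale $\pi_0(A)$-algebras is both essentially surjective and fully faithful, and that its source is a $1$-category (in fact a set-valued category, since morphism spaces are discrete).

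The main obstacle is the cotangent-complex input: one needs that \'etaleness in the $\E_\infty$-sense implies $L_{B/A} \simeq 0$, and conversely that a flat map with vanishing cotangent complex is \'etale. This is established in \cite{HA}*{Section 7.5} by bootstrapping from the classical result for discrete rings via the same Postnikov argument, so the proof has a mildly circular flavor which Lurie resolves by setting up the obstruction theory in parallel with the definition. I would not reprove this here but simply cite \cite{HA}*{Theorem 7.5.0.6 and Theorem 7.5.4.2}; the lemma as stated is a direct packaging of these results.
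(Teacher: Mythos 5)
The paper offers no proof of this lemma; it simply cites \cite{HA}*{Theorem 7.5.4.2}, and your final sentence does the same. Your prose sketch of Lurie's argument (reduce to the connective case via base change along the connective cover, climb the Postnikov tower using that each truncation map is a square-zero extension, and invoke $L_{B/A}\simeq 0$ for \'etale maps to make lifting and mapping spaces trivial) is a faithful summary of the cited proof, so the two ``proofs'' agree. One minor quibble: there is no real circularity for Lurie to resolve—the vanishing of the cotangent complex for \'etale morphisms (\cite{HA}*{7.5.0.6}) is established independently before the rectification theorem, so the deformation-theoretic input is available cleanly when it is needed.
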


Finite \'etale algebras provide examples of separable algebras by the following result. 

\begin{lemma}\label{lem-inclusions}
 For any $\E_\infty$-ring $A$, we  have fully faithful inclusions
\[
\Cov_{ \pi_0 (A)} \subseteq \CAlg^{\cov}(\mod{A}) =\CAlg^{\wcov}(\mod{A})\subseteq \CAlg^{\sep,\mathrm{f}}(\Perf{A}).
\]
\end{lemma}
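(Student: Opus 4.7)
The stated inclusions decompose into three assertions. The middle equality $\CAlg^{\cov}(\mod{A})=\CAlg^{\wcov}(\mod{A})$ is immediate from the proposition following the definition of (weak) finite covers, since the unit $A\in\mod{A}$ is compact. The rightmost inclusion is Corollary~\ref{cor-wcov-sep-dual} applied to $\C=\mod{A}$. It remains to prove the leftmost inclusion.

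Given $B_{0}\in\Cov_{\pi_{0}(A)}$, Lemma~\ref{lem-etale} produces a unique étale $A$-algebra $B$ lifting $B_{0}$, and full faithfulness of $\Cov_{\pi_{0}(A)}\hookrightarrow\CAlg(\mod{A})$ is automatic from that equivalence. To show $B$ is a finite cover, I would invoke classical Galois theory for ordinary commutative rings to produce a finite étale faithfully flat $G$-Galois cover $C_{0}/\pi_{0}(A)$ and a splitting $B_{0}\otimes_{\pi_{0}(A)}C_{0}\simeq\prod_{i=1}^{n}C_{0}[e_{i}^{-1}]$ of $C_{0}$-algebras, for suitable idempotents $e_{i}\in C_{0}$. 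Applying Lemma~\ref{lem-etale} once more lifts $C_{0}$, its $G$-action, and this decomposition to an étale $A$-algebra $C$ with $G$-action and an equivalence $B\otimes_{A}C\simeq\prod_{i=1}^{n}C[e_{i}^{-1}]$ in $\CAlg(\mod{C}(\mod{A}))$.

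The main obstacle is to verify that $C$ is a faithful $G$-Galois extension in the sense of Definition~\ref{def-galois-extension}. Faithfulness of $C/A$ follows routinely from flatness combined with faithful flatness of $C_{0}/\pi_{0}(A)$, and the splitting $C\otimes_{A}C\simeq\prod_{G}C$ reduces by uniqueness of étale lifts to its classical counterpart. The delicate point is the equivalence $A\simeq C^{hG}$, which I would extract from the homotopy fixed point spectral sequence $H^{s}(G,\pi_{t}C)\Rightarrow\pi_{t-s}(C^{hG})$: flatness gives $\pi_{t}C\simeq C_{0}\otimes_{\pi_{0}(A)}\pi_{t}A$ as $G$-modules, and faithfully flat Galois descent yields $H^{s}(G,C_{0}\otimes_{\pi_{0}(A)}M)=0$ for $s>0$ and any $\pi_{0}(A)$-module $M$, so the spectral sequence collapses to $(\pi_{\ast}C)^{G}=\pi_{\ast}A$. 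With $C$ established as a faithful $G$-Galois extension, Proposition~\ref{prop-galois-is-descendable} applies (again using compactness of the unit) to give descendability of $C$, and combined with the splitting of $B\otimes_{A}C$ this exhibits $B$ as a finite cover.
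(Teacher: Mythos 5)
Your proof is correct and runs along the same lines as the paper's (one-line) proof, which simply invokes \cite{Mathew2016}*{Proposition 6.10} for the first inclusion and middle equality and Corollary~\ref{cor-wcov-sep-dual} for the last; you have essentially reproduced Mathew's argument for Proposition 6.10 rather than citing it. One small imprecision worth flagging: the vanishing $H^{s}(G,C_{0}\otimes_{\pi_{0}(A)}M)=0$ for $s>0$ is not itself a consequence of ``faithfully flat Galois descent'' (that only identifies the $s=0$ term); it rests on the cohomological triviality of a classical $G$-Galois extension, which one can deduce from the Chase--Harrison--Rosenberg normal basis theorem, or more in the spirit of this paper from the observation that the separability idempotent exhibits $C_{0}\otimes_{\pi_{0}(A)}M$ as a $G$-equivariant retract of the induced module $C_{0}\otimes_{\pi_{0}(A)}C_{0}\otimes_{\pi_{0}(A)}M\simeq C_{0}[G]\otimes_{\pi_{0}(A)}M$.
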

\begin{proof}
  Combine~\cite{Mathew2016}*{Proposition 6.10} with Corollary~\ref{cor-wcov-sep-dual}. The above equality uses~\cite{Mathew2016}*{Theorem 6.5}.
\end{proof}

In his work on Galois theory, Mathew exhibits two important classes of $\mathbb E_\infty$-rings, 
for which the inclusion 
\[ \Cov_{\pi_0(A)}\subseteq \CAlg^{\mathrm{cov}}(\mod{A})\]
is an equality. The next two results generalize this to include separable algebras with perfect underlying module.

\begin{proposition}\label{prop-connective-sep}
  Let $A$ be a connective $\E_\infty$-ring. There is an equality
  \[
  \Cov_{\pi_0(A)} =\CAlg^{\sep}(\Perf{A}).
  \]
  In particular, any separable commutative algebra in $\Perf{A}$ has finite degree.
\end{proposition}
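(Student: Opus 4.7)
The inclusion $\Cov_{\pi_0(A)} \subseteq \CAlg^{\sep}(\Perf{A})$ is the content of Lemma~\ref{lem-inclusions}, so my task is to establish the reverse containment. My plan is to show that any $B \in \CAlg^{\sep}(\Perf{A})$ is étale over $A$ in Lurie's sense, so that by Lemma~\ref{lem-etale} it is determined by a finite étale $\pi_0(A)$-algebra. Once this is done, the finite-degree statement comes for free, since a finite étale algebra over $\pi_0(A)$ is a finitely generated projective module of locally constant rank, which on the quasi-compact space $\mathrm{Spec}(\pi_0(A))$ is automatically bounded.

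The reduction to a field is obtained by base-changing along the composite of $\E_\infty$-ring maps $A \to \pi_0(A) \to \kappa(\p)$ for each prime $\p \subseteq \pi_0(A)$, giving $B_\p := B \otimes_A \kappa(\p) \in \CAlg^{\sep}(\Perf{\kappa(\p)})$, separability being preserved by Construction~\ref{con-smf-pres-sep}.

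The core of the argument is then the field case: for any field $k$, every $C \in \CAlg^{\sep}(\Perf{k})$ is discrete and a finite product of finite separable field extensions of $k$. The key observation is that $\pi_*(C)$ is a finite-dimensional graded-commutative $k$-algebra, and the $\infty$-categorical bimodule section of $\mu$ induces on homotopy groups a $(\pi_*(C), \pi_*(C))$-bimodule section of the graded multiplication, i.e.\ $\pi_*(C)$ is graded-separable over $k$. In particular $\pi_0(C)$ is classically separable, so by Auslander--Goldman it is a finite product of finite separable field extensions of $k$, hence reduced. To extend reducedness to all degrees, I take $n \ne 0$ of maximal absolute value with $\pi_n(C) \ne 0$: any nonzero $x \in \pi_n(C)$ satisfies $x^2 \in \pi_{2n}(C) = 0$ by maximality of $|n|$, so $x$ is nilpotent. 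Arguing via a graded version of Auslander--Goldman, or equivalently by analyzing the separability idempotent $\sigma(1) \in \pi_*(C \otimes_k C)$ directly using its centrality relation $(r \otimes 1)\sigma(1) = \sigma(1)(1 \otimes r)$, this contradicts graded-separability and forces $C$ to be discrete.

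Returning to a general connective $A$: the field case shows that each $B_\p$ is discrete of finite rank over $\kappa(\p)$. Since $B$ is perfect, each $\pi_i(B)$ is a finitely generated $\pi_0(A)$-module, and a Tor spectral sequence argument comparing $\pi_i(B) \otimes_{\pi_0(A)} \kappa(\p)$ with $\pi_i(B_\p)$ shows these modules vanish at every residue field for $i \ne 0$. Nakayama's lemma then gives $\pi_i(B) = 0$ for $i \ne 0$, so $B \simeq \pi_0(B)$ is a finite projective and separable, hence finite étale, $\pi_0(A)$-algebra. Lemma~\ref{lem-etale} upgrades this to a finite étale $A$-algebra equivalent to $B$, establishing $B \in \Cov_{\pi_0(A)}$. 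The main technical obstacle is the field case above, specifically establishing the graded analog of the classical fact that separable commutative algebras over a field are reduced.
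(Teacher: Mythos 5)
Your base-change-to-residue-fields strategy matches the paper's, and your proposal to prove the field case directly (via graded separability of $\pi_*(C)$ and a reducedness argument) is a reasonable alternative to the paper's citation of Neeman's Proposition~1.6 --- it is in the same spirit as the paper's Lemma~\ref{lem-sep-supervectorspace}, used later in the even periodic case. However, the final step of your argument contains a genuine error.

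You claim that since each $B_\p = B \otimes_A \kappa(\p)$ is discrete, a Tor spectral sequence plus Nakayama argument forces $\pi_i(B) = 0$ for $i \neq 0$, so that $B \simeq \pi_0(B)$. This conclusion is false: if $A$ is connective with $\pi_i(A) \neq 0$ for some $i > 0$, then any nonzero finite \'etale $A$-algebra $B$ (e.g.\ $B = A$ itself) is flat over $A$, hence $\pi_*(B) \cong \pi_*(A) \otimes_{\pi_0(A)} \pi_0(B)$ is \emph{not} concentrated in degree $0$. The correct target is not ``$B$ is discrete'' but ``$B \otimes_A \pi_0(A)$ is discrete and projective over $\pi_0(A)$'' (equivalently, $A \to B$ is flat). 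The Tor spectral sequence argument you sketch also does not deliver what you want even at the level of $B \otimes_A \pi_0(A)$: vanishing of $\pi_i(B_\p)$ for $i \neq 0$ controls the abutment, and the edge term $\pi_i(B) \otimes_{\pi_0(A)} \kappa(\p)$ is only a sub\emph{quotient} of it after accounting for higher Tor contributions and differentials, so you cannot read off its vanishing directly. What you can extract from the minimal nonzero homotopy group, as the paper does, is \emph{connectivity} of $B$; the remaining flatness statement then requires a separate argument, for instance the paper's appeal to the characterization of perfect complexes that become projective modules after base change to every residue field (stacks-project Tag 068V), applied to $B \otimes_A \pi_0(A)$. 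With flatness in hand one deduces $\pi_0(A) \to \pi_0(B)$ is finite, flat and formally unramified, and only then invokes Lemma~\ref{lem-etale}.
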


\begin{proof}
 The proof is a variation of the one of~\cite{MathewLennart}*{Example 5.5}. 
 
  Let $B$ be a separable $A$-algebra and a perfect $A$-module. By Lemma~\ref{lem-inclusions}, it will suffice to show that $A \to B$ is \'etale and that $\pi_0(A) \to \pi_0(B)$ is finite. For any morphism $\pi_0 A \to k$ into a field, we have maps of commutative rings
 \[
 A \to \pi_0(A) \to k.
 \]
 Base-changing $B$ along this composite, we get a separable commutative algebra $k \otimes_A B$ in $\Perf{k}$ by Construction~\ref{con-smf-pres-sep}. This is equivalent to a finite product of finite separable field extension of $k$ by~\cite{Neeman2018}*{Proposition 1.6}. \\
  We now argue that $B$ is connective, for which we can assume that $B\neq 0$. Using \cite{HA}*{Corollary 7.2.4.5} we see that there is a smallest $n\in\mathbb Z$ with $\pi_n(B)\neq 0$, and that the $\pi_0(A)$-module $\pi_n(B)$ is finitely presented. There thus exists a residue field $\pi_0(A)\to k$ such that $\pi_n(B)\otimes_{\pi_0(A)}k\neq 0$. 
  By inspection of the Tor-spectral sequence~\cite{EKMM}*{Chapter IV, Theorem 4.1}
  \[
  E^2_{p,q}= \mathrm{Tor}_{p,q}^{\pi_*A}(k, \pi_*(B)) \Rightarrow \pi_{p+q}(k \otimes_A B)
  \]
  and using the minimality of $n$, one sees that $0 \not =k\otimes_{\pi_0(A)}\pi_n(B) \simeq \pi_n(k \otimes_A B)$. By the previous paragraph $k\otimes_A B$ is discrete, so we conclude that $n=0$. It follows that $B$ is connective and $\pi_0(B)$ is finitely presented as $\pi_0A$-module. In particular, $\pi_0(A) \to \pi_0 (B)$ is finite.\\
  We next argue that $A \to B$ is flat. By~\cite{HA}*{Theorem 7.2.2.15}, it suffices to verify that for any discrete $A$-module (i.e. $\pi_0(A)$-module) $N$, the spectrum $B \otimes_A N$ is discrete. Since
  \[
  B \otimes_A N \simeq (B \otimes_{A} \pi_0(A))\otimes_{\pi_0(A)} N,
  \]
  it suffices to show that $B \otimes_A \pi_0(A)$ is a discrete, flat $\pi_0(A)$-module. 
  Now we appeal to the following fact of commutative algebra~\cite{stacks-project}*{Tag 068V}: given a commutative ring $R$ and a perfect complex of $R$-modules $P$, then $P$ is quasi-isomorphic to a projective $R$-module concentrated in degree $0$ if and only if the same 
  holds (over $k$) for $P \otimes_R k$ for every residue field $k$ of $R$. 
  We apply this result to $R=\pi_0(A)$ and $P= B \otimes_{A} \pi_0(A)$, using that $B \otimes_A k$ is a finite product of separable extensions of $k$, and conclude that $B \otimes_{A} \pi_0(A)$ is a discrete, flat $\pi_0(A)$-module. 
  Thus $A \to B$ is flat. 
  
  It is only left to show that $\pi_0(A) \to \pi_0(B)$ is \'etale. Note that $\pi_0(A) \to \pi_0(B)$ is flat since $A\to B$ is so. 
  It is also formally unramified since the K\"ahler-differentials satisfy $\Omega_{\pi_0(B)/\pi_0(A)}=0$ if and only if $k \otimes_{\pi_0 (A)}\Omega_{\pi_0(B)/\pi_0(A)}=0$ 
  for all residue field $k$ of $\pi_0(A)$, and we know that $k \otimes_{\pi_0 (A)}\Omega_{\pi_0(B)/\pi_0(A)}\simeq \Omega_{\pi_0(B) \otimes_{\pi_0 (A)}k/k}=0$ since $k\subseteq \pi_0(B)\otimes_{\pi_0(A)}k$ is separable. We showed that $\pi_0(A) \to \pi_0(B)$ is finite, flat and formally unramified, so it is finite \'etale. 
\end{proof}

\begin{corollary}
 The only separable commutative algebras in finite spectra are those of the form $(S^0)^{\times n}$ for $n\in \mathbb{N}$.
\end{corollary}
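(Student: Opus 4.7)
The plan is to apply Proposition~\ref{prop-connective-sep} to the sphere spectrum $\mathbb{S}$, which is a connective $\E_\infty$-ring with $\pi_0(\mathbb{S}) = \mathbb{Z}$. Since the category of finite spectra is precisely $\Perf{\mathbb{S}}$, we obtain an equivalence
\[
\Cov_{\mathbb{Z}} \simeq \CAlg^{\sep}(\Perf{\mathbb{S}}).
\]
So it suffices to classify the finite \'etale $\mathbb{Z}$-algebras and identify the corresponding $\mathbb{S}$-algebras.

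A finite \'etale $\mathbb{Z}$-algebra $R$ decomposes (as a commutative ring) as a finite product $R \simeq \prod_i \mathcal{O}_i$ of rings of integers in number fields $K_i$ that are everywhere unramified over $\mathbb{Q}$. By Minkowski's theorem on discriminants, every number field $K \neq \mathbb{Q}$ has $|\mathrm{disc}(K/\mathbb{Q})| > 1$, hence is ramified at some rational prime. Consequently, the only connected factor that can occur is $\mathcal{O}_i = \mathbb{Z}$, and therefore
\[
\Cov_{\mathbb{Z}} = \{\, \mathbb{Z}^{\times n} \mid n \in \mathbb{N}\,\}.
\]

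Finally, by Lemma~\ref{lem-etale} the equivalence $\pi_0\colon \mathrm{CAlg}^{\mathrm{et}}(\mathbb{S}) \xrightarrow{\sim} \mathrm{CAlg}^{\mathrm{et}}(\mathbb{Z})$ sends $(S^0)^{\times n} \mapsto \mathbb{Z}^{\times n}$, which identifies the $\mathbb{S}$-algebra lifting $\mathbb{Z}^{\times n}$ as $(S^0)^{\times n}$. Combining with Proposition~\ref{prop-connective-sep} yields the claim. The only non-formal input is Minkowski's discriminant bound; everything else is a direct invocation of results already established in the excerpt.
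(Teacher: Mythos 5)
Your proof is correct and follows essentially the same route as the paper's: apply Proposition~\ref{prop-connective-sep} to the connective $\E_\infty$-ring $S^0$ with $\pi_0(S^0)=\Z$, then invoke Minkowski's discriminant bound to conclude that $\Cov_{\Z}$ contains only $\Z^{\times n}$. The paper states this more tersely (``the \'etale fundamental group of $\Z$ is trivial by Minkowski's theorem''), while you spell out the decomposition of a finite \'etale $\Z$-algebra into rings of integers and the identification of the lifts via Lemma~\ref{lem-etale}, but the underlying argument is identical.
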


\begin{proof}
 The $\infty$-category of spectra $\Sp$ can be identified with the $\infty$-category of modules over the sphere spectrum $S^0$, which is connective. The \'etale fundamental group of $\pi_0(S^0)=\Z$ is trivial by Minkowski’s theorem, so the claim follows from Proposition \ref{prop-connective-sep}. 
\end{proof}

 Recall that an $\mathbb E_\infty$-ring $A$ is \emph{even periodic} if $\pi_i(A)=0$ if $i$ is odd, and the multiplication map $\pi_2(A) \otimes_{\pi_0(A)} \pi_* (A) \to \pi_{*+2}(A)$ is an isomorphism. The proof of the next result is inspired by~\cite{Mathew2016}*{Theorem 6.29}.

\begin{theorem}\label{thm-sep-even-periodic}
Let $A$ be an even periodic $\E_\infty$-ring with $\pi_0 (A)$ regular and Noetherian with $2\in \pi_0(A)^\times$. Then we have 
\[ \Cov_{\pi_0(A)} = \CAlg^{\sep}(\Perf{A}).\] 
In particular, every separable commutative algebra in $\Perf{A}$ has finite degree.
\end{theorem}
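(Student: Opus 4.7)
The inclusion $\Cov_{\pi_0(A)} \subseteq \CAlg^{\sep}(\Perf{A})$ is immediate from Lemma~\ref{lem-inclusions}, so I focus on the reverse inclusion: every $B \in \CAlg^{\sep}(\Perf{A})$ should arise from a finite \'etale $\pi_0(A)$-algebra. My plan is to follow the strategy of Proposition~\ref{prop-connective-sep} and of~\cite{Mathew2016}*{Theorem 6.29}: reduce to residue fields, classify separable algebras over each residue field, and then lift the classification back using a flatness argument.

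First, for each prime $\mathfrak{p}$ of $\pi_0(A)$ I would produce an even periodic $\E_\infty$-ring $A_{\mathfrak{p}}$ with $\pi_0(A_{\mathfrak{p}}) = k(\mathfrak{p})$, constructed as in~\cite{Mathew2016}*{Construction 6.17} by localizing $A$ at $\mathfrak{p}$ and killing a regular system of parameters for the maximal ideal (using regularity of $\pi_0(A)$). Base-changing $B$ along $A \to A_{\mathfrak{p}}$ produces a separable commutative algebra $B_{\mathfrak{p}}$ with perfect underlying module over $A_{\mathfrak{p}}$, to which the field case below can be applied.

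The core step is the classification of $\CAlg^{\sep}(\Perf{A})$ when $\pi_0(A) = k$ is a field. In this case $A$ is $2$-periodic with $\pi_*(A) = k[u^{\pm 1}]$, and the homotopy category of $\Perf{A}$ is equivalent to the $\Z/2$-graded category of finite-dimensional $k$-vector spaces, with every object splitting as a sum of copies of $A$ and $\Sigma A$. I would argue that any separable commutative $C \in \Perf{A}$ has homotopy concentrated in even degrees: the induced odd-odd pairing $\pi_1(C) \otimes_k \pi_1(C) \to \pi_0(C)$ is antisymmetric by the Koszul sign rule, and combined with the bimodule section witnessing separability this forces $\pi_1(C) = 0$ (with the characteristic~$2$ case requiring a finer bimodule analysis rather than the sign trick). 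Once $C$ is even, $\pi_0(C)$ is a separable commutative $k$-algebra, hence a finite product of finite separable field extensions by the classical theorem, and lifting along the idempotent splittings of $\pi_0(C)$ recovers $C \simeq A \otimes_k \pi_0(C)$.

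With the residue-field classification established, I would complete the proof by running, at each prime $\mathfrak{p}$, the Tor spectral sequence
\[
E^2_{p,q} = \mathrm{Tor}^{\pi_* A}_{p,q}(\pi_* A_{\mathfrak{p}}, \pi_* B) \Longrightarrow \pi_{p+q}(B_{\mathfrak{p}}).
\]
Since each $\pi_*(B_{\mathfrak{p}})$ is concentrated in even degrees, the minimality-of-degree argument used in the proof of Proposition~\ref{prop-connective-sep} shows that $\pi_*(B)$ is even and flat over $\pi_*(A)$, with $\pi_*(B) \cong \pi_0(B) \otimes_{\pi_0(A)} \pi_*(A)$. Then $\pi_0(B)$ is a finite, flat, and separable $\pi_0(A)$-algebra, hence finite \'etale by classical commutative algebra, and $B \simeq A \otimes_{\pi_0(A)} \pi_0(B)$ lies in $\Cov_{\pi_0(A)}$; the finiteness of degree is an immediate consequence. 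The main obstacle in this program is the classification over a residue field, specifically ruling out an odd part of a separable commutative algebra in the $\Z/2$-graded setting, where the characteristic~$2$ case cannot be handled by the Koszul sign argument and requires a genuinely different input.
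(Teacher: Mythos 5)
Your overall plan correctly identifies the two-step structure of the argument (reduce mod a regular system of parameters, classify separable supercommutative algebras over a field) and even the right crux (killing the odd part, with characteristic $2$ being genuinely harder). However, there are two gaps, one of which is structural.

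First, and most seriously: you propose "producing an even periodic $\E_\infty$-ring $A_{\mathfrak{p}}$ with $\pi_0(A_{\mathfrak{p}}) = k(\mathfrak{p})$, constructed as in~\cite{Mathew2016}*{Construction 6.17}," then base-changing $B$ to a separable object of $\Perf{A_{\mathfrak{p}}}$. But that construction only yields an $\E_1$-algebra: residue fields of even periodic $\E_\infty$-rings with regular $\pi_0$ are not $\E_\infty$-rings in general (the Morava $K$-theories being the standard counterexample when $A = E_n$, the relevant case for the motivating theorem of Mathew). So "$\CAlg^{\sep}(\Perf{A_{\mathfrak{p}}})$" and "base-changing $B$ along $A \to A_{\mathfrak{p}}$" are not well-posed, and this dead end cannot be repaired by choosing the construction more carefully. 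The paper circumvents this by settling for an $\E_1$-structure on the quotient $A/(x_1,\ldots,x_n)$ and then passing to the associated homology theory $P_*(M) = \pi_*(M \otimes_A A/(x_1,\ldots,x_n))$. One checks (on homotopy categories) that $P_*$ is a symmetric monoidal functor to graded $k[t^{\pm 1}]$-modules with the Koszul sign convention, equivalently to $\mathrm{sVect}_k$. That is all one needs: the image of $B$ under this functor is then a separable \emph{commutative algebra object in super $k$-vector spaces}, and the problem is reduced to pure graded algebra over a field without ever needing $\E_\infty$-ring residue fields.

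Second, you correctly note that the key point is ruling out the odd part. Your Koszul-sign antisymmetry argument for odd characteristic matches the argument of Neeman cited in the paper's Lemma~\ref{lem-sep-supervectorspace}, but for characteristic $2$ you explicitly flag that the sign trick fails and a "finer bimodule analysis" is needed, without supplying it. The paper does not prove this either: it cites~\cite{BC2018}*{Proposition 1.5} of Balmer--Carlson. So this is not a flaw in your understanding, but it is a genuine hole in the written argument as it stands; you should either reproduce the Balmer--Carlson argument or cite it. The remaining steps — the Tor spectral sequence argument for evenness/flatness/freeness of $\pi_*(B)$, and the deduction that $\pi_0(B)$ is finite, flat, and separable hence finite \'etale over $\pi_0(A)$ — agree with the paper's proof, which invokes~\cite{Mathew2016}*{Lemma 6.32} for the even/flat step and~\cite{Ford}*{Theorem 8.3.6} for the final formal-unramifiedness.
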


\begin{proof}
 Let $B$ be a commutative separable $A$-algebra with perfect underlying $A$-module. One sees that $\pi_0(B)$ is a finite $\pi_0(A)$-module by induction on the number of cells and using that $A$ is Noetherian and even periodic. Therefore, it will suffice to show that $A \to B$ is \'etale. To do this, we can assume that $\pi_0(A)$ is regular local, say with maximal ideal $\mathfrak{m}$ generated by a regular sequence $(x_1,\ldots,x_n)$ and residue field $k$. Our assumptions on $A$ ensure that we can choose a structure of $\mathbb E_1$-algebra on
 \[
 A/(x_1,\ldots, x_n):=A/(x_1)\otimes_A \ldots \otimes_A A/(x_n)\in \Alg_{\E_2}(\mod{A})
 \]
 and that $\pi_*(A/(x_1,\ldots, x_n))=k[t^{\pm 1}]$ with $|t|=2$, see~\cite{Algeltveit}*{Section 3}. Moreover $ A/(x_1,\ldots, x_n)$ is automatically homotopy commutative by~\cite{Strickland99}*{Corollary 3.12}. 
 We can then define a homology theory $P_*$ from the (homotopy) category of $A$-modules to the category of graded $k[t^{\pm 1}]$-modules by 
 \[
 P_*(M)= \pi_*(M \otimes_A A/(x_1,\ldots, x_n)).
 \] 
 (here by homology theory we mean a functor which sends triangles to long exact sequences and preserves products). 
 An easy argument using the Tor-spectral sequence and the fact that $k[t^{\pm 1}]$ is a graded field, shows that the homology theory $P_*$ satisfies the K\"unneth isomorphism
 \[
 P_*(M)\otimes_{k[t^{\pm 1}]} P_*(N) \simeq P_*(M \otimes_A N).
 \]
 If we endow the category of graded $k[t^{\pm 1}]$-modules with the graded tensor product and commutativity constraint $x \otimes y = (-1)^{|x||y|}y \otimes x$, then the functor $P_*$ lifts to a symmetric monoidal functor (this can be checked on homotopy categories).
It will be convenient to replace this symmetric monoidal category of graded $k[t^{\pm 1}]$-modules with the equivalent symmetric monoidal category of super $k$-vector spaces $\mathrm{sVect}_k$. This is the category of $\Z/2$-graded $k$-vector spaces endowed with the graded tensor product and commutativity constraint $v\otimes w= (-1)^{|v||w|} w \otimes v$, so that by definition, any commutative algebra in $\mathrm{sVect}_k$ is graded commutative. 
 We denote the modified symmetric monoidal functor by $Q_*\colon \mod{A} \to \mathrm{sVect}_k$. Then, $Q_*(B)$ is a separable commutative algebra in $\mathrm{sVect}_k$, so by Lemma~\ref{lem-sep-supervectorspace} below, we must have $Q_1(B)=0$. 
 It then follows from~\cite{Mathew2016}*{Lemma 6.32} that $\pi_1(B)=0$ and that 
 $\pi_0(B)$ is a free $\pi_0(A)$-module, necessarily finite. This in particular implies that 
 $A \to B$ is flat and that $\pi_0(B \otimes_A B)\simeq \pi_0(B)\otimes_{\pi_0(A)}\pi_0(B)$ so that $\pi_0(B)$ is a separable $\pi_0(A)$-algebra. It then follows from~\cite{Ford}*{Theorem 8.3.6} that $\pi_0 (A) \to \pi_0 (B)$ is formally unramified and hence \'etale.
\end{proof}

\begin{lemma}\label{lem-sep-supervectorspace}
Let $k$ be a field of characteristic $p>2$ and let $A= (A_0, A_1) \in \CAlg(\mathrm{sVect}_k)$ be separable. Then $A_1=0$ and $A_0$ is a finite product of finite separable field extensions of $k$.
\end{lemma}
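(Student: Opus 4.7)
The plan is to reduce separability in $\mathrm{sVect}_k$ to ordinary associative separability, invoke the classical structure theorem over a field, and then use super commutativity to force the odd part to vanish. First, a bimodule section $\sigma$ of $\mu\colon A\otimes A \to A$ witnessing separability of $A$ in $\mathrm{sVect}_k$ depends only on the $\mathbb{Z}/2$-graded bimodule structure of $A\otimes A$, which does not use the symmetric braiding. Hence the underlying associative $k$-algebra of $A$ is separable in the classical sense, and so by a classical theorem over a field it is finite-dimensional and semisimple, with Wedderburn decomposition $A\simeq\prod_i M_{n_i}(D_i)$. Super commutativity immediately gives $A_0 \subseteq Z(A) = \prod_i Z(D_i)$, so $A_0$ is a reduced commutative $k$-algebra.

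If $\mathrm{char}\, k \neq 2$, super commutativity forces $a^2 = 0$ for every $a \in A_1$. A pigeonhole argument — any monomial in the expansion of $z^N$, with $z = \sum_{k=1}^n a_k b_k \in A_1\cdot A_1$ and $N > n$, contains a repeated odd factor which collapses to zero after being moved together with signs — shows that $A_1\cdot A_1 \subseteq A_0$ is a nil ideal. Reducedness of $A_0$ then gives $A_1\cdot A_1 = 0$, so $A_1$ is a two-sided ideal of $A$ with $A_1^2 = 0$; semisimplicity of the Artinian ring $A$ forces $A_1 = 0$.

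Characteristic $2$ is the main obstacle, since the vanishing of odd squares breaks down. Here one instead exploits that the super signs are trivial, so $A$ is classically commutative separable; combined with finite-dimensionality this gives $A\simeq\prod L_i$ for finite separable field extensions $L_i/k$. The induced $\mathbb{Z}/2$-grading makes $(L_i)_0\subseteq L_i$ a subfield (nonzero even elements have even inverses), and if $(L_i)_1\neq 0$ then for any $0 \neq a\in (L_i)_1$ one identifies $L_i\simeq (L_i)_0[x]/(x^2 - c)$ with $c = a^2 \in (L_i)_0^\times$; but the derivative $2x$ vanishes in characteristic $2$, so $L_i/(L_i)_0$ is purely inseparable, contradicting the separability of $L_i/k$ via the tower $k\subseteq (L_i)_0 \subseteq L_i$. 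Hence $(L_i)_1 = 0$ and $A_1 = 0$. In either case $A = A_0$ is a commutative separable $k$-algebra, hence by the classical theorem a finite product of finite separable field extensions of $k$.
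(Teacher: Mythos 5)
Your proof is correct, and it replaces the paper's reliance on two external references with a self‑contained argument. The paper handles characteristic $2$ by citing \cite{BC2018}*{Proposition 1.5}, handles $A_1=0$ in odd characteristic by ``arguing as in'' \cite{Neeman2018}*{Lemma 1.4}, and then cites \cite{Neeman2018}*{Proposition 1.6} for the structure of $A_0$. Your first observation --- that the bimodule section condition $\mu\sigma=1$, $(1\otimes\mu)(\sigma\otimes1)=\sigma\mu=(\mu\otimes1)(1\otimes\sigma)$ involves no Koszul signs because $\sigma$ is degree zero, so the underlying associative $k$-algebra is separable in the classical sense --- is not stated in the paper but is exactly what lets you bring in the Wedderburn decomposition and finite-dimensionality directly. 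Your char $\neq 2$ argument (odd squares vanish, hence $A_1\cdot A_1$ consists of nilpotents, which must be zero since $A_0$ embeds in $Z(A)=\prod Z(D_i)$, which is reduced; then $A_1$ is a square-zero ideal in a semisimple ring) is presumably close in spirit to what Neeman does, but you spell it out. Your char $2$ argument (ungraded commutative separable gives $A\cong\prod L_i$; an odd unit would make $L_i/(L_i)_0$ a degree-2 inseparable extension, contradicting separability over $k$) is a clean alternative to invoking Balmer--Carlson. Note also that your char $\neq 2$ argument covers characteristic $0$, whereas the paper only speaks of ``odd'' characteristic.

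One small point worth spelling out in the characteristic $2$ step: you need the idempotents $e_i$ realizing the decomposition $A\cong\prod L_i$ to be \emph{even}, so that the $\mathbb{Z}/2$-grading actually descends to each factor $L_i$. This is true --- writing $e=e_0+e_1$ and using $e^2=e$ together with $\mathrm{char}=2$ and $\deg(e_1^2)=0$ forces $e_1=0$ --- but it is not automatic and should be stated, since otherwise the sentence ``the induced $\mathbb{Z}/2$-grading makes $(L_i)_0\subseteq L_i$ a subfield'' has a gap. Similarly, the phrase ``contains a repeated odd factor which collapses to zero after being moved together with signs'' is slightly loose: the cleanest justification is that each $a_ib_i$ lies in $A_0\subseteq Z(A)$, so the central blocks can be permuted freely to produce a factor $(a_ib_i)^2=-a_i^2b_i^2=0$. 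Both are cosmetic rather than substantive.
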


\begin{proof}
  We can argue in the same way as in the proof of~\cite{Neeman2018}*{Lemma 1.4} and see that $A_1=0$. From this it follows that $A_0$ is a separable $k$-algebra and so a finite product of finite separable field extensions of $k$ by~\cite{Neeman2018}*{Proposition 1.6}.
\end{proof}

\section{Complete modules over an adic \texorpdfstring{$\E_\infty$}{E}-ring}\label{sec-complete}

In this section we recall the $\infty$-category of complete modules over a connective $\E_\infty$-ring and we classify its dualizable and separable commutative algebras. We start by recalling some background and terminology following~\cite{SAG}*{Chapter 8.1}.

\begin{definition}
 An \emph{adic} $\E_\infty$-ring is a connective $\E_\infty$-ring $A$ with an adic topology on the commutative ring $\pi_0(A)$ admitting a finitely generated ideal $I \subseteq  \pi_0(A)$ of definition. 
\end{definition}

Given a ideal $I$ in a discrete commutative ring $\pi_0(A)$, one can form the $I$-adic tower of quotient rings
\[
 \ldots \to \pi_0(A)/I^n \to \ldots \to \pi_0(A)/I^2 \to \pi_0(A)/I.
\]
This tower governs the theory of $I$-adic completion for $\pi_0(A)$-modules. 
The next results provides a similar construction applicable to all adic $\E_\infty$-rings. 

\begin{lemma}\label{lem:adic-tower}
 Let $A$ be an adic $\E_\infty$-ring with finitely generated ideal of definition $I\subseteq \pi_0(A)$. Then, there exists a tower $\ldots\to  A_4 \to A_3 \to A_2 \to A_1 $ in 
 the $\infty$-category $\CAlg_{A}$ of $A$-algebras satisfying the following properties:
 \begin{itemize}
     \item[(a)] Each $A$-algebra $A_i$ is connective and each map $A_{i+1}\to A_i$ induces a surjection on $\pi_0$ with nilpotent kernel.
     \item[(b)] For every connective $A$-algebra $B$, the canonical map $$\colim\Hom_{\CAlg}(A_n, B)\to \Hom_{\CAlg}(A,B)$$ induces a homotopy equivalence between the source and the summand of the target consisting of those maps $A \to B$ which annihilate some power of $I$.
     \item[(c)] Each $\mathbb E_\infty$-ring $A_n$ is almost perfect when regarded as an $A$-module. 
 \end{itemize}
 Moreover we can also arrange that the given map induces an isomorphism 
 \[ \pi_0(A)/I\stackrel{\simeq}{\longrightarrow}\pi_0(A_1).\]
\end{lemma}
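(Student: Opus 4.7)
My plan is to imitate the classical $I$-adic tower of discrete commutative algebra in the derived setting, via Lurie's derived-quotient construction. Choose a finite set of generators $x_1, \ldots, x_r \in \pi_0(A)$ for $I$. For each $x \in \pi_0(A)$ consider the connective $\E_\infty$-$A$-algebra
\[
A/\!\!/x := A \otimes_{A\{t\}} A,
\]
defined as the pushout in $\CAlg_A^{\cn}$ of the two maps $A\{t\} \rightrightarrows A$ sending the free generator $t$ of $A\{t\}$ to $x$ and to $0$, respectively. Its underlying $A$-module is the cofiber of multiplication by $x$ on $A$, which is perfect, and on $\pi_0$ one has $\pi_0(A/\!\!/x) \cong \pi_0(A)/(x)$. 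Then set
\[
A_n := A/\!\!/x_1^n \otimes_A A/\!\!/x_2^n \otimes_A \cdots \otimes_A A/\!\!/x_r^n \in \CAlg_A^{\cn}.
\]
To construct the transition maps, note that $x_i^n$ carries a canonical null-homotopy in $A/\!\!/x_i^n$, hence so does $x_i^{n+1} = x_i \cdot x_i^n$; the universal property of $A/\!\!/x_i^{n+1}$ therefore yields a canonical $A$-algebra map $A/\!\!/x_i^{n+1} \to A/\!\!/x_i^n$, and tensoring over $i$ produces the transition map $A_{n+1} \to A_n$.

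Properties (a), (c), and the ``moreover'' assertion are then formal. Each $A_n$ is connective as a tensor product in $\CAlg_A^{\cn}$, and is perfect as an $A$-module (hence almost perfect) because perfectness is preserved by tensor products. A K\"unneth computation on $\pi_0$ of the iterated pushout yields $\pi_0(A_n) \cong \pi_0(A)/(x_1^n, \ldots, x_r^n)$; for $n=1$ this recovers $\pi_0(A)/I$, establishing the last claim. The induced map $\pi_0(A_{n+1}) \to \pi_0(A_n)$ is the surjection $\pi_0(A)/(x_1^{n+1},\ldots,x_r^{n+1}) \twoheadrightarrow \pi_0(A)/(x_1^n,\ldots,x_r^n)$, whose kernel $J$ is generated by the classes of $x_1^n, \ldots, x_r^n$; any product of $r+1$ of these generators must by pigeonhole contain some factor $x_i^{2n}$, which for $n \geq 1$ lies in $(x_i^{n+1})$, so $J^{r+1} = 0$ and $J$ is nilpotent, proving (a).

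The main obstacle is the universal property (b). From the pushout description, $\mathrm{Map}_{\CAlg}(A/\!\!/x_i^n, B)$ is the space of null-homotopies of the image of $x_i^n$ in $\Omega^\infty B$; for fixed $n$ its fibers over those $\phi \colon A \to B$ killing the $x_i^n$'s are $\prod_i \Omega^\infty(B[-1])$, which are in general not contractible. Thus a single $A_n$ does not cut out a summand of $\mathrm{Map}_{\CAlg}(A, B)$: there is genuine extra homotopical data from the choice of null-homotopies. The crucial point is that these corrections are absorbed once one passes to the filtered colimit over $n$, yielding the desired summand inclusion. This is precisely the content of Lurie's construction of the adic tower in \cite{SAG}, which I would invoke directly for the technical verification of (b).
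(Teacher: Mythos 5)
The paper's own proof is just a citation to SAG Lemma~8.1.2.2, together with the observation that Lurie's construction yields $\pi_0(A_1)\simeq \pi_0(A)/I$. You instead attempt an explicit construction, and it contains a genuine error. The $\E_\infty$-quotient $A/\!\!/x = A\otimes_{A\{t\}}A$, where $A\{t\}$ is the \emph{free} $\E_\infty$-$A$-algebra on a degree-zero generator, does not have $\mathrm{cofib}(x\colon A\to A)$ as its underlying $A$-module. The free $\E_\infty$-algebra $A\{t\}\simeq\bigoplus_{n\ge 0}A\otimes\Sigma^\infty_+B\Sigma_n$ is not the polynomial ring, and the derived tensor product $A\otimes_{A\{t\}}A$ carries extra homotopy beyond the two-term Koszul complex. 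Concretely, $S^0/\!\!/2$ is by construction an $\E_\infty$-ring, whereas the mod-$2$ Moore spectrum $\mathrm{cofib}(2\colon S^0\to S^0)$ famously admits no homotopy-unital multiplication at all, so the two are not equivalent as spectra. (Replacing $A\{t\}$ by the polynomial ring $A[t]$ would indeed give the cofiber, but then the $\E_\infty$-map $A[t]\to A$, $t\mapsto x$, does not exist for a general $x\in\pi_0(A)$; this is the issue of strictly multiplicative elements.)

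As a consequence, your argument for part~(c) breaks: $A/\!\!/x^n$ is not perfect, and although it is plausibly still almost perfect, that would require a separate argument analyzing the bar complex. Part~(a) and the moreover clause survive, since they rely only on the identification $\pi_0(A/\!\!/x)\cong\pi_0(A)/(x)$, which holds by right-exactness of $\pi_0$ on connective spectra. Finally, for~(b) you correctly flag the nontriviality of the null-homotopy fibers, but then defer the colimit argument entirely to SAG; this is only legitimate if your explicit tower agrees with the one Lurie constructs, which you do not establish. Once one is invoking SAG for the hardest part anyway, it is cleaner to cite Lemma~8.1.2.2 outright and simply note that Lurie's construction has the extra $\pi_0$-property, which is exactly what the paper does.
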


\begin{proof}
 The existence of a tower satisfying (a), (b) and (c) is proved in~\cite{SAG}*{Lemma 8.1.2.2}, and we note that the construction given there has the additional property that $\pi_0(A_1)\simeq \pi_0(A)/I$. 
\end{proof} 
As in classical commutative algebra, there is a notion of completeness for modules over an adic $\E_\infty$-ring $A$. If $I$ is an ideal of definition of $\pi_0(A)$, there is an associated full subcategory $\mod{A}^{\Cpl(I)}\subseteq \mod{A}$ of $I$-complete $A$-modules~\cite{SAG}*{Definition 7.3.1.1}. The inclusion $\mod{A}^{\Cpl(I)}\subseteq \mod{A}$ admits a left adjoint~\cite{SAG}*{Notation 7.3.1.5} which we refer to as the $I$-completion functor.  The $\infty$-category $\mod{A}^{\Cpl(I)}$ is a stable homotopy theory where the symmetric monoidal structure is given by the completed tensor product~\cite{SAG}*{Corollary 7.3.5.2} (presentability and stability of $\mod{A}^{\Cpl(I)}$ follows by combining~\cite{SAG}*{Proposition 7.1.1.12 and Proposition 7.3.1.7}.  The next result determines the complete modules which are dualizable with respect to this symmetric monoidal structure.

 \begin{theorem}\label{thm-dual=perfect}
  Let $A$ be an adic $\E_\infty$-ring which is complete with respect to a finitely generated ideal of definition $I \subseteq\pi_0(A)$. Then we have the following equality of full subcategories
 \[ \Perf{A} = \mod{A}^{\Cpl(I),\mathrm{dual}}\subseteq\mod{A}\]
 and the symmetric monoidal structures on $\Perf{A}$ and $\mod{A}^{\Cpl(I),\mathrm{dual}}$ agree.
 \end{theorem}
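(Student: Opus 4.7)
The plan is to establish the two inclusions separately, noting in the process that the symmetric monoidal structures agree. For the forward inclusion $\Perf{A} \subseteq \mod{A}^{\Cpl(I),\mathrm{dual}}$, I first observe that every perfect $A$-module is $I$-complete: the unit $A$ is complete by assumption, and the full subcategory $\mod{A}^{\Cpl(I)} \subseteq \mod{A}$ is stable under shifts, finite (co)limits, and retracts, operations which generate $\Perf{A}$ from $A$. Moreover, for $P, Q \in \Perf{A}$ the ordinary tensor product $P \otimes_A Q$ is itself perfect, hence complete, so it agrees with the completed tensor product $P \hat\otimes_A Q$. Consequently, the standard duality data for $P$ in $\mod{A}$ also witnesses $P$ as dualizable in $\mod{A}^{\Cpl(I)}$, and the symmetric monoidal structures coincide on $\Perf{A}$.

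For the reverse inclusion, let $M \in \mod{A}^{\Cpl(I),\mathrm{dual}}$ and consider the $I$-adic tower $\{A_n\}$ of Lemma~\ref{lem:adic-tower}. Since $\pi_0(A_1) \simeq \pi_0(A)/I$ and each map $\pi_0(A_{n+1}) \to \pi_0(A_n)$ has nilpotent kernel, the ideal $I$ acts nilpotently on $\pi_0(A_n)$ for every $n$. In particular, each $A_n$ is automatically $I$-complete, and tensoring with $A_n$ absorbs completion, so $M \otimes_A A_n \simeq M \hat\otimes_A A_n$. The base-change functor $\mod{A}^{\Cpl(I)} \to \mod{A_n}$ is thus symmetric monoidal, colimit-preserving, and sits in $\CAlg(\Pr)$; in particular it sends dualizable objects to dualizable, i.e.\ perfect, objects. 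Hence $M \otimes_A A_n \in \Perf{A_n}$ for all $n \geq 1$, and by completeness of $M$ we recover $M \simeq \lim_n (M \otimes_A A_n)$.

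To promote this compatible family of perfect modules to a perfect $A$-module, I will establish the equivalence of $2$-rings
\[
\Perf{A} \;\xrightarrow{\;\sim\;}\; \lim_n \Perf{A_n},
\]
which is of independent interest and formalizes the heuristic mentioned in the section overview that perfect complete modules organize into a limit over the $I$-adic tower. Fully faithfulness is the formal part: for $P, Q \in \Perf{A}$ one has $\Hom_A(P, Q) \simeq \lim_n \Hom_{A_n}(P \otimes_A A_n, Q \otimes_A A_n)$, since $Q$ is $I$-complete, $P$ is dualizable, and $A \simeq \lim_n A_n$ by completeness of $A$. Essential surjectivity is the main obstacle: given a compatible family $\{N_n \in \Perf{A_n}\}_n$, one forms $N := \lim_n N_n$ in $\mod{A}^{\Cpl(I)}$ and must verify that $N$ is perfect over $A$. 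I would handle this in two steps, first showing that $N$ is almost perfect (inherited from the coherent reductions $N_n$ via the uniform finiteness of their cell structures mod $I$), and then showing $N$ has finite Tor-amplitude over $A$ (inherited from the uniform Tor-amplitude of the $N_n$); together these imply perfectness. Granting the displayed equivalence, the family $\{M \otimes_A A_n\}_n$ lifts to some $N \in \Perf{A}$, and since both $M$ and $N$ are recovered as the same limit $\lim_n (M \otimes_A A_n)$, we conclude $M \simeq N \in \Perf{A}$, as required.
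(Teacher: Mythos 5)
Your forward inclusion $\Perf{A} \subseteq \mod{A}^{\Cpl(I),\mathrm{dual}}$ is fine and matches the paper's (one-line) argument. The reverse inclusion, however, has a genuine gap. You reduce it to the equivalence $\Perf{A} \simeq \lim_n \Perf{A_n}$, and for that you reduce to essential surjectivity: given a compatible family $\{N_n \in \Perf{A_n}\}$, you must show $N := \lim_n N_n$ is perfect over $A$. At this point you write only that you ``would handle this in two steps'' — first that $N$ is almost perfect ``via the uniform finiteness of their cell structures mod $I$'', then that $N$ has finite Tor-amplitude. Neither step is carried out, and the phrase ``uniform finiteness of cell structures'' is not a proof; in particular you never establish that the family $\{N_n\}$ (or $\{M \otimes_A A_n\}$) is \emph{uniformly} bounded below in connectivity, nor how to pass from such a bound to almost perfectness of the limit. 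That is exactly the hard content of the theorem, and it is where all the work lies.

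For comparison, the paper's proof of Theorem~\ref{thm-dual=perfect} does not use the $\{A_n\}$ tower at all: it invokes the perfectness criterion \cite{SAG}*{Corollary 8.3.5.9} (reduce to checking that $M$ is almost connective and that $M\otimes_A \pi_0(A)/I$ is perfect) and then establishes almost connectivity by a descending induction on Koszul stages $C_k=(A/x_1)\otimes_A\cdots\otimes_A(A/x_k)$, exploiting that $M\otimes_A C_n$ is dualizable for the \emph{ordinary} tensor product because the completion map has $I$-nilpotent and $I$-local fiber. The tower equivalence $\Perf{A}\simeq\lim_n\Perf{A_n}$ you are aiming for is then derived \emph{afterwards}, as Corollary~\ref{lem-compl-mod}, using the theorem plus a uniform-connectivity observation. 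So your route is not circular in principle, but it front-loads exactly the connectivity estimate that the paper supplies via Koszul complexes, and your sketch never supplies it. To make your argument work you would need, concretely: (i) a proof that a compatible family of perfect $A_n$-modules has uniformly bounded below connectivity (the paper gets this from the equivalences $M_n\simeq A_n\otimes_{A_{n+1}}M_{n+1}$ with $A_n$, $A_{n+1}$ connective, but the argument deserves to be spelled out, since it really uses Nakayama along the surjections $\pi_0(A_{n+1})\twoheadrightarrow\pi_0(A_n)$ with nilpotent kernel), and (ii) an appeal to \cite{SAG}*{Corollary 8.3.5.9} or an equivalent criterion to conclude perfectness of the limit. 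As written, the heart of the proof is missing.

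You should also be a little careful with the assertion $M\simeq\lim_n(M\otimes_A A_n)$ ``by completeness of $M$'': the adjunction $\mod{A}^{\Cpl(I)}\rightleftarrows\lim_n\mod{A_n}$ from \cite{SAG}*{Theorem 8.3.4.4} restricts to an equivalence on connective objects, not a priori on arbitrary $I$-complete modules, so this identity needs the connectivity control you have yet to establish rather than being an input to it.
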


 \begin{proof}
 The inclusion $\Perf{A}\subseteq\mod{A}^{\Cpl(I),\mathrm{dual}}$ holds because $A$ is complete and $\mod{A}^{\Cpl(I),\mathrm{dual}}\subseteq\mod{A}$ is a thick subcategory.\\
 To see the reverse inclusion we consider some $M\in\mod{A}^{\Cpl(I),\mathrm{dual}}$. To show that $M\in\Perf{A}$, by~\cite{SAG}*{Corollary 8.3.5.9} it suffices to show that $M$ is almost connective and that $M\otimes_A (\pi_0(A)/I)$ is perfect.
Since $M$ is dualizable for the completed tensor product, so is the 
$\pi_0(A)/I$-module $M\otimes_A (\pi_0(A)/I)$. Since $I$ is zero here, this module is dualizable for the ordinary tensor product, i.e. it is perfect.\\
To see that $M$ is almost connective, we choose generators $I=(x_1,\ldots,x_n)$, the corresponding Koszul-objects
$C_k:=(A/x_1)\otimes_A\cdots\otimes_A (A/x_k)$ ($0\leq k\leq n$) and
show by descending induction on $k\leq n$ that $M\otimes_A C_k$ is almost connective. The final case $k=0$ will give the result since $M\otimes_A C_0=M$. To start the induction, we claim that $M\otimes_A C_n$ is a perfect $A$-module, and in particular almost connective. It is clear that $M\otimes_A C_n$ is dualizable for the completed tensor-product, 
say with dual $N$. Since completion is exact, it commutes with $-\otimes_A C_n$, and hence the fiber of the completion map
\[ \mathrm{fib}(M\otimes_A C_n\otimes_A N\longrightarrow 
(M\otimes_A C_n)\widehat{\otimes}_A N)\simeq C_n\otimes \mathrm{fib}(M\otimes_A N\longrightarrow M\widehat{\otimes}_A N)\]
is both $I$-nilpotent (\cite{SAG}*{Definition 7.1.1.6}) and $I$-local (using~\cite{SAG}*{Proposition 7.3.1.4}), and thus vanishes. Hence  
$M\otimes_A C_n$ is dualizable for the ordinary tensor product, as claimed.\\
For the induction step, we assume given $1\leq k\leq n$ and that 
$M\otimes_A C_k$ is almost connective, say it is $r$-connective.
Using the fiber sequences
\[ M\otimes_A C_{k-1}\otimes_A A/x_k^n\longrightarrow
M\otimes_A C_{k-1}\otimes_A  A/x_k^{n+1}\longrightarrow
M\otimes_A C_{k-1}\otimes_A A/x_k=M\otimes_A C_k\]
and induction on $n$ shows, that for every $n$, $M\otimes_A C_{k-1}\otimes_A A/x_k^n$
is $r$-connective. Since $M\otimes_A C_{k-1}$ is $I$-complete, we have
\[ M\otimes_A C_{k-1}\simeq\lim_n M\otimes_A C_{k-1}\otimes_A A/x_k^n,\]
and this is $(r-1)$-connective by the Milnor exact sequence. This concludes the induction and the proof of the claimed equality. 
Regarding the symmetric monoidal structures, note that the unit objects of $\Perf{A}$ and $\mod{A}^{\Cpl(I),\dual}$ agree as $A$ is complete and that the tensor products of two perfect $A$-modules is already complete as it is perfect.

\end{proof}
 
This result implies the following description 
of complete dualizable modules as a limit, which will be the key to determining the separable algebras.

\begin{corollary}\label{lem-compl-mod}
 Let $A$ be an adic $\E_\infty$-ring which is complete with respect to a finitely generated ideal of definition $I \subseteq\pi_0(A)$ and let $\{A_n\}$ be its associated adic tower as given in Lemma~\ref{lem:adic-tower}. Then we have a symmetric monoidal equivalence 
\[
\Perf{A}=\mod{A}^{\Cpl(I),\dual}\stackrel{\simeq}{\longrightarrow} \lim_n \Perf{A_n}.
\]
\end{corollary}

\begin{proof}

    Recall from the proof of~\cite{SAG}*{Theorem 8.3.4.4} that there is an adjunction 
    \[
    F\colon \mod{A}^{\Cpl(I)} \leftrightarrows \lim_n \mod{A_n}:G
    \]
with $F$ symmetric monoidal and given on objects by $F(M)=\{A_n\otimes_A M\}_n$ and $G(\{M_n\})=\lim_n M_n$. The cited result also proves that $(F,G)$ restricts to an equivalence on suitably connective objects, namely 
    \begin{equation}\label{connective-eq}
         F^{\mathrm{cn}}\colon \mod{A}^{\Cpl(I)}\cap \mod{A}^{\mathrm{cn}} \stackrel{\simeq}{\longrightarrow} \lim_n\mod{A_n}^{\mathrm{cn}}:G^{\mathrm{cn}}.
    \end{equation}
    We will prove that the adjunction $(F,G)$ restricts to an equivalence between dualizable objects. Let us first argue why the adjunction restricts to dualizable objects. A direct argument using~\cite{SAG}*{Proposition 7.3.5.1} and the fact that $A_n \otimes_A M$ is $I$-complete for all modules $M$, shows that $F$ is symmetric monoidal and hence it restricts to dualizable objects. 
    The argument for the functor $G$ is more involved. Consider $\{M_n\}\in\lim_n \Perf{A_n}$, we want to show that $\lim_n M_n \in \Perf{A}$. We first observe that the connectedness of the compatible collection $\{M_n\}$ is uniformly bounded below: this is because we have equivalences $A_n \otimes_{A_{n+1}}M_{n+1} \simeq M_n$ with $A_{n}$ and $A_{n+1}$ connective. It follows that there exists $s \geq 0$ such that $\Sigma^s M_n$ is connective for all $n\geq 0$. Then by (\ref{connective-eq}), the $A$-module $G(\Sigma^s M_n)=\lim\Sigma^s M_n$ is $I$-complete and connective. By~\cite{SAG}*{Corollary 8.3.5.9} the $A$-module $\lim\Sigma^s M_n$ is perfect if $(\lim_n \Sigma^s M_n)\otimes_A \pi_0(A_1)$ is a perfect $\pi_0(A_1)$-module (we used that $\pi_0(A_1)=\pi_0(A)/I$). But using the adjoint equivalence~(\ref{connective-eq})
    \[
    (\lim_n \Sigma^s M_n)\otimes_A \pi_0(A_1) = (\lim_n \Sigma^s M_n \otimes_{A} A_1 )\otimes_{A_1} \pi_0(A_1)= \Sigma^s M_1 \otimes_{A_1} \pi_0(A_1)
    \]
    which is perfect since $M_1 \in \Perf{A_1}$. It follows that $G(\{\Sigma^sM_n\})$ and $G(\{M_n\})$ are perfect $A$-modules. This shows that $(F,G)$ restricts to an adjunction 
    \[
    F^\dual \colon \Perf{A}=\mod{A}^{\Cpl(I),\dual} \leftrightarrows \lim_n \Perf{A_n}: G^\dual.
    \]
    
    The fact that $(F^\dual, G^\dual)$ is an adjoint equivalence now follows from (\ref{connective-eq}) since any object in $\Perf{A}$ and $\lim_n \Perf{A_n}$ is bounded below so up to suspending can be assumed to be connective.
\end{proof}

\begin{remark}
There are also variants of Theorem \ref{thm-dual=perfect}  for certain non-connective rings, for example~\cite{Mathew2016}*{Proposition 10.11} which establishes the result for Morava $E$-theories. Mathew points out that this uses the regularity, as one can see by considering the cochain algebra
$A:=C^*(BC_p,E_1)$ with $E_1$ denoting $p$-complete $K$-theory. Then $A$ is a finite flat $E_1$-algebra, 
$p$-complete (but not regular), and we claim that the inclusion
\[ \Perf{A}\subsetneq\mod{A}^{\Cpl((p)),\mathrm{dual}}\]
is proper in this case. To see this, we recall from~\cite{Ambidexterity}*{Corollary 5.4.4}
that the global sections functor is a symmetric monoidal equivalence
\[ \mathrm{Fun}(BC_p,\mod{E_1}^{\Cpl((p))})\stackrel{\simeq}{\longrightarrow} \mod{A}^{\Cpl((p))}.\]
Restricting to dualizable objects, we obtain a symmetric monoidal equivalence
\[ \mathrm{Fun}(BC_p,\Perf{E_1})\simeq\mod{A}^{\Cpl((p)),\mathrm{dual}}.\]
The induced representation $E_1\otimes C_{p,+}$ is an object on the left hand side, which is not contained in the thick subcategory generated by the unit (as one sees after rationalization). Its global sections are $E_1$, considered as an $A$-module via the augmentation $A\longrightarrow E_1$, which hence is an example of a $p$-complete dualizable $A$-module which is not perfect.
\end{remark}

\begin{theorem}\label{thm-complete-sep}
 Let $A$ be an adic $\E_{\infty}$-ring which is complete with respect to a finitely generated  ideal of definition $I\subseteq \pi_0(A)$. Then 
 \[
 \CAlg^{\sep,\mathrm{f}}(\mod{A}^{\Cpl(I),\dual}) \simeq \Cov_{\pi_0(A)/I}.
 \]
\end{theorem}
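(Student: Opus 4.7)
The plan is to combine the limit description of perfect complete modules from Corollary~\ref{lem-compl-mod} with the descent statement for separable algebras (Proposition~\ref{prop-descent-sep}) in order to reduce to the connective case handled by Proposition~\ref{prop-connective-sep}, and then collapse the resulting tower of étale categories using classical topological invariance of the finite étale site.

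To begin, I would apply Proposition~\ref{prop-descent-sep} to the tower $n \mapsto \mod{A_n}$ in $\CAlg(\Pr)$. Since formation of dualizable objects commutes with limits of symmetric monoidal $\infty$-categories \cite{HA}*{Proposition 4.6.1.11}, the dualizable objects in $\lim_n \mod{A_n}$ form precisely $\lim_n \Perf{A_n}$, which by Corollary~\ref{lem-compl-mod} agrees symmetric-monoidally with $\mod{A}^{\Cpl(I),\dual}$. Putting these together produces
\[ \CAlg^{\sep}(\mod{A}^{\Cpl(I),\dual}) \;\simeq\; \lim_n \CAlg^{\sep}(\Perf{A_n}). \]
Since every $A_n$ is connective by Lemma~\ref{lem:adic-tower}(a), Proposition~\ref{prop-connective-sep} identifies each term on the right with $\Cov_{\pi_0(A_n)}$. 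The transition maps are induced by base change along the surjections $\pi_0(A_{n+1}) \twoheadrightarrow \pi_0(A_n)$, whose kernels are nilpotent, so by topological invariance of the finite étale site each transition is an equivalence of categories. Because $\pi_0(A_1)= \pi_0(A)/I$, the limit collapses to $\Cov_{\pi_0(A)/I}$.

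Finally, to match the finite-degree condition on the left-hand side, I would invoke Corollary~\ref{cor-bounded-degree}: a separable algebra $B$ in $\mod{A}^{\Cpl(I),\dual}$ has finite degree if and only if the family $(B_n)$ of its images in $\Perf{A_n}$ has degree uniformly bounded in $n$. By functoriality of the splitting tower (Remark~\ref{rem-tower-functorial}) and the equality $\deg=\rk$ for weak finite covers (Proposition~\ref{prop-degree-rank}), the degree of $B_n$ equals the supremum of the classical locally constant rank function of $B_n$ viewed as a finite étale $\pi_0(A_n)$-algebra. This rank is preserved under the base-change equivalences $\Cov_{\pi_0(A_{n+1})}\xrightarrow{\simeq}\Cov_{\pi_0(A_n)}$, and for any finite étale $\pi_0(A)/I$-algebra the rank function is bounded on the quasi-compact space $\Spec\pi_0(A)/I$; hence the compatible family automatically has bounded degree. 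Therefore every element of $\Cov_{\pi_0(A)/I}$ lifts to an object of $\CAlg^{\sep,\mathrm{f}}(\mod{A}^{\Cpl(I),\dual})$, giving the claimed equivalence. The main subtlety is ensuring a consistent degree/rank dictionary across all stages of the tower, and this is what Proposition~\ref{prop-degree-rank} combined with the functoriality of the splitting tower is there to provide.
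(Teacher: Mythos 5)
Your proposal is correct and follows essentially the same strategy as the paper: reduce to the connective case via the limit description from Corollary~\ref{lem-compl-mod}, identify each stage of the tower via Proposition~\ref{prop-connective-sep}, and collapse the tower by topological invariance of the finite \'etale site. The only genuine variation is at the descent step: where the paper invokes the general permanence result Theorem~\ref{thm-descent} together with the bounded-rank description of limits of Galois categories (Lemma~\ref{lem-wcov-com-limits}), you instead apply Proposition~\ref{prop-descent-sep} directly to the diagram $n\mapsto\mod{A_n}$ and then verify the finite-degree condition by hand using Corollary~\ref{cor-bounded-degree} and the degree-equals-rank dictionary of Proposition~\ref{prop-degree-rank}. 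This is a legitimate shortcut that is available precisely because the tower of \'etale categories is constant (all transition maps are equivalences), so the boundedness of degrees can be read off directly; it also makes explicit the slightly stronger observation that the finite-degree restriction in the statement is in fact automatic here, i.e.\ $\CAlg^{\sep}(\mod{A}^{\Cpl(I),\dual})=\CAlg^{\sep,\mathrm{f}}(\mod{A}^{\Cpl(I),\dual})$.
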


\begin{proof}
 Let $\{A_n\}$ be the associated adic tower of $A$, as in Lemma \ref{lem:adic-tower}. Recall that 
 \[
 \mod{A}^{\mathrm{Cpl}(I),\dual}\simeq \lim_n \Perf{A_n}
 \]
 and that $\CAlg^{\wcov}(\mod{A_n})=\CAlg^{\sep,\mathrm{f}}(\Perf{A_n})=\Cov_{\pi_0(A_n)}$ by Proposition~\ref{prop-connective-sep}.  
 Therefore by our descent result (Theorem~\ref{thm-descent}) together with Lemma~\ref{lem-wcov-com-limits}, we find that 
 \[
 \CAlg^{\sep,\mathrm{f}}(\mod{A}^{\Cpl(I),\dual})=\left(\lim_n \CAlg^{\sep,\mathrm{f}}(\Perf{A_n})\right)^{\br}=\left(\lim_n \Cov_{\pi_0(A_n)}\right)^{\br}.
 \]
 As the given map $\pi_0(A_n)\to \pi_0 (A_1)$ is a surjection with nilpotent kernel, we have $\Cov_{\pi_0(A_n)}\simeq \Cov_{\pi_0(A_1)}$
 (see~\cite{EGAIV}*{Theorem 18.1.2}), so the transition maps in the limit are all equivalences. It follows that 
  \[
  \CAlg^{\sep,\mathrm{f}}(\mod{A}^{\Cpl(I),\dual})\simeq \Cov_{\pi_0(A_1)}.
  \]
  We conclude by recalling that $\pi_0 (A_1)\simeq \pi_0(A)/I$.
\end{proof}

\section{Chromatic homotopy theory}\label{sec:chromotopy}

In this section we use our descent result to classify separable and dualizable algebras in the setting of chromatic homotopy theory. We have three main examples: the category of modules over Lubin-Tate theories over perfect fields, the category of modules over topological complex and real K-theories and the category of quasi-coherent sheaves on even periodic derived stacks defined over $\spec(\Z[1/2])$. 

\subsection{Chromatic localizations}
In this section we fix a perfect field $k$ of characteristic $p>0$ and a formal group $\mathbb{G}_0$ of height $n$ over $k$. This data determines a Lubin-Tate theory $E(k;\mathbb{G}_0)\in \CAlg$, which we denote by $E(k)$. The homotopy groups of $E(k)$ are non-canonically isomorphic to 
\[
W(k)\llbracket u_1, \ldots,u_{n-1} \rrbracket [u^{\pm 1}]
\]
where $|u_i|=0$, $|u|=2$ and $W(k)$ denotes the ring of Witt vectors of $k$. The Lubin-Tate theory $E(k)$ is even periodic with regular and noetherian $\pi_0(E(k))$ so by Theorem~\ref{thm-sep-even-periodic} we can classify all separable commutative algebras in $\Perf{E(k)}$ provided that $p\not =2$. In this section we will complete this classification result by showing that the same result also holds if $p=2$. 

To this end recall that for each $n\geq 0$, the category of spectra admits localizations functors
\[
\Sp \to \Sp_{T(n)} \to \Sp_{K(n)}
\]
where: 
\begin{itemize}
    \item $\Sp_{T(n)}$ denotes the full subcategory of $\Sp$ spanned by the $T(n)$-local objects. Here $T(n)$ is the telescope of a $v_n$-self map of a type $n$-spectrum. 
    \item $\Sp_{K(n)}$ denotes the full subcategory of $\Sp$ spanned by the $K(n)$-local objects. Here $K(n)$ denotes the \emph{Morava K-theory} spectrum which is defined as
    \[
    K(n)= E(k)/(u_1)\otimes_{E(k)} \ldots \otimes_{E(k)}E(k)/(u_{n-1})\in \mod{E(k)}.
    \]
    Since $E(k)$ is even periodic and the sequence $(p,u_1,\ldots, u_{n-2})$ is regular in $\pi_0(E(k))$, we can choose a structure of $\mathbb E_1$-algebra on $K(n)$ such that $\pi_*(K(n))=k[u^{\pm 1}]$ with $|u|=2$, see~\cite{Algeltveit}*{Section 3}.
\end{itemize}
The above full subcategories admit the structure of stable homotopy theories making the above localizations into symmetric monoidal functors. Since the Lubin-Tate spectrum $E(k)$ is $K(n)$-local, we obtain induced symmetric monoidal localizations
\[
\mod{E(k)} \to \mod{E(k)}(\Sp_{T(n)}) \to \mod{E(k)}(\Sp_{K(n)})
\]
on the corresponding categories of modules. We have the following result which relates the dualizable objects in all these localizations.

\begin{lemma}\label{lem-dual-in-local}
The above functors induces a symmetric monoidal equivalence of $\infty$-categories 
\[
\Perf{E(k)}\simeq \mod{E(k)}(\Sp_{T(n)})^\dual \simeq \mod{E(k)}(\Sp_{K(n)})^\dual.
\]
\end{lemma}

\begin{proof}
    We note that any perfect $E(k)$-module is automatically $K(n)$-local (and so  $T(n)$-local) as $E(k)$ is $K(n)$-local.
    It follows that we have fully faithful functors $\Perf{E(k)}\to \mod{E(k)}(\Sp_{T(n)})^\dual \to\mod{E(k)}(\Sp_{K(n)})^\dual$. The result follows from the fact that
     \[ \left(\mod{E(k)}(\Sp_{K(n)})\right)^{\mathrm{dual}}\simeq \Perf{E(k)}\]
     which is proved in~\cite{Mathew2016}*{Proposition 10.11}, see also Remark~\ref{rem-k-local}.
\end{proof}
Implicitly in the previous proof and often throughout this section we will use the following fact.
\begin{remark}\label{rem-k-local}
Given a Morava $K$-theory spectrum $K(n)$ associated to a Lubin-Tate theory $E(k)$, there are two candidates for the category of $K(n)$-local $E(k)$-modules:
\[
\mod{E(k)}(\Sp_{K(n)}) \quad \mathrm{and} \quad L_{K(n)} \mod{E(k)}
\]
where the right hand side is the Bousfield localization of the category of $E(k)$-modules at $K(n)$. These two categories agree by \cite{Hovey2008}*{Proposition 2.2}. 
\end{remark}

Before diving into our classification result we will need some preliminary results.

\begin{lemma}\label{lem-k-mon-cons}
    Let $K(n)$ be the Morava $K$-theory spectrum attached to $E(k)$. The $K(n)$-homology functor 
    \[
    K(n)_*(-):=\pi_*(K(n)\otimes_{E(k)}-)\colon \mod{E(k)} \to \mod{K(n)_*}
    \]
    is monoidal, and conservative on perfect $E(k)$-modules.
\end{lemma}

\begin{proof}
    We note that for any $E(k)$-module $M$, the homology groups $K(n)_*(M)$ are naturally a graded module over $K(n)_*$ as $K(n)$ admits an $\E_1$-algebra structure, so the functor is well defined. Monoidality follows using the Tor-spectral sequence and the fact that $K(n)_*$ is a graded field. By definition of Bousfield localization, the functor $K(n)_*(-)$ is conservative on $K(n)$-local $E(k)$-modules and so in particular it is conservative on all perfect $E(k)$-module.
\end{proof}

\begin{construction}
Let $\CAlg_k^{\perf}$ denote the category of perfect $k$-algebras, that is those $k$-algebras on which the Frobenius map is an isomorphism. As show in \cite{Nullstellensatz}*{Definition 2.32}, the Lubin-Tate theory extends to a functor 
\[
E(-)\colon \CAlg_k^\perf \to \CAlg(\mod{E(k)}(\Sp_{T(n)}))
\]
in such a way that for any perfect $k$-algebra $P$, we have a non-canonical isomorphism
\[
\pi_*(E(P)) \simeq W(P)\llbracket u_1,\ldots, u_{n-1}\rrbracket [u^{\pm 1}].
\]
\end{construction}

\begin{theorem}\label{thm-Lubin-Tate}
    Let $k$ be a perfect field of characteristic $p>0$. Then any separable commutative algebra in $\Perf{E(k)}$ has finite degree and there are equivalences
    \[
    \Cov_{k}\simeq \Cov_{\pi_0(E(k))}\simeq\CAlg^{\cov}(\mod{E(k)})\simeq\CAlg^\sep(\Perf{E(k)}). 
    \]
\end{theorem}

\begin{proof}
   The equivalence $\Cov_{k}\simeq\Cov_{\pi_0(E(k))}$ follows from the fact that $\pi_0(E(k))$ is a complete local Noetherian ring with residue field $k$ and that the \'etale site is invariant under thickenings, see~\cite{EGAIV}*{Theorem 18.1.2}.
   The second equivalence follows from Matthew's work on Galois theory~\cite{Mathew2016}*{Theorem 6.29}. Moreover by Theorem~\ref{thm-(a)} we have the containment $\CAlg^{\cov}(\mod{E(k)})\subseteq\CAlg^\sep(\Perf{E(k)})$. Therefore it is only left to show that any $A\in \CAlg^\sep(\Perf{E(k)})$ is a finite cover. It will then follows that $A$ has finite degree. 
   
   Let us first deal with the case that $k$ is an uncountable algebraically closed field.  Recall that the $K(n)$-homology functor is monoidal and conservative by Lemma~\ref{lem-k-mon-cons}. We prove by induction on $d=\dim_{K(n)_*} K(n)_*(A)$ that $A \simeq E(k)^d$ as a $E(k)$-algebra. The case $d=0$ follows by conservativity of $K(n)_*(-)$. Now suppose that $d\geq1$. By~\cite{Nullstellensatz}*{Theorem 6.21}, the unit map $E(k) \to A$ admits a retraction so that $A \simeq E(k) \times A'$ as algebras by Theorem~\ref{thm-splitting}. By the induction hypothesis $A'=E(k)^{\times d-1}$ since $\dim K(n)_*(A')=d-1$. It follows that $A\simeq E(k)^d$ and so a finite cover. Thus we proved the theorem for $k$ an uncountable algebraically closed field. 

   If $k$ is a general perfect field, choose an uncountable algebraically closed extension $k \to l$.
   We now claim that the induce map on Lubin-Tate theories $E(k) \to E(l)$ is faithfully flat. Flatness follows from \cite{LurieEll}*{Theorem 6.1.2} provided that we show that the module of K\"ahler differentials $\Omega_{l/k}$ vanishes. Note that $\Omega_{k/\mathbb{F}_p}=0$ since $k$ is a perfect field of characteristic $p$: given an $\mathbb{F}_p$-derivation $d$ on $k$ we see that $d(x^p)=px^{p-1}d(x)=0$ for all $x\in k$, so $d=0$ by surjectivity of the Frobenius map. Since $l$ is also perfect, this gives $\Omega_{l/\mathbb{F}_p}=0$. 
   The transitivity sequence then implies $\Omega_{l/k}=0$ as desired. It is only left to check that $\pi_0(E(k)) \to \pi_0(E(l))$ is faithfully flat. But this follows as it is induced by the base changed map $W(k)\to W(l)$ which is faihtfully flat (as it is a flat map between local rings).   
   It then follows from \cite{Mathew2018}*{Theorem 2.40} that there is an equivalence 
   \[
   \mod{E(k)}\simeq \Tot(\mod{E(l)^{\otimes_{E(k)} \bullet +1}}).
   \]
   We can then apply Proposition~\ref{prop-descent} and the previous paragraph to conclude that 
   \[
   \CAlg^{\cov}(\mod{E(k)})=\CAlg^{\wcov}(\mod{E(k)})=\CAlg^{\sep}(\Perf{E(k)})
   \]
   where the first equality uses the fact that the unit is compact so there is no difference between weak finite covers and finite covers.
\end{proof}

For the rest of this section it will be convenient to write $E_n$ for a Lubin-Tate theory associated to a perfect field $k$ and a formal group of height $n$ over $k$. Let $L_n$ denote the functor of (Bousfield) localization at $E_n$. It is known that $L_n$ is a smashing localization so $L_n \Sp \simeq \mod{L_n S^0}$.

\begin{proposition}
We have 
  \[\CAlg^{\sep}(\Perf{L_n S^0})= \CAlg^{\wcov}(\mod{L_n S^0})\simeq \Cov_{\Z_{(p)}}.\]
\end{proposition}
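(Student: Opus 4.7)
The plan is to combine our descent results with Mathew's computation of the Galois theory of chromatic localizations. From Lemma~\ref{lem-inclusions}, together with the well-known identification $\pi_0(L_n S^0) = \Z_{(p)}$, we have the chain of inclusions
\[ \Cov_{\Z_{(p)}} \subseteq \CAlg^{\wcov}(\mod{L_n S^0}) \subseteq \CAlg^{\sep,\mathrm{f}}(\Perf{L_n S^0}) \subseteq \CAlg^{\sep}(\Perf{L_n S^0}), \]
and we need to establish the reverse inclusions.

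For the equality $\CAlg^{\sep}(\Perf{L_n S^0}) = \CAlg^{\wcov}(\mod{L_n S^0})$, I would proceed by descent along the Lubin--Tate spectrum $E_n$. The spectrum $E_n$ is descendable in the $L_n$-local category (this is due to Devinatz--Hopkins and recorded by Mathew~\cite{Mathew2016}*{Theorem 10.11}). Since $E_n$ is even periodic with $\pi_0 E_n = W(\mathbb{F}_{p^n})[\![u_1,\ldots,u_{n-1}]\!]$ regular and Noetherian, Theorem~\ref{thm-sep-even-periodic} combined with Lemma~\ref{lem-inclusions} gives
\[ \CAlg^{\sep}(\Perf{E_n}) = \CAlg^{\wcov}(\mod{E_n}) = \Cov_{\pi_0 E_n}. \]
Applying Corollary~\ref{cor-descent-sep} to the descendable algebra $E_n \in \CAlg(\mod{L_n S^0})$ then yields the desired equality over $L_n S^0$.

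For the equivalence $\CAlg^{\wcov}(\mod{L_n S^0}) \simeq \Cov_{\Z_{(p)}}$, the $\supseteq$ direction is already part of the chain above, so only the reverse inclusion remains. This is the content of Mathew's computation of the chromatic Galois groupoid~\cite{Mathew2016}*{Theorem 10.9}, which identifies $\pi^{\weak}_{\leq 1}(L_n \Sp)$ with the \'etale fundamental groupoid of $\Spec(\Z_{(p)})$; via the Galois correspondence (Theorem~\ref{thm-Galois-correspondence}), this is equivalent to our claim.

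The main obstacle is the second step. Given a weak finite cover $A$ of $L_n S^0$, its base change to $E_n$ is a finite \'etale $\pi_0 E_n$-algebra carrying a continuous action of the Morava stabilizer group $\mathbb{G}_n$. One must show that such data descends to a finite \'etale $\Z_{(p)}$-algebra. This uses the pro-Galois presentation $L_n S^0 \to E_n$ with group $\mathbb{G}_n$ and the deformation-theoretic description of $\pi_0 E_n$ (finite \'etale covers of a complete local ring are determined by the residue field via Hensel), which is what allows one to control the $\mathbb{G}_n$-equivariant structure and match it with the arithmetic Galois theory of $\Z_{(p)}$. This deep identification is exactly what Mathew establishes.
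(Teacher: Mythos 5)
Your argument follows the paper's proof step for step: establish descendability of $E_n$ in $\mod{L_n S^0}$, apply Theorem~\ref{thm-sep-even-periodic} to $\Perf{E_n}$, descend via Corollary~\ref{cor-descent-sep}, and invoke Mathew's computation of the chromatic Galois groupoid. The logic is sound and the chain of inclusions at the start is a clean way to organize it.

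Two corrections. First, your citations point to the wrong place: the identification of the weak Galois groupoid of $L_n\Sp$ with $\pi_1^{\mathrm{\acute et}}(\spec\Z_{(p)})$ is \cite{Mathew2016}*{Theorem 10.15}, not Theorem 10.9 (the latter is the $K(n)$-local statement identifying the Galois group with $\mathbb G_n^{\mathrm{ext}}$), and the descendability of $E_n$ in the $L_n$-local category goes back to the Hopkins--Ravenel smash product theorem, not to Mathew's Theorem 10.11. Second, the closing paragraph conflates the $L_n$-local and $K(n)$-local pictures: the pro-$\mathbb G_n$-Galois structure on $E_n$ lives over $L_{K(n)}S^0$ (one has $E_n^{h\mathbb G_n}\simeq L_{K(n)}S^0$, not $L_nS^0$), so base change of an $L_n S^0$-weak finite cover to $E_n$ does not directly carry the equivariant data you describe. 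Since you are in any case citing Mathew's theorem for this step rather than reproving it, this does not create a gap in the argument, but the heuristic sketch as written describes the wrong category and should be either corrected or dropped.
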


\begin{proof}
  By work of Hopkins-Ravenel~\cite{Ravanel}*{Chapter 8}, the $\E_\infty$-ring $E_n$ is descendable in $\mod{L_n S^0}$. By Theorem~\ref{thm-Lubin-Tate} we know that 
  \[
  \CAlg^{\sep}(\Perf{E_n})=\CAlg^{\cov}(\mod{E_n})=\CAlg^{\wcov}(\mod{E_n})
  \]
  using that there is no difference between weak finite covers and finite covers as the unit is compact. 
  Thus Corollary~\ref{cor-descent-sep} applies to give 
  \[
  \CAlg^{\sep}(\Perf{L_n S^0})=\CAlg^{\wcov}(\mod{L_n S^0})
  \]
  proving the first claim. The second claim is proved in~\cite{Mathew2016}*{Theorem 10.15}.
\end{proof}

We have already discussed that associated to $E_n$ there is a Morava $K$-theory spectrum $K(n)$. We refer the reader to \cite{Mathew2016}*{section 10.2} for more background, and in particular for the definition of the extended Morava stabilizer group $\mathbb G^{\mathrm{ext}}_n$. The $\infty$-category of $K(n)$-local spectra $\Sp_{K(n)}$ is an example of a stable homotopy theory where the unit object is not compact. We now classify all its separable commutative algebras with underlying dualizable module. To do this, we use the following result from the proof of \cite{Mathew2016}*{Theorem 10.9}.

\begin{lemma}\label{lem:akhils-claim}
The canonical inclusion as the classically \'etale algebras
\[ \Cov_{\pi_0(E_n)} \subseteq \CAlg^\wcov(\mod{E}(\Sp_{K(n)}))\]
is an equality.
\end{lemma}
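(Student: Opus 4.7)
The plan is to recognize $\mod{E_n}(\Sp_{K(n)})$ as a category of complete modules over an adic $\E_\infty$-ring and then invoke Theorem~\ref{thm-complete-sep}. The Lubin--Tate ring $E_n$ is adic with finitely generated ideal of definition $\m = (p, u_1, \ldots, u_{n-1}) \subseteq \pi_0(E_n)$, and $\pi_0(E_n) = W(\mathbb{F}_{p^n})[[u_1, \ldots, u_{n-1}]]$ is $\m$-adically complete. By a theorem of Hovey--Strickland, $K(n)$-localization of $E_n$-modules agrees with $\m$-adic completion in the sense of Section~\ref{sec-complete}, yielding a symmetric monoidal equivalence
\[
\mod{E_n}(\Sp_{K(n)}) \simeq \mod{E_n}^{\Cpl(\m)}.
\]

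With this identification in hand, I would apply Theorem~\ref{thm-complete-sep} to obtain
\[
\CAlg^{\sep,\mathrm{f}}\bigl(\mod{E_n}(\Sp_{K(n)})^{\dual}\bigr) \simeq \Cov_{\pi_0(E_n)/\m} = \Cov_{\mathbb{F}_{p^n}}.
\]
Since $\pi_0(E_n)$ is complete Noetherian local with residue field $\mathbb{F}_{p^n}$, reduction modulo $\m$ induces an equivalence $\Cov_{\pi_0(E_n)} \xrightarrow{\sim} \Cov_{\mathbb{F}_{p^n}}$ by the classical invariance of the \'etale site under nilpotent thickenings together with passage to the $\m$-adic limit (see~\cite{EGAIV}*{Theorem~18.1.2}). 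Tracing through the equivalences in the proof of Theorem~\ref{thm-complete-sep}, together with the uniqueness of \'etale lifts along $E_n \to \pi_0(E_n)/\m$ guaranteed by Lemma~\ref{lem-etale}, one sees that this composite equivalence is compatible with the canonical inclusion of classically \'etale algebras.

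The proof will then conclude by sandwiching:
\[
\Cov_{\pi_0(E_n)} \subseteq \CAlg^{\wcov}(\mod{E_n}(\Sp_{K(n)})) \subseteq \CAlg^{\sep,\mathrm{f}}\bigl(\mod{E_n}(\Sp_{K(n)})^{\dual}\bigr) \simeq \Cov_{\pi_0(E_n)},
\]
where the first inclusion is the canonical one under discussion and the second is Corollary~\ref{cor-wcov-sep-dual}. This forces both containments to be equalities. The main obstacle is really only bookkeeping: verifying that the chain of equivalences above restricts compatibly to the canonical inclusion of \'etale algebras. Beyond this, the only nontrivial technical ingredient is the symmetric monoidal identification of $\mod{E_n}(\Sp_{K(n)})$ with $\mathfrak{m}$-complete $E_n$-modules, which is well-established in chromatic homotopy theory.
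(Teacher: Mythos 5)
Your sandwich structure at the end is exactly right, but the key middle step does not go through: $E_n$ is \emph{not connective} (it is even periodic, so $\pi_i E_n \neq 0$ for $i<0$), whereas the paper's Definition of an adic $\E_\infty$-ring in Section~\ref{sec-complete} explicitly requires connectivity. Consequently Theorem~\ref{thm-complete-sep} does not apply to $E_n$. The proof of that theorem depends on Corollary~\ref{lem-compl-mod}, which in turn rests on the adic tower of connective quotients from Lemma~\ref{lem:adic-tower} and uses connectivity in an essential way (the bounded-below arguments both in Theorem~\ref{thm-dual=perfect} and in the proof of Corollary~\ref{lem-compl-mod}). The paper is aware of this: the unnumbered Remark after Corollary~\ref{lem-compl-mod} explicitly points out that Mathew's Proposition~10.11 is the non-connective variant of Theorem~\ref{thm-dual=perfect} appropriate for Morava $E$-theories, and even gives a counterexample ($C^*(BC_p, \mod{E_1})$) showing the identification of dualizables with perfects fails without regularity.

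The paper's own proof replaces your middle link with a direct chain: Corollary~\ref{cor-wcov-sep-dual} gives the first inclusion; Mathew's Proposition~10.11 identifies $\mod{E_n}(\Sp_{K(n)})^{\dual}$ with $\Perf{E_n}$ (precisely the non-connective analogue of what you wanted from Theorem~\ref{thm-dual=perfect}); and Theorem~\ref{thm-sep-even-periodic} --- which is designed for exactly this even periodic, regular Noetherian situation --- gives $\CAlg^{\sep}(\Perf{E_n}) = \Cov_{\pi_0(E_n)}$. This avoids both the incorrect appeal to Theorem~\ref{thm-complete-sep} and the unnecessary detour through $\Cov_{\mathbb{F}_{p^n}}$ and EGA IV 18.1.2, and additionally dispenses with the finite degree hypothesis. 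If you want to salvage your approach in spirit, the corrective is precisely to swap Theorem~\ref{thm-complete-sep} for the pair (Mathew 10.11, Theorem~\ref{thm-sep-even-periodic}).
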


\begin{proof}
We have a chain of inclusions and equalities
\[ \CAlg^\wcov(\mod{E_n}(\Sp_{K(n)}))\stackrel{}{\subseteq}
\CAlg^{\mathrm{sep}}\left(\mod{E_n}(\Sp_{K(n)}))^{\mathrm{dual}}\right)\stackrel{}{=}\]
\[\stackrel{}{=} \CAlg^{\mathrm{sep}}(\Perf{E_n})\stackrel{}{=} \Cov_{\pi_0(E_n)},\]
given in turn by Corollary \ref{cor-wcov-sep-dual}, \cite{Mathew2016}*{Proposition 10.11}, and Theorem \ref{thm-Lubin-Tate}. 
\end{proof}

\begin{proposition} We have
    \[\CAlg^{\sep}(\Sp_{K(n)}^\dual)=\CAlg^\wcov(\Sp_{K(n)}),\]
    and these are classified by the the extended Morava stabilizer group $\mathbb{G}_n^{\mathrm{ext}}$.
\end{proposition}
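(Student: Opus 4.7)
The plan is to combine the descent result Corollary~\ref{cor-descent-sep} for separable algebras with Mathew's Galois-theoretic identification to obtain both claims; essentially all the heavy lifting has been done in the preceding sections and in~\cite{Mathew2016}, so the proof will be a direct assembly.

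For the first equality, I would invoke Corollary~\ref{cor-descent-sep} with $\C=\Sp_{K(n)}$ and $A=E_n$. The hypothesis that $E_n$ is descendable in $\Sp_{K(n)}$ is classical, being a consequence of the nilpotence theorem of Hopkins--Ravenel; compare~\cite{Mathew2016}*{Proposition 10.10}. The other hypothesis,
\[
\CAlg^\wcov(\mod{E_n}(\Sp_{K(n)})) = \CAlg^{\sep}(\mod{E_n}(\Sp_{K(n)})^\dual),
\]
is exactly the content of Lemma~\ref{lem:akhils-claim}. The conclusion of the corollary then gives the desired equality $\CAlg^{\sep}(\Sp_{K(n)}^\dual)=\CAlg^{\wcov}(\Sp_{K(n)})$.

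For the classification, I would first observe that $\Sp_{K(n)}$ is connected in the sense of Definition~\ref{def-connected}: indeed, $\pi_0$ of the $K(n)$-local sphere is the local, hence indecomposable, ring $\mathbb{Z}_p$. Corollary~\ref{cor-con-galois-cor} then furnishes an equivalence
\[
\CAlg^\wcov(\Sp_{K(n)})^{\op} \simeq \FinSet^{\cts}_{\pi_1^{\weak}(\Sp_{K(n)})}.
\]
The remaining input is the identification $\pi_1^{\weak}(\Sp_{K(n)})\simeq \mathbb{G}_n^{\mathrm{ext}}$ recorded in~\cite{Mathew2016}*{Theorem 10.9}, which I would quote directly. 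The principal conceptual difficulty is entirely concentrated in this last reference, which underlies the whole computation; the novelty of the present statement is the strengthening from weak finite covers to arbitrary separable dualizable commutative algebras, made possible by the descent framework developed in Section~\ref{sec-descent}.
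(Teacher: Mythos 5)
Your argument matches the paper's proof essentially step by step: descendability of $E_n$ combined with Corollary~\ref{cor-descent-sep} reduces the problem to the $E_n$-module category, and the classification then comes from~\cite{Mathew2016}*{Theorem 10.9}. The only slight imprecision is that the needed equality $\CAlg^\wcov(\mod{E_n}(\Sp_{K(n)})) = \CAlg^{\sep}(\mod{E_n}(\Sp_{K(n)})^\dual)$ is not literally the statement of Lemma~\ref{lem:akhils-claim} (which asserts $\Cov_{\pi_0(E_n)}=\CAlg^\wcov(\mod{E_n}(\Sp_{K(n)}))$), but follows from that lemma together with~\cite{Mathew2016}*{Proposition 10.11} and Theorem~\ref{thm-sep-even-periodic}, exactly as the paper spells out.
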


\begin{proof}
    We will show that $\CAlg^{\sep}(\Sp_{K(n)}^\dual)=\CAlg^\wcov(\Sp_{K(n)})$. The classification of the weak finite covers in the $K(n)$-local category is then due to Mathew~\cite{Mathew2016}*{Theorem 10.9}.
    To prove the claim we use that $E_n$ is descendable in $\Sp_{K(n)}$ (see ~\cite{Mathew2016}*{Proposition 10.10}) and apply Corollary~\ref{cor-descent-sep}, which reduces us to to verifying that
    \[ \CAlg^{\sep}\left(\left(\mod{E_n}(\Sp_{K(n)})\right)^{\mathrm{dual}}\right) =\CAlg^\wcov(\mod{E_n}(\Sp_{K(n)})).\]
     By Lemma~\ref{lem-dual-in-local} there is a symmetric monoidal equivalence 
     \[ \left(\mod{E_n}(\Sp_{K(n)})\right)^{\mathrm{dual}}\simeq \Perf{E_n}.\] 
     Theorem~~\ref{thm-Lubin-Tate} implies $\CAlg^{\mathrm{sep}}(\Perf{E_n})=\Cov_{\pi_0(E_n)}$. This agrees with the right hand side by Lemma \ref{lem:akhils-claim} above. The implicit identifications are easily seen to be compatible, since in both cases they are given by the classically \'etale algebras.
     \end{proof}

\subsection{Topological K-theories}
     Let $KU$ denote the $\E_\infty$-ring of topological complex $K$-theory whose homotopy groups are given by $\Z[\beta^{\pm 1}]$ with $|\beta|=2$. As $2$ is not a unit in $\pi_0$, we do not get a classification of separable commutative algebras in $\Perf{KU}$ just by the results of the previous subsection. The following result bridges this gap.

\begin{theorem}\label{complex-k-theory}
    The $\E_\infty$-ring $KU$ is separably closed, i.e., the only separable commutative algebras in $\Perf{KU}$ are those of the form $KU^{\times n}$ for $n \geq 0$.
\end{theorem}

\begin{proof}
An easy inspection on homotopy groups shows that    there is a pullback of $\E_\infty$-rings 
    \[
    \begin{tikzcd}
        KU \arrow[r] \arrow[d] & KU[1/2]\arrow[d] \\
        KU_2^\wedge  \arrow[r] &(KU_2^{\wedge})[1/2]
    \end{tikzcd}
    \]
    which induced a squares of $2$-rings
    \[
    \begin{tikzcd}
        \Perf{KU} \arrow[r] \arrow[d] & \Perf{KU[1/2]}\arrow[d] \\
        \Perf{KU_2^\wedge}  \arrow[r] &\Perf{(KU_2^{\wedge})[1/2]}
    \end{tikzcd}
    \]
    which we claim is cartesian. This follows from~\cite{LandTamme}*{Proposition 1.17} and the discussion after the cited result by noting that the canonical map $KU[1/2]\otimes_{KU}KU_2^{\wedge}\to (KU_2^{\wedge})[1/2]$ is an equivalence by the Tor-spectral sequence.

    We now note that $KU_2^\wedge$ is a form of Lubin-Tate theory at the prime $2$ and height $1$, and that $KU[1/2]$ and $(KU_2^{\wedge})[1/2]$ are even periodic $\E_\infty$-rings with regular and Noetherian $\pi_0$ in which $2$ acts invertibly. Therefore in these cases, Theorems~\ref{thm-sep-even-periodic} and \ref{thm-Lubin-Tate} tells us that there is no difference between finite covers and separable commutative algebras with underlying dualizable objects. Therefore by Theorem~\ref{thm-descent} 
    we find that 
    \[
    \CAlg^{\sep,\mathrm{f}}(\Perf{KU})=\CAlg^\wcov(\mod{KU})=\CAlg^\cov(\mod{KU})=\Cov_{\Z}
    \]
    where in the second equality we used Proposition~\ref{prop-wcov=cov} and in the last equality we used~\cite{Mathew2016}*{Theorem 6.29}. We are only left to argue why every separable algebra in $\Perf{KU}$ has finite degree, but this follows from the fact that there is a conservative symmetric monoidal exact functor 
    \[
    \Perf{KU} \to \Perf{KU_2^\wedge}\times \Perf{KU[1/2]}
    \]
    in the target of which every separable commutative algebra has finite degree.
\end{proof}

Next, we consider the real $K$-theory spectrum $KO$ and recall from~\cite{Rognes2008}*{Proposition 5.3.1} that there is a faithful $C_2$-Galois extension $KO \to KU$. 

\begin{theorem}\label{real-k-theory}
There is an equality $\CAlg^\sep(\Perf{KO}) =\CAlg^\cov(\mod{KO})$ and these are precisely those of the form $KO^{\times n} \times KU^{\times m}$ for $n,m\geq 0$.   
\end{theorem}

\begin{proof}
    The equality follows from Corollary~\ref{cor-descent-sep} and the fact that faithful Galois extension are descendable~\cite{Mathew2018}*{Proposition 3.21}. The classification of finite covers is proved in~\cite{Mathew2016}*{Corollary 10.5(2)}. 
\end{proof}

\subsection{Affiness and topological modular forms}

Let $M_{\mathrm{FG}}$ be the moduli stack of formal groups. Let $X$ be a Noetherian and separated Deligne--Mumford stack together with a flat map $X \to M_{\mathrm{FG}}$. 
For any \'etale map $\spec R \to X$, the composition $\spec R \to X \to M_{\mathrm{FG}}$ is again flat, so by the Landweber exact functor theorem there exists a canonically associated even periodic, multiplicative homology theory whose formal group is classified by the map $\spec R\to M_{\mathrm{FG}}$. This defines a presheaf of multiplicative homology theories on $\mathrm{Aff}^{et}_{/X}$, the affine \'etale site of $X$. Following~\cite{MathewLennart}, we call an \emph{even periodic refinement} of this data a pair $\mathfrak{X}=(X,\mathcal{O}^{top})$ where $\mathcal{O}^{top}$ is a sheaf of even periodic $\E_\infty$-rings on $\mathrm{Aff}^{et}_{/X}$ lifting the above diagram of multiplicative homology theories. If we assume in addition that $X$ is regular, then the $\E_\infty$-ring $\mathcal{O}^{top}(\spec R)$ is weakly even periodic and regular. One then defines~\cite{MathewLennart} a stable homotopy theory of quasi-coherent sheaves on $\mathfrak{X}$ by
\[
\QCoh(\mathfrak{X}):=\lim_{(\spec R \to X)\in \mathrm{Aff}^{et}_{/X}} \mod{\mathcal{O}^{top}(\spec(R))},
\]
and an $\E_\infty$-ring of global sections
\[
\Gamma(X,\mathcal{O}^{top}):=\lim_{(\spec R \to X)\in \mathrm{Aff}^{et}_{/X}} \mathcal{O}^{top}(\spec(R)).
\]

\begin{theorem}\label{thm-even-ref-DM}
 Let $X$ be a regular Deligne--Mumford stack defined over $\spec(\Z[1/2])$, let $X \to M_{\mathrm{FG}}$ be a flat, quasi-affine map and assume that there exists an even periodic refinement $\mathfrak{X}=(X,\mathcal{O}^{top})$ as above. Then 
 \[
 \CAlg^{\sep}(\Perf{\Gamma(X, \mathcal{O}^{top})})=\CAlg^{\cov}(\mod{\Gamma(X,\mathcal{O}^{top})}),
 \]
 and these are classified by the \'etale fundamental group $\pi_1^{\mathrm{\acute{e}t}}(X)$. 
\end{theorem}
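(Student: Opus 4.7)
The plan is to reduce the problem to a limit of even-periodic module categories and apply the descent machinery developed above. First I would invoke the affineness theorem of Mathew--Meier, which under our hypotheses ($X \to M_{\mathrm{FG}}$ quasi-affine and flat, with even periodic refinement) produces a symmetric monoidal equivalence
\[
\Gamma\colon \QCoh(\mathfrak{X})\xrightarrow{\simeq}\mod{\Gamma(X,\mathcal{O}^{\mathrm{top}})}.
\]
Under this equivalence, $\Perf{\Gamma(X,\mathcal{O}^{\mathrm{top}})}$ corresponds to $\QCoh(\mathfrak{X})^{\dual}$, so it suffices to prove $\CAlg^{\sep}(\QCoh(\mathfrak{X})^{\dual}) = \CAlg^{\cov}(\QCoh(\mathfrak{X}))$ and to identify this common category with $\Cov_X$.

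By construction, $\QCoh(\mathfrak{X})=\lim_{(\spec R\to X)\in\mathrm{Aff}^{\mathrm{et}}_{/X}}\mod{\mathcal{O}^{\mathrm{top}}(\spec R)}$, and each $\mathcal{O}^{\mathrm{top}}(\spec R)$ is an even periodic $\E_\infty$-ring with $\pi_0=R$ regular and Noetherian. Combining Theorem~\ref{thm-sep-even-periodic} with Lemma~\ref{lem-inclusions} yields the pointwise equality $\CAlg^{\wcov}(\mod{\mathcal{O}^{\mathrm{top}}(\spec R)})=\CAlg^{\sep,\mathrm{f}}(\Perf{\mathcal{O}^{\mathrm{top}}(\spec R)})=\Cov_R$. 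Applying Theorem~\ref{thm-descent} then gives
\[
\CAlg^{\wcov}(\QCoh(\mathfrak{X}))=\CAlg^{\sep,\mathrm{f}}(\QCoh(\mathfrak{X})^{\dual}).
\]
Since $\Gamma(X,\mathcal{O}^{\mathrm{top}})$ is compact in its own module category, weak covers and covers coincide on the right-hand side; using that $X$ is quasi-compact (so is covered by finitely many affine étale charts), every separable dualizable algebra has degree bounded by the supremum of its finite affine-local degrees, which lets us drop the finiteness restriction on the left and obtain the desired first equality.

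To identify the common answer with $\pi_1^{\mathrm{\acute{e}t}}(X)$, I would invoke classical étale descent for finite étale algebras: the assignment $\spec R\mapsto\Cov_R$ is a sheaf of Galois categories on the small étale site of $X$, giving $\lim_{(\spec R\to X)}\Cov_R \simeq \Cov_X$ as Galois categories, and SGA~1 classifies $\Cov_X^{\op}$ by the profinite group(oid) $\pi_1^{\mathrm{\acute{e}t}}(X)$. The main obstacle is the finite-degree upgrade: locally the degree is finite by Theorem~\ref{thm-sep-even-periodic}, but a priori the degree function on $\Spc(\QCoh(\mathfrak{X})^{\dual})$ need not be bounded, and showing that it is requires either the quasi-compactness of $X$ or a careful use of Lemma~\ref{lem-wcov-com-limits} and Proposition~\ref{prop-degree-rank} to translate between bounded-rank weak covers in the limit and finite-degree separable algebras in the Balmer picture. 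Once this is handled, everything else is a formal consequence of the descent theorems of Sections~\ref{sec:descent-separable} and \ref{sec-descent}.
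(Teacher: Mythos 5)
Your proposal follows essentially the same route as the paper's proof: affineness to identify $\mod{\Gamma(X,\mathcal{O}^{\mathrm{top}})}$ with $\QCoh(\mathfrak{X})$, then the limit presentation, the pointwise identification from Theorem~\ref{thm-sep-even-periodic}, and Theorem~\ref{thm-descent} to conclude the finite-degree version $\CAlg^{\sep,\mathrm{f}}(\QCoh(\mathfrak{X})^{\dual})=\CAlg^{\wcov}(\QCoh(\mathfrak{X}))$, followed by compactness of the unit to merge weak covers with covers. That part is correct and matches the paper.

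The place where you stop short is exactly the step you flag as ``the main obstacle'': dropping the finiteness hypothesis. Your sentence ``every separable dualizable algebra has degree bounded by the supremum of its finite affine-local degrees'' is the right idea but you leave it unjustified and then second-guess it. The paper's resolution is more direct than either of the alternatives you sketch: choose a single affine \'etale cover $\spec(R)\to X$ (available by quasi-compactness), observe that the pullback functor $\QCoh(\mathfrak{X})^{\dual}\to\Perf{\mathcal{O}^{\mathrm{top}}(\spec R)}$ is conservative, and then invoke \cite{Balmer2014}*{Theorem 3.7(b)}, which says that a conservative symmetric monoidal exact functor preserves (does not increase and in fact computes) degree. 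Since the target has only finite-degree separable algebras by Theorem~\ref{thm-sep-even-periodic}, so does the source; no translation through rank or bounded-rank weak covers via Lemma~\ref{lem-wcov-com-limits} and Proposition~\ref{prop-degree-rank} is needed. For the identification with $\pi_1^{\mathrm{\acute{e}t}}(X)$, you re-derive it from SGA~1 descent for finite \'etale covers, whereas the paper simply cites Mathew's \cite{Mathew2016}*{Theorem 10.4}; both are valid, but yours re-proves a result that is already available off the shelf.
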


\begin{proof}
 Note that our assumptions imply that $X$ is Noetherian and separated. Therefore we are under the assumptions of~\cite{MathewLennart}*{Theorem 4.1} so the result applies to give a symmetric monoidal equivalence
 \begin{equation}\label{eq-affiness}
 \mod{\Gamma(X,\mathcal{O}^{top})}\simeq \QCoh(\mathfrak{X}).
 \end{equation}
 By definition, the stable homotopy theory $\QCoh(\mathfrak{X})$ is a limit of module categories over regular, Noetherian, even periodic $\E_\infty$-rings $\mathcal{O}^{top}(\spec R)$ where $2$ acts invertibly. By Theorem~\ref{thm-sep-even-periodic}, separable commutative algebras with underlying perfect module over these $\E_\infty$-rings agree with the (weak) finite covers and these have all finite degree. Our descent result in the form of Theorem~\ref{thm-descent} applies to give 
 \[
 \CAlg^{\sep, \mathrm{f}}(\QCoh(\mathfrak{X})^\dual)= \CAlg^{\wcov}(\QCoh(\mathfrak{X})).
 \]
 Note that in this case there is no difference between weak finite covers and finite covers since the unit object is compact by~(\ref{eq-affiness}). Finally, the (weak) finite covers have already been classified by Mathew in~\cite{Mathew2016}*{Theorem 10.4} by the \'etale fundamental group $\pi_1^{\mathrm{\acute{e}t}}(X)$. It is only left to argue that all separable commutative algebras in $\QCoh(\mathfrak{X})$ with underlying perfect module have finite degree. 
 To this end pick an \'etale cover $\spec(R) \to X$ and observe that the corresponding pullback functor $\QCoh(\mathfrak{X})^\dual\to \Perf{\mathcal{O}^{top}(\spec(R))}$ is conservative. As any separable algebra in the target of the pullback functor has finite degree by Theorem~\ref{thm-sep-even-periodic}, it follows from~\cite{Balmer2014}*{Theorem 3.7(b)} that the same is true for any separable algebra in $\QCoh(\mathfrak{X})^\dual$.
\end{proof}

\begin{example}[Non-periodic topological modular forms] 
 We consider $\overline{M_{ell}}$, the compactification of the moduli stack $M_{ell}$ of elliptic curves. As discussed
 in ~\cite{MathewLennart}*{Section 7}, work of Goerss-Hopkins-Miller-Lurie gives an even periodic refinement $\mathcal{O}^{top}$ on $\overline{M_{ell}}$ whose $\mathbb E_\infty$-ring of global section is the spectrum $\mathrm{Tmf}$ of non-connective, non-periodic topological modular forms. The proof of~\cite{MathewLennart}*{Theorem 7.2} shows that the map $\overline{M_{{ell}}}\to M_{\mathrm{FG}}$ is quasi-affine. We can now base change to $\Z_{(p)}$ for some prime number $p\neq 2$ and obtain $\mathrm{Tmf}_{(p)}$ as the $\mathbb E_\infty$-ring of global sections. Combining Theorem \ref{thm-even-ref-DM} with~\cite{Mathew2016}*{Corollary 10.5(1)} we find that the separable commutative algebras with perfect underlying module over $\mathrm{Tmf}_{(p)}$ are classified by the \'etale fundamental group of $\Z_{(p)}$.
\end{example}

\section{Spectral Deligne--Mumford stacks}\label{sec:spectral-DM}

In this section we classify separable commutative algebras with underlying perfect module in the stable homotopy theory of quasi coherent sheaves on a spectral Deligne--Mumford stack. We start by mildly elaborating on the relevant definition ~\cite{SAG}*{Definition 1.4.4.2}.

\begin{definition}
A \emph{non-connective spectral Deligne--Mumford stack} is a spectrally ringed $\infty$-topos $\mathsf{X}=(\mathcal{X}, \mathcal{O}_\mathcal{X})$ satisfying the following conditions:
\begin{itemize}
    \item[i)] There is a collection of objects $U_\alpha\in {\mathcal X}$ covering ${\mathcal X}$ and such that
    \item[ii)] For each $\alpha$ there is an equivalence $(\mathcal{X}_{/U_\alpha}, \mathcal{O}_\mathcal{X}|U_\alpha)\simeq \mathrm{Sp\acute{e} t}(A_{\alpha})$ for some $\mathbb E_\infty$-ring $A_\alpha$.
\end{itemize}

A {\em spectral Deligne Mumford-stack} is a non-connective spectral Deligne-Mumford stack $\mathsf{X}=(\mathcal{X}, \mathcal{O}_\mathcal{X})$ with connective structure sheaf $\mathcal{O}_\mathcal{X}=\tau_{\ge 0}\mathcal{O}_\mathcal{X}$.
We observe that in this case, all $A_\alpha$ appearing in ii) above are connective: Since 
\[ \tau_{\ge 0}({\mathcal O}_{\mathcal X}|U_\alpha)\simeq(\tau_{\ge 0}{\mathcal O}_{\mathcal X})|_{U_\alpha}\simeq{\mathcal O}_{\mathcal X}|_{U_\alpha},\]
we see that $\mathrm{Sp\acute{e} t}(A_{\alpha})$ is a spectral Deligne-Mumford stack, hence $A_\alpha$ is connective by~\cite{SAG}*{Corollary 1.4.5.3}.
\end{definition}

\begin{example}
Any Deligne--Mumford stack canonically determines a spectral Deligne--Mumford stack by~\cite{SAG}*{Remark 1.4.8.3}. In particular, we can view any qcqs scheme as a spectral Deligne--Mumford stack.
\end{example}

\begin{definition}
Given a spectral Deligne--Mumford stack $\mathsf{X}$, one associates with it a stable homotopy theory of quasi-coherent sheaves by setting 
\begin{equation}\label{eq-qcoh}
\QCoh(\mathsf{X})=\lim_{\spec(R) \to \mathsf{X}}\mod{R}
\end{equation}
where the limits runs over all maps $\spec(R) \to \mathsf{X}$ with $R$ a connective $\E_\infty$-ring. For more discussion see~\cite{SAG}*{Proposition 6.2.4.1}.
\end{definition}

We recall the following result.

\begin{lemma}[\cite{SAG}*{Proposition 6.2.6.2}]
 Let $\mathsf{X}$ be a spectral Deligne-Mumford stack, and consider some $\mathcal{F}\in \QCoh(\mathsf{X})$. Then, the following are equivalent:
 \begin{itemize}
     \item[(a)] $\mathcal{F}$ is dualizable in $\QCoh(\mathsf{X})$;
     \item[(b)] $\mathcal{F}$ is {\em perfect}, that is $\mathcal{F}(\spec(R))$ is a perfect $R$-module for all $\spec(R) \to \mathsf{X}$.
 \end{itemize}
\end{lemma}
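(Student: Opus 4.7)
My approach would be to reduce the statement to the affine case by exploiting the limit presentation (\ref{eq-qcoh}) of $\QCoh(\mathsf{X})$ as an object of $\CAlg(\Pr)$. The key tool is \cite{HA}*{Proposition 4.6.1.11}, which says that passage to dualizable objects commutes with arbitrary limits in $\CAlg(\Pr)$. This principle has already been used several times in the paper, for instance in the proof of Proposition~\ref{prop-descent-sep} to see that dualizable objects descend, and again in Corollary~\ref{lem-compl-mod} in the complete setting.

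Concretely, the plan is as follows. From (\ref{eq-qcoh}) and the cited commutation with dualizable objects one obtains
\[ \QCoh(\mathsf{X})^\dual \simeq \lim_{\spec(R) \to \mathsf{X}} \mod{R}^\dual.\]
By construction of the limit, an object $\mathcal{F}$ in the left hand side corresponds to a compatible family $\{\mathcal{F}(\spec(R))\}_{\spec(R)\to\mathsf{X}}$, and such a family lies in the right hand side if and only if each component $\mathcal{F}(\spec(R)) \in \mod{R}$ is dualizable. The proof is then completed by the classical identification $\mod{R}^\dual = \Perf{R}$ which holds for every $\mathbb{E}_\infty$-ring $R$.

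I do not anticipate a serious obstacle. The only point requiring care is distinguishing this situation from the completed one in Theorem~\ref{thm-dual=perfect}, where the tensor product on $\mod{A}^{\Cpl(I)}$ is a \emph{completed} tensor product and hence the inclusion of perfect modules into dualizables is proper in general. Here, by contrast, the symmetric monoidal structure on $\QCoh(\mathsf{X})$ is induced directly pointwise from the ordinary module tensor products through the limit, so no completion enters and the abstract argument above applies without modification.
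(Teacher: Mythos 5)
Your proof is correct, and since the paper cites \cite{SAG}*{Proposition 6.2.6.2} without providing an argument, your proposal supplies a genuine self-contained proof rather than reproducing one from the paper. The route you take — using the limit description (\ref{eq-qcoh}), the fact that dualizable objects commute with limits in $\CAlg(\Pr)$ via \cite{HA}*{Proposition 4.6.1.11}, and then the affine identification $\mod{R}^\dual = \Perf{R}$ — is the natural one and is presumably close to Lurie's own argument. Your remark distinguishing this situation from Theorem~\ref{thm-dual=perfect} is apt: there the completed tensor product is the reason dualizable-implies-perfect needs genuine work, whereas here the limit in $\CAlg(\Pr)$ has ordinary module categories at every vertex, so the statement reduces vertex-by-vertex to the standard affine fact. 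One small point worth making explicit if you were to write this out: the transition functors in (\ref{eq-qcoh}) are base-change functors, hence symmetric monoidal left adjoints, so the diagram genuinely lands in $\CAlg(\Pr)$ and \cite{HA}*{Proposition 4.6.1.11} applies; and the projection $\QCoh(\mathsf{X}) \to \mod{R}$ is exactly evaluation $\mathcal{F} \mapsto \mathcal{F}(\spec R)$, so ``dualizable componentwise'' coincides verbatim with the definition of perfect in (b).
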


\begin{example}
 For any Deligne--Mumford stack $X$,
 the homotopy category of $\QCoh(X)$ identifies with $\mathsf{D}_{qc}(X)$, the derived category of complexes of $\mathcal{O}_X$-modules with quasi-coherent cohomology, see ~\cite{HallRydh}*{Remark 1.7} for details.
 \end{example}

\begin{theorem}\label{thm-sep-SDM-stack}
 Let $\mathsf{X}=(\mathfrak{X}, \mathcal{O}_\mathfrak{X})$ be a spectral Deligne--Mumford stack. Then $$\CAlg^{\sep,\mathrm{f}}(\mathrm{QCoh}(\mathsf{X})^\dual)=\CAlg^{\wcov}(\QCoh(\mathsf{X})),$$
 and this category identifies with the category of finite \'etale covers of the underlying classical stack of $\mathsf{X}$. If $\mathsf{X}$ is quasi-compact, then any separable commutative algebra in $\mathrm{QCoh}(\mathsf{X})^\dual$ has finite degree.
\end{theorem}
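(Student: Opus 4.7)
The plan is to apply the descent machinery of Part~1 to the limit presentation \eqref{eq-qcoh} of $\QCoh(\mathsf{X})$. Since $\mathsf{X}$ is a spectral Deligne--Mumford stack, every affine \'etale chart is of the form $\mathrm{Sp\acute{e}t}(R)$ for a \emph{connective} $\mathbb{E}_\infty$-ring $R$, so Proposition~\ref{prop-connective-sep} applies at each vertex of the limit to give
\[
\CAlg^{\sep,\mathrm{f}}(\Perf{R}) = \Cov_{\pi_0(R)} = \CAlg^{\wcov}(\mod{R}).
\]
Theorem~\ref{thm-descent} then transports this equality to the limit, yielding the first claim $\CAlg^{\sep,\mathrm{f}}(\QCoh(\mathsf{X})^\dual) = \CAlg^{\wcov}(\QCoh(\mathsf{X}))$.

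For the classification, I would combine Lemma~\ref{lem-wcov-com-limits} with the above identification at each vertex to obtain
\[
\CAlg^{\wcov}(\QCoh(\mathsf{X})) \simeq \left( \lim_{\spec(R) \to \mathsf{X}} \Cov_{\pi_0(R)} \right)^{\br}.
\]
By classical \'etale descent for finite \'etale algebras, this limit is exactly the category of finite \'etale covers of the underlying classical Deligne--Mumford stack $\mathsf{X}_{\mathrm{cl}} = (\mathcal{X}, \pi_0 \mathcal{O}_\mathcal{X})$; the bounded-rank restriction coincides with the usual condition that a finite \'etale cover have locally constant finite degree, which holds automatically for morphisms of stacks that are finite in the classical sense.

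For the quasi-compact assertion, I would show that the pullbacks $\{A_R\}$ of a given $A \in \CAlg^\sep(\QCoh(\mathsf{X})^\dual)$ have uniformly bounded degree, and then invoke Corollary~\ref{cor-bounded-degree}. Since $\mathsf{X}$ is quasi-compact, choose a single affine \'etale surjection $\spec(R_0) \to \mathsf{X}$; by Proposition~\ref{prop-connective-sep}, $A_{R_0}$ has finite degree, say $d$. For an arbitrary $\spec(R) \to \mathsf{X}$, form $S := R \otimes_{\mathcal{O}_\mathsf{X}} R_0$, so that $R \to S$ is faithfully flat (hence $S$ is faithful, in fact descendable, over $R$) and $S$ is also naturally an $R_0$-algebra. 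Two applications of Proposition~\ref{prop-properties-degree}(a) give
\[
\deg(A_R) = \deg(A_S) \leq \deg(A_{R_0}) = d,
\]
proving the required uniform bound.

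The main obstacle is the identification with finite \'etale covers of $\mathsf{X}_{\mathrm{cl}}$: one must carefully match the bounded-rank condition arising from Mathew's formalism of Galois categories with the geometric notion of a finite \'etale morphism of stacks, and verify compatibility of \'etale descent for $\pi_0$-algebras with the spectral descent implicit in \eqref{eq-qcoh}. Both reduce, via Lemma~\ref{lem-etale}, to the observation that each map in the affine \'etale site is \'etale on $\pi_0$, but spelling out the compatibility cleanly requires some care.
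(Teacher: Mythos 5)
Your argument for the equality $\CAlg^{\sep,\mathrm{f}}(\QCoh(\mathsf{X})^\dual)=\CAlg^{\wcov}(\QCoh(\mathsf{X}))$ matches the paper's: write $\QCoh(\mathsf{X})$ as the limit $\lim_{\spec R\to\mathsf{X}}\mod{R}$ over the affine \'etale site, invoke Proposition~\ref{prop-connective-sep} at each vertex, and transport through the limit via Theorem~\ref{thm-descent}. The paper treats the identification with finite \'etale covers of the classical stack more briskly (``the second is clear,'' appealing to the observation that each $\Cov_{\pi_0(R)}$ only depends on $\pi_0(R)$), whereas you unfold it via Lemma~\ref{lem-wcov-com-limits}; this is a reasonable elaboration, though you should be careful about your parenthetical that bounded rank ``holds automatically'' for finite \'etale morphisms of stacks --- without quasi-compactness a finite \'etale cover of the classical stack can have unbounded degree across connected components, so the bounded-rank side genuinely cuts things down there.

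Where you differ substantively from the paper is the final claim, and there is a gap in your route. You form $S:=R\otimes_{\mathcal{O}_{\mathsf{X}}}R_0$ and then treat $S$ as a connective $\mathbb{E}_\infty$-ring receiving \'etale maps from $R$ and $R_0$. But for a general spectral Deligne--Mumford stack, $\mathrm{Sp\acute{e}t}(R)\times_{\mathsf{X}}\mathrm{Sp\acute{e}t}(R_0)$ need not be affine (there is no affine-diagonal hypothesis in the paper's definition), so this $S$ need not be a ring and Proposition~\ref{prop-properties-degree}(a) cannot be applied to it directly. One can repair this by covering the fiber product by finitely many affines over $\spec R$ and bounding the degree on each piece, but that requires further quasi-compactness/separation input that you haven't supplied. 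The paper sidesteps the issue entirely: pick a surjective \'etale $\mathrm{Sp\acute{e}t}(R_0)\to\mathsf{X}$, note that the pullback $\QCoh(\mathsf{X})^\dual\to\Perf{R_0}$ is conservative, and then use \cite{Balmer2014}*{Theorem 3.7(b)} (conservative tensor-triangulated functors preserve and detect degree) together with Proposition~\ref{prop-connective-sep} over $R_0$. This avoids ever needing the fiber products of affines to be affine and is the cleaner argument; if you want to keep your ``bounded family plus Corollary~\ref{cor-bounded-degree}'' strategy, you should replace the fiber-product step with the conservativity/surjectivity-on-spectra argument together with Proposition~\ref{prop-diagram commutes} to propagate the uniform bound from $R_0$.
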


\begin{proof}
 By (\ref{eq-qcoh}), the $\infty$-category of quasi-coherent sheaves is a limit of module categories over connective $\E_\infty$-rings $R$. For any such $R$, we have 
 \[ \Cov_{\pi_0(R)} =  \CAlg^{\wcov}(\mod{R}) =\CAlg^{\sep}(\Perf{R})\]
 by Proposition~\ref{prop-connective-sep}. In particular, all the categories are invariant under passage from $R$ to $\pi_0(R)$, i.e. under passage to the underlying classical stack. Therefore the first claim follows from Theorem~\ref{thm-descent}, and the second is clear. 
 For the final claim, suppose that $\mathsf{X}$ is quasi-compact so that we can find a cover $\mathrm{Sp\acute{e} t}(R) \to \mathsf{X}$ where $R$ is a connective $\E_\infty$-ring. 
 Note that the corresponding pullback functor $\QCoh(\mathsf{X})^\dual\to \Perf{R}$ is conservative. As any separable algebra in the target of the pullback functor has finite degree by Proposition~\ref{prop-connective-sep}, it follows from~\cite{Balmer2014}*{Theorem 3.7(b)} that the same is true for any separable algebra in $\QCoh(\mathsf{X})^\dual$. 
\end{proof}

\begin{remark}\label{rem-no-diff-cov}
  If $\mathsf{X}=(\mathfrak{X}, \mathcal{O}_{\mathfrak{X}})$ is a perfect stack in the sense of~\cite{SAG}*{Definition 9.4.4.1}, then the structure sheaf $\mathcal{O}_{\mathfrak{X}}$ is compact in $\QCoh(\mathsf{X})$ so there is no difference between weak finite covers and finite covers. Important examples of perfect stacks are:
  \begin{itemize}
      \item (stack-theoretic) quotients $X/G$ for $G$ an affine algebraic group defined over a field $k$ of characteristic zero and $X$ is quasi-projective $k$-scheme, see~\cite{SAG}*{Example 9.4.4.4}.
      \item quasi-compact and quasi-separated schemes by~\cite{SAG}*{Proposition 9.6.1.1}.
  \end{itemize}
  For more example we refer to~\cite{HallRydh}.
\end{remark}

We note that the next result may alternatively
be deduced form ~\cite{Neeman2018}*{Theorem 7.10}, using Noetherian approximation. 

\begin{corollary}\label{cor-sep-scheme}
 Let $X$ be a qcqs scheme. Then 
 \[
 \CAlg^{\sep}(\QCoh(X)^\dual)= \CAlg^{\cov}(\QCoh(X)),
 \]
 and these are classified by the the \'etale fundamental group $\pi_1^{\mathrm{\acute{e}t}}(X)$. 
\end{corollary}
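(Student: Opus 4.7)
The plan is to deduce this corollary almost immediately from Theorem~\ref{thm-sep-SDM-stack} together with Remark~\ref{rem-no-diff-cov}. First I would regard the qcqs scheme $X$ as a spectral Deligne--Mumford stack in the canonical way. Since $X$ is quasi-compact, the last clause of Theorem~\ref{thm-sep-SDM-stack} tells us that every separable commutative algebra with underlying dualizable module in $\QCoh(X)$ automatically has finite degree, so the finiteness qualifier in $\CAlg^{\sep,\mathrm{f}}$ is redundant and the theorem gives
\[
\CAlg^{\sep}(\QCoh(X)^\dual)=\CAlg^{\wcov}(\QCoh(X)).
\]

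Next I would pass from weak finite covers to finite covers. By Remark~\ref{rem-no-diff-cov}, any qcqs scheme is a perfect stack, so its structure sheaf is a compact object of $\QCoh(X)$. As recorded there, compactness of the unit implies that the inclusion $\CAlg^{\cov}(\QCoh(X))\subseteq\CAlg^{\wcov}(\QCoh(X))$ is an equality, yielding
\[
\CAlg^{\sep}(\QCoh(X)^\dual)=\CAlg^{\cov}(\QCoh(X)).
\]

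For the classification, Theorem~\ref{thm-sep-SDM-stack} identifies this common category with the category of finite \'etale covers of the underlying classical stack of $X$, which is just $X$ itself since $X$ is a classical scheme. The classification of finite \'etale covers of $X$ by the \'etale fundamental group $\pi_1^{\mathrm{\acute{e}t}}(X)$ is then Grothendieck's classical theorem, so the Galois-theoretic description follows. I do not anticipate any genuine obstacle in this deduction: all the work has been done in Theorem~\ref{thm-sep-SDM-stack} and the identifications listed in Remark~\ref{rem-no-diff-cov}; the only minor point to keep in mind is that for a classical scheme the underlying classical stack is $X$ itself, so that Mathew's notion of finite cover recovers the classical \'etale site.
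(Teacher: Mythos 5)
Your proposal is correct and matches the paper's own proof essentially step for step: apply Theorem~\ref{thm-sep-SDM-stack} for the separable-to-weak-cover identification and the finite-degree claim (using quasi-compactness), then invoke Remark~\ref{rem-no-diff-cov} to upgrade weak covers to covers because a qcqs scheme is a perfect stack. The only cosmetic difference is that the paper cites Mathew's computation of the Galois group directly for the classification, whereas you unpack it via Theorem~\ref{thm-sep-SDM-stack}'s description as finite \'etale covers of the underlying classical scheme plus Grothendieck's classical theorem; both routes are valid and yield the same conclusion.
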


\begin{proof}
 The first part of the claim follows from Theorem~\ref{thm-sep-SDM-stack} together with the fact that $X$ is perfect so there is no difference between weak finite covers and finite covers. Note that all separable algebras have finite degree by Theorem~\ref{thm-sep-SDM-stack} and our assumption that $X$ is quasi-compact. Finally, the description of the Galois group is given in~\cite{Mathew2016}*{Discussion after Example 7.19}.
\end{proof}

\section{The stable module category for finite groups of \texorpdfstring{$p$}{p}-rank one}\label{sec:stable-mod}

In this section we classifiy all commutative separable algebras of finite degree in the small stable module category of a finite group of $p$-rank one. This in particular answers affirmatively a question of Balmer in this case. We fix a field $k$ of characteristic $p>0$ and a finite group $G$.

\begin{definition}
 We denote by $\mod{G}(k)$ the $\infty$-category $\Fun(BG,\mod{}(k))$. 
 The pointwise tensor product turns this $\infty$-category into a stable homotopy theory with indecomposable unit object. Therefore $\mod{G}(k)$ is a connected stable homotopy theory.
\end{definition}

As discussed in~\cite{Mathew2015}*{Section 2}, the category of $k[G]$-modules admits a combinatorial stable model structure in which the fibrations are the surjections, the cofibrations are the injections and the weak equivalences are the stable equivalences. The $k$-linear tensor product makes the category of $k[G]$-modules into a symmetric monoidal model category. 

\begin{definition}
 The stable module category $\StMod(G;k)$ is the $\infty$-categorical localization of 
 the category of $k[G]$-modules at the class of stable equivalences. This inherits the structure of a symmmetric monoidal $\infty$-category by~\cite{HA}*{Proposition 4.1.7.4.}. We denote by $\stmod(G;k)$ the full subcategory of compact objects; this also agree with the subcategory of dualizable objects.  
\end{definition}

\begin{remark}
  The stable module category $\StMod(G;k)$ is a stable homotopy theory. 
  It follows from Lemma~\ref{lem-dual-is-2ring} that $\stmod(G;k)$ is a $2$-ring. One calculates that
 \[
 \pi_0(k)=\widehat{H}^0(G;k)=\mathrm{cok}(k \xrightarrow{|G|}k),
 \]
  so $\StMod(G;k)$ is a connected stable homotopy theory which is trivial if and only if $p$ does not divide the order of $G$. 
\end{remark}

\begin{definition}\label{def-A_H^G}
For every finite $G$-set $X$, we define a commutive algebra in $k[G]$-modules by $A^G_X:= k[X]$ with multiplication $\mu \colon A^G_X \otimes A^G_X \to A^G_X$ 
and unit $\eta\colon k \to A_X^G$ obtained by extending $k$-linearly the formulas $\mu(x \otimes x)=x$ and $\mu(x \otimes x')=0$ for all $x \not =x' \in X$, and $\eta(1)= \sum _{x \in X}x\otimes x$.
For a subgroup $H\subseteq G$, we ease the notation by setting $A^G_H:=A^G_{G/H}$. Note that $A^G_X$ can also be viewed as a commutative algebra in $\mod{G}(k)$ and $\stmod(G;k)$.
\end{definition}

The next result was first observed by Balmer in~\cite{Balmer2015}*{Proposition 3.16}.
\begin{lemma}\label{lem-all-sep-alg-fin-degree}
 For every finite $G$-set $X$, the commutative algebra $A^G_X$ is separable of finite 
 degree in $\mod{G}(k)$ and $\stmod(G;k)$.
\end{lemma}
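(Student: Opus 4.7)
The plan is to exhibit an explicit bimodule section of the multiplication map (establishing separability directly), then to compute the splitting tower by translating to the combinatorics of $G$-sets (establishing finite degree), and finally to transport both structures from the derived category $\mod{G}(k)$ to $\stmod(G;k)$ via the canonical symmetric monoidal Verdier quotient.

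First, I would verify separability in $\mod{G}(k)$. The basis elements $\{x\}_{x \in X}$ of $A^G_X = k[X]$ form a family of orthogonal idempotents summing to $1$. The natural $G$-equivariant candidate section $\sigma \colon A^G_X \to A^G_X \otimes A^G_X$, defined by linear extension of $x \mapsto x \otimes x$, is manifestly a section of $\mu$ and a bimodule map by a short calculation using orthogonality.

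Next, to compute the degree, I would introduce the contravariant functor $Y \mapsto A^G_Y$ from finite $G$-sets to $\CAlg(\mod{G}(k))$, which converts cartesian products into tensor products, disjoint unions into products of algebras, and the diagonal $X \to X \times X$ into the multiplication $\mu$. Writing $X^{(n)}$ for the $G$-set of ordered $n$-tuples of pairwise distinct elements of $X$, the decomposition $X \times X = \Delta(X) \sqcup X^{(2)}$ yields an equivalence $A^G_X \otimes A^G_X \simeq A^G_X \times A^G_{X^{(2)}}$ with first projection $\mu$, so by Corollary~\ref{cor-sep} we obtain $(A^G_X)^{[2]} \simeq A^G_{X^{(2)}}$. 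Inductively, the decomposition $X^{(n)} \times_{X^{(n-1)}} X^{(n)} \simeq X^{(n)} \sqcup X^{(n+1)}$ (diagonal plus tuples appending a new distinct entry) gives $(A^G_X)^{[n]} \simeq A^G_{X^{(n)}}$ for all $n \geq 1$. Since $X^{(n)} = \emptyset$ for $n > |X|$, the splitting tower terminates and $A^G_X$ has finite degree at most $|X|$ in $\mod{G}(k)$.

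Finally, for $\stmod(G;k)$, I would transport these conclusions along the canonical symmetric monoidal colimit-preserving Verdier quotient $q \colon \mod{G}(k) \to \StMod(G;k)$, which lies in $\CAlg(\Pr)$. Construction~\ref{con-smf-pres-sep} ensures $q(A^G_X)$ remains separable, and functoriality of the splitting tower (Remark~\ref{rem-tower-functorial}) shows that the tower in $\StMod(G;k)$ is the image of the one above, hence also terminates at step $|X|$. Since $A^G_X$ is a finite-dimensional $k[G]$-module it is compact in $\StMod(G;k)$, so lies in $\stmod(G;k)$, and Construction~\ref{con-tt-tower-2ring} identifies its splitting tower there with the truncated one. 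The only non-trivial step is the inductive identification of the splitting tower with the $G$-set tower, which reduces to a direct combinatorial calculation and presents no real obstacle.
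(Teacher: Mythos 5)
Your proof is correct and takes a more explicit route than the paper's. The paper proves separability with the same section $\sigma(x)=x\otimes x$ (restricted to the transitive case $A^G_H$), cites Balmer's Example 4.6 for the finite degree of $A^G_H$ in both $\mod{G}(k)$ and $\stmod(G;k)$, and then treats a general $X$ by decomposing it into orbits and invoking closure of separable algebras under finite products (Example~\ref{ex-separable-product-retract}) together with additivity of the degree over products (Lemma~\ref{lem-product-fin-degree}). You instead work with a general $X$ throughout, re-derive the content of Balmer's example by identifying $(A^G_X)^{[n]}\simeq A^G_{X^{(n)}}$ via the contravariance of $Y\mapsto A^G_Y$ and the decompositions $X^{(n)}\times_{X^{(n-1)}}X^{(n)}\simeq X^{(n)}\sqcup X^{(n+1)}$, and then transport along the symmetric monoidal quotient $\mod{G}(k)\to\StMod(G;k)$ using Construction~\ref{con-smf-pres-sep} and Remark~\ref{rem-tower-functorial}. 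This buys you an explicit closed formula for the splitting tower and the sharper bound $\deg(A^G_X)\le|X|$, at the cost of carrying out the combinatorics that the paper outsources. One small point worth making explicit in your inductive step: the identification $A^G_{X^{(n)}}\otimes_{A^G_{X^{(n-1)}}}A^G_{X^{(n)}}\simeq A^G_{X^{(n)}\times_{X^{(n-1)}}X^{(n)}}$ is a \emph{derived} tensor product, so one should note that $k[X^{(n)}]$ is projective over $k[X^{(n-1)}]$ (the latter acts through its orthogonal idempotents) before concluding it coincides with the classical tensor product of $G$-sets.
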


\begin{proof}
It is easy to check that $A_H^G$ is separable with bimodule section $\sigma\colon A_H^G \to A_H^G \otimes A_H^G$ given by $\sigma(x)=x\otimes x$ for all $x\in G/H$.  
The discussion in~\cite{Balmer2014}*{Example 4.6} shows that for any $H\subseteq G$, the commutative algebra $A^G_H$ has finite degree in $\mod{G}(k)$ and $\stmod(G;k)$.

For a general finite $G$-set, we can write $A_X^G\simeq \prod_i  A_{H_i}^G$ by decomposing $X$ into its orbits. Then $A_X^G$ is separable by Example~\ref{ex-separable-product-retract} and of finite degree by Lemma~\ref{lem-product-fin-degree}.
\end{proof}

We now recall the determination of finite covers in $\mod{G}(k)$.

\begin{theorem}[Mathew] \label{thm-galois-group-mod_G(k)}
 Let $k$ be a separably closed field of characteristic $p>0$ and let $G$ be a 
 finite group. 
 Then the Galois group $\pi_{1}(\mod{G}(k))$ is isomorphic to the quotient of $G$ by the normal 
 subgroup $N_p(G)\trianglelefteq G$ generated by the elements of order $p$ \footnote{The group $N_p(G)$ is traditionally denoted by $\Omega_1(G)$ in group
theory}. 
 Furthermore, every object of $\CAlg^\cov(\mod{G}(k))$ is of the form $A_X$ for a finite 
 $G$-set $X$ with trivial $N_p(G)$-action.
\end{theorem}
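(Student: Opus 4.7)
The plan is to exhibit an explicit $G/N_p(G)$-Galois extension in $\mod{G}(k)$, use it to set up a short exact sequence of Galois groups via descent, induct on $|G|$, and finally handle a base case for groups generated by elements of order $p$. Throughout, set $H := N_p(G)$, which is normal in $G$ with $[G:H]$ coprime to $p$ (by Cauchy's theorem, since $G/H$ contains no element of order $p$).

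First, I will consider $A := A^G_H = k[G/H] \in \CAlg(\mod{G}(k))$ and equip it with a $G/H$-action by right translation on $G/H$; since $H$ is normal, this commutes with the left $G$-action. I then verify that $A$ is a faithful $G/H$-Galois extension of $\1$ in the sense of Definition~\ref{def-galois-extension}: the splitting $A \otimes A \simeq \prod_{G/H} A$ holds already at the level of $G$-sets via the bijection $(x,y) \mapsto (x, x^{-1}y)$ on $G/H \times G/H$; the equivalence $\1 \xrightarrow{\sim} A^{h(G/H)}$ follows since $[G:H]$ is invertible in $k$, so homotopy $G/H$-fixed points reduce to strict invariants (the diagonal copy of $k \subseteq k[G/H]$); faithfulness holds because $\1$ is a $G$-equivariant retract of $A$ via $c \mapsto \frac{c}{[G:H]} \sum_{xH} e_{xH}$ paired with the augmentation map. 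The same retract argument shows that $\1$ lies in the thick tensor ideal generated by $A$, so $A$ is descendable. By Proposition~\ref{prop-G-galois-are-G-torsor}(b) combined with the Galois correspondence of Corollary~\ref{cor-con-galois-cor} (using that $\mod{G}(k)$ is connected), $A$ then corresponds to a surjection $\pi_1(\mod{G}(k)) \twoheadrightarrow G/N_p(G)$.

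Next, I will identify $\mod{A}(\mod{G}(k)) \simeq \mod{H}(k)$ via equivariant Frobenius reciprocity for $H \hookrightarrow G$, under which base-change along $\1 \to A$ corresponds to the restriction functor $\mathrm{Res}^G_H$. The Galois-theoretic formalism of torsors under descent then produces a short exact sequence of profinite groups
\[ 1 \longrightarrow \pi_1(\mod{H}(k)) \longrightarrow \pi_1(\mod{G}(k)) \longrightarrow G/H \longrightarrow 1. \]
I now induct on $|G|$: if $H \subsetneq G$ then $|H| < |G|$ and $N_p(H) = H$ (every element of $G$ of order $p$ already lies in $H$), so the inductive hypothesis applied to $H$ yields $\pi_1(\mod{H}(k)) = 1$, and the short exact sequence gives $\pi_1(\mod{G}(k)) \cong G/N_p(G)$. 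The second claim of the theorem, that every finite cover has the form $A^G_X$ for a $G$-set $X$ with trivial $N_p(G)$-action, is then obtained by tracking orbits in the equivalence $\CAlg^{\cov}(\mod{G}(k))^{\op} \simeq \mathrm{FinSet}^{\cts}_{G/N_p(G)}$ supplied by Corollary~\ref{cor-con-galois-cor}.

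The residual obstacle is the base case $G = N_p(G)$, where I must show $\pi_1(\mod{G}(k)) = 1$. The approach is to directly classify the descendable faithful $F$-Galois extensions $B \in \CAlg^{\cov}(\mod{G}(k))$: restricting along $\mathrm{Res}^G_1 \colon \mod{G}(k) \to \mod{k}$, the image $\mathrm{Res}^G_1 B$ is a finite cover in $\mod{k}$, hence (since $k$ is separably closed) the trivial $F$-cover $k^{\times |F|}$. Thus $B = k[Y]$ for some $G \times F$-set $Y$ of size $|F|$, and one must show that the $G$-action on $Y$ is trivial. If some orbit $G/K \subseteq Y$ has $K \subsetneq G$, the assumption $G = N_p(G)$ is used to force $[G:K]$ to be divisible by $p$; a careful analysis of the thick tensor ideal generated by the summand $A^G_K$ in the non-rigid category $\mod{G}(k)$ (for instance by passage to the stable module category $\StMod(G;k)$, where projective contributions are killed while the unit $\1$ survives) shows that $B \supseteq A^G_K$ fails to be descendable, contradicting the hypothesis. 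Hence all orbits of $Y$ must be trivial, so $B \simeq k^{\times n}$ with trivial $G$-action, i.e., a trivial cover. This base-case descendability analysis is the main obstacle of the proof: Steps 1 and 2 are essentially formal, but rigorously ruling out non-trivial covers in the base case requires genuine modular representation-theoretic input specific to $\mod{G}(k)$ with its non-rigid symmetric monoidal structure.
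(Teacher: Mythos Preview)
Your approach differs substantially from the paper's. The paper does not reprove the Galois-group computation at all: it cites Mathew's original result \cite{Mathew2016}*{Theorem 7.16} for $\pi_1(\mod{G}(k))\cong G/N_p(G)$ and for the fact that the torsors are exactly the $A^G_N$ with $N\trianglelefteq G$ and $N\supseteq N_p(G)$. The only thing the paper actually argues is the second claim, using Theorem~\ref{thm-fin-cover-sep} to write an arbitrary finite cover as a product $\prod_\alpha (A^G_{N_\alpha})^{hH_\alpha}$ and then computing $(A^G_{N_\alpha})^{hH_\alpha}\simeq A^G_{N_\alpha H_\alpha}=A^G_{H_\alpha}$. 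Your deduction of the second claim (``tracking orbits'') is the same idea.

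Your inductive framework for the first claim is sound: $A^G_H$ with $H=N_p(G)$ is indeed a descendable $G/H$-Galois extension (since $p\nmid [G:H]$), the identification $\mod{A^G_H}(\mod{G}(k))\simeq\mod{H}(k)$ is standard, and the short exact sequence of Galois groups follows from the Galois correspondence.

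The base case needs one sharpening. In the torsor situation you describe, $F$ acts simply transitively on $Y$, so choosing a basepoint identifies $Y\cong F$ with $G$ acting via a homomorphism $\rho:G\to F$; in particular every $G$-orbit has the same stabiliser $K=\ker\rho$, which is \emph{normal}. Your claim that $p\mid [G:K]$ then follows (since $G=N_p(G)$ is generated by order-$p$ elements, not all of which can lie in the proper normal subgroup $K$), but note that it would fail for an arbitrary proper subgroup. More importantly, your suggestion to pass directly to $\StMod(G;k)$ is not sufficient on its own, because $B=k[G/K]$ need not be projective over $k[G]$ (e.g.\ $G=(\Z/p)^2$, $K=\Z/p\times 1$). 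The clean argument is: pick $g\in G$ of order $p$ with $g\notin K$; since $K$ is normal, $\langle g\rangle$ acts freely on $G/K$, so $\mathrm{Res}^G_{\langle g\rangle}B$ is a free $k[\Z/p]$-module. In $\mod{\Z/p}(k)$ this generates only the thick $\otimes$-ideal of perfect complexes, which does not contain $\1=k$. Hence $B$ is not descendable, completing the base case.
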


\begin{proof}
 The calculation of the Galois group is due to Mathew~\cite{Mathew2016}*{Theorem 7.16}.  
 In the proof the author shows that the Galois covers are precisely 
 those of the form $A_N^G$ for some normal subgroup $N\trianglelefteq G$ containing 
 $N_p(G)$. Our claim about the shape of a general finite covers follows formally from this: Given $A\in \CAlg^\cov(\mod{G}(k))$, there exist Galois covers $\{A_{N_\alpha}^G\}$ and subgroups $ N_\alpha \subseteq H_\alpha$ such that 
 $A\simeq \prod_\alpha (A_{N_\alpha}^G)^{hH_\alpha}$, see (the proof of) Theorem~\ref{thm-fin-cover-sep}. Here $H_\alpha$ acts on $A_{N_\alpha}^G$ on the right by $[g]_{N_\alpha} .h=[gh]_{N_\alpha}$; one can check that this action is well-defined since $N_\alpha$ is normal in $G$. For each $\alpha$, we find that 
 \[
 (A_{N_\alpha}^G)^{hH_\alpha}\simeq (\prod_{G/N_\alpha} k)^{hH_\alpha}\simeq \prod_{G/N_\alpha H_\alpha}k  \simeq A^G_{N_\alpha H_\alpha}=A^G_{H_\alpha}
 \]
 as $N_\alpha \subseteq H_\alpha$. We conclude that $A\simeq A^G_X$ for $X=\coprod_{\alpha} A^G_{ H_{\alpha}}$.

\end{proof}

The goal of this section is to settle the following question posed by Balmer~\cite{Balmer2015}*{Question 4.7} in the simplest non-trivial case. 

\begin{question}\label{que-Balmer-calrson}
 Let $k$ be a separably closed field of positive characteristic and $G$ a finite group, and let $A$ be a separable commutative 
 algebra in $\stmod(G;k)$. Is there a finite $G$-set $X$ such that $A\simeq A^G_X$ in $\stmod(G;k)$? 
 \end{question}

Equivalently, we can ask if the only indecomposable separable algebras in 
 $\stmod(G;k)$ are those of the form $A^G_H$ for some subgroup $H \subseteq G$. Observe that by Theorem~\ref{thm-galois-group-mod_G(k)}
 this is also equivalent to asking if every separable algebra in $\stmod(G;k)$ lifts to $\mod{G}(k)$.

We remind the reader of the computation of the Balmer spectrum of the stable module category.  Let $k$ be a field of characteristic $p>0$ and let $G$ be a finite group. Let $H^\bullet(G;k)$ denote the commutative ring $H^*(G;k)$ if $p=2$, and $H^{\mathrm{even}}(G;k)$ otherwise, and write $\CV_G$ for the projective support 
 variety $\mathrm{Proj}(H^\bullet(BG;k))$. Note that $\CV_G$ is Noetherian since $H^\bullet(G;k)$ is so~\cite{Benson}*{Corollary 4.2.2}.

\begin{theorem}[{\cite{BCR1997}*{Theorem 3.4}}]
There is a homeomorphism $\Spc(\stmod(G;k))\simeq \CV_G$. 
\end{theorem}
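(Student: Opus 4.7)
The plan is to establish the homeomorphism via Balmer's universal property of the tt-spectrum. I would first construct a cohomological support theory on $\stmod(G;k)$ valued in $\CV_G$: for a finitely generated $kG$-module $M$, set
\[ W_G(M) := V(\mathrm{ann}_{H^\bullet(G;k)} \mathrm{Ext}^*_{kG}(M,M)) \subseteq \CV_G, \]
the vanishing locus in $\CV_G$ of the annihilator ideal of the graded $H^\bullet(G;k)$-module $\mathrm{Ext}^*_{kG}(M,M)$. The first task would be to verify the support datum axioms: $W_G(0) = \emptyset$, $W_G(\1) = \CV_G$, $W_G(M \oplus N) = W_G(M) \cup W_G(N)$, $W_G(\Sigma M) = W_G(M)$, and sub-additivity on distinguished triangles. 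These follow from the half-exactness of $\mathrm{Ext}$ and standard manipulations of annihilator ideals, and are routine.

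The main obstacle, and the technical heart of the proof, is the tensor product theorem: $W_G(M \otimes N) = W_G(M) \cap W_G(N)$. The inclusion $\subseteq$ is relatively formal. The reverse inclusion is deep: I would follow the Benson--Carlson--Rickard strategy of reducing via Quillen's stratification theorem to the case of an elementary abelian $p$-group $E$, where Carlson's rank varieties give a geometric incarnation of the support and allow one to test projectivity of restrictions of $M$ along one-parameter subgroups (shifted cyclic subgroups of $kE$). The tensor product property then reduces to the case of a cyclic group of order $p$, where $\stmod(C_p;k)$ is entirely explicit. Bootstrapping back to general $G$ uses Chouinard's theorem that a module is projective iff its restriction to every elementary abelian subgroup is projective.

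With the tensor product theorem in hand, I would verify realizability: every closed homogeneous subvariety $V \subseteq \CV_G$ arises as $W_G(M)$ for some compact $M$. This uses Carlson's $L_\zeta$-modules: for a homogeneous element $\zeta \in H^\bullet(G;k)$ of degree $n$ representing a map $\Omega^n k \to k$, the fiber $L_\zeta$ satisfies $W_G(L_\zeta) = V(\zeta)$, and tensoring such modules realizes all closed subvarieties. Combined with the tensor product theorem and the support datum axioms, this yields the Benson--Carlson--Rickard classification: thick tensor ideals of $\stmod(G;k)$ are in bijection with specialization-closed subsets of $\CV_G$ via $W_G$. Balmer's reconstruction theorem~\cite{Balmer} then identifies $\Spc(\stmod(G;k))$ with $\CV_G$ as topological spaces, since both are Noetherian spectral spaces and the lattice of Thomason (equivalently, specialization-closed) subsets is a complete invariant of the underlying space. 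The Balmer support corresponds to $W_G$ under this homeomorphism by the universal property.
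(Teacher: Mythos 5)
The paper does not prove this theorem; it is stated purely as a citation to Benson--Carlson--Rickard. Your sketch correctly reconstructs the standard route to the homeomorphism: BCR's classification of thick tensor ideals of $\stmod(G;k)$ by specialization-closed subsets of $\CV_G$, fed into Balmer's classifying-support-data (reconstruction) theorem. The one loose point is describing the reduction as going ``to the case of a cyclic group of order $p$'': the rank-variety argument over an elementary abelian group $E$ tests projectivity against all \emph{shifted cyclic subgroups}, i.e.\ subalgebras $k\langle 1+\alpha\rangle \subseteq kE$ that are abstractly isomorphic to $kC_p$ but vary continuously with $\alpha$, via Dade's lemma; and the comparison between rank varieties and cohomological support varieties is the Avrunin--Scott isomorphism, which is a named nontrivial input. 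It is not a reduction of the tensor-product theorem to a statement about $C_p$ itself. With that clarification, your outline --- support-datum axioms, tensor-product theorem via Quillen stratification and rank varieties, realizability via Carlson's $L_\zeta$-modules, and Balmer's reconstruction --- is the standard and correct proof of the cited statement.
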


We next need a group-theoretical observation about finite groups of $p$-rank one.
Recall that the $p$-rank of a finite group $G$ is the the largest integer $n$ such that $G$ has an elementary abelian subgroup of order $p^n$. We recall the following classification result, see for instance ~\cite{Gorenstein}*{Sec. 5.4, Thm. 4.10}.

\begin{lemma}\label{lem-class-rank-one}
 Let $G$ be a finite group of $p$-rank one and let $G_p\subseteq G$ be a Sylow $p$-subgroup. 
 Then either $G_p$ is cyclic, or $p=2$ and $G_p$ is a generalized quaternion group. Furthermore, all maximal elementary abelian $p$-subgroups of $G$ are conjugate. 
\end{lemma}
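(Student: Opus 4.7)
The plan is to reduce both statements to a classical theorem on $p$-groups with a unique subgroup of order $p$, plus the fact that every elementary abelian $p$-subgroup lies in some Sylow $p$-subgroup.

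First I would argue that the $p$-rank of $G$ equals the $p$-rank of $G_p$. Indeed, any elementary abelian $p$-subgroup $E\subseteq G$ is a $p$-subgroup and hence conjugate into $G_p$ by Sylow's theorem, so it is isomorphic to an elementary abelian subgroup of $G_p$; conversely, every elementary abelian $p$-subgroup of $G_p$ is one of $G$. In particular $G_p$ has $p$-rank one, i.e., no subgroup isomorphic to $(\mathbb{Z}/p)^2$, which is equivalent to $G_p$ having a unique subgroup of order $p$ (any two distinct subgroups of order $p$ would generate an elementary abelian subgroup of order $p^2$ by commutativity in the center, after passing to a suitable subgroup of the centre; alternatively, apply the standard fact that the number of subgroups of order $p$ in a non-cyclic abelian $p$-group is $\equiv 1\pmod p$ and $>1$).

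For the structural claim, I would then invoke the classical theorem (see e.g.\ Gorenstein, \emph{Finite Groups}, Thm.~5.4.10, or Aschbacher, \emph{Finite Group Theory}, (23.4)) that a finite $p$-group containing a unique subgroup of order $p$ is either cyclic, or else $p=2$ and the group is a generalized quaternion group $Q_{2^n}$ ($n\geq 3$). This immediately gives the dichotomy for $G_p$.

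Finally, for the conjugacy of maximal elementary abelian $p$-subgroups, I note that in both cases $G_p$ contains a unique subgroup of order $p$, namely its centre (cyclic case) or the centre of $Q_{2^n}$, and this subgroup is the unique maximal elementary abelian $p$-subgroup of $G_p$. Now if $E,E'\subseteq G$ are two maximal elementary abelian $p$-subgroups, then each is contained in a Sylow $p$-subgroup, say $E\subseteq G_p$ and $E'\subseteq G_p'$, and by the above each coincides with the unique subgroup of order $p$ in its containing Sylow. Conjugating $G_p'$ to $G_p$ by Sylow's theorem carries $E'$ onto $E$, proving the conjugacy. The only genuine input is the classical structure theorem cited above; the rest is routine Sylow theory.
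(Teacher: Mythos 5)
Your proof is correct and follows essentially the same route as the paper: reduce to the classical theorem on $p$-groups with a unique subgroup of order $p$ (Gorenstein, Sec.~5.4, Thm.~4.10) and then deduce the conjugacy of maximal elementary abelian subgroups from uniqueness within each Sylow plus Sylow's conjugacy theorem. The one spot worth tightening is your parenthetical justification that $p$-rank one implies a unique subgroup of order $p$: the cleanest version is to take a central subgroup $Z_0$ of order $p$ (which exists since $Z(G_p)$ is a nontrivial $p$-group); if there were another subgroup $Q$ of order $p$ with $Q\neq Z_0$, then $Z_0$ and $Q$ commute and intersect trivially, so $\langle Z_0,Q\rangle\simeq(\mathbb{Z}/p)^2$, contradicting rank one. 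As written, the phrase ``after passing to a suitable subgroup of the centre'' is unclear, and the alternative argument about counting subgroups in an abelian $p$-group does not directly apply since $G_p$ need not be abelian. (Note also that the paper sidesteps this implication entirely, by quoting the classification of rank-one $p$-groups and then simply observing that cyclic and generalized quaternion groups each contain exactly one $\mathbb{Z}/p$.)
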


\begin{corollary}\label{cor-spectrum-point-rank-one}
  Let $G$ be a finite group and let $k$ be a field of characteristic $p$, which divides the order of $G$.  
  Then $\CV_G$ is discrete if and only if $\CV_G \simeq *$ if and only if $G$ has $p$-rank one. 
\end{corollary}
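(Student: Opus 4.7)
The plan is to prove the three-fold equivalence by a cycle (2) $\Rightarrow$ (1) $\Rightarrow$ (3) $\Rightarrow$ (2). The implication (2) $\Rightarrow$ (1) is trivial. For the remaining two implications I will invoke Quillen's results on the cohomology ring $H^\bullet(G;k)$: the dimension theorem and the stratification theorem.

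For (1) $\Rightarrow$ (3), first note that since $p$ divides $|G|$, the group $G$ has a nontrivial elementary abelian $p$-subgroup, so by Quillen's dimension theorem the Krull dimension of $H^\bullet(G;k)$ equals the $p$-rank of $G$ and is at least $1$. Hence $\CV_G = \mathrm{Proj}(H^\bullet(G;k))$ is nonempty and of dimension equal to $p$-rank$(G) - 1$. Since $\CV_G$ is Noetherian, it is discrete if and only if it is finite and of dimension $0$, which forces $p$-rank$(G) = 1$.

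For the key implication (3) $\Rightarrow$ (2), suppose $G$ has $p$-rank one. Then as above $\dim \CV_G = 0$, so $\CV_G$ is a finite discrete set and reduces to counting its irreducible components. By Quillen's stratification theorem, every prime of $H^\bullet(G;k)$ lies in the image of the restriction map $\mathrm{Spec}(H^\bullet(E;k)) \to \mathrm{Spec}(H^\bullet(G;k))$ for some elementary abelian $p$-subgroup $E \subseteq G$, and the irreducible components of $\mathrm{Spec}(H^\bullet(G;k))$ are in bijection with $G$-conjugacy classes of maximal elementary abelian $p$-subgroups. In our case the maximal elementary abelian $p$-subgroups are exactly the subgroups isomorphic to $\Z/p$, and by Lemma~\ref{lem-class-rank-one} they form a single conjugacy class. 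Therefore $\mathrm{Spec}(H^\bullet(G;k))$ has exactly one irreducible component, namely the image of $\mathrm{Spec}(H^\bullet(\Z/p;k))$; passing to $\mathrm{Proj}$ yields a single point, so $\CV_G \simeq *$.

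The main work is really just to invoke the right pair of theorems of Quillen in the right order; the genuine input from the previous lemma is the conjugacy of all maximal elementary abelian $p$-subgroups in the $p$-rank one case, which rules out the a priori possibility that $\CV_G$ could be a discrete set with more than one point.
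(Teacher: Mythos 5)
Your proof is correct, but it takes a genuinely different route from the paper's for the implication that $p$-rank one forces $\CV_G\simeq *$ (rather than merely discrete with possibly several points). The paper dispatches the equivalence ``discrete $\Leftrightarrow$ point'' in one stroke using the tensor-triangular machinery already developed: since $\pi_0(\1)=k$ is a field, $\StMod(G;k)$ is a connected stable homotopy theory, hence by Proposition~\ref{prop-char-connected-Balmer} the Balmer spectrum $\CV_G$ is connected and nonempty, so discrete implies a single point; the remaining work is then just Quillen's dimension theorem in both directions, plus the fact that a $0$-dimensional Noetherian scheme is discrete. You instead close the cycle with Quillen's stratification theorem, identifying the irreducible components of $\CV_G$ with $G$-conjugacy classes of maximal elementary abelian $p$-subgroups, and then invoke Lemma~\ref{lem-class-rank-one} (all such subgroups are conjugate in the $p$-rank one case) to conclude there is a unique component. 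Both arguments are valid. Yours is perhaps more transparent from a pure representation-theoretic standpoint and does not lean on the surrounding tt-geometric framework, but it invokes a heavier theorem (stratification, where the paper uses only the dimension theorem) and uses the conjugacy half of Lemma~\ref{lem-class-rank-one} in an essential way. The paper's approach is leaner precisely because the connectedness of the Balmer spectrum is available for free from the earlier sections; it also sidesteps the need for any conjugacy input (the conjugacy statement in Lemma~\ref{lem-class-rank-one} is actually used later, in Theorem~\ref{thm-stable-module-cat}, not here). One small point worth making explicit in your $(3)\Rightarrow(2)$ step: you should note $\CV_G\neq\emptyset$, which follows from $p\mid |G|$; your appeal to ``at least one'' maximal elementary abelian subgroup implicitly uses this, and it is also needed so that ``one irreducible component'' genuinely yields a point rather than the empty scheme.
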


\begin{proof}
We have already noted that $\StMod(G;k)$ is a connected stable homotopy theory so its Balmer spectrum must be connected by Proposition~\ref{prop-char-connected-Balmer}. 
This immediatelly gives that $\CV_G$ is discrete if and only if $\CV_G \simeq *$. 
Let us now prove the other implications. Recall from~\cite{Quillen1971}*{Corollary 7.8} that the Krull dimension of $H^\bullet(BG;k)$ coincides with the $p$-rank of $G$, and that $\dim \CV_G=\dim H^\bullet(BG;k)-1$. So if $\CV_G$ is discrete, then $\dim H^\bullet(BG;k)=1$ and so $G$ must have $p$-rank one.
Conversely if $G$ has $p$-rank one, then $\dim H^\bullet(BG;k)=1$ and so $\dim\CV_G=0$.  We conclude by noting that any $0$-dimensional Noetherian scheme is discrete~\cite{EisenbudHarris}*{Exercise I.36}.
\end{proof}

The next result is our classification of finite separable algebras. In the special case that $G$ is cyclic, this is due to Balmer-Carlson~\cite{BC2018}. Observe however,
that they prove more in this special case as they do not have to assume from the outset that their algebras have finite degree. We do not know if this finiteness condition hold in our situation, too. 

\begin{proposition}\label{prop-sep-cov-rank-one-case}
 Let $k$ be a field of characteristic $p>0$ and let $G$ be a finite group of $p$-rank one. 
 Then there is an equality 
 \[
 \CAlg^\cov(\StMod(G;k))=\CAlg^{\sep,\mathrm{f}}(\stmod(G;k))
 \]
 between the finite covers and the separable and compact commutative algebras 
 of finite degree.
\end{proposition}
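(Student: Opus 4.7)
The approach is a fairly direct application of the comparison theorem together with the degeneracy of the Balmer spectrum in this situation. The whole content of the hypothesis ''$p$-rank one'' is that $\Spc(\stmod(G;k))=\CV_G$ reduces to a single point, which automatically forces any finite degree function to be locally constant.

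First, I would dispose of the trivial case: if $p$ does not divide $|G|$, then $\StMod(G;k)=0$ and both sides of the claimed equality are trivial, so we may assume $p\mid |G|$. Under this assumption, $\pi_0(\1)=\widehat{H}^0(G;k)=k$ is an indecomposable ring, so $\StMod(G;k)$ is a connected, hence fin-connected, stable homotopy theory. Moreover, $\stmod(G;k)=\StMod(G;k)^\dual$ (the compact objects coincide with the dualizable objects in this compactly generated tensor-triangulated category).

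The two inclusions now proceed as follows. For the inclusion $\CAlg^\cov(\StMod(G;k))\subseteq\CAlg^{\sep,\mathrm{f}}(\stmod(G;k))$, I would simply invoke Corollary~\ref{cor-sep-lcf=cov}, which already tells us that a finite cover is separable with dualizable (=compact) underlying module and has a locally constant, finite degree function—in particular, it is of finite degree. For the reverse inclusion $\CAlg^{\sep,\mathrm{f}}(\stmod(G;k))\subseteq\CAlg^\cov(\StMod(G;k))$, the key observation is that by Corollary~\ref{cor-spectrum-point-rank-one}, the Balmer spectrum $\Spc(\stmod(G;k))=\CV_G$ consists of a single point. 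Hence \emph{every} function on $\Spc(\stmod(G;k))$ is constant, and in particular every finite-valued degree function is (locally) constant. This gives the equality of full subcategories
\[
\CAlg^{\sep,\mathrm{f}}(\stmod(G;k))=\CAlg^{\sep,\lcf}(\stmod(G;k)),
\]
and a second application of Corollary~\ref{cor-sep-lcf=cov} identifies the right-hand side with $\CAlg^\cov(\StMod(G;k))$.

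There is no real obstacle in this argument; all the technical heavy lifting has already been packaged into Corollary~\ref{cor-sep-lcf=cov} and into the point-counting computation of the Balmer spectrum via Quillen stratification (Corollary~\ref{cor-spectrum-point-rank-one}). The proposition can therefore be proved in a few lines by stringing these two facts together. The genuinely restrictive ingredient—and the reason the argument does not extend beyond $p$-rank one—is precisely the collapse of $\CV_G$ to a point; as soon as $\dim\CV_G>0$, a separable algebra of finite degree could in principle have non-locally-constant degree function, and additional input would be required to rule this out.
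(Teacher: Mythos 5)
Your proof is correct and takes the same route as the paper's: the Balmer spectrum of $\stmod(G;k)$ collapses to a point by Corollary~\ref{cor-spectrum-point-rank-one}, so every finite degree function is automatically locally constant, and the result then follows directly from Corollary~\ref{cor-sep-lcf=cov}. The only difference is cosmetic: you spell out the fin-connectedness and the identification $\stmod(G;k)=\StMod(G;k)^{\dual}$ more explicitly (and treat the vacuous case $p\nmid|G|$, which in fact cannot occur once $G$ has $p$-rank one), while the paper leaves these implicit.
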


\begin{proof}
 Since $G$ has $p$-rank one then $\Spc(\stmod(G;k))=*$ by 
 Corollary~\ref{cor-spectrum-point-rank-one}. 
 It follows that any separable algebra of finite degree $A$ has locally constant degree 
 function $\deg(A) \colon \Spc(\stmod(G,k)) \to \Z$. Therefore the claim follows from 
 Corollary~\ref{cor-sep-lcf=cov}.
\end{proof}

The following example demonstrates that if the $p$-rank of $G$ is not one, there are examples of separable algebras which are not finite covers. So our approach of classifying separable algebras via Galois theory will not give a conclusive answer in this case.

\begin{example}\label{ex-wc-not-lc-degree}
Let $G$ be elementary abelian $p$-group of rank $2$ and let $E \subseteq G$ an elementary abelian subgroup of rank $1$. Then we have seen that $A^G_E$ is separable of finite degree in $\stmod(G;k)$. We observe that the degree function of $A^G_E$ is not locally constant on $\CV_G$ as it is nonzero exactly on the image of the inclusion $*=\CV_E \to\CV_G$. It then follows that $A^G_E$ is not a finite cover.    
To prove the claim about the degree function, it suffices to note that $A_H^G$ is equivalent to $\mathrm{Coind}_H^G(k)$ and apply~\cite{Balmersurj}*{Theorem 1.7}. Recall also that $A_E^G$ is a weak finite cover in $\mod{G}(k)$ so the same argument as above shows that not all weak finite covers have locally constant degree function. Note we have just given our first example of some $\C:=\StMod_G(k)\in\CAlg(\Pr)$ for which the inclusion of full subcategories
\[ \CAlg^{\mathrm{w.cov}}(\C)\subsetneq\CAlg^{\mathrm{sep}}(\C^{\mathrm{dual}})\]
is proper. We also observe that \cite{Mathew2016}*{Theorem 9.2} exhibits $\C$ as a finite limit of module categories of localizations $R$ of cochain-algebras.\footnote{This can be thought of as a categorification of the Greenlees-May spectral sequence from local cohomology to Tate cohomology.} Consequently, we will also have 
\[ \CAlg^{\mathrm{w.cov}}(\mod{R})\subsetneq\CAlg^{\mathrm{sep}}(\Perf{R})\]
for some such $R$.

\end{example}

By Proposition \ref{prop-sep-cov-rank-one-case}, we can use Galois theory to classify all separable 
algebras of finite degree in $\stmod(G;k)$ for $G$ a finite group of $p$-rank one. We know as a special case of ~\cite{Mathew2016}*{Corollary 9.19} that the Galois group of the stable module category is the Weyl-group of a copy of $\mathbb Z/p\subseteq G$. We next observe that the corresponding $W$-torsor is induced by the canonical action of the Weyl-group on $A^G_{\mathbb{Z}/p}$. We remark that this action happens already in the module category, but in general, it determines a $W$-Galois extension only after passage to the stable module category.

\begin{lemma}\label{lem-rank-one-galois}
Let $k$ be a field of characteristic $p>0$ and let $G$ be a finite group of $p$-rank one. 
Let $P$ denote a maximal elementary abelian subgroup of $G$ so that $P\simeq \Z/p$. 
Write $N$ for the normalizer of $P$ in $G$, and $W$ for the Weyl group of $P$ in $G$ so that $W=N/P$. Then $A^G_P=k[G/P]$ is an indecomposable and descendable $W$-Galois extension in $\stmod(G;k)$. 
\end{lemma}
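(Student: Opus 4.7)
The plan is to verify, separately, descendability, the two Galois conditions of Definition~\ref{def-galois-extension}, and indecomposability. The $W$-action on $A^G_P=k[G/P]$ comes from right multiplication: for $n\in N$, $gP\cdot n:=gnP$ is well-defined since $P\trianglelefteq N$, commutes with the left $G$-action, descends to $W=N/P$, and preserves the commutative algebra structure.

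For descendability, the $p$-rank one hypothesis gives $\Spc(\stmod(G;k))=\{\ast\}$ by Corollary~\ref{cor-spectrum-point-rank-one}, so by Lemma~\ref{lem-desc} it suffices to show that $A^G_P\not\simeq 0$ in $\stmod(G;k)$, i.e. that $k[G/P]$ is not projective as a $k[G]$-module. Restricting to $P$ via Mackey and using that $P\cap gPg^{-1}$ equals $P$ when $g\in N$ and is trivial otherwise (a consequence of Lemma~\ref{lem-class-rank-one}, since all subgroups of order $p$ are conjugate and $P\simeq \mathbb{Z}/p$), one obtains
\[ \mathrm{Res}^G_P\, k[G/P]\simeq k^{|W|}\oplus(\text{free }k[P]\text{-module}),\]
and the trivial summands preclude projectivity. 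Descendability then yields faithfulness.

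The heart of the proof is the identification $h\colon A^G_P\otimes A^G_P\xrightarrow{\simeq}\prod_W A^G_P$ in $\stmod(G;k)$. The diagonal $G$-set $G/P\times G/P$ decomposes into orbits indexed by $P\backslash G/P$, with the orbit of $(eP,gP)$ isomorphic to $G/(P\cap gPg^{-1})$. The $p$-rank one input again forces the stabilizer to be $P$ when $g\in N$ and trivial otherwise; since $P\trianglelefteq N$, the former orbits are indexed by $N/P=W$ and each isomorphic to $G/P$, while the latter are free. Hence
\[ A^G_P\otimes A^G_P\simeq (A^G_P)^{|W|}\oplus(\text{free }k[G]\text{-modules})\]
in $\mod{G}(k)$, and the free part is zero in $\stmod(G;k)$. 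A direct computation of $h(gP\otimes g'P)_w$ shows this is nonzero exactly when $g'^{-1}g\in nP$ for a representative $n$ of $w$, i.e.\ the $W$-indexed summands on each side are matched bijectively by $h$; moreover $h$ is $W$-equivariant for the permutation action on $\prod_W A^G_P$. Dualizability of $A^G_P$ (it is compact in $\stmod(G;k)$) lets us apply $(-)^{hW}$ to $h$ and commute the free tensor factor through the limit: using $(\prod_W A^G_P)^{hW}\simeq A^G_P$ for the permutation action, this gives $A^G_P\otimes (A^G_P)^{hW}\simeq A^G_P$, and faithfulness of $A^G_P$ allows us to cancel to obtain $(A^G_P)^{hW}\simeq k$, verifying the second Galois condition.

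For indecomposability, one computes $\pi_0(A^G_P)=\widehat{H}^0(G;k[G/P])$, which by Shapiro's lemma for Tate cohomology equals $\widehat{H}^0(P;k)=k$ (the norm is multiplication by $|P|=p=0$ in $k$). Since $k$ is a field, this is indecomposable as a discrete ring, hence $A^G_P$ is indecomposable in $\CAlg(\stmod(G;k))$ by the characterization following Definition~\ref{def-connected}. The main obstacle in the proof is the orbit decomposition of the third paragraph: this is the only place where the $p$-rank one hypothesis is used essentially (via Lemma~\ref{lem-class-rank-one} and the dichotomy $P\cap gPg^{-1}\in\{1,P\}$); everything else is formal unwinding of the Galois axioms, faithfulness, and standard Tate cohomology.
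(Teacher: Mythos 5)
Your proposal is correct and follows essentially the same route as the paper: same $W$-action by right multiplication, same double-coset orbit decomposition of $A^G_P\otimes A^G_P$ with the non-normalizing summands becoming free (hence zero in $\stmod$), same faithful base-change argument using $(\prod_W A^G_P)^{hW}\simeq A^G_P$ to verify $\eta$, and the same $\pi_0$-computation via Tate cohomology for indecomposability. The one genuine divergence is in how you establish descendability: the paper invokes the degree formula from Pauwels (\cite{Bregje}*{Proposition 10.13(a)}) to compute $\deg(A^G_P)=|W|$, then concludes via Lemma~\ref{lem-descend-degree-geq1} and $\CV_G=\ast$. You instead argue directly that $k[G/P]$ is not projective over $k[G]$ by restricting to $P$ via Mackey and spotting the trivial summands, then use Lemma~\ref{lem-desc} and the one-point spectrum. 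Your route is more elementary and avoids citing the degree formula, at the cost of not extracting the degree itself (the paper's computation of $\deg(A^G_P)=|W|$ is a useful by-product that prefigures $A^G_P$ being a $W$-torsor, but is not logically needed here). One small remark: the dichotomy $P\cap P^g\in\{1,P\}$ actually follows just from $|P|=p$ being prime; the $p$-rank one hypothesis enters essentially only through the Balmer spectrum being a point (and through $P$ being maximal elementary abelian to begin with), not through the orbit decomposition itself.
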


\begin{proof}
 Note that $A^G_P$ has a canonical left $G$-action by left multiplication, making it an
 object of $\stmod(G;k)$, but it has also a right $N$-action by right multiplication
 via $[g]_P.n=[gn]_P$ for all $g \in G$ and $n\in N$. 
 Clearly $P \subseteq N$ acts trivially on $A^G_P$ 
 so we get an induced $W=N/P$-action. The $W$ and $G$ actions are compatible to one 
 another and so they make $A_P^G$ into an object with a $W$-action in $\stmod(G;k)$. 
 
 
 Note that $A^G_P$ is descendable by the second part of 
 Lemma~\ref{lem-descend-degree-geq1} and the fact that $\CV_G=\ast$. Moreover, $A_P^G$ is indecomposable since 
 \[
 \pi_0(A_P^G)=\widehat{H}^0(P;k)=k/|P|=k
 \]
 by our assumption on the characteristic of the field.   
 
 Let us now show that the map $h\colon A^G_P \otimes A^G_P \to\prod_W A^G_P$ from
 Definition~\ref{def-galois-extension} is an equivalence. Let $T$ be a complete 
 set of representatives for the double coset $P\backslash G /P$. 
 The double coset formula tells us that the following $G$-equivariant map is an isomorphism:
 \begin{equation}\label{double-coset-formula}
 \coprod_{g\in T} \beta_g \colon \coprod_{g\in T} G/P \cap P^g \xrightarrow{\sim} G/P \times G/P, \quad \beta_g([x]_{P \cap P^g})=([x]_P, [xg^{-1}]_P)
 \end{equation}
 see~\cite{Bregje}*{Remark 10.6}. Since $P$ is normal in $N$, we have 
 $P \backslash N/P=N/P=W$, so we can decompose $T$ as $T= W_0 \cup (T-W_0)$ by picking representatives 
 $W_0$ for $W$. 
 We note that $P\cap P^g=P$ if $g \in W_0$, and $P\cap P^g=1$ otherwise. 
 The double coset decomposition~(\ref{double-coset-formula}) induces a ring isomorphism 
 in the stable module category
 \[
 \tau \colon A_P^G \otimes A_P^G \xrightarrow{\sim} 
 \prod_{g \in W_0} A^G_P  \times \prod_{g \in T-W_0} A^G_1 \simeq \prod_{g \in W_0} A^G_P 
 \]
 since $A_1^G$ is projective. For all $g\in W_0$ and $x,y \in G$, the map $\tau$ is defined as 
 follows (see~\cite{Bregje}*{Remark 10.6})
 \[
 \mathrm{pr}_g \tau([x]_P \otimes [y]_P)= 
 \begin{cases}
 [x]_P & \mathrm{if}\; \exists\,  p\in P\; \mathrm{s.t}\; y^{-1}xpg^{-1}\in P \\
 0     & \mathrm{otherwise}
 \end{cases}
 \]
 The condition $y^{-1}xpg^{-1}\in P$ can be rewritten as $y^{-1}xg^{-1}(gpg^{-1})\in P$ which is equivalent to $y^{-1}xg^{-1}\in P$ since $gpg^{-1}\in P$. 
 Therefore we can simplify the formula of $\tau$ as follows:
 \begin{equation}\label{tau}
 \mathrm{pr}_g \tau([x]_P \otimes [y]_P)= 
 \begin{cases}
 [x_P] & \mathrm{if} \; y^{-1}xg^{-1}\in P \\
 0     & \mathrm{otherwise}
 \end{cases}
 \end{equation}
 We claim that $\tau$ agrees with the map $h\colon A_P^G \otimes A_P^G \to \prod_W A^G_P$ 
 of the Galois condition. For all $g\in W$ and $x,y \in G$, the map $h$ is given by 
 \begin{equation}\label{h}
 \mathrm{pr}_g h ([x]_P\otimes [y_P])=[x]_P \cdot [yg]_P = 
 \begin{cases}
 [x]_P & \mathrm{if}\; [x]_P=[yg]_P \\
 0   & \mathrm{otherwise}
 \end{cases}
 \end{equation}
 where we used the definition of the multiplication in $A_P^G$ which we have recalled 
 in Definition~\ref{def-A_H^G}. The condition $[x]_P=[yg]_P$ means that there exists $p\in P$
 such that $xp=yg$, and we can rewrite this as $x^{-1}yg \in P$. Conjugating this last 
 formula by $g$, we find that $gx^{-1}y=gx^{-1}ygg^{-1} \in P^g=P$. Finally applying $(-)^{-1}$ 
 to this we find that $y^{-1}xg^{-1}\in P$. But this is precisely 
 the condition defining $\tau$ see~(\ref{tau}). Therefore $h=\tau$ and this is an isomorphism 
 (as $\tau$ was).
 
 It is only left to prove that 
 $\eta \colon k \to (A_P^G)^{hW}$ is an equivalence. Since $A_P^G$ is faithful, it suffices to 
 check that $A_P^G \otimes \eta \colon A_P^G \to A_P^G \otimes (A_P^G)^{hW}$ is 
 an equivalence.  
 By Lemma~\ref{lem-dual-nu-map}, we can rewrite 
 \[
 A_P^G \otimes(A_P^G)^{hW} \simeq (A_P^G \otimes A_P^G)^{hW}\simeq (\prod_W A_P^G)^{hW}
 =A^G_P
 \]
 and so the required map is an equivalence.
\end{proof}

\begin{theorem}\label{thm-stable-module-cat}
 Suppose we are in the situation of Lemma~\ref{lem-rank-one-galois} and that the field $k$ is separably closed.
The Galois group of $\StMod(G;k)$ is isomorphic to $W$ and the resulting equivalence
 \[
 \FinSet_W\simeq \CAlg^\cov(\StMod(G;k))^{\op} 
 \]
 can be chosen to send $W$ to $A^G_P$. Furthermore, Question~\ref{que-Balmer-calrson} 
 admits a positive answer in this case, if we assume in addition that the separable algebra has finite degree.
\end{theorem}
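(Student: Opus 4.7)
The strategy combines Lemma~\ref{lem-rank-one-galois}, which promotes $A^G_P$ to a descendable $W$-Galois extension, with the axiomatic Galois theory of Section~\ref{sec:axiomatic_galois} and the reduction from separable algebras to finite covers in Proposition~\ref{prop-sep-cov-rank-one-case}. I will first compute the Galois group as $W$, realized by the torsor $A^G_P$, and then use a Mackey calculation to identify all finite covers with the $A^G_H$.

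By Lemma~\ref{lem-rank-one-galois} and Proposition~\ref{prop-G-galois-are-G-torsor}(b), $A^G_P$ is an indecomposable $W$-torsor in $\CAlg^\cov(\StMod(G;k))$, giving a continuous surjection $\pi_1(\StMod(G;k))\twoheadrightarrow W$. For injectivity, I would pass to modules via a Shapiro-style identification $\mod{A^G_P}(\StMod(G;k))\simeq\StMod(P;k)$, so that descent along the descendable $A^G_P$ identifies the kernel with $\pi_1(\StMod(\Z/p;k))$. The triviality of this kernel can be checked directly---the only candidate non-trivial cover $A^{\Z/p}_1=k[\Z/p]$ is projective, hence stably zero, leaving only the unit---or imported from \cite{BC2018} or \cite{Mathew2016}*{Corollary 9.19}. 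With $\pi_1(\StMod(G;k))\simeq W$ established, the equivalence $\FinSet_W\simeq\CAlg^\cov(\StMod(G;k))^{\op}$ of Corollary~\ref{cor-con-galois-cor} may be chosen to send $W$ to $A^G_P$ by matching indecomposable $W$-torsors on both sides (Example~\ref{ex-G-torsor-G} and Lemma~\ref{lem-rank-one-galois}), possibly after composing with an automorphism of $W$.

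To answer Question~\ref{que-Balmer-calrson}, Proposition~\ref{prop-sep-cov-rank-one-case} reduces separable algebras of finite degree in $\stmod(G;k)$ to finite covers, and the above correspondence classifies these by finite $W$-sets. Decomposing into orbits, it suffices to identify the cover attached to a transitive $W$-set $W/U$: writing $P\le N_U\le N_G(P)$ for the preimage of $U\le W$ (so $N_U/P=U$), I claim this cover is $A^G_{N_U}$. The verification is a Mackey double-coset analysis of $A^G_P\otimes A^G_{N_U}=k[G/P\times G/N_U]$: for $g\in N_G(P)$, the stabilizer of $(eP,gN_U)$ equals $P$ since $gN_Ug^{-1}\supseteq gPg^{-1}=P$, giving a summand $k[G/P]$, and these are indexed by $P\backslash N_G(P)/N_U\cong W/U$; by the $p$-rank one dichotomy (Lemma~\ref{lem-class-rank-one}), for $g\notin N_G(P)$ the stabilizer $P\cap gN_Ug^{-1}$ is trivial, yielding a projective summand which is stably zero. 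This gives $A^G_P\otimes A^G_{N_U}\simeq\prod_{W/U}A^G_P$ as $A^G_P$-algebras, matching $A^G_{N_U}$ with the cover corresponding to $W/U$, so that every separable algebra of finite degree is of the form $A^G_X$ with $X=\coprod_i G/N_{U_i}$.

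The main obstacle I anticipate is the injectivity of $\pi_1(\StMod(G;k))\twoheadrightarrow W$: while the reduction to the cyclic case via descent along $A^G_P$ is clean, ruling out all possible Galois extensions of $\stmod(\Z/p;k)$---not only those of the form $A^{\Z/p}_H$---requires either the Balmer--Carlson analysis or the general machinery of \cite{Mathew2016}. The subsequent Mackey computation, by contrast, proceeds cleanly once the $p$-rank one dichotomy is in hand; the only additional care needed is to verify that the splitting is $A^G_P$-algebra linear, which follows from the naturality of the double-coset decomposition.
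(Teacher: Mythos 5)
Your proposal is correct in outline and reaches the same conclusion, but the third paragraph takes a genuinely different computational route from the paper. Where you run a Mackey double-coset analysis of $A^G_P\otimes A^G_{N_U}$ to match it with $\prod_{W/U}A^G_P$, the paper instead descends from the $W$-torsor $A^G_P$ via homotopy fixed points: the transitive $W$-set $W/U$ is $W_{hU}$, so under the contravariant correspondence it goes to $(A^G_P)^{hU}$, and a one-line calculation $(A^G_P)^{hU}\simeq(\prod_{G/P}k)^{V/P}\simeq\prod_{G/V}k=A^G_V$ (with $V=N_U$ the preimage of $U$ in $N_G(P)$) finishes the job. The fixed-point route is shorter and automatically produces the correct $W$-equivariant cover; your Mackey computation is also valid (and the double-coset bookkeeping is essentially the content of the proof of Lemma~\ref{lem-rank-one-galois}), but as you partly acknowledge, one must track not just $A^G_P$-algebra linearity but also the $W$-equivariant structure on the splitting to be sure the resulting finite cover is the one indexed by $W/U$ and not merely some $W$-set of the same cardinality. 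On the first part: the paper simply cites \cite{Mathew2016}*{Corollary 9.19} (plus Lemma~\ref{lem-class-rank-one} to lift one of its hypotheses), whereas you sketch a direct descent argument reducing to $\pi_1(\StMod(\mathbb{Z}/p;k))=1$. That sketch, as stated, is incomplete --- ruling out only covers of the form $A^{\mathbb{Z}/p}_H$ does not a priori exhaust all Galois covers --- but you correctly flag this as the obstacle and fall back to the literature, so there is no actual gap in the argument as written.
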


\begin{proof}
 The fact that the Galois group is isomorphic to $W$ is proved in~\cite{Mathew2016}*{Corollary 9.19} (together with the second part of Lemma~\ref{lem-class-rank-one} to remove one assumption from the cited result) and so by the Galois correspondence (Theorem~\ref{thm-Galois-correspondence}) we get an 
 equivalence 
 \begin{equation}\label{eq-galois}
  \CAlg^\cov(\StMod(G;k))^{\op}\simeq \FinSet_W.
 \end{equation}
 We next argue that the equivalence behaves as claimed. By Lemma~\ref{lem-rank-one-galois} we know that $A^G_P$ is an indecomposable and descendable $W$-Galois extension. Thus by 
 Proposition~\ref{prop-G-galois-are-G-torsor}(b), it corresponds to the indecomposable $W$-torsor $W$ in $\FinSet_W$, see Example~\ref{ex-G-torsor-G}.

 Finally, to see that Question~\ref{que-Balmer-calrson} admits a positive answer as claimed, we assume that $A$ is a
 compact and separable algebra in the stable module category, which has finite degree. By Proposition~\ref{prop-sep-cov-rank-one-case}, $A$ is a finite cover. It thus corresponds to a finite $W$-set $X$, which we can assume transitive, say $X=W/U$.
 We can write $U=V/P$. Since $W/U=(W)_{hU}$, the coset 
 $W/U$ corresponds under the equivalence~(\ref{eq-galois}) to the indecomposable finite cover $
 (A_P^G)^{hU}$. Furthermore,
 \[
 (A_P^G)^{hU}\simeq(\prod_{G/P}k)^{V/P}\simeq \prod_{G/V} k=A^G_V.
 \]
 This provides a positive answer to Question~\ref{que-Balmer-calrson}.
\end{proof}

\bibliography{reference}

\end{document}